\documentclass[a4paper,12pt]{amsart}
\title[Gepner type stability conditions]{Gepner type stability conditions on 
graded matrix factorizations}
\date{}
\author{Yukinobu Toda}

\usepackage{amscd}
\usepackage{amsmath}
\usepackage{amssymb}
\usepackage{amsthm}
\usepackage{float}
\usepackage[dvips]{graphicx}

\usepackage[all,ps,dvips]{xy}

\usepackage{booktabs}

\usepackage{array}
\usepackage{amscd}
\usepackage[all]{xy}

\DeclareFontFamily{U}{rsfs}{%
\skewchar\font127}
\DeclareFontShape{U}{rsfs}{m}{n}{%
<-6>rsfs5<6-8.5>rsfs7<8.5->rsfs10}{}
\DeclareSymbolFont{rsfs}{U}{rsfs}{m}{n}
\DeclareSymbolFontAlphabet
{\mathrsfs}{rsfs}
\DeclareRobustCommand*\rsfs{%
\@fontswitch\relax\mathrsfs}

\theoremstyle{plain}
\newtheorem{thm}{Theorem}[section]
\newtheorem{prop}[thm]{Proposition}
\newtheorem{lem}[thm]{Lemma}
\newtheorem{sublem}[thm]{Sublemma}
\newtheorem{defi}[thm]{Definition}
\newtheorem{rmk}[thm]{Remark}
\newtheorem{cor}[thm]{Corollary}

\newtheorem{step}{Step}
\newtheorem{sstep}{Step}

\newtheorem{prop-defi}[thm]{Proposition-Definition}
\newtheorem{thm-defi}[thm]{Theorem-Definition}
\newtheorem{lem-defi}[thm]{Lemma-Definition}

\newtheorem{conj}[thm]{Conjecture}
\newtheorem{exam}[thm]{Example}

\newdimen\argwidth
\def\db[#1\db]{
 \setbox0=\hbox{$#1$}\argwidth=\wd0
 \setbox0=\hbox{$\left[\box0\right]$}
  \advance\argwidth by -\wd0
 \left[\kern.3\argwidth\box0 \kern.3\argwidth\right]}

\newcommand{\aA}{\mathcal{A}}
\newcommand{\bB}{\mathcal{B}}
\newcommand{\cC}{\mathcal{C}}
\newcommand{\dD}{\mathcal{D}}

\newcommand{\fF}{\mathcal{F}}

\newcommand{\hH}{\mathcal{H}}

\newcommand{\lL}{\mathcal{L}}
\newcommand{\mM}{\mathcal{M}}

\newcommand{\oO}{\mathcal{O}}
\newcommand{\pP}{\mathcal{P}}
\newcommand{\qQ}{\mathcal{Q}}

\newcommand{\sS}{\mathcal{S}}
\newcommand{\tT}{\mathcal{T}}

\newcommand{\xX}{\mathcal{X}}

\newcommand{\lr}{\longrightarrow}

\newcommand{\Hom}{\mathop{\rm Hom}\nolimits}

\newcommand{\dR}{\mathbf{R}}

\newcommand{\Pic}{\mathop{\rm Pic}\nolimits}

\newcommand{\id}{\textrm{id}}

\newcommand{\ch}{\mathop{\rm ch}\nolimits}

\newcommand{\td}{\mathop{\rm td}\nolimits}
\newcommand{\Ext}{\mathop{\rm Ext}\nolimits}

\newcommand{\rank}{\mathop{\rm rank}\nolimits}
\newcommand{\Coh}{\mathop{\rm Coh}\nolimits}

\newcommand{\cneq}{\mathrel{\raise.095ex\hbox{:}\mkern-4.2mu=}}
\newcommand{\eqcn}{\mathrel{=\mkern-4.5mu\raise.095ex\hbox{:}}}

\newcommand{\Cok}{\mathop{\rm Cok}\nolimits}

\newcommand{\Aut}{\mathop{\rm Aut}\nolimits}

\newcommand{\Cone}{\mathop{\rm Cone}\nolimits}

\newcommand{\Stab}{\mathop{\rm Stab}\nolimits}

\newcommand{\DT}{\mathop{\rm DT}\nolimits}

\newcommand{\grr}{\mathrm{gr} \mathchar`- }
\newcommand{\grrproj}{\mathrm{grproj} \mathchar`- }

\newcommand{\Imm}{\mathop{\rm Im}\nolimits}

\newcommand{\HMF}{\mathrm{HMF}^{\rm{gr}}}
\newcommand{\Ker}{\mathop{\rm Ker}\nolimits}
\newcommand{\Ree}{\mathop{\rm Re}\nolimits}

\newcommand{\ST}{\mathop{\rm ST}\nolimits}

\begin{document}

\begin{abstract}
We introduce the notion of 
Gepner type Bridgeland stability conditions 
on triangulated categories, which depends on 
a choice of an autoequivalence and a complex number. 
We conjecture the existence of Gepner type stability 
conditions on the triangulated categories of graded
matrix factorizations
of weighted homogeneous polynomials. 
Such a stability condition 
may give a natural stability condition for Landau-Ginzburg 
B-branes, 
and correspond to the Gepner point 
of the stringy K$\ddot{\rm{a}}$hler moduli space of 
a quintic 3-fold. 
The main result is to show 
our conjecture when the variety 
defined by the weighted
homogeneous polynomial is 
a complete intersection of hyperplanes in 
a Calabi-Yau 
manifold with dimension less than or equal to two. 
\end{abstract}

\maketitle

\setcounter{tocdepth}{1}
\tableofcontents

\section{Introduction}

\subsection{Motivation}
The Donaldson-Thomas (DT) invariants
enumerate 
semistable coherent sheaves on 
Calabi-Yau 3-folds, 
which 
have drawn much attention recently~\cite{Thom}. 
We are in interested in the following two problems
in DT theory: 
\begin{itemize}
\item Find constraints among DT invariants 
induced by autoequivalences of the derived category of 
coherent sheaves, e.g. Seidel-Thomas twists~\cite{ST}. 

\item Construct DT type invariants
counting B-branes on Landau-Ginzburg (LG)
models associated to a superpotential.  
\end{itemize}
As for the former problem, there are 
several predictions in string theory on 
generating series of DT invariants, 
e.g. $S$-duality conjecture, 
Ooguri-Strominger-Vafa conjecture~\cite{DM},~\cite{OSV}.
There seem to be mysterious constraints among 
DT invariants behind such predictions, 
and we hope to reveal their origins via 
symmetries in the derived category.   
We believe that a key 
step toward this problem is 
to construct
a Bridgeland stability condition
on the derived category~\cite{Brs1}
satisfying a certain symmetric property
with respect to the given autoequivalence. 
Indeed a construction of a (weak) stability condition 
which is preserved under the derived dual,  
together with wall-crossing arguments~\cite{JS},~\cite{K-S},
play crucial roles in the
proof of the rationality of the 
generating series of rank
one DT type invariants counting 
curves~\cite{BrH},~\cite{Tolim2},~\cite{Tsurvey}. 
 
As for the latter problem, 
in order to define the DT type invariants, 
we need to fix a 
stability condition 
for B-branes on LG models. 
A desired stability condition should 
be natural in some sense, 
so that it is an analogue of Gieseker 
stability on coherent sheaves. 
In a mathematical term, 
if the superpotential is given by a 
homogeneous polynomial $W$, the 
relevant B-brane category
is Orlov's triangulated category of
 graded matrix factorizations $\HMF(W)$~\cite{Orsin}. 
For instance, suppose that $W$ is
the defining polynomial of a 
quintic Calabi-Yau 3-fold 
$X \subset \mathbb{P}^4$. 
Then a desired Bridgeland 
stability condition on $\HMF(W)$
may 
correspond to the Gepner point (cf.~Figure~\ref{fig:one})
of the stringy K$\ddot{\rm{a}}$hler moduli 
space of $X$, via mirror symmetry and 
Orlov equivalence~\cite{Orsin}
\begin{align}\label{intro:Orlov}
 D^b \Coh(X) \stackrel{\sim}{\to} \HMF(W). 
\end{align}
The Gepner point is 
an orbifold point in the stringy 
K$\ddot{\rm{a}}$hler moduli space of $X$, 
with the stabilizer group
$\mathbb{Z}/5\mathbb{Z}$. 
Such an orbifold data may be translated into 
a certain symmetric property of the corresponding 
Bridgeland stability condition, which we focus and 
pursue in this paper. 

Now we have observed a common keyword
regarding the above two problems, 
that is \textit{a Bridgeland stability condition 
with a symmetric property}. 
The motivating problem of this paper, which is 
rather ambitious and not able to do at this moment, 
is 
to find a natural stability condition on $\HMF(W)$, 
and apply its symmetric property to 
obtain non-trivial constraints among DT invariants on 
$X$, via Orlov equivalence (\ref{intro:Orlov})
and wall-crossing arguments~\cite{JS},~\cite{K-S}. 
(cf.~Subsection~\ref{subsec:future} (iii).) 

\begin{figure}[htbp]
 \begin{center}
  \includegraphics[width=60mm]{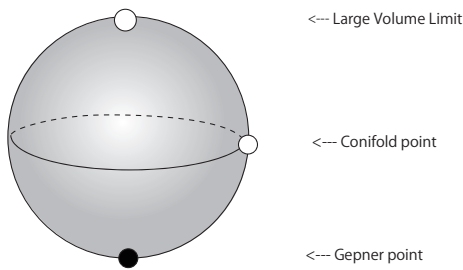}
 \end{center}
 \caption{Stringy K$\ddot{\textrm{a}}$hler moduli space
 of a quintic 3-fold}
 \label{fig:one}
\end{figure}

\subsection{Gepner type stability conditions}
As we discussed, we are interested in constructing 
a natural stability condition on $\HMF(W)$. 
So far there are few works on this problem, 
except~\cite{KST1},~\cite{Tak},~\cite{Wal}, 
which will be discussed later. 
A serious issue is that, since the  
category $\HMF(W)$ is \textit{not} a priori constructed as 
a derived category of some abelian category, 
it is not clear what is the meaning of `natural'. 
Our viewpoint 
is as follows:
rather than constructing a stability 
condition in terms of graded matrix factorizations, 
we just extract and formulate the 
symmetric property of a
desired stability condition, and try to 
find a one satisfying such a property. 
We formulate it as 
a \textit{Gepner type}
property, 
which depends on
a choice of a pair of an autoequivalence and a 
complex number, given simply as follows:  
\begin{defi}
A stability condition 
$\sigma$ on a triangulated category $\dD$
is called 
Gepner type with 
respect to $(\Phi, \lambda) \in \Aut(\dD) \times \mathbb{C}$
if the following condition holds: 
\begin{align}\label{intro:eq:gep}
\Phi_{\ast} \sigma= \sigma \cdot (\lambda). 
\end{align}
\end{defi}
Obviously any stability condition is 
Gepner type with respect to $([k], k)$
for $k\in \mathbb{Z}$, where $[k]$ is the 
$k$-times composition of the shift functor 
$[1]$.  
On the other hand, 
there are several interesting 
examples in which the relation (\ref{intro:eq:gep}) holds 
with respect to non-trivial 
pairs $(\Phi, \lambda)$. 
As the name indicates, 
if $X \subset \mathbb{P}^4$ is a 
quintic 3-fold, 
a 
Gepner type 
stability condition on $D^b \Coh(X)$
with 
respect to the pair 
\begin{align}\label{pair:ST}
(\Phi, \lambda)=
\left(\mathrm{ST}_{\oO_X} \circ \otimes \oO_X(1), \frac{2}{5} \right)
\end{align}
conjecturally corresponds to the Gepner point, 
where $\mathrm{ST}_{\oO_X}$ is the Seidel-Thomas
twist~\cite{ST} associated to $\oO_X$. 
The fraction $2/5$ appears since the five times composition 
of the autoequivalence $\Phi$ 
becomes the twice shift functor. 
This fact is best understood in
terms of $\HMF(W)$ via the equivalence
 (\ref{intro:Orlov}), as the 
autoequivalence $\Phi$ corresponds to the grade 
shift functor on the matrix factorization side. 

At this moment, 
it seems to be a difficult problem 
to construct a stability condition 
on a quintic 3-fold corresponding to the 
Gepner point:
we are not even 
able to construct a Bridgeland stability condition on 
a quintic 3-fold near the large volume limit point. 
(cf.~\cite{BMT}.)
However there is a plenty of examples of 
weighted homogeneous polynomials, which are 
more amenable than quintic polynomials, but 
enough interesting to study. 
The goal of this paper is to construct 
Gepner type stability conditions on graded
matrix factorizations in some of 
such interesting cases. 
In the quintic case, 
an attempt to construct a Gepner 
point leads to a conjectural stronger version of 
 Bogomolov-Gieseker inequality
among Chern characters of 
 stable sheaves on quintic 3-folds. 
The detail in this case will be discussed 
in a subsequent paper~\cite{TGep2}.

\subsection{Gepner type stability conditions on 
 graded matrix factorizations}
Let $W$ be a homogeneous element with 
degree $d$ in the weighted polynomial ring
\begin{align*}
W \in A \cneq 
\mathbb{C}[x_1, x_2, \cdots, x_n], \quad \deg x_i = a_i \in \mathbb{Z}
\end{align*}
such that $(W=0) \subset \mathbb{C}^n$ 
has only an isolated singularity at the origin. 
The triangulated category 
of graded matrix factorizations of $W$, denoted by 
$\mathrm{HMF}^{\rm{gr}}(W)$, 
is 
defined to be the homotopy category of the dg category whose 
objects consist of data 
\begin{align*}
P^{0} \stackrel{p^0}{\to} P^1
 \stackrel{p^1}{\to} P^0(d)
\end{align*}
where $P^i$ are 
graded free $A$-modules of finite rank, 
$p^i$ are homomorphisms of graded 
$A$-modules, $P^i \mapsto P^i(1)$ is the shift of the 
grading, satisfying the following 
condition:
\begin{align*}
\quad p^1 \circ p^0= \cdot W, \quad 
p^0(d) \circ p^1= \cdot W. 
\end{align*}
In~\cite{Orsin}, Orlov proved that 
the triangulated category $\mathrm{HMF}^{\rm{gr}}(W)$
is related to 
the derived category of coherent sheaves on 
the stack 
\begin{align}\label{W=0}
X \cneq (W=0) \subset \mathbb{P}(a_1, \cdots, a_n)
\end{align}
depending on the sign of the 
Gorenstein index
\begin{align*}
\varepsilon \cneq \sum_{i=1}^n a_i -d. 
\end{align*}
Let $\tau$ be the autoequivalence of $\HMF(W)$
induced by the grade shift functor 
$P^{\bullet} \mapsto P^{\bullet}(1)$. 
We propose the following conjecture 
on the existence of a Gepner type stability 
condition on $\mathrm{HMF}^{\rm{gr}}(W)$: 
\begin{conj}\label{conj:Gep}
There is a Gepner type stability 
condition 
\begin{align*}
\sigma_G=(Z_G, \{\pP_G(\phi)\}_{\phi \in \mathbb{R}})
\in \Stab(\mathrm{HMF}^{\rm{gr}}(W))
\end{align*}
with respect to $(\tau, 2/d)$, whose 
central charge $Z_G$ is given by 
\begin{align*}
Z_G(P^{\bullet})=\mathrm{str}(e^{2\pi \sqrt{-1}/d} \colon 
P^{\bullet} \to P^{\bullet}). 
\end{align*}
Here the $e^{2\pi \sqrt{-1}/d}$-action on 
$P^{\bullet}$ is induced by the $\mathbb{Z}$-grading 
on each $P^{i}$, and $`\mathrm{str}'$ is the supertrace
which respects the $\mathbb{Z}/2\mathbb{Z}$-grading on 
$P^{\bullet}$. 
\end{conj}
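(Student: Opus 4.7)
The plan is to establish Conjecture~\ref{conj:Gep} in the stated range of cases by transporting the problem across Orlov's equivalence to the geometric side $D^b\Coh(X)$, producing a Gepner type stability condition there, and then translating back. First, I would invoke Orlov's theorem to identify $\HMF(W)$ with either $D^b\Coh(X)$ (when $\varepsilon=0$) or an admissible triangulated subcategory sitting in a semi-orthogonal decomposition of $D^b\Coh(X)$ with exceptional pieces whose number is controlled by $|\varepsilon|$. Under this identification the grade-shift autoequivalence $\tau$ corresponds to an explicit autoequivalence $\Phi$ built from tensoring with $\oO_X(1)$ and, when $\varepsilon\neq 0$, from mutations in the semi-orthogonal decomposition. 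I would then compute $\Phi^d$ on the geometric side and verify that it equals the double shift $[2]$; this is precisely the statement that $\tau$ together with the scalar $2/d$ generates a finite cyclic action of $\mathbb{Z}/d\mathbb{Z}$ on $\Stab(\HMF(W))$.

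Next, I would exploit the hypothesis that $X$ is a complete intersection of hyperplanes in a Calabi-Yau of dimension at most two, so that $\dim X\leq 2$ and Bridgeland stability conditions on $D^b\Coh(X)$ are known to exist: for $\dim X=0$ they are explicit, for an elliptic curve they are due to Bridgeland, and for K3 or abelian surfaces (and finite quotients thereof) they are produced by the standard double tilt. Starting from such a seed $\sigma_0$, I would construct $\sigma_G$ as a fixed point of the $\mathbb{Z}/d\mathbb{Z}$-action on $\Stab(\HMF(W))$ generated by $\tau_{\ast}\circ(\cdot(-2/d))$. Concretely, one first tilts $\sigma_0$ to obtain a heart $\aA\subset \HMF(W)$ preserved, up to the $\mathbb{C}$-shift by $2/d$, by this action, and then defines $Z_G$ by averaging the $d$ transformed central charges $\{e^{2\pi\sqrt{-1}k/d}\,\tau^k_{\ast}Z_0\}_{k=0}^{d-1}$. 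By construction, the pair $(Z_G,\aA)$ satisfies the Gepner identity $\tau_{\ast}\sigma_G=\sigma_G\cdot(2/d)$, and a Chern-character computation on the geometric side identifies $Z_G$ with the supertrace $\mathrm{str}(e^{2\pi\sqrt{-1}/d})$ of the conjecture, since up to scaling this is the unique linear functional on the numerical Grothendieck group on which $\tau_{\ast}$ acts with eigenvalue $e^{2\pi\sqrt{-1}/d}$.

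The main obstacle I expect is producing a heart preserved by the finite group action. On the geometric side neither $\otimes\oO_X(1)$ nor the relevant Seidel-Thomas twists preserve the standard heart $\Coh(X)$, so one must replace it by a carefully chosen tilt that becomes invariant under $\Phi$ up to the compensating $\mathbb{C}$-shift; controlling this tilt and verifying that positivity of $Z_G$, the Harder-Narasimhan property, and the support property all survive the averaging is where the bulk of the work lies. The dimensional hypothesis enters precisely here: for $\dim X\leq 2$, tilted hearts on $D^b\Coh(X)$ are well understood and admit enough flexibility to be forced into $\Phi$-equivariance, whereas for higher-dimensional $X$, such as the quintic 3-fold, even the \emph{unequivariant} existence of a Bridgeland stability condition on $D^b\Coh(X)$ remains conjectural, which is why that case must be deferred to a sequel.
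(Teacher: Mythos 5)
Your high-level plan (Orlov's theorem, a tilted heart, uniqueness of the eigen-central-charge) points in the right general direction, but there are two concrete problems. First, you have the direction of Orlov's semiorthogonal decomposition reversed for the cases actually covered by the theorem. For $\varepsilon\le 0$ it is $D^b\Coh(X)$ that embeds into $\HMF(W)$, with $-\varepsilon$ exceptional objects $\mathbb{C}(-1-\varepsilon),\dots,\mathbb{C}(0)$ living \emph{inside} $\HMF(W)$, i.e. $\HMF(W)=\langle \mathbb{C}(-1-\varepsilon),\dots,\mathbb{C}(0),\Psi D^b\Coh(X)\rangle$; the description of $\HMF(W)$ as an admissible subcategory of $D^b\Coh(X)$ is the Fano case $\varepsilon>0$, which is not at issue here. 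Consequently one cannot simply ``transport the problem to the geometric side and translate back'': a stability condition on $D^b\Coh(X)$ does not induce one on the strictly larger category $\HMF(W)$. The actual construction must glue a heart on $\HMF(W)$ out of $\Psi\Coh(X)$ \emph{together with} the exceptional objects (the extension closure $\aA_W=\langle \mathbb{C}(-1-\varepsilon),\dots,\mathbb{C}(0),\Psi\Coh(X)\rangle_{\rm ex}$), compute $Z_G$ on these generators, and then tilt.

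Second, the step you acknowledge as the main obstacle --- producing a heart preserved by the $\mathbb{Z}/d\mathbb{Z}$-action up to the compensating shift, or equivalently finding a fixed point of $\tau_*\circ(\cdot(-2/d))$ on $\Stab$ --- is precisely where the content of the proof lies, and your proposal supplies no mechanism for it. There is no fixed-point theorem available on the (non-compact) space of stability conditions, and averaging only handles the central charge (where it indeed reproduces $Z_G$ up to scale, by the one-dimensionality of the relevant eigenspace of $\tau_*$ on $\mathrm{HH}_0(W)$), not the slicing. The paper instead verifies the Gepner identity by reducing it, via the tilting lemma comparing two torsion pairs, to checking $\sigma_G$-(semi)stability of the specific objects $\tau\Psi(\oO_x)$, $\mathbb{C}(1),\dots,\mathbb{C}(-\varepsilon)$ (and $\tau^{1-\varepsilon}\Psi(\oO_x)$ when $n=3$); and already the positivity of $Z_G$ on the tilted heart requires case-specific inequalities --- the Lange--Newstead Clifford-type bound for semistable coherent systems when $(n,\varepsilon)=(3,-1)$, and the exclusion of certain spherical classes on the K3 when $(n,\varepsilon)=(4,0)$. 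None of this is ``forced'' by equivariance of a tilt; it is exactly the case-by-case work your outline defers, so as written the proposal does not close the argument.
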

Similarly to the quintic case, the fraction 
$2/d$ appears since the $d$-times 
composition of $\tau$ coincides with $[2]$. 
We propose that, if there is a 
desired stability condition $\sigma_G$
in Conjecture~\ref{conj:Gep},
 then it may be employed as a `natural' stability condition 
for graded matrix factorizations. 
There are at least three reasons for this. 
Firstly  
the Gepner type property with respect to $(\tau, 2/d)$ 
resembles the following property
of the classical Gieseker stability
on coherent sheaves:
for a polarized variety $(X, H)$, 
a coherent sheaf $E$ on $X$ 
is $H$-Gieseker semistable 
if and only if $E \otimes \oO_X(H)$ is 
$H$-Gieseker semistable.
 In this sense, 
the desired stability condition $\sigma_G$ 
seems to be a natural analogue of the Gieseker 
stability on $\Coh(X)$ for 
graded matrix factorizations.
Secondly, the Gepner type
property with respect to $(\tau, 2/d)$ 
turns out to be a very strong constraint
for the stability conditions. 
Indeed, such a property 
characterizes the central charge $Z_G$
uniquely up 
to a scalar multiplication.
(cf.~Subsection~\ref{subsec:unique1}.)
Also in some cases, we see that
$\sigma_G$ is also
unique up to shift (cf.~Subsection~\ref{subsec:unique2})
and one may expect that this holds in general. 
Thirdly, if there is such $\sigma_G$, then 
the $\sigma_G$-semistable 
objects have a nice compatibility with 
the Serre functor on $\HMF(W)$. 
As a result, 
the moduli space of $\sigma_G$-semistable objects 
should have good local properties, e.g. 
smoothness, with a perfect obstruction theory, etc, 
depending on the given data $n, d, \varepsilon$. 
(cf.~Subsection~\ref{subsec:future} (ii).)
Therefore, whatever the way $\sigma_G$ is 
constructed, 
it seems worth studying moduli spaces of 
$\sigma_G$-semistable objects, and the enumerative invariants
defined by them. 

\subsection{Result}
Before stating our result, 
we mention the previous beautiful works by 
Takahashi~\cite{Tak} and 
Kajiura-Saito-Takahashi~\cite{KST1}. 
They study the 
triangulated category $\HMF(W)$ 
in the following cases: 
$n=a_1=1$~\cite{Tak}
and $n=3$, $\varepsilon >0$~\cite{KST1}. 
In these cases, they show that $\HMF(W)$
is equivalent to the 
derived category of representations of 
a quiver of ADE type. 
As a result, $\HMF(W)$
has only a finite number of indecomposable 
objects up to shift, 
which are completely classified. 
Using such a classification, they construct 
a stability condition on $\HMF(W)$ by 
the assignment of
a phase for each indecomposable object.
Their construction satisfies 
our Gepner type property, so
Conjecture~\ref{conj:Gep} is proved in these cases. 
(cf.~Subsection~\ref{subsec:rmk:n=3}.)  

From our motivation, we are rather interested in 
a case that there is an  
infinite number of indecomposable 
objects up to shift, which are hard to classify, and 
form non-trivial moduli spaces. 
For instance if $\varepsilon \le 0$, then 
the stack (\ref{W=0}) is either Calabi-Yau or
general type, and 
it seems hopeless to construct $\sigma_G$
via classification of indecomposable objects. 
Our main result, formulated as follows, 
contains such cases:  
(cf.~Propositions~\ref{prop:30}, \ref{prop:2-1}, 
\ref{prop:40}, \ref{prop:3-1}, \ref{prop:2-2}.)
\begin{thm}\label{thm:main}
Conjecture~\ref{conj:Gep} is true
if $n-4 \le \varepsilon \le 0$
and
the stack $X$
defined by (\ref{W=0})
does not contain stacky points. 
\end{thm}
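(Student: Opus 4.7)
The plan is to prove Conjecture~\ref{conj:Gep} by a case-by-case analysis over the finite set of admissible pairs $(n,\varepsilon)$ with $n-4\le\varepsilon\le 0$, corresponding to the five propositions cited. Under the no-stacky-points hypothesis, $X=(W=0)\subset\mathbb{P}(a_1,\ldots,a_n)$ is a smooth variety of dimension $n-2\le 2$, and Orlov's theorem relates $\HMF(W)$ to $D^b\Coh(X)$. The overall strategy is to transport the construction of a Gepner type stability condition to the geometric side, where classical techniques for curves and Bridgeland's theory for K3 surfaces are available, and then pull back via the equivalence.

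The first step is to identify what the grade-shift autoequivalence $\tau$ becomes under Orlov's equivalence. When $\varepsilon=0$ one has an outright equivalence $\HMF(W)\simeq D^b\Coh(X)$; when $\varepsilon<0$, $\HMF(W)$ is the admissible subcategory of $D^b\Coh(X)$ right orthogonal to the exceptional collection $\langle\oO_X,\oO_X(1),\ldots,\oO_X(-\varepsilon-1)\rangle$. In either case $\tau$ corresponds to an autoequivalence $\Phi$ of $D^b\Coh(X)$ (or of the admissible subcategory) built from $\otimes\,\oO_X(1)$ together with a spherical twist, of the same shape as the quintic example~(\ref{pair:ST}). The Gepner condition~(\ref{intro:eq:gep}) then translates into the requirement that a stability condition on the coherent side be fixed, modulo the $\mathbb{C}$-action, by $\Phi$ with rotation parameter $2/d$.

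For the four low-dimensional cases $(3,0),(3,-1),(2,-1),(2,-2)$, where $X$ is an elliptic curve, a general type curve, or a zero-dimensional scheme, the construction is essentially elementary. In each case the space of stability conditions on the relevant admissible subcategory is classically covered by $\widetilde{\mathrm{GL}}^+_2(\mathbb{R})$ acting on a stability condition inherited from slope stability on $X$. Producing a $\Phi$-symmetric point then reduces to a finite-dimensional linear-algebra problem on the $K$-theoretic lattice, whose solvability comes down to showing that the action of $\Phi$ admits an eigenvalue $e^{2\pi\sqrt{-1}/d}$ of the appropriate phase. This can be verified by direct computation using the explicit description of $\Phi$ and the Chern characters of the exceptional sheaves.

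The main obstacle is the K3 surface case $(4,0)$, where $X$ is a low-degree K3 surface and one invokes Bridgeland's construction of stability conditions~\cite{Brs1}. The goal is to produce a point in the distinguished connected component $\Stab^{\dagger}(X)$ fixed modulo $\mathbb{C}$ by $\Phi$. This splits into (i) an algebraic step of solving the eigenvalue equation $Z\circ\Phi^{\ast}=e^{2\pi\sqrt{-1}/d}\cdot Z$ on the Mukai lattice of $X$, and (ii) a geometric step of verifying that the resulting $Z$ is the central charge of an actual stability condition in $\Stab^{\dagger}(X)$, thereby automatically supplying the heart and support property. Step (ii) is the delicate one: a priori the solution to the eigenvalue equation need not lie in the `good' locus where Bridgeland's explicit construction applies, and one must exploit the specific geometry of the K3 surfaces arising as $(W=0)$. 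In every case, once the geometric stability condition is constructed, transporting back via Orlov gives the desired $\sigma_G$; the coincidence of its central charge with the supertrace formula in Conjecture~\ref{conj:Gep} then follows from uniqueness (cf.~Subsection~\ref{subsec:unique1}) up to a scalar fixed by a normalization.
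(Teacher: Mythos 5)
Your high-level plan (case-by-case over the five pairs $(n,\varepsilon)$, Orlov's theorem to move to the coherent side, Bridgeland's K3 machinery for $(4,0)$) points in the right general direction, but there are three genuine gaps. First, for $\varepsilon<0$ you have the direction of Orlov's embedding backwards: by Theorem~\ref{thm:Orlov}(ii) it is $D^b\Coh(X)$ that embeds into $\HMF(W)$, and $\HMF(W)=\langle \mathbb{C}(-1-\varepsilon),\cdots,\mathbb{C}(0),\Psi D^b\Coh(X)\rangle$ is \emph{strictly larger}, containing extra exceptional objects $\mathbb{C}(j)$. So one cannot simply transport a stability condition from $D^b\Coh(X)$ and pull back; the heart one must work with is the extension closure $\aA_W$ of the $\mathbb{C}(j)$ together with $\Psi\Coh(X)$, which for $(n,\varepsilon)=(3,-1)$ is the abelian category of coherent systems on the curve $X$, and for $n=2$ is a category of quiver representations. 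Second, and relatedly, your claim that the non-K3 cases reduce to "essentially elementary" linear algebra on the $K$-theory lattice fails for $(3,-1)$: verifying that $Z_G$ maps the tilted heart $\aA_G$ into a half-plane requires a nontrivial geometric inequality, namely the Clifford-type bound of Lange--Newstead for semistable coherent systems (this is the content of Lemma~\ref{lem:crucial}, which rules out $\mu$-stable coherent systems with $Z_G$ landing in $\mathbb{R}_{\ge 0}$). No amount of lattice computation substitutes for this.

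Third, even granting the construction of a stability condition whose central charge solves the eigenvalue equation $Z\circ\tau_\ast^{-1}=e^{2\pi\sqrt{-1}/d}Z$, you have not verified the Gepner property of the \emph{slicing}. Lemma~\ref{lem:Hoch} only pins down the central charge up to scalar; it does not imply $\tau_\ast\sigma=\sigma\cdot(2/d)$ as stability conditions. The paper closes this gap by an explicit mechanism (Lemmas~\ref{lem:fur} and~\ref{lem:con:n=3} together with the tilting-uniqueness Lemma~\ref{lem:tilting}): one shows that $\tau(\aA_G)$ and $\pP_G((\theta+2/d,\theta+2/d+1])$ are both tiltings of $\tau(\aA_W)$ compatible with the same central charge, which reduces the symmetry to checking $\sigma_G$-(semi)stability of finitely many specific objects --- $\tau\Psi(\oO_x)$, $\mathbb{C}(1),\ldots,\mathbb{C}(-\varepsilon)$, and for $n=3$ additionally $\tau^{1-\varepsilon}\Psi(\oO_x)$ --- each of which requires a separate argument (e.g.\ Lemma~\ref{lem:Ix:stable} for ideal sheaves on the K3, Lemma~\ref{lem:tau2Psi} for the curve case). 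Your proposal would need to supply an analogous verification; as written, the step from "the central charge is an eigenvector" to "the stability condition is Gepner type" is asserted rather than proved. The K3 portion of your outline is otherwise consistent with the paper's (the delicate point you flag is exactly Lemma~\ref{lem:spherical}, excluding spherical torsion-free sheaves with $Z_G^{\dag}\in\mathbb{R}_{\le 0}$ via the Castelnuovo bound and the double-cover geometry).
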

The assumption that $X$ does not contain 
stacky points means that $X$ is indeed 
a smooth projective variety. 
The inequality $n-4 \le \varepsilon \le 0$
implies that $X$ is 
contained in a
Calabi-Yau manifold 
of dimension less than or equal to two
as a codimension $-\varepsilon$
complete intersection of hyperplanes. 
These conditions are restrictive, and 
the possible types $(a_1, \cdots, a_n, d, W, X)$ 
are completely classified. 
The classification 
with $n-\varepsilon=3, 4$ and $W$ of Fermat type 
is given in Table~1 below.

\begin{table}\label{table}
\caption{Possible types in Theorem~\ref{thm:main}}
\begin{center}

\begin{tabular}{rrrrrr} \toprule
$n$ & $\varepsilon$ & $(a_1, \cdots, a_n)$ & $d$ & $W$ & $X$ \\
\midrule
$4$ & $0$ & (1, 1, 1, 1) & 4 & $x_1^4 + x_2^4 + x_3^4 + x_4^4$ & K3 surface \\
$4$ & $0$ & (3, 1, 1, 1) & 6 & $x_1^2 + x_2^6 + x_3^6 + x_4^6$ & K3 surface \\
$3$ & $-1$ & (1, 1, 1) & 4 & $x_1^4 + x_2^4 + x_3^4$ & genus 3 curve \\
$3$ & $-1$ & (3, 1, 1) & 6 & $x_1^2 + x_2^6 + x_3^6$ & genus 2 curve \\
$2$ & $-2$ & (1, 1) & 4 & $x_1^4 + x_2^4$ & 4 points \\
$2$ & $-2$ & (3, 1) & 6 & $x_1^2 + x_2^6$ &  2 points \\
$3$ & $0$ & (1, 1, 1) & 3 & $x_1^3 + x_2^3 + x_3^3$ & elliptic curve \\
$3$ & $0$ & (2, 1, 1) & 4 & $x_1^2 + x_2^4 + x_3^4$ & elliptic curve \\
$3$ & $0$ & (3, 2, 1) & 6 & $x_1^2 + x_2^3 + x_3^6$ & elliptic curve \\
$2$ & $-1$ & (1, 1) & 3 & $x_1^3 + x_2^3$ & 3 points \\
$2$ & $-1$ & (2, 1) & 4 & $x_1^2 + x_2^4$ & 2 points \\
$2$ & $-1$ & (3, 2) & 6 & $x_1^2 + x_2^3$ & 1 point \\
\bottomrule
\end{tabular}

\end{center}

\end{table}

Our strategy proving Theorem~\ref{thm:main}
is as follows: 
by Orlov's theorem~\cite{Orsin}, 
the condition $\varepsilon \le 0$
allows us to
describe $\HMF(W)$ as a semiorthogonal 
decomposition
\begin{align}\notag
\HMF(W)=\langle \mathbb{C}(-1-\varepsilon), \cdots, 
\mathbb{C}(0), \Psi D^b \Coh(X) \rangle
\end{align}
where $\Psi$ is a fully faithful functor 
from $D^b \Coh(X)$ to $\HMF(W)$, and 
$\mathbb{C}(i)$
 is a certain exceptional object. 
We show that there is the 
heart of a bounded t-structure
on $\HMF(W)$, given by the 
extension closure
\begin{align}\notag
\aA_W \cneq 
\langle \mathbb{C}(-1-\varepsilon), \cdots, 
\mathbb{C}(0), \Psi \Coh(X) \rangle_{\rm ex}. 
\end{align} 
We describe the central charge $Z_G$ in terms of 
the generators of the heart $\aA_W$, 
and take a suitable tilting $\aA_G$
of $\aA_W$ using the description of $Z_G$. 
We then show that the pair
 $(Z_G, \aA_G)$
determines a 
stability condition $\sigma_G$
on $\HMF(W)$. 
It remains to show that $\sigma_G$ has a 
desired Gepner type property, and we reduce 
it to showing $\sigma_G$-stability of 
certain objects in $\HMF(W)$. 

We show that the above construction works 
in the situation of Theorem~\ref{thm:main}. 
However unfortunately, 
several case by case arguments are involved in 
the proof, and that prevents us to construct 
$\sigma_G$ beyond the cases in Theorem~\ref{thm:main}. 
For instance, they contain 
the proofs of the inequalities of numerical classes of 
certain objects in $\aA_W$, which are required 
in proving the axiom of Bridgeland stability for 
$(Z_G, \aA_G)$. 
They also contain checking the 
$\sigma_G$-stability of some objects in $\HMF(W)$, 
which we use to prove the Gepner type 
property of $\sigma_G$. 
The above arguments are in particular hard 
if the image of $Z_G$ is not discrete. 
In the situation of Theorem~\ref{thm:main}, 
fortunately, the image of $Z_G$
is always discrete and that makes our situation
 technically 
rather amenable. 
For instance 
the image of $Z_G$ is not discrete if $d=5$, and 
the case of $(a_1, a_2, d)=(1, 1, 5)$ is 
not included in Table~1. 
Such a case will be discussed in a subsequent paper~\cite{TGep2}, 
since it involves a subtle argument due
to the above non-discrete issue.  

Among the list in Table~1, 
the case of $(n, \varepsilon)=(3, -1)$ seems to 
be the most interesting case, since 
we observe a new phenomenon relating 
graded matrix factorizations and coherent 
systems on the smooth projective 
curve $X$. 
Recall that a coherent system on $X$ consists of data
\begin{align*}
V \otimes \oO_X \to F 
\end{align*}
where $F$ is a coherent sheaf on $X$
and $V$ is a finite dimensional $\mathbb{C}$-vector 
space.   
We show that the heart $\aA_W$ 
is equivalent to the 
abelian category of coherent systems 
on $X$. 
In the construction of $\sigma_G$, 
we see  
that a Clifford type 
bound on the dimension of $V$
is involved. 
Such a Clifford type bound 
for semistable coherent systems
is established by Lange-Newstead~\cite{LaNe}, 
and we apply their work. 
If $X$
has a higher dimension,  
the conjectural construction of $\sigma_G$ would 
predict a higher dimensional 
analogue of Clifford type bound for 
semistable coherent systems. 
In the case of a quintic surface, 
the detail will
be discussed in~\cite{TGep2}.

\subsection{Future directions of the research}\label{subsec:future}
We believe that the work of this paper 
leads to several interesting directions of
the future research. We discuss some of them. 

{\bf (i) Descriptions of $\sigma_G$-semistable objects in terms of 
graded matrix factorizations:}
As we discussed in the previous subsection, 
our construction of $\sigma_G$ relies on
Orlov's theorem, and it is not intrinsic in
terms of graded matrix factorizations. 
It would be an interesting problem to see
what kinds of 
graded matrix factorizations appear as 
$\sigma_G$-semistable objects, and compare 
them with the $R$-stability discussed in
string theory~\cite{Wal}. 
It may involve deeper understanding of
Orlov equivalence, and we are not even able 
to give a mathematically rigorous
 candidate of
the description of $\sigma_G$ purely 
in terms of 
graded matrix factorizations.

{\bf (ii) Constructing moduli spaces of $\sigma_G$-semistable
graded matrix factorizations:}
It would be an important problem to construct 
moduli spaces of $\sigma_G$-semistable 
graded matrix factorizations, and study their properties. 
We expect that, using the argument of~\cite{Tst3},
there exist Artin stacks of finite type 
\begin{align*}
\mM_G^{\rm{s}}(\gamma) \subset \mM_G^{\rm{ss}}(\gamma), \quad 
\gamma \in \mathrm{HH}_{0}(W)
\end{align*}
which parameterize $\sigma_G$-(semi)stable
graded 
matrix factorizations $P^{\bullet}$ with 
$\ch(P^{\bullet})=\gamma$. 
Here $\mathrm{HH}_{0}(W)$ is the 
zero-th
Hochschild homology group of $\HMF(W)$, 
studied in~\cite{Dyck},~\cite{PoVa2},~\cite{PoVa}.
On the other hand, 
we have the following vanishing for 
$[P^{\bullet}] \in {\mM}_G^{\rm{ss}}(\gamma)$:
 \begin{align*}
\Hom^i(P^{\bullet}, P^{\bullet})=0, \quad
i> n-2 -\frac{2\varepsilon}{d}. 
\end{align*}
The above vanishing, 
which is proved in Lemma~\ref{lem:important}, is one of the 
important properties of Gepner type stability conditions. 
Since the space $\Hom^i(P^{\bullet}, P^{\bullet})$
is responsible for the local deformation theory 
of $P^{\bullet}$, 
the moduli space $\mM_G^{\rm{ss}}(\gamma)$
would have good local properties depending on 
$n, d, \varepsilon$, e.g. 
it is smooth if $4>n-2\varepsilon/d$, 
has a perfect obstruction theory if 
$5>n-2\varepsilon/d$. 
Such properties would be important in 
constructing counting invariants of 
graded matrix factorizations, even in a
non-CY3 situation.

{\bf (iii) DT type invariants counting 
$\sigma_G$-semistable graded matrix factorizations:}
Suppose that $\HMF(W)$ is a
CY3 category. 
If Conjecture~\ref{conj:Gep}
is true, then (ii) would imply the existence
of the invariants
\begin{align*}
\DT_{G}(\gamma) \in \mathbb{Q}, \quad 
\gamma \in \mathrm{HH}_{0}(W)
\end{align*}
which count $\sigma_G$-semistable 
matrix factorizations $P^{\bullet}$
with $\ch(P^{\bullet})=\gamma$. 
The above invariants may give DT analogue 
of the Fan-Jarvis-Ruan-Witten
theory~\cite{FJRW1}
in Gromov-Witten theory.
Also,
the Gepner type property of $\sigma_G$
should yield 
an important identity
\begin{align*}
\DT_{G}(\gamma) = \DT_{G}(\tau_{\ast}\gamma).
\end{align*}
The above identity, combined with 
Orlov's equivalence
and wall-crossing argument~\cite{JS}, \cite{K-S},
may imply non-trivial constraints among the 
original sheaf counting 
DT invariants on $X$ induced by
the autoequivalence
$\mathrm{ST}_{\oO_X} \circ \otimes \oO_X(1)$. 
If the above story works, then it realizes an analogue of 
Calabi-Yau/Landau-Ginzburg
 correspondence in FJRW theory~\cite{ChRu}.

\subsection{Plan of the paper}
In Section~\ref{sec:Gep}, we introduce the notion 
of Gepner type stability conditions
and propose a conjecture on the existence of 
Gepner type stability conditions on 
the triangulated category $\HMF(W)$. 
We also discuss some examples of our 
conjecture, and their uniqueness. 
In Section~\ref{sec:T-st}, we construct 
the heart $\aA_W$ of a bounded t-structure
on 
$\HMF(W)$, and
describe it in terms of quiver 
representations or coherent systems. 
In Section~\ref{sec:const}, we explain how to 
compute the central charge in terms of 
generators of $\aA_W$, and 
propose a general recipe on a construction of 
a Gepner type stability condition. 
In Section~\ref{sec:proof}, we prove Theorem~\ref{thm:main}
by applying the strategy in Section~\ref{sec:const}.

\subsection{Acknoledgement}
The author would like to thank 
Kentaro Hori, Kyoji Saito and Atsushi 
Takahashi for valuable discussions. 
This work is supported by World Premier 
International Research Center Initiative
(WPI initiative), MEXT, Japan. This work is also supported by Grant-in Aid
for Scientific Research grant (22684002)
from the Ministry of Education, Culture,
Sports, Science and Technology, Japan.

\section{Gepner type stability conditions}\label{sec:Gep}
In this section, we recall the definition of 
Bridgeland stability conditions on triangulated categories, 
group actions, and define the notion of Gepner 
type stability conditions. We then recall Orlov's 
triangulated categories of graded matrix factorizations, and 
discuss Gepner type stability conditions on them. 

\subsection{Definitions}
Let $\dD$ be a triangulated category and $K(\dD)$ its 
Grothendieck group. 
We first recall Bridgeland's definition of 
stability conditions on it. 
\begin{defi}\label{defi:stab} \emph{(\cite{Brs1})}
A stability condition $\sigma$
on $\dD$ consists of a pair $(Z, 
\{\pP(\phi)\}_{\phi \in \mathbb{R}})$
\begin{align}\label{pair2}
Z \colon K(\dD) \to \mathbb{C}, \quad 
\pP(\phi) \subset \dD
\end{align}
where $Z$
is a group homomorphism 
(called central charge) 
and $\pP(\phi)$ is
a full subcategory (called $\sigma$-semistable objects 
with phase $\phi$)
satisfying the following conditions: 
\begin{itemize}
\item For $0\neq E \in \pP(\phi)$, 
we have $Z(E) \in \mathbb{R}_{>0} \exp(\sqrt{-1} \pi \phi)$. 
\item For all $\phi \in \mathbb{R}$, we have 
$\pP(\phi+1)=\pP(\phi)[1]$. 
\item For $\phi_1>\phi_2$ and $E_i \in \pP(\phi_i)$, we have 
$\Hom(E_1, E_2)=0$. 

\item For each $0\neq E \in \dD$, there is
 a collection of distinguished triangles 
\begin{align*}
E_{i-1} \to E_i \to F_i \to E_{i-1}[1], \quad 
E_N=E, \ E_0=0
\end{align*}
with $F_i \in \pP(\phi_i)$ and  
$\phi_1> \phi_2> \cdots > \phi_N$. 
\end{itemize}
\end{defi}
The full subcategory $\pP(\phi) \subset \dD$ is 
shown to be an abelian category, and its 
simple objects are called $\sigma$-stable. 
In~\cite{Brs1}, Bridgeland 
shows that there is a natural topology on 
the  
 set of `good' stability conditions 
$\Stab(\dD)$, and 
its each connected component 
has structure of a complex manifold. 

\begin{rmk}
The above `good' conditions are called 
`numerical property' and `support property'
in literatures. 
Although the above properties are important in 
considering the space $\Stab(\dD)$, we 
omit the detail since we focus on 
the construction of one specific stability condition. 
\end{rmk}

Let $\Aut(\dD)$
be the group of autoequivalences on $\dD$. 
There is a left $\Aut(\dD)$-action on 
the set of stability conditions on $\dD$. 
For $\Phi \in \Aut(\dD)$, it
acts on the pair (\ref{pair2}) as 
follows:  
\begin{align*}
\Phi_{\ast} (Z, \{\pP(\phi)\}_{\phi \in \mathbb{R}})
=(Z \circ \Phi^{-1}, \{ \Phi(\pP(\phi)) \}_{\phi \in \mathbb{R}}). 
\end{align*}
There is also a right $\mathbb{C}$-action on 
the set of stability conditions on $\dD$. 
For $\lambda \in \mathbb{C}$, its acts on the pair (\ref{pair2})
as follows: 
\begin{align*}
 (Z, \{\pP(\phi)\}_{\phi \in \mathbb{R}}) \cdot (\lambda)
= (e^{-\sqrt{-1}\pi \lambda} Z, \{ \pP(\phi + \Ree \lambda) \}_{\phi \in \mathbb{R}}). 
\end{align*}
The notion of Gepner type stability 
conditions is defined in terms of the above group actions. 
\begin{defi}
A stability condition 
$\sigma$ on a triangulated category $\dD$
is called 
Gepner type with 
respect to $(\Phi, \lambda) \in \Aut(\dD) \times \mathbb{C}$
if the following condition holds: 
\begin{align}\label{eq:gep}
\Phi_{\ast} \sigma= \sigma \cdot (\lambda). 
\end{align}
\end{defi}

Here we give some trivial examples: 
\begin{exam}
(i) For $k\in \mathbb{Z}$, any stability condition 
$\sigma$ on a triangulated category $\dD$ is Gepner type 
with respect to $([k], k)$. 

(ii) Let $\aA$ be an abelian category 
and $Z$ a group homomorphism 
\begin{align*}
Z \colon K(\aA) \to \mathbb{C}
\end{align*} 
such that $Z(\aA \setminus \{0\})$
is contained in $\mathbb{R}_{>0} e^{\sqrt{-1} \pi \theta}$
for some $\theta \in \mathbb{R}$. 
We set $\pP(\phi)$ for $\phi \in [\theta, \theta+1)$ to be
\begin{align*}
\pP(\theta)=\aA, \quad \pP(\phi)=\{0\} \mbox{ if }\phi \in (\theta, \theta+1).
\end{align*}
Other $\pP(\phi)$ are determined by the rule 
$\pP(\phi+1)=\pP(\phi)[1]$. 
Let $\Phi$ be an 
autoequivalence of $D^b (\aA)$
which preserves $\aA$. 
Then 
the pair 
$(Z, \{\pP(\phi)\}_{\phi \in \mathbb{R}})$ is a Gepner type stability 
condition on $D^b (\aA)$ with respect to 
$(\Phi, 0)$. 
\end{exam}

\begin{rmk}
A Gepner type stability conditions as in Example~(ii) 
appears at a point in the space of stability conditions on
$X=\omega_{\mathbb{P}^2}$
studied by~\cite{BaMa}. 
Such a point corresponds to the orbifold point in the 
stringy K$\ddot{\rm{a}}$hler moduli space of
$X$, and $\aA$ 
is the abelian category of representations of a McKay quiver. 
The autoequivalence $\Phi$ is
given by 
\begin{align*}
\Phi=
\ST_{\oO_{\mathbb{P}^2}} \circ \otimes \oO_X(1)
\end{align*} 
which induces the automorphism of the McKay quiver. 
\end{rmk}

As we will see, there will be more interesting 
examples of Gepner type stability conditions on 
triangulated categories of graded matrix factorization. 

\subsection{Triangulated categories of graded matrix factorizations}
Here we recall Orlov's construction of triangulated
categories of graded matrix factorizations~\cite{Orsin}. 
Let $A$
be a
weighted graded polynomial ring
\begin{align}\label{def:A}
A \cneq \mathbb{C}[x_1, x_2, \cdots, x_n], \quad \deg x_i= a_i
\end{align}
and $W \in A$ a homogeneous element of degree $d$. 
Throughout of this paper, we always assume that
$a_1 \ge a_2 \ge \cdots \ge a_n$, and  
$(W=0) \subset \mathbb{C}^n$ has an isolated 
singularity at the origin. 
For a graded $A$-module $P$, 
we denote by $P_i$ its degree $i$-part, 
and $P(k)$ the graded $A$-module 
whose grade is shifted by $k$, i.e. 
$P(k)_{i}= P_{i+k}$. 
\begin{defi}
A graded matrix factorization of $W$ is data
\begin{align}\label{MF}
P^0 \stackrel{p^0}{\to} P^1 \stackrel{p^1}{\to}
P^0(d)
\end{align}
where $P^i$ are graded free $A$-modules of finite rank, $p^i$ 
are homomorphisms of graded $A$-modules, satisfying the 
following conditions:
\begin{align*}
\quad p^1 \circ p^0= \cdot W, \quad 
p^0(d) \circ p^1= \cdot W.
\end{align*}
\end{defi}
The category $\HMF(W)$ 
is defined to be
the homotopy category of graded matrix factorizations of $W$. 
Its objects consist of 
data (\ref{MF}), and 
the set of morphisms are given by the commutative 
diagrams of graded $A$-modules
\begin{align*}
\xymatrix{
P^0 \ar[r]^{p^0}\ar[d]^{f^0} & P^1 \ar[r]^{p^1}\ar[d]^{f^1}
 & P^0(d) \ar[d]^{f^0(d)} \\
Q^0 \ar[r]^{q^0} & Q^1 \ar[r]^{q^1} & Q^0(d). 
}
\end{align*}
modulo null-homotopic morphisms: the above diagram is 
null-homotopic if there are homomorphisms of 
graded $A$-modules 
\begin{align*}
h^0 \colon P^0 \to Q^1(-d), \quad 
h^1 \colon P^1 \to Q^0
\end{align*}
satisfying 
\begin{align*}
f^0= q^1(-d) \circ h^0 + h^1 \circ p^0, \quad 
f^1 = q^0 \circ h^1 + h^0(d) \circ p^1. 
\end{align*}
The category $\HMF(W)$ has a structure of a triangulated category. 
The shift functor $[1]$ sends data (\ref{MF})
to
 \begin{align*}
P^1 \stackrel{-p^1}{\to} P^0(d) \stackrel{-p^0(d)}{\to} 
P^1(d)
\end{align*}
and the distinguished triangles are defined via the
usual mapping cone constructions. 
The grade shift functor
$P^{\bullet} \mapsto P^{\bullet}(1)$ 
induces the 
autoequivalence $\tau$ of $\HMF(W)$, 
which satisfies the 
 following identity: 
\begin{align}\label{taud}
\tau^{\times d} =[2]. 
\end{align}
There is also a Serre functor $\sS_W$
on $\HMF(W)$, described by $\tau$, $n$
and the \textit{Gorenstein index} $\varepsilon$. 
The number $\varepsilon$ is defined as
\begin{align*}
\varepsilon \cneq \sum_{i=1}^{n} a_i -d \in \mathbb{Z}.
\end{align*}  
The Serre functor $\sS_W$ on $\HMF(W)$ is 
given by (for instance see~\cite[Theorem~3.8]{KST2})
\begin{align}\label{Serre}
\sS_W = \tau^{-\varepsilon}[n-2]. 
\end{align}
The above description of the Serre functor will 
be used later in this paper.

\subsection{Relation to the triangulated categories of 
singularities}\label{subsec:relation}
The triangulated category  
$\HMF(W)$ is known to be 
equivalent to the derived category of 
singularities of the hypersurface 
singularity $(W=0) \subset \mathbb{C}^n$. 
This equivalence is often useful in 
doing some computations of matrix factorizations. 

Let $R$ be the graded ring 
$R=A/(W)$
and 
$\mathrm{gr} \mathchar`- R$
the abelian category of finitely generated 
graded $R$-modules. 
We denote by 
$D^b (\mathrm{grproj} \mathchar`- R)$ 
the subcategory of 
$D^b (\mathrm{gr} \mathchar`- R)$
consisting of perfect complexes of $R$-modules. 
The triangulated category of singularities 
is defined to be the 
quotient category 
\begin{align*}
D_{\rm{sg}}^{\rm{gr}}(R) \cneq D^b (\grr R)
/ D^b(\grrproj R). 
\end{align*}
The following result is proved in~\cite{Orsin}: 
\begin{thm}\emph{(\cite[Theorem~3.10]{Orsin})}
There is an equivalence of triangulated categories
\begin{align}\label{Cok}
\mathrm{Cok} \colon 
\HMF(W) \stackrel{\sim}{\to} D_{\rm{sg}}^{\rm{gr}}(R)
\end{align}
sending a matrix factorization (\ref{MF}) 
to the cokernel of $p^0$. 
\end{thm}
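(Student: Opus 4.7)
The plan is to construct a quasi-inverse to $\mathrm{Cok}$ using graded free resolutions and Eisenbud's periodicity theorem for matrix factorizations over hypersurface singularities. First I would verify that $\mathrm{Cok}$ descends to a functor $\HMF(W) \to D_{\rm sg}^{\rm gr}(R)$. The identities $p^1 \circ p^0 = W \cdot \mathrm{id}_{P^0}$ and $p^0(d)\circ p^1 = W \cdot \mathrm{id}_{P^1}$ imply that $W$ annihilates $\mathrm{Cok}(p^0)$, so the latter carries a natural graded $R$-module structure. A null-homotopy $(h^0, h^1)$ for a morphism $(f^0, f^1)$ of matrix factorizations produces a factorization of the induced map $\mathrm{Cok}(p^0) \to \mathrm{Cok}(q^0)$ through the graded free $A$-module $Q^1$, hence through the graded projective $R$-module $R\otimes_A Q^1$; such a morphism vanishes in $D_{\rm sg}^{\rm gr}(R)$. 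Compatibility with the triangulated structure reduces to a direct check on mapping cones.

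The bridge between the two sides is the observation that each matrix factorization $P^{\bullet}$ provides, after tensoring with $R$, an infinite $2$-periodic graded projective $R$-resolution
\begin{align*}
\cdots \xrightarrow{\bar p^0(-d)} R\otimes P^1(-d) \xrightarrow{\bar p^1(-d)} R\otimes P^0 \xrightarrow{\bar p^0} R\otimes P^1 \twoheadrightarrow \mathrm{Cok}(p^0)
\end{align*}
of $\mathrm{Cok}(p^0)$, whose exactness follows from the matrix factorization equations together with the fact that $W$ is a non-zerodivisor on each free module $P^i$. This periodic resolution machine is what allows one to translate between matrix factorizations and graded $R$-modules.

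For essential surjectivity I would represent an arbitrary object of $D_{\rm sg}^{\rm gr}(R)$ by a graded $R$-module $M$, and take a graded free $A$-resolution of $M$. Since $W \cdot M = 0$ and $W$ is a non-zerodivisor on $A$, Eisenbud's theorem forces the resolution to become $2$-periodic after finitely many steps, and the periodic tail is precisely a graded matrix factorization $P^{\bullet}$ whose cokernel is a high syzygy of $M$, hence isomorphic to $M$ in $D_{\rm sg}^{\rm gr}(R)$. For fully faithfulness, morphisms in $D_{\rm sg}^{\rm gr}(R)$ can be computed as stable homomorphisms modulo maps through graded projectives, and the $2$-periodicity of the resolutions identifies this quotient with morphisms in $\HMF(W)$ modulo null-homotopy.

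The main obstacle is establishing that the assignment $M \mapsto P^{\bullet}$ is functorial: one must verify that the periodic tail produced by Eisenbud's procedure is well-defined in $\HMF(W)$ up to canonical isomorphism despite the non-uniqueness of chain-level lifts, and that a morphism of graded $R$-modules lifts to a morphism of matrix factorizations uniquely up to null-homotopy. Standard comparison arguments for projective resolutions handle these statements, and once assembled the resulting functor $D_{\rm sg}^{\rm gr}(R) \to \HMF(W)$ is checked to be a quasi-inverse of $\mathrm{Cok}$.
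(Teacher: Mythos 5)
This statement is quoted verbatim from Orlov (\cite[Theorem~3.10]{Orsin}); the paper gives no proof of it, so there is nothing internal to compare your argument against. Your outline is essentially the standard Eisenbud--Buchweitz--Orlov proof: $\mathrm{Cok}$ is well defined because null-homotopic morphisms induce maps on cokernels factoring through graded free $R$-modules, the $2$-periodic complex $R\otimes P^{\bullet}$ resolves $\Cok(p^0)$, essential surjectivity comes from replacing an object of $D^{\rm gr}_{\rm sg}(R)$ by a (shift of a) module and passing to a high syzygy, and fully faithfulness is the identification of $\Hom$ in the singularity category with stable $\Hom$ of maximal Cohen--Macaulay modules. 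Two points deserve tightening. First, your periodicity step is misstated: a graded free resolution of $M$ over the polynomial ring $A$ is \emph{finite} and never becomes periodic; Eisenbud's theorem concerns the minimal free resolution over the hypersurface $R=A/(W)$, which becomes $2$-periodic after at most $\mathrm{gl.dim}\,A$ steps, the periodic tail being a graded matrix factorization whose cokernel is the corresponding syzygy (equivalently, a sufficiently high $R$-syzygy is maximal Cohen--Macaulay and graded MCM modules over $R$ are exactly cokernels of graded matrix factorizations). Second, the assertion that morphisms in $D^{\rm gr}_{\rm sg}(R)$ between MCM modules are computed by stable homomorphisms is the genuinely nontrivial input (Buchweitz's theorem, or \cite[Proposition~1.21]{Orsin} in the form Orlov uses); it is not a formality and should be cited or proved, since it is precisely what makes the comparison of the two quotient constructions work. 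With those two repairs your route coincides with the proof in the literature.
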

The cokernel of $p^0$ is easily checked to be annihilated by $W$, so 
it is $R$-module. 
Obviously the equivalence (\ref{Cok}) commutes with 
grade shift functors on both sides, so we use the 
same notation $\tau$ for the grade shift functor 
on $D_{\rm{sg}}^{\rm{gr}}(R)$. 

Let 
\begin{align}\label{mideal}
m=(x_1, \cdots, x_n) \subset R
\end{align}
be the maximal 
ideal and set the graded $R$-module 
$\mathbb{C}(j)$ to be $(R/m)(j)$.
The graded $R$-module $\mathbb{C}(j)$ determines 
an object in $D_{\rm{sg}}^{\rm{gr}}(R)$.  
The matrix factorization 
given by
\begin{align}\notag
\Cok^{-1}(\mathbb{C}(j)) \in \HMF(W)
\end{align}
plays an important role. 
By~\cite[Corollary~2.7]{Dyck}, 
it is given by the matrix factorization
of the form
\begin{align}\label{cok--}
\bigoplus_{k\ge 0}
\bigwedge^{2k+1}m/m^2 \otimes A(d&k+j) 
\stackrel{p^0}{\to}
\bigoplus_{k\ge 0}
 \bigwedge^{2k}m/m^2
 \otimes A(dk+j) \\
\notag
& \stackrel{p^1}{\to}
 \bigoplus_{k\ge 0}
\bigwedge^{2k+1}m/m^2 \otimes A(dk + k +j).
\end{align}
We omit the descriptions of the above morphisms 
$p^0$, $p^1$, 
as we will not use them. 
By an abuse of notation, we abbreviate
$\Cok^{-1}$ and denote by 
$\mathbb{C}(j)$ the matrix factorization 
given by (\ref{cok--}).

\subsection{Conjecture on the existence of Gepner type stability condition}
We are interested in constructing a natural 
stability condition on $\HMF(W)$.
We require that such a stability condition 
is preserved under the grade shift functor $\tau$ in some sense. 
This might be an analogues property for the classical 
$H$-Gieseker stability on a 
polarized variety $(X, H)$:
$H$-Gieseker semistable 
sheaves are preserved under $\otimes \oO_X(H)$.  
Because of the identity (\ref{taud}), the 
expected stability condition is not exactly preserved
by $\tau$ but the phases of semistable objects 
should be shifted by $2/d$. 
This is nothing but the Gepner type 
property with respect to $(\tau, 2/d)$. 

There is a natural construction of a central charge $Z_G$
which might give a desired stability condition, and 
already appeared in some articles~\cite{KST1},~\cite{Tak},~\cite{Wal}.   
For a graded matrix factorization (\ref{MF}), its image under $Z_G$ is 
symbolically defined by
\begin{align}\label{ZG}
\mathrm{str}(e^{2\pi \sqrt{-1}/d} \colon P^{\bullet} \to P^{\bullet}). 
\end{align}
Here $e^{2\pi \sqrt{-1}/d}$-action is given by 
the $\mathbb{C}^{\ast}$-action on $P^{\bullet}$
induced by the $\mathbb{Z}$-grading on each $P^i$,
and the $`\mathrm{str}'$ means the
super trace of $e^{2\pi \sqrt{-1}/d}$-action 
which respects the $\mathbb{Z}/2\mathbb{Z}$-grading 
of $P^{\bullet}=P^0 \oplus P^1$. 
More precisely, since $P^i$ are free $A$-modules of finite rank, 
they are written as 
\begin{align*}
P^i \cong 
\bigoplus_{j=1}^{m} A(n_{i, j}), 
\quad n_{i, j} \in \mathbb{Z}.
\end{align*}
Then (\ref{ZG}) is written as 
\begin{align*}
\sum_{j=1}^{m} \left( e^{2 n_{0, j} \pi \sqrt{-1}/d} - e^{2 n_{1, j} 
\pi \sqrt{-1} /d} \right). 
\end{align*}
\begin{exam}\label{exam:C0}
Let $\mathbb{C}(0)$ be the
matrix factorization given by (\ref{cok--}) for $j=0$. 
By the definition of $Z_G$, we have 
\begin{align*}
Z_G(\mathbb{C}(0))= &\sum_{i=1}^{n} e^{-2a_i \pi \sqrt{-1}/d} + \sum_{i_1< i_2 < i_3}e^{-2(a_{i_1}+a_{i_2}+a_{i_3}) \pi \sqrt{-1}/d} + \cdots \\
&
- 1- \sum_{i_1 < i_2} e^{-2(a_{i_1}+a_{i_2}) \pi \sqrt{-1}/d} - 
\sum_{i_1<i_2<i_3<i_4} \cdots  \\
&= -\prod_{j=1}^{n} \left( 1-e^{-2a_j \pi \sqrt{-1}/d} \right) \neq 0.  
\end{align*} 
\end{exam}

It is easy to check that (\ref{ZG}) descends to a 
group homomorphism 
\begin{align*}
Z_G \colon K(\HMF(W)) \to \mathbb{C}. 
\end{align*}
Indeed, $Z_G$ is one of the components of the 
Chern character map of $\HMF(W)$
constructed in~\cite{PoVa}. 
(cf.~Remark~\ref{rmk:Chern}.)
As we stated in the introduction, we propose the following 
conjecture: 
\begin{conj}\label{conj2:Gep}
There is a Gepner type stability 
condition 
\begin{align*}
\sigma_G=(Z_G, \{\pP_G(\phi)\}_{\phi \in \mathbb{R}})
\in \Stab(\mathrm{HMF}^{\rm{gr}}(W))
\end{align*}
with respect to $(\tau, 2/d)$, where 
$Z_G$ is given by (\ref{ZG}). 
\end{conj}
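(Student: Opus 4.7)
The plan is to construct $\sigma_G$ by choosing a heart $\aA_G$ adapted to the central charge $Z_G$ of (\ref{ZG}) and then verify the Gepner symmetry on a finite collection of generators. I would work throughout under the hypotheses of Theorem~\ref{thm:main}: $n-4 \le \varepsilon \le 0$ and the stack $X$ from (\ref{W=0}) is a smooth projective variety. Since $\varepsilon \le 0$, Orlov's theorem provides a semiorthogonal decomposition
\begin{align*}
\HMF(W) = \langle \mathbb{C}(-1-\varepsilon), \ldots, \mathbb{C}(0), \Psi D^b\Coh(X) \rangle
\end{align*}
with $\Psi$ a fully faithful functor and each $\mathbb{C}(j)$ the exceptional matrix factorization from (\ref{cok--}). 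My first step would be to take the extension closure
\begin{align*}
\aA_W \cneq \langle \mathbb{C}(-1-\varepsilon), \ldots, \mathbb{C}(0), \Psi\Coh(X) \rangle_{\rm ex}
\end{align*}
and verify, via Ext-vanishing between the exceptional blocks $\mathbb{C}(j)$ and the image $\Psi\Coh(X)$, that $\aA_W$ is the heart of a bounded t-structure on $\HMF(W)$. Because $\dim X \le 2$ under our hypotheses, $\Coh(X)$ is already the heart of $D^b\Coh(X)$, so only the semiorthogonal gluing needs verification, and (\ref{cok--}) makes the necessary Ext computations essentially finite-dimensional.

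Next I would evaluate $Z_G$ on the generators of $\aA_W$. On $\mathbb{C}(j)$, formula (\ref{ZG}) produces, up to the overall root of unity $e^{-2\pi j\sqrt{-1}/d}$, the product computed in Example~\ref{exam:C0}, in particular a nonzero complex number whose phase I can read off explicitly. On $\Psi F$ with $F \in \Coh(X)$, the central charge should read as a polynomial in $\ch(F)$ with coefficients involving the $a_i$ and $d$-th roots of unity, by comparison with the Chern character construction of \cite{PoVa}. Using these explicit formulas I would choose a torsion pair on $\aA_W$ and let $\aA_G$ be the resulting tilt, arranged so that every nonzero object has $Z_G$ in the strict upper half plane or on the negative real axis. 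This is the step where a Bogomolov--Gieseker type inequality must be verified: for $(n,\varepsilon)=(3,-1)$, where $\aA_W$ is equivalent to coherent systems on a smooth curve, the Clifford-type bound of Lange--Newstead supplies what is needed; for $\dim X \le 2$ the remaining inequalities are elementary numerical checks done case by case from the rows of Table~\ref{table}.

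Having set up $(Z_G, \aA_G)$, I would invoke the standard criterion that existence of Harder--Narasimhan filtrations, hence a genuine Bridgeland stability condition, follows once $\aA_G$ is noetherian and the image of $Z_G$ is discrete in $\mathbb{C}$. Discreteness holds in every entry of Table~\ref{table}; this is precisely why the case $d=5$, where the image of $Z_G$ is dense, is excluded and deferred to \cite{TGep2}. The final and most delicate step is the Gepner identity $\tau_{\ast} \sigma_G = \sigma_G \cdot (2/d)$. The central-charge half $Z_G \circ \tau^{-1} = e^{-2\pi\sqrt{-1}/d} Z_G$ is immediate from (\ref{ZG}), since $\tau$ shifts the grading by one. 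The harder half $\tau(\pP_G(\phi)) = \pP_G(\phi + 2/d)$ I would reduce to showing $\sigma_G$-stability of each generator $\mathbb{C}(j)$ and of a spanning collection inside $\Psi\Coh(X)$, together with matching $\tau\mathbb{C}(j)$ to the predicted shifted phase. In each case of Table~\ref{table} this becomes a finite stability check; the combinatorial verification of these stability statements, and the associated numerical inequalities on the classes in $\aA_W$, is the main obstacle and the source of the case-by-case flavor of the argument.
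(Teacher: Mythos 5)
Your proposal follows essentially the same route as the paper: Orlov's semiorthogonal decomposition giving the heart $\aA_W$, explicit computation of $Z_G$ on its generators, a tilt $\aA_G$ whose positivity rests on case-by-case numerical inequalities (with the Lange--Newstead Clifford bound handling the coherent-system case $(n,\varepsilon)=(3,-1)$), discreteness of $\mathrm{Im}\,Z_G$ for the Harder--Narasimhan property, and a reduction of the Gepner identity to $\sigma_G$-stability of finitely many objects such as $\mathbb{C}(j)$ and $\tau\Psi(\oO_x)$. This matches the strategy of Sections~\ref{sec:T-st}--\ref{sec:proof}, so there is nothing substantive to add beyond noting that the hard content lives in the stability checks you correctly identify as the main obstacle.
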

Note that the central charge $Z_G$ 
satisfies the condition (\ref{eq:gep})
with respect to $(\tau, 2/d)$
by the construction. The problem 
is to construct full subcategories 
$\pP_G(\phi) \subset \HMF(W)$ 
satisfying the desired property. 

\begin{rmk}\label{discrete}
The image of $Z_G$
is contained in $\mathbb{Z}[e^{2\pi \sqrt{-1}/d}]$, 
which may or may not be discrete depending
on $d$. For instance, it is discrete if $d=3, 4, 6$, 
but not so if $d=5$. 
\end{rmk}

\subsection{The case of $n=1$}\label{subsec:n=1}
As a toy example of Conjecture~\ref{conj2:Gep}, 
let us consider the case of $n=1$, i.e. 
\begin{align*}
W=x^d \in \mathbb{C}[x], \quad \deg x=a. 
\end{align*} 
The case of $a=1$ is worked out in~\cite{Tak}. 
In this case, the triangulated category 
$\HMF(W)$ is equivalent to the derived category of 
the path algebra of the Dynkin quiver of type $A_{d-1}$. 
As a result, we have a complete classification of 
indecomposable objects in $\HMF(W)$, given by
\begin{align*}
Q_{j, l} \cneq 
\left\{
A(j-l) \stackrel{x^{l}}{\to}
A(j) \stackrel{x^{d-l}}{\to} A(j-l+d) \right\} 
\end{align*}
for $1\le l \le d-1$ and $j\in \mathbb{Z}$. 
Note that $Q_{j, 1}$ coincides with $\mathbb{C}(j)$
given by (\ref{cok--}). 
We set $\phi(Q_{j, l}) \in \mathbb{Q}$ to
be
\begin{align*}
\phi(Q_{j, l}) \cneq -\frac{1}{2} - \frac{l}{d} + \frac{2j}{d}. 
\end{align*}
For $\phi \in \mathbb{R}$, 
we define $\pP_G(\phi) \subset \HMF(W)$
to be the subcategory consisting of 
direct sums of objects $Q_{j, l}$ with
$\phi(Q_{j, l})=\phi$. 
Then the pair 
$\sigma_G=(Z_{G}, \{\pP_G(\phi)\}_{\phi \in \mathbb{R}})$
is shown to be a desired stability condition in 
Conjecture~\ref{conj2:Gep} by~\cite{Tak}. 
The case of $a>1$ 
follows from the following lemma: 
\begin{lem}\label{coprime}
Let $A$ be the graded ring (\ref{def:A}), 
$a \in \mathbb{Z}_{\ge 1}$ the 
greatest common divisor of $(a_1, \cdots, a_n)$, 
and set $a_i'=a_i/a$. 
Let $A'$ be the graded ring defined by 
\begin{align*}
A'=\mathbb{C}[x_1', x_2', \cdots, x_n'], \quad 
\deg x_i' =a_i'. 
\end{align*}
For a homogeneous element $W \in A$
of degree $d$, we 
regard it as a homogeneous element $W' \in A'$
of degree $d'=d/a$
by the identification $x_i=x_i'$. 
Then if Conjecture~\ref{conj2:Gep} holds for $W'\in A'$, then 
it also holds for $W\in A$. 
\end{lem}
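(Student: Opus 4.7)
The plan is to exploit the fact that when every variable has degree divisible by $a$, every homogeneous component of $A$ lies in degrees divisible by $a$, so the category $\HMF(W)$ breaks up into $a$ mutually orthogonal pieces which are cyclically permuted by $\tau$. Concretely, for each $k \in \mathbb{Z}/a\mathbb{Z}$ let $\HMF(W)^{(k)} \subset \HMF(W)$ denote the full triangulated subcategory whose objects $P^0 \to P^1 \to P^0(d)$ admit a presentation with every summand $A(n_{i,j})$ satisfying $n_{i,j} \equiv k \pmod{a}$. Since $W$ has degree $d = ad'$, the differentials respect this residue, and any morphism of graded $A$-modules between $A(m)$ and $A(n)$ vanishes unless $m \equiv n \pmod a$. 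Thus the decomposition
\begin{align*}
\HMF(W) = \bigoplus_{k=0}^{a-1} \HMF(W)^{(k)}
\end{align*}
holds as triangulated categories, $\tau$ sends $\HMF(W)^{(k)}$ to $\HMF(W)^{(k+1)}$, and the rescaling $A(na) \leftrightarrow A'(n)$ yields an equivalence $\iota \colon \HMF(W)^{(0)} \stackrel{\sim}{\to} \HMF(W')$ under which $\tau^a$ corresponds to $\tau'$.

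Next I would transfer stability. Assume $\sigma_{G'} = (Z_{G'}, \{\pP_{G'}(\phi)\})$ is a Gepner type stability condition on $\HMF(W')$ for $(\tau', 2/d')$. For $E \in \HMF(W)^{(k)}$, define $E$ to lie in $\pP_G(\phi)$ if and only if $\iota(\tau^{-k} E) \in \pP_{G'}(\phi - 2k/d)$. Equivalently, $\pP_G(\phi)$ is the full subcategory generated inside $\HMF(W)$ by the images of $\pP_{G'}(\phi - 2k/d)$ under $\tau^k \circ \iota^{-1}$ as $k$ runs over $0, \dots, a-1$. Because distinct $k$'s populate distinct orthogonal summands $\HMF(W)^{(k)}$, the semistable subcategories and Harder--Narasimhan filtrations of $\sigma_G$ are simply the direct sums of those in each component, so Bridgeland's axioms are inherited coordinate-wise from $\sigma_{G'}$.

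It then remains to match the central charge and the Gepner condition. A direct computation on $\HMF(W)^{(k)}$ with summands $A(n_{i,j})$ where $n_{i,j} = am_{i,j} + k$ gives
\begin{align*}
Z_G(P^{\bullet}) = e^{2k\pi\sqrt{-1}/d}\sum_j \bigl(e^{2m_{0,j}\pi\sqrt{-1}/d'} - e^{2m_{1,j}\pi\sqrt{-1}/d'}\bigr) = e^{2k\pi\sqrt{-1}/d} \, Z_{G'}(\iota(\tau^{-k}P^{\bullet})),
\end{align*}
which is exactly the phase shift by $2k/d$ demanded by the definition of $\pP_G(\phi)$. Finally, since $\tau$ carries $\HMF(W)^{(k)}$ to $\HMF(W)^{(k+1)}$ and, via the construction, shifts phases by $2/d$, the identity $\tau_*\sigma_G = \sigma_G \cdot (2/d)$ holds on each summand, cycling $\HMF(W)^{(a-1)}$ back to $\HMF(W)^{(0)}$ via the relation $\tau^a|_{\HMF(W)^{(0)}} \simeq \iota^{-1}\tau'\iota$ and the Gepner condition of $\sigma_{G'}$ with respect to $(\tau', 2/d')$.

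The only step requiring genuine care is the orthogonal decomposition in Step~1: one must verify that the homotopy relations in $\HMF(W)$ do not mix residue classes, which follows since the homotopies $h^i$ are themselves morphisms of graded $A$-modules and $A_{\not\equiv 0 \pmod a} = 0$. Everything else is formal transport of structure through the equivalence $\iota$ and the cyclic action of $\tau$.
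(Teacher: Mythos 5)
Your proposal is correct and follows essentially the same route as the paper: the paper introduces the fully faithful functor $i\colon \HMF(W')\to\HMF(W)$ (your $\iota^{-1}$), records the relations $\tau^a\circ i=i\circ\tau'$ and $Z_G\circ i=Z_{G}'$, derives the orthogonal decomposition $\HMF(W)=\langle i\HMF(W'),\tau i\HMF(W'),\dots,\tau^{a-1}i\HMF(W')\rangle$ from the fact that nonzero maps $A(m)\to A(n)$ force $a\mid m-n$, and then defines $\pP_G(\phi)$ as direct sums of objects $\tau^j(Q_j)$ with $Q_j\in i\pP_G'(\phi-2j/d)$ --- exactly your construction. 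The only cosmetic difference is that you index the orthogonal summands by residue classes of the grading rather than by powers of $\tau$ applied to the image of $i$.
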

\begin{proof}
There is an obvious fully-faithful 
functor as triangulated categories
\begin{align*}
i \colon \HMF(W') \to \HMF(W)
\end{align*}
by multiplying $a$
to each grading of $A'$-modules
which appear
in the LHS. 
If $\tau'$ is the grade shift on $\HMF(W')$ and 
$Z_{G}'$ the central charge (\ref{ZG}) for $\HMF(W')$, 
we have 
\begin{align}\label{compare}
\tau^{a} \circ i= i \circ \tau', \quad 
Z_G \circ i= Z_{G}'. 
\end{align}
Also note that if there is a non-zero 
morphism of graded $A$-modules 
$A(m) \to A(n)$, 
then $m-n$ is divisible by $a$. 
This implies that
$\HMF(W)$ has the following orthogonal
decomposition:
\begin{align}\label{decom}
\langle i \HMF(W'), 
\tau i \HMF(W'), \cdots, \tau^{a-1} i \HMF(W')
 \rangle. 
\end{align}
Suppose that 
$(Z_G', \{\pP_G'(\phi)\}_{\phi \in \mathbb{R}})$
is a Gepner type stability condition on $\HMF(W')$ 
with respect to $(\tau', 2/d')$. 
We set $\pP_G(\phi)$ as follows: 
\begin{align*}
\pP_G(\phi)=
\left\{ \bigoplus_{j=0}^{a-1}
\tau^j(Q_j) :
Q_j \in i \pP_G' \left( \phi -\frac{2j}{d} \right)
\right\}.  
\end{align*}
By (\ref{compare}) and (\ref{decom}), it is easy 
to check that $(Z_G, \{ \pP_G(\phi)\}_{\phi \in \mathbb{R}})$
is a Gepner type stability condition on $\HMF(W)$
with respect to $(\tau, 2/d)$. 
\end{proof}

Combined with
the argument for $n=a_1=1$, we obtain the following corollary: 
\begin{cor}\label{cor:n=1}
Conjecture~\ref{conj2:Gep} is true if $n=1$. 
\end{cor}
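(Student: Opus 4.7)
The plan is to combine the two ingredients already assembled in Subsection~\ref{subsec:n=1}: the explicit Takahashi construction \cite{Tak} for the reduced case $a=1$, and the reduction principle provided by Lemma~\ref{coprime}. There is essentially nothing new to prove beyond verifying that these two pieces fit together in the $n=1$ setting.

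First, I would handle the base case $n=a_1=1$, where $W=x^{d}$ with $\deg x =1$. Here the explicit classification of indecomposable objects $Q_{j,l}$ and the prescription
\begin{align*}
\pP_G(\phi)=\left\{ \bigoplus Q_{j_s, l_s} : \phi(Q_{j_s,l_s})=\phi \right\}, \qquad
\phi(Q_{j,l})=-\tfrac{1}{2}-\tfrac{l}{d}+\tfrac{2j}{d},
\end{align*}
together with the central charge $Z_G$ defined by (\ref{ZG}), furnish a Gepner type stability condition with respect to $(\tau,2/d)$. This is already written out in the excerpt as the result from \cite{Tak}, and one only needs to observe that the claimed phase function is compatible with $Z_G(Q_{j,l})$ (which is a routine check: $Z_G(Q_{j,l})=e^{2\pi\sqrt{-1} j/d}-e^{2\pi\sqrt{-1}(j-l)/d}$ lies on the ray of argument $\pi\phi(Q_{j,l})$) and that the grade shift $\tau$ sends $Q_{j,l}\mapsto Q_{j+1,l}$, increasing the phase by $2/d$ as required.

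For the general case $n=1$ with arbitrary weight $\deg x=a$, I would apply Lemma~\ref{coprime} with $a=\mathrm{GCD}(a_1)=a_1$. The reduced datum has $a_1'=1$ and $d'=d/a$, so $W'=x^{d'}\in A'$ falls into the base case just discussed. Lemma~\ref{coprime} then directly transports the stability condition on $\HMF(W')$ to a Gepner type stability condition on $\HMF(W)$, using the orthogonal decomposition (\ref{decom}) of $\HMF(W)$ into $a$ copies of $\HMF(W')$ twisted by successive grade shifts.

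There is no real obstacle here: the corollary is genuinely a direct consequence of the two preceding results. The only mild point worth double-checking is that in the base case, the objects $Q_{j,l}$ and the central charge $Z_G$ indeed produce a Bridgeland stability condition satisfying the Harder–Narasimhan property (this is immediate since there are only finitely many indecomposables up to shift, so every object admits a finite decomposition into direct summands of the $Q_{j,l}$) and that the Gepner property $\tau_\ast \sigma_G=\sigma_G\cdot(2/d)$ reduces to the identity $\phi(\tau Q_{j,l})=\phi(Q_{j,l})+2/d$ verified above.
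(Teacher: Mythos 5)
Your proposal is correct and follows the paper's own route exactly: Takahashi's explicit construction settles the base case $n=a_1=1$, and Lemma~\ref{coprime} transports it to arbitrary weight $a=a_1$ via the orthogonal decomposition (\ref{decom}), which is precisely how the paper deduces the corollary. One small slip in your sanity check: with the convention of (\ref{MF}) one has $P^0=A(j-l)$ and $P^1=A(j)$, so $Z_G(Q_{j,l})=e^{2\pi\sqrt{-1}(j-l)/d}-e^{2\pi\sqrt{-1}j/d}$, the negative of the expression you wrote, and it is this value (not its negative) that lies on the ray $\mathbb{R}_{>0}e^{\sqrt{-1}\pi\phi(Q_{j,l})}$.
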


\subsection{Remarks for the
 case of $n=2$}\label{subsec:rmk2}
We discuss Conjecture~\ref{conj2:Gep} 
in some more cases with small $n$. 
By Corollary~\ref{cor:n=1},
the next interesting case may be $n=2$.
In this case, the problem 
is trivial when $\varepsilon >0$. 
Indeed it is easy to check that 
\begin{align*}
\HMF(W)=\{0\}, \quad n=2, \ \varepsilon > 0
\end{align*}  
by using the equivalence (\ref{Cok}).
On the other hand, 
the triangulated category $\HMF(W)$ is non-trivial 
when $\varepsilon =0$. 
In this case, by applying the
coordinate change if necessary, 
we may assume that 
\begin{align*}
W=x_1 x_2 \in \mathbb{C}[x_1, x_2]
\end{align*} 
with $a_1$ and $a_2$ coprime by Lemma~\ref{coprime}. 
Similarly to the case of $n=1$, it turns out that there is 
only 
a finite number of indecomposable objects up to shift
in $\HMF(W)$. 
They consist of the objects
\begin{align*}
\mathbb{C}(j)[k], \quad 0\le j \le d-1, \quad
k\in \mathbb{Z}
\end{align*}
where $\mathbb{C}(j)$ is given by 
(\ref{cok--}). 
Furthermore each indecomposable objects are mutually 
orthogonal. The above fact can be easily checked, 
for instance using 
Orlov's theorem~\cite{Orsin}, 
as given in Example~\ref{exam:n=2}
below.  
A desired stability condition $\sigma_G$
in Conjecture~\ref{conj2:Gep} is constructed as follows: 
we first choose $\phi_0 \in \mathbb{R}$ so that 
$Z_G(\mathbb{C}(0)) \in \mathbb{R}_{>0} e^{2\pi i \phi_0}$
and set 
\begin{align*}
\phi(\mathbb{C}(j)[k])= \phi_0 + k + \frac{2j}{a_1+a_2}. 
\end{align*}
We define $\pP_G(\phi)$ to be the subcategory consisting of 
direct sums of indecomposable objects with $\phi(\ast)=\phi$. 
Then by the above argument, 
$\sigma=(Z_G, \{\pP_G(\phi)\}_{\phi \in \mathbb{R}})$
gives a desired stability condition. 
As a summary, we have the following: 
\begin{prop}\label{thm:n2e}
Conjecture~\ref{conj2:Gep} is true if $n=2$ and $\varepsilon \ge 0$. 
\end{prop}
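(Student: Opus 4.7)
The case $\varepsilon > 0$ is immediate, since the excerpt asserts $\HMF(W) = \{0\}$ in this range (readily verified via the equivalence (\ref{Cok})): the zero category carries a vacuous Gepner type stability condition. The nontrivial case is $\varepsilon = 0$, i.e.\ $d = a_1 + a_2$, on which the rest of the plan focuses. By Lemma~\ref{coprime} I may reduce to $\gcd(a_1, a_2) = 1$, and since $W$ is a homogeneous polynomial of weighted degree $a_1 + a_2$ in $\mathbb{C}[x_1, x_2]$ defining an isolated singularity at the origin, a weighted coordinate change will allow me to assume $W = x_1 x_2$ (in each admissible weighted case, the extra monomials of degree $d$ can be absorbed into a redefinition of $x_1$ or $x_2$).

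With this reduction, the strategy has three parts: (i) classify the indecomposable objects of $\HMF(W)$ up to shift; (ii) show they are pairwise orthogonal; (iii) assign phases compatible with $Z_G$ and verify the Gepner type property. For (i) and (ii), I would apply Orlov's theorem: the stack $X = (x_1 x_2 = 0) \subset \mathbb{P}(a_1, a_2)$ consists of two reduced stacky points with isotropy groups $\mu_{a_1}$ and $\mu_{a_2}$, so $D^b \Coh(X)$ splits as a direct sum of the representation categories of these cyclic groups. The indecomposables are then the simple characters, pairwise orthogonal, totalling $a_1 + a_2 = d$ up to shift. Tracing these objects through Orlov's equivalence and matching with the matrix factorization description (\ref{cok--}) identifies the indecomposables of $\HMF(W)$ with the objects $\mathbb{C}(j)[k]$ for $0 \le j \le d-1$ and $k \in \mathbb{Z}$, pairwise orthogonal.

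For (iii), Example~\ref{exam:C0} yields $Z_G(\mathbb{C}(0)) \ne 0$, so one may choose $\phi_0 \in \mathbb{R}$ with $Z_G(\mathbb{C}(0)) \in \mathbb{R}_{>0} e^{\sqrt{-1}\pi \phi_0}$. By construction $Z_G \circ \tau = e^{-2\pi \sqrt{-1}/d} Z_G$, and $\mathbb{C}(j) = \tau^j \mathbb{C}(0)$ in $\HMF(W)$, so $Z_G(\mathbb{C}(j)[k])$ lies on the ray of argument $\pi(\phi_0 + k + 2j/d)$; this makes the assignment $\phi(\mathbb{C}(j)[k]) = \phi_0 + k + 2j/d$ compatible with $Z_G$. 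Defining $\pP_G(\phi)$ as the additive closure of the indecomposables whose phase equals $\phi$, the axioms of Definition~\ref{defi:stab} are straightforward: every object is a direct sum of indecomposables so Harder-Narasimhan filtrations exist trivially by grouping summands, the $\Hom$-vanishing between distinct phases follows from pairwise orthogonality, and the relation $\pP_G(\phi+1) = \pP_G(\phi)[1]$ is built into the phase formula. The Gepner type identity $\tau_* \sigma_G = \sigma_G \cdot (2/d)$ then holds because $\tau$ shifts the phase of each indecomposable by exactly $2/d$.

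The principal obstacle is step (i): confirming that $\HMF(W)$ has only finitely many indecomposables up to shift and no objects beyond the $\mathbb{C}(j)[k]$. This is essentially a finite representation type statement for the graded nodal singularity $\mathbb{C}[x_1, x_2]/(x_1 x_2)$, which becomes transparent after Orlov's reduction to two stacky points; once this classification is in hand, the remaining verifications are entirely formal.
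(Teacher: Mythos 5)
Your proposal is correct and follows essentially the same route as the paper: dispose of $\varepsilon>0$ via $\HMF(W)=\{0\}$, reduce to $W=x_1x_2$ with coprime weights by Lemma~\ref{coprime}, classify the indecomposables as the mutually orthogonal $\mathbb{C}(j)[k]$ using Orlov's theorem (the two stacky points $[\mathrm{pt}/\mathbb{Z}_{a_1}]\coprod[\mathrm{pt}/\mathbb{Z}_{a_2}]$, exactly as in Example~\ref{exam:n=2}), and assign the phases $\phi_0+k+2j/d$. The only slip is the sign in $Z_G\circ\tau=e^{-2\pi\sqrt{-1}/d}Z_G$ (it should be $e^{+2\pi\sqrt{-1}/d}$, as one sees from (\ref{cok--}) or (\ref{Zdag:com})), but your stated phase assignment is consistent with the correct sign, so nothing downstream is affected.
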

The situation drastically changes 
when $n=2$ and $\varepsilon <0$. 
For instance, let us consider the case
\begin{align*}
W=x_1^d + x_2^d \in \mathbb{C}[x_1, x_2], \quad \deg x_i =1, \ d\ge 3.  
\end{align*}
Even in such a simple case, Conjecture~\ref{conj2:Gep}
seems to be not obvious, and it requires 
a deep understanding of the category $\HMF(W)$. 
The above case is treated in this paper when $d\le 4$. 
The $d=5$ case will be studied in ~\cite{TGep2}. 

\subsection{Remarks for the case of $n=3$}\label{subsec:rmk:n=3}
Suppose that $n=3$ and $\varepsilon >0$, the 
case studied by Kajiura-Saito-Takahashi~\cite{KST1}. 
In this case, $W$ is classified into the 
following ADE types~\cite{Saito}: 
\begin{align*}
W(x_1, x_2, x_3)=
\left\{ \begin{array}{ll}
x_1 x_2 + 
x_3^{l+1} & A_l \ (l \ge 1) \\
x_1^2 +x_2^2 x_3 + x_3^{l-1} & D_l \ (l\ge 4) \\
x_1^2 + x_2^3 + x_3^4 & E_6 \\
x_1^2 + x_2^3 + x_2 x_3^3 & E_7 \\
x_1^2 + x_2^3 + x_3^5 & E_8. 
\end{array}  \right. 
\end{align*}
Furthermore, the triangulated category 
$\HMF(W)$ is equivalent to 
the derived category of 
quiver representations of a Dynkin quiver of 
the corresponding ADE type. 
As a result, the category $\HMF(W)$ 
is shown to have only a finite number of indecomposable 
objects up to shift, which are completely 
classified. Similar to the case of $n=1$ 
(which is also interpreted as 
an $A_l$-case in the above
$n=3$, $\varepsilon >0$ list by 
Kn$\ddot{\rm{o}}$rrer periodicity~\cite{Knor})
they assign phases to classified indecomposable 
objects, and prove the following: 
\begin{thm}\emph{(\cite[Theorem~4.2]{KST1})}\label{thm:KST}
Conjecture~\ref{conj2:Gep} is true 
if $n=3$ and $\varepsilon >0$. 
\end{thm}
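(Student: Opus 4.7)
The plan is to exploit the ADE classification of $\HMF(W)$ available in this regime. The first step is to invoke the triangle equivalence $\HMF(W) \simeq D^b(\mathrm{mod}\,kQ)$ for the Dynkin quiver $Q$ associated with $W$. By Krull--Schmidt together with Gabriel's theorem, this category has only finitely many indecomposables up to shift, parameterized by the positive roots of the corresponding ADE root system, and every object is a finite direct sum of such indecomposables and their shifts. One also needs an explicit description of how the grade shift $\tau$ acts on the set of indecomposables modulo $[1]$; this comes from the interpretation of $\tau$ on $D^b(\mathrm{mod}\,kQ)$ together with the relation $\tau^d=[2]$.

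The second step is to define the phase function. For each indecomposable $E$ the complex number $Z_G(E)$ is nonzero (as in Example~\ref{exam:C0}), so one may choose $\phi(E) \in \mathbb{R}$ with $Z_G(E) \in \mathbb{R}_{>0}\, e^{\sqrt{-1}\pi \phi(E)}$, subject to the constraints $\phi(E[1]) = \phi(E)+1$ and $\phi(\tau E) = \phi(E) + 2/d$. These constraints are compatible precisely because $\tau^d = [2]$: one picks a representative of each $\langle \tau, [1]\rangle$-orbit on the set of indecomposables, fixes a coherent choice of $\phi$ on each representative, and then extends by the two rules. Set $\pP_G(\phi) \subset \HMF(W)$ to be the subcategory of direct sums of indecomposables with assigned phase $\phi$, and $\sigma_G = (Z_G, \{\pP_G(\phi)\}_{\phi \in \mathbb{R}})$.

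The third step is to check the axioms of Definition~\ref{defi:stab}. The phase positivity condition and the shift compatibility $\pP_G(\phi+1) = \pP_G(\phi)[1]$ hold by construction. The Harder--Narasimhan property is also immediate: given $E \in \HMF(W)$, take its Krull--Schmidt decomposition into indecomposables, group summands of equal phase, and arrange the groups in order of decreasing phase; the resulting split filtration gives the HN triangles. The Gepner type condition $\tau_{\ast}\sigma_G = \sigma_G \cdot (2/d)$ is forced by the defining rule $\phi(\tau E) = \phi(E) + 2/d$.

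The main obstacle I anticipate is the Hom-vanishing axiom: if $E, F$ are indecomposables with $\phi(E) > \phi(F)$, one must show $\Hom(E, F) = 0$. The approach is to compute the argument of $Z_G$ on each positive-root indecomposable, a finite combinatorial task once an ADE type is fixed, and then compare with the Hom and Ext structure of $D^b(\mathrm{mod}\,kQ)$ (encoded in the Auslander--Reiten quiver and the Coxeter action), verifying case by case that Hom-nonvanishing forces the expected inequality between phases. For types $A_l, D_l, E_6, E_7, E_8$ this reduces to an explicit finite verification; the details constitute the bulk of the argument in \cite{KST1}.
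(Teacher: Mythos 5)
Your proposal follows essentially the same route as the paper, which itself only sketches the argument and defers the details to Kajiura--Saito--Takahashi: use the ADE classification and the equivalence with $D^b(\mathrm{mod}\,kQ)$, assign phases to the finitely many indecomposables compatibly with $\tau$ and $[1]$, and reduce the remaining axioms to a finite case-by-case check. One small caveat: the nonvanishing $Z_G(E)\neq 0$ for \emph{every} indecomposable $E$ is not covered by Example~\ref{exam:C0} (which only treats $\mathbb{C}(0)$) and must be folded into the same finite verification.
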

Conjecture~\ref{conj2:Gep} in the 
case of $n=3$ and $\varepsilon \le 0$ 
is not obvious, and a part of this case 
is treated later in this paper.

\subsection{Uniqueness of the central charge}\label{subsec:unique1}
It is a natural question whether 
the Gepner type property uniquely 
characterize $\sigma_G$ or not
in some sense. 
As for the central charge, this is true: 
$Z_G$ is characterized by the Gepner type 
property with respect to $(\tau, 2/d)$
up to a scalar multiplication.
Indeed we are only interested in 
central charges which factors 
through the Chern character map
\begin{align*}
\ch \colon \HMF(W) \to \mathrm{HH}_{0}(W). 
\end{align*}
The RHS is the Hochschild homology group of $\HMF(W)$, 
or more precisely of its dg enhancement. 
A general theory on Hochschild homology 
groups and Chern character maps on $\HMF(W)$ 
is available in~\cite{Dyck},~\cite{PoVa2},~\cite{PoVa}. 

Because $K(\HMF(W))$ is not finitely 
generated in general, it would be more natural 
to define the set of central charges 
on $\HMF(W)$
as the dual space $\mathrm{HH}_0(W)^{\vee}$, 
rather than the original 
one in Definition~\ref{defi:stab}. 
On the other hand, the 
autoequivalence $\tau$ on $\HMF(W)$ defines the
 linear isomorphism 
\begin{align*}
\tau_{\ast} \colon \mathrm{HH}_{\ast}(W) \stackrel{\cong}{\to}
\mathrm{HH}_{\ast}(W). 
\end{align*}
The above 
isomorphism induces the isomorphism 
$\tau_{\ast}^{\vee}$
on the space of central 
charges $\mathrm{HH}_0(W)^{\vee}$. 
The Gepner type property requires the central charge, regarded as an element in $\mathrm{HH}_{0}(W)^{\vee}$, 
to be an eigenvector with respect to $\tau_{\ast}^{\vee}$
with eigenvalue $e^{2\pi \sqrt{-1}/d}$. 
The lemma below shows that such an eigenspace is one dimensional. 
\begin{lem}\label{lem:Hoch}
The eigenspace of $\tau_{\ast}$-action on 
$\mathrm{HH}_{\ast}(W)$
with eigenvalue $e^{\pm 2\pi \sqrt{-1}/d}$
is one dimensional,
and contained in $\mathrm{HH}_0(W)$.  
\end{lem}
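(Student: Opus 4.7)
The plan is to invoke the explicit description of the Hochschild homology of the dg enhancement of $\HMF(W)$ developed by Dyckerhoff and Polishchuk--Vaintrob, in which $\mathrm{HH}_*(W)$ is presented as a kind of orbifold Jacobian ring with a natural decomposition indexed by $\mu_d$. Realizing $\HMF(W)$ as the category of $\mathbb{C}^*$-equivariant matrix factorizations of $W$ on $\mathbb{A}^n$, one obtains a canonical direct sum decomposition
$$
\mathrm{HH}_{*}(W) \;=\; \bigoplus_{\zeta \in \mu_{d}} \mathrm{HH}_{*}(W)_{\zeta},
$$
where $\mu_d \subset \mathbb{C}^{*}$ is the group of $d$-th roots of unity and $\mathrm{HH}_*(W)_\zeta$ is, by construction of this decomposition, precisely the $\zeta$-eigenspace of the grade-shift action $\tau_*$. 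The lemma is thereby reduced to identifying the two sectors labeled by the primitive eigenvalues $\zeta = e^{\pm 2\pi \sqrt{-1}/d}$.

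By the Polishchuk--Vaintrob formula, the sector $\mathrm{HH}_*(W)_\zeta$ is computed by a Koszul-type complex supported on the fixed locus $(\mathbb{A}^n)^\zeta$ of the automorphism $x_i \mapsto \zeta^{a_i} x_i$, with potential $W|_{(\mathbb{A}^n)^\zeta}$. A coordinate $x_i$ lies in this fixed locus exactly when $d \mid a_i$. The isolated singularity hypothesis on $(W=0)$ at the origin lets us reduce to the case $0 < a_i < d$ for every $i$: indeed, if $a_i = d$, then the equation $\partial W/\partial x_i(0) \neq 0$ (required by isolated singularity) forces a linear coefficient of $x_i$ in $W$, and a Kn\"orrer-type elimination removes the variable $x_i$ without changing either side of the statement. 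Consequently, for $\zeta = e^{\pm 2\pi \sqrt{-1}/d}$ we have $(\mathbb{A}^n)^\zeta = \{0\}$, the restricted potential $W|_{\{0\}}$ vanishes, and $\mathrm{HH}_*(W)_\zeta \cong \mathbb{C}$ placed in Hochschild degree zero. This yields both the one-dimensionality and the containment in $\mathrm{HH}_0(W)$.

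The principal obstacle in writing out this argument carefully is the invocation of the Polishchuk--Vaintrob machinery in the precise form adapted to graded matrix factorizations, together with the compatibility of the $\tau_*$-action on $\mathrm{HH}_*$ with the $\mu_d$-indexing of twisted sectors; once this formalism is in place, the remaining step is the elementary fixed-locus calculation above. A secondary technical point is the reduction to the range $0 < a_i < d$, which changes nothing essential but should be recorded so that the fixed locus argument applies without exception.
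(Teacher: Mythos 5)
Your argument is essentially identical to the paper's proof: both invoke the Polishchuk--Vaintrob description of $\mathrm{HH}_{\ast}(W)$ as a direct sum of sectors indexed by $\mu_d$, identify this with the character decomposition for the $\tau_{\ast}$-action via $\tau_{\ast}^{\times d}=\mathrm{id}$, and observe that the fixed locus for a primitive eigenvalue $e^{\pm 2\pi\sqrt{-1}/d}$ is $\{0\}$, so the corresponding sector is $\mathbb{C}$ sitting in $\mathrm{HH}_0(W)$. The only difference is that you make explicit why $d\nmid a_i$ (so that the fixed locus really is the origin), a point the paper leaves implicit under its isolated-singularity hypothesis.
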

\begin{proof}
We consider the $G \cneq \mu_{d}$-action on 
$\mathbb{C}^n$, given by 
\begin{align*}
e^{2\pi \sqrt{-1}/d} \cdot (x_1, \cdots, x_n)=
(e^{2a_1\pi \sqrt{-1}/d} x_1, \cdots, e^{2a_n \pi \sqrt{-1}/d}x_n). 
\end{align*} 
By~\cite[Theorem~2.6.1 (i)]{PoVa2}, 
$\mathrm{HH}_{\ast}(W)$ is given by 
\begin{align}\label{H:decom}
\mathrm{HH}_{\ast}(W) \cong \bigoplus_{\gamma \in G}
\mathrm{H}(\mathbb{C}^n_{\gamma}, W_{\gamma})^{G}. 
\end{align}
Here $H(\mathbb{C}^n, W)$ is defined by 
\begin{align*}
H(\mathbb{C}^n, W) \cneq \left(
\mathbb{C}[x_1, \cdots, x_n]/( \partial_{x_1}W, \cdots, \partial_{x_n}W) \right) dx_1 \wedge \cdots \wedge dx_n
\end{align*}
and the space $\mathrm{H}(\mathbb{C}^n_{\gamma}, W_{\gamma})$ is given
by applying the above construction for 
\begin{align*}
\mathbb{C}_{\gamma}^n \cneq \{ x \in \mathbb{C}^n \colon 
\gamma(x)=x \}, \quad 
W_{\gamma} \cneq W|_{\mathbb{C}^n_{\gamma}}. 
\end{align*} 
Since $\tau^{\times d}=[2]$ on $\HMF(W)$, we have 
$\tau_{\ast}^{\times d}=\id$ on $\mathrm{HH}_{\ast}(W)$. 
This implies that $\tau_{\ast}$ generates the $\mathbb{Z}/d\mathbb{Z}$-action 
on $\mathrm{HH}_{\ast}(W)$. 
By~\cite[Theorem~2.6.1 (ii)]{PoVa2}, 
the decomposition (\ref{H:decom})
coincides with the character decomposition of $\mathrm{HH}_{\ast}(W)$
with respect to the above $\mathbb{Z}/d\mathbb{Z}$-action. 
Therefore, noting that $\mathbb{C}^n_{e^{\pm 2\pi \sqrt{-1}/d}}=\{0\}$, 
 the desired eigenspace is one dimensional by
\begin{align}\notag
\mathrm{H}(\mathbb{C}^n_{e^{\pm 2\pi \sqrt{-1}/d}}, 
W_{e^{\pm 2\pi \sqrt{-1}/d}}) \cong \mathbb{C}. 
\end{align}
By the grading of $\mathrm{HH}_{\ast}(W)$
given in~\cite[Theorem~2.6.1]{PoVa2}, 
the above eigenspace 
is contained in $\mathrm{HH}_0(W)$. 
\end{proof}
\begin{rmk}\label{rmk:Chern}
For $E \in \HMF(W)$, the 
$\gamma= e^{2\pi \sqrt{-1}/d}$-component of 
$\ch(E)$ in the decomposition (\ref{H:decom})
coincides with the central charge $Z_G$
defined by (\ref{ZG}). 
(cf.~\cite[Theorem~3.3.3]{PoVa}.)
In particular $Z_G$ is given by an 
element in $\mathrm{HH}_0(W)^{\vee}$, 
which gives a basis of the eigenspace 
of $\tau_{\ast}^{\vee}$-action with 
eigenvalue $e^{2\pi \sqrt{-1}/d}$. 
\end{rmk}
\begin{rmk}\label{rmk:action}
Obviously the set of Gepner type stability conditions 
with respect to $(\tau, 2/d)$ is preserved under the natural 
right action of 
$\mathbb{C}$ on $\Stab(\HMF(W))$. 
By Bridgeland's deformation result~\cite[Theorem~7.1]{Brs1}, 
Lemma~\ref{lem:Hoch} implies that the set of such stability conditions 
forms a discrete subset in the quotient space
$\Stab(\HMF(W))/\mathbb{C}$.  
\end{rmk}

\subsection{Uniqueness of $\sigma_G$}\label{subsec:unique2}
There are some cases in which not only $Z_G$ but also 
$\sigma_G$ is characterized by the Gepner type property. 
At least this is the case 
when all the indecomposable 
objects should become semistable. 
We first note the following lemma, 
which is an important property of 
Gepner type stability conditions: 
\begin{lem}\label{lem:important}
Suppose that 
$W \in A$ satisfies Conjecture~\ref{conj2:Gep}
and $\sigma_G=(Z_{G}, \{\pP_G(\phi)\}_{\phi \in \mathbb{R}})$
is a Gepner type stability condition with respect to 
$(\tau, 2/d)$. 
For $\phi_i \in \mathbb{R}$
with $i=1, 2$
and $k\in \mathbb{Z}$, 
suppose that the
following inequality holds: 
\begin{align}\label{ineq:phases}
\phi_1 > \phi_2 +n-k-2 -\frac{2\varepsilon}{d}. 
\end{align}
Then for any $F_i \in \pP_G(\phi_i)$, 
we have $\Hom^k(F_2, F_1)=0$. 
\end{lem}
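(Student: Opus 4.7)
The plan is to reduce the statement to the standard phase-vanishing axiom of a Bridgeland stability condition via Serre duality, using the two pieces of structure we know about $\sigma_G$ and $\HMF(W)$: the Serre functor formula $\sS_W = \tau^{-\varepsilon}[n-2]$ from (\ref{Serre}), and the Gepner type relation $\tau_{\ast}\sigma_G = \sigma_G \cdot (2/d)$.

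First, since $\HMF(W)$ admits the Serre functor $\sS_W$, I would apply Serre duality to rewrite
\[
\Hom^{k}(F_2, F_1) \cong \Hom(F_1, \sS_W F_2[-k])^{\vee} = \Hom\bigl(F_1,\ \tau^{-\varepsilon} F_2[n-2-k]\bigr)^{\vee}.
\]
So it suffices to show that the right-hand Hom vanishes.

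Next, I would compute the phase of $\tau^{-\varepsilon} F_2[n-2-k]$. Unpacking the Gepner relation $\Phi_{\ast}\sigma = \sigma\cdot(\lambda)$ for $(\Phi,\lambda) = (\tau, 2/d)$ gives $\tau(\pP_G(\phi)) = \pP_G(\phi + 2/d)$ for all $\phi\in\mathbb{R}$, and iterating yields $\tau^{-\varepsilon}(\pP_G(\phi_2)) = \pP_G(\phi_2 - 2\varepsilon/d)$. Combining with the shift axiom $\pP_G(\phi+1)=\pP_G(\phi)[1]$, I conclude
\[
\tau^{-\varepsilon} F_2[n-2-k] \in \pP_G\!\left(\phi_2 - \tfrac{2\varepsilon}{d} + n - 2 - k\right).
\]

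Finally, the hypothesis (\ref{ineq:phases}) reads exactly
\[
\phi_1 > \phi_2 - \tfrac{2\varepsilon}{d} + n - 2 - k,
\]
so $F_1$ and $\tau^{-\varepsilon}F_2[n-2-k]$ lie in $\pP_G$ of strictly decreasing phases. The third axiom in Definition~\ref{defi:stab} then forces $\Hom(F_1, \tau^{-\varepsilon} F_2[n-2-k])=0$, and by Serre duality $\Hom^k(F_2,F_1)=0$ as desired. There is no real obstacle here; the only substantive ingredients are Serre duality in $\HMF(W)$ and the explicit Serre functor (\ref{Serre}), both of which are already in hand.
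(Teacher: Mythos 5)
Your proposal is correct and follows essentially the same argument as the paper: apply Serre duality with $\sS_W=\tau^{-\varepsilon}[n-2]$ to get $\Hom(F_2,F_1[k])\cong\Hom(F_1,\tau^{-\varepsilon}(F_2)[n-k-2])^{\vee}$, use the Gepner property to place $\tau^{-\varepsilon}(F_2)[n-k-2]$ in $\pP_G(\phi_2+n-k-2-2\varepsilon/d)$, and conclude by the phase-ordering axiom. No gaps.
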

\begin{proof}
By the Serre functor given by (\ref{Serre}), 
we have the isomorphism
\begin{align}\label{vanish:Serre}
\Hom(F_2, F_1[k]) &\cong \Hom(F_1, \tau^{-\varepsilon} (F_2)[n-k-2])^{\vee}.
\end{align}
By the Gepner type property with respect to $(\tau, 2/d)$, we have 
\begin{align*}
\tau^{-\varepsilon} (F_2)[n-k-2] \in \pP_G \left( 
\phi_2 +n-k-2 -\frac{2\varepsilon}{d} \right). 
\end{align*}
By the inequality (\ref{ineq:phases}), the phase of 
$F_1$ is bigger than that of the above object, hence 
the RHS of (\ref{vanish:Serre}) vanishes. 
\end{proof}
Using the above lemma, 
we show the following proposition: 
\begin{prop}\label{prop:unique}
In the same situation of Lemma~\ref{lem:important}, 
suppose that the following inequality holds: 
\begin{align}\label{ineq:ep}
(n-3)d\le 2\varepsilon. 
\end{align}
Then all the other stability conditions
satisfying the conditions in Conjecture~\ref{conj2:Gep}
are obtained 
as $[2m]_{\ast}\sigma_G$ for $m\in \mathbb{Z}$. 
\end{prop}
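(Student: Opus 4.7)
The plan is to show that under the hypothesis $(n-3)d \le 2\varepsilon$, equivalently $n-2-2\varepsilon/d \le 1$, every indecomposable object of $\HMF(W)$ is semistable for any Gepner type stability condition as in Conjecture~\ref{conj2:Gep}, and then to rigidify the phase function up to an even integer shift.

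First, I apply Lemma~\ref{lem:important} with $k=1$: for any Gepner type stability condition $\sigma$ with respect to $(\tau, 2/d)$, the hypothesis forces $\Hom^1(F_2, F_1) = 0$ whenever $F_1, F_2$ are $\sigma$-semistable with $\phi(F_1) > \phi(F_2)$. A standard devissage on Harder--Narasimhan filtrations (the boundary map of the triangle exhibiting the lowest HN factor as a quotient lies in a $\Hom^1$ that must vanish) then shows that every HN filtration splits as a direct sum. Consequently every indecomposable object is semistable both for $\sigma_G$ and for any other Gepner type stability condition $\sigma' = (Z_G, \{\pP'(\phi)\}_{\phi \in \mathbb{R}})$ satisfying the conditions of Conjecture~\ref{conj2:Gep}.

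Since $\sigma'$ and $\sigma_G$ share the same central charge $Z_G$, for each indecomposable $E$ both $\phi'(E)$ and $\phi_{\sigma_G}(E)$ are real numbers with $Z_G(E) \in \mathbb{R}_{>0} e^{\sqrt{-1}\pi \phi}$, so $n_E := \phi'(E) - \phi_{\sigma_G}(E)$ automatically lies in $2\mathbb{Z}$. The task reduces to showing that $n_E$ is a single even integer $2m$, independent of $E$. For the local constancy step, I combine Hom vanishing in both directions: whenever $\Hom^i(E, F) \ne 0$ for indecomposables $E, F$ and some $i \in \mathbb{Z}$, stability together with the contrapositive of Lemma~\ref{lem:important} applied to the pair $(E, F[i])$ yields
\begin{align*}
\phi(E) - i \;\le\; \phi(F) \;\le\; \phi(E) + n - 2 - \frac{2\varepsilon}{d} - i
\end{align*}
for both $\sigma'$ and $\sigma_G$. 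The hypothesis makes this an interval of length at most $1$, and subtracting the two instances gives $|n_F - n_E| \le 1$; combined with evenness this forces $n_E = n_F$.

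It remains to propagate $n_E = n_F$ to all pairs of indecomposables, i.e., to show that the graph on indecomposables with edges ``$\Hom^{\ast} \ne 0$'' is connected. This is where I expect the main obstacle to lie. A fully orthogonal $\tau$-stable decomposition $\HMF(W) = \cC_1 \oplus \cC_2$ with both factors non-zero would produce a decomposition of $\mathrm{HH}_0(W)$ and a $\tau_{\ast}$-eigenspace of dimension at least two at the eigenvalue $e^{2\pi \sqrt{-1}/d}$, directly contradicting Lemma~\ref{lem:Hoch}; this rules out the most serious form of disconnection, and the remaining possibilities (such as $\tau$ permuting blocks nontrivially) should be excluded using $\tau^{\times d} = [2]$ together with the requirement that $Z_G$ be non-zero on every semistable object. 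Once $n_E \equiv 2m$ is proven globally, the identification $\pP'(\phi) = \pP_G(\phi - 2m)$ on indecomposables extends by direct sums to all of $\HMF(W)$, so $\sigma' = [2m]_{\ast} \sigma_G$ as required.
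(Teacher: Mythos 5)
Your first step coincides with the paper's: Lemma~\ref{lem:important} with $k=1$ forces all Harder--Narasimhan filtrations to split, so every indecomposable object is semistable for $\sigma_G$ and for any competing Gepner type stability condition, and the phase discrepancy $n_E=\phi'(E)-\phi_{\sigma_G}(E)$ lies in $2\mathbb{Z}$ because the central charges agree. Your local rigidity step (the two-sided estimate $\phi(E)-i\le \phi(F)\le \phi(E)+n-2-2\varepsilon/d-i$ whenever $\Hom^i(E,F)\neq 0$, giving $|n_F-n_E|\le 1$ and hence $n_F=n_E$ by parity) is also sound.

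The gap is in the propagation step, and it is not a technicality: the Hom-graph on indecomposables is genuinely disconnected in cases covered by the hypothesis $(n-3)d\le 2\varepsilon$. For $n=2$, $\varepsilon=0$ (Subsection~\ref{subsec:rmk2}) the indecomposables are $\mathbb{C}(j)[k]$, $0\le j\le d-1$, and these are \emph{mutually orthogonal}, so the graph has $d$ components; the same happens whenever $\gcd(a_1,\dots,a_n)>1$ via the orthogonal decomposition (\ref{decom}) of Lemma~\ref{coprime}. Your proposed remedy via Lemma~\ref{lem:Hoch} cannot close this: in these examples $\tau$ permutes the orthogonal blocks cyclically, $\tau_{\ast}$ acts on $\mathrm{HH}_0$ accordingly, and the $e^{2\pi\sqrt{-1}/d}$-eigenspace remains one-dimensional, so there is no contradiction -- the ``remaining possibilities'' you hope to exclude with $\tau^{\times d}=[2]$ actually occur. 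What the paper does instead is to abandon connectivity of the Hom-graph and exploit two things you do not use: (a) the Ballard--Favero--Katzarkov generation result that for any nonzero indecomposable $M$ the objects $\tau^i(M)$, $0\le i\le d-1$, generate $\HMF(W)$, so every indecomposable $E$ admits a nonzero $\Hom(E,\tau^i(M)[j])$ for some $i,j$; and (b) the Gepner property itself, which pins the phase of $\tau^i(M)[j]$ to $j+\phi_M+2i/d$ simultaneously for both stability conditions once $M$ is given the same phase in each. The same Serre-duality estimate then traps $\phi_E$ and $\phi_E'$ in a common closed interval of length $\le 1$, and parity finishes the argument. If you add the $\tau$-edges $E\sim\tau E$ (whose phase increments are fixed at $2/d$ by the Gepner property) to your graph and invoke the generation result, your argument can be repaired along essentially these lines, but as written the connectivity claim is false and the proof does not go through.
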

\begin{proof}
Let us take $F_i \in \pP_G(\phi_i)$, 
$i=1, 2$ 
 with 
$\phi_1>\phi_2$. 
Then Lemma~\ref{lem:important} and the assumption (\ref{ineq:ep})
show that 
$\Hom^1(F_2, F_1)=0$. 
This implies that any object in $\HMF(W)$
whose Harder-Narasimhan factors
are $F_1$, $F_2$ decomposes into the direct sum of 
$F_1$ and $F_2$. 
Repeating this argument, any object $E\in \HMF(W)$
decomposes into the direct sum of $\sigma_G$-semistable 
objects. In particular any non-zero indecomposable object 
$E \in \HMF(W)$
is $\sigma_G$-semistable, whose phase is denoted by $\phi_E$. 

Let us fix a non-zero indecomposable 
object $M \in \HMF(W)$. By the result 
of~\cite[Theorem~5.16]{BFK},
the objects $\tau^i(M), 0\le i\le d-1$
generate the triangulated category $\HMF(W)$.  
Therefore for any non-zero indecomposable object
$E \in \HMF(W)$, there is $0\le i\le d-1$ and 
$j\in \mathbb{Z}$
such that $\Hom(E, \tau^i(M)[j]) \neq 0$. 
This implies that 
\begin{align*}
\phi_E \le j + \phi_M + \frac{2i}{d}. 
\end{align*}
Also by the Serre functor (\ref{Serre}), 
we have $\Hom(\tau^i(M)[j], \sS_W(E)) \neq 0$, 
which implies that 
\begin{align*}
j+ \phi_M -n +2 +\frac{2i}{d} + \frac{2\varepsilon}{d}\le \phi_E. 
\end{align*}
Combined with the assumption (\ref{ineq:ep}), we obtain 
\begin{align}\label{ineq:phase}
\phi_E \in \left[ j+ \phi_M + \frac{2i}{d} -1, 
j+ \phi_M + \frac{2i}{d}  \right]. 
\end{align}
Now suppose that $\sigma_G'$ is another 
stability condition which satisfies the 
condition in Conjecture~\ref{conj2:Gep}. 
Then there is $m\in \mathbb{Z}$ such that 
the phase of $M$ with respect to $[-2m]_{\ast}\sigma_G'$ coincides
with $\phi_M$. 
If $\phi_E'$ is the phase of the indecomposable 
object $E$ with respect to $[-2m]_{\ast}\sigma_G'$, 
then $\phi_E'$ is also contained in the RHS of (\ref{ineq:phase}). 
Since both of the central charges of $\sigma_G$ and $[-2m]_{\ast}\sigma_G'$
are the same $Z_G$, it follows that $\phi_E'=\phi_E$. 
Therefore $\sigma_G'=[2m]_{\ast}\sigma_G$ follows. 
\end{proof}
The proof of Proposition~\ref{prop:unique}
immediately implies the following: 

\begin{cor}
In the same situation of 
Proposition~\ref{prop:unique}, there is a function $\phi(\ast)$
from the set of indecomposable objects in $\HMF(W)$
to real numbers such that 
$\pP_G(\phi)$ consists of direct sums of 
indecomposable objects $E$ with $\phi(E)=\phi$. 
\end{cor}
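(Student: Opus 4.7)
The plan is short because the proof of Proposition~\ref{prop:unique} has already done the essential work. That proof establishes, under the hypothesis $(n-3)d \le 2\varepsilon$, that every non-zero indecomposable object $E \in \HMF(W)$ is $\sigma_G$-semistable, and hence has a well-defined phase. The first step is simply to define $\phi(E) \in \mathbb{R}$ to be this phase for each indecomposable $E$.

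Next I would verify the two inclusions characterizing $\pP_G(\phi)$. For the inclusion $\supseteq$, if $E_1, \ldots, E_k$ are indecomposables with $\phi(E_i) = \phi$, then each $E_i \in \pP_G(\phi)$, and since $\pP_G(\phi)$ is an abelian subcategory of $\HMF(W)$ and in particular closed under finite direct sums, we conclude $\bigoplus_{i} E_i \in \pP_G(\phi)$.

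For the inclusion $\subseteq$, take any non-zero $F \in \pP_G(\phi)$. Using that $\HMF(W)$ is a Krull--Schmidt category (the endomorphism rings of indecomposables are local, which is standard for graded matrix factorizations of a polynomial with isolated singularity), we write $F = \bigoplus_{i=1}^k E_i$ with each $E_i$ indecomposable. Each $E_i$ is a direct summand of the semistable object $F \in \pP_G(\phi)$, and $\pP_G(\phi)$ is closed under direct summands (a general property of the semistable subcategories in a Bridgeland stability condition, which one may verify by applying the Harder--Narasimhan filtration to the summands). Hence $E_i \in \pP_G(\phi)$, i.e.\ $\phi(E_i) = \phi$ for every $i$, as required.

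The main (and really only) obstacle beyond what is already in the proof of Proposition~\ref{prop:unique} is invoking Krull--Schmidt for $\HMF(W)$; this is standard in the graded matrix factorization literature, so no additional serious argument is needed.
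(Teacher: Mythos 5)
Your argument is correct and is essentially the paper's own: the paper derives the corollary directly from the proof of Proposition~\ref{prop:unique}, which already shows that every indecomposable object is $\sigma_G$-semistable (giving the function $\phi(\ast)$) and that every object splits as a direct sum of semistable objects. Your added details (Krull--Schmidt for $\HMF(W)$ and closure of $\pP_G(\phi)$ under direct sums and summands) are exactly the routine facts the paper leaves implicit, so there is nothing further to check.
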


\begin{rmk}
The inequality (\ref{ineq:ep}) is satisfied in the 
cases of Corollary~\ref{cor:n=1}, 
Theorem~\ref{thm:n2e} and Theorem~\ref{thm:KST}. 
In the list of Table~1, it is satisfied 
except $(n, \varepsilon)=(4, 0)$ and $(3, -1)$. 
\end{rmk}

\begin{rmk}
If we believe Conjecture~\ref{conj2:Gep}, 
the proof of Proposition~\ref{prop:unique} 
predicts that $Z_G(E) \neq 0$ for any 
non-zero indecomposable object $E \in \HMF(W)$
as long as the inequality (\ref{ineq:ep}) is satisfied. 
This seems to be not an obvious property
of graded matrix factorizations. 
\end{rmk}

\section{T-structures on triangulated categories of graded
matrix factorizations}\label{sec:T-st}
In this section, we construct and study 
the 
hearts of bounded 
t-structures on $\HMF(W)$, via 
Orlov's theorem relating $\HMF(W)$ with the 
derived category of coherent sheaves on $(W=0)$. 
In what follows, we use the same notation 
in the previous section.

\subsection{Orlov's theorem}
In~\cite{Orsin}, Orlov proves 
his famous theorem relating 
the triangulated category $\HMF(W)$ with 
the derived category of coherent sheaves on the
Deligne-Mumford stack
\begin{align}\label{DMW}
X \cneq (W=0) \subset \mathbb{P}(a_1, \cdots, a_n). 
\end{align}
Using the notation in Subsection~\ref{subsec:relation}, 
Orlov's theorem is stated in the following way: 
\begin{thm}\emph{(\cite[Theorem~2.5]{Orsin})}\label{thm:Orlov}

(i) If $\varepsilon >0$, there is a
fully faithful functor 
\begin{align*}
\Phi_i \colon \HMF(W) \hookrightarrow 
D^b \Coh(X)
\end{align*}
such that we have the semiorthogonal decomposition
\begin{align*}
D^b \Coh(X)= \langle \oO_X(-i-\varepsilon +1), \cdots, 
\oO_X(-i), \Phi_i \HMF(W)  \rangle. 
\end{align*}

(ii) If $\varepsilon \le 0$, there is a fully 
faithful functor
\begin{align*}
\Psi_{i} \colon D^b \Coh(X) \hookrightarrow \HMF(W)
\end{align*}
such that we have the semiorthogonal decomposition
\begin{align}\label{sod}
\HMF(W)= \langle \mathbb{C}(-i-\varepsilon), 
\cdots, \mathbb{C}(-i+1), 
\Psi_i D^b \Coh(X) \rangle. 
\end{align}
In particular $\Psi_i$ is an equivalence if $\varepsilon =0$. 
\end{thm}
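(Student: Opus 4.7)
The plan is to realize both sides of the comparison as Verdier quotients of the common ambient category $D^b(\grr R)$, and then match them using the Gorenstein structure of $R$. The equivalence (\ref{Cok}) already identifies $\HMF(W)$ with $D^b(\grr R)/D^b(\grrproj R)$. For the coherent sheaf side, a graded Serre-type theorem for the weighted projective stack $\mathbb{P}(a_1,\ldots,a_n)$ identifies $\Coh(X)$ with the Serre quotient of $\grr R$ by the subcategory of $m$-torsion modules, and, with appropriate care about boundedness, this lifts to an equivalence $D^b\Coh(X) \simeq D^b(\grr R)/\dD_m^b$, where $\dD_m^b \subset D^b(\grr R)$ is the subcategory of complexes with $m$-torsion cohomology. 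The problem is thus reduced to comparing these two quotient descriptions.

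Next I would use graded local duality. Since $W$ defines an isolated singularity at the origin, $R$ is graded Gorenstein with dualizing module $\omega_R \cong R(-\varepsilon)$. This duality exchanges the two generating families for the kernels, namely the twisted free modules $R(k)$ (generating $D^b(\grrproj R)$) and the residue modules $\mathbb{C}(k)$ (generating $\dD_m^b$), up to a total shift of $-\varepsilon$ in grading and $n-1$ in cohomological degree. This mismatch produces, for any fixed integer $i$, exactly $|\varepsilon|$ ``extra'' generators that survive in one quotient but die in the other; these are precisely the exceptional collection appearing in the statement. For $\varepsilon \le 0$, the subcategory of $\HMF(W)$ spanned by $\mathbb{C}(-i-\varepsilon),\ldots,\mathbb{C}(-i+1)$ is exceptional, and its left orthogonal provides the image of a fully faithful embedding $\Psi_i \colon D^b\Coh(X) \hookrightarrow \HMF(W)$, giving the decomposition (\ref{sod}); the case $\varepsilon=0$ is the borderline with no extra generators, so $\Psi_i$ is an equivalence. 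For $\varepsilon>0$, the dual argument produces instead an exceptional collection $\oO_X(-i-\varepsilon+1),\ldots,\oO_X(-i)$ on the coherent-sheaf side and an embedding $\Phi_i$ of $\HMF(W)$ into $D^b\Coh(X)$.

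The hard part will be verifying semiorthogonality and generation at the level of $D^b(\grr R)$: one must show that $\Hom^\ast$-spaces between the candidate exceptional objects and the complementary subcategory vanish in the correct range, and that together they generate the ambient derived category. These checks proceed via graded local duality for $R$ together with the explicit Koszul-type resolution (\ref{cok--}) expressing $\mathbb{C}(j)$ in terms of twisted free modules. The Gorenstein parameter $-\varepsilon$ appears as the precise shift controlling the range of non-vanishing Ext groups between the $R(k)$'s and $\mathbb{C}(k)$'s, and therefore both dictates the length of the exceptional collection and determines on which side of the comparison it appears.
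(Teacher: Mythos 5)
This statement is not proved in the paper at all: it is quoted verbatim from Orlov (\cite[Theorem~2.5]{Orsin}), and the only ingredients of the argument that the paper reproduces are the explicit formula (\ref{def:Psi})--(\ref{def:omega}) for $\Psi_i$ as $\mathrm{Cok}^{-1}\circ \pi \circ \dR\omega_i$ and, inside the proof of Proposition~\ref{prop:t-st}, the existence of the admissible subcategory $\tT_i \subset D^b(\grr R)$ carrying the semiorthogonal decomposition (\ref{sod2}). So there is no in-paper proof to compare against; what you have written is an outline of Orlov's original argument, and as such it identifies the correct skeleton: both $D_{\rm{sg}}^{\rm{gr}}(R)$ and $D^b\Coh(X)\simeq \mathrm{qgr}\,R$ are Verdier quotients of $D^b(\grr R)$ by the subcategories generated by the $R(k)$'s and the $\mathbb{C}(k)$'s respectively, and the Gorenstein parameter $\varepsilon$ measures the failure of these two kernels to interact symmetrically, leaving exactly $|\varepsilon|$ exceptional objects on one side or the other. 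This is consistent with the functor $\Psi_i$ and the category $\tT_i$ that the paper actually uses downstream.

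That said, your sketch has one imprecise step and one large deferred step. The imprecise step: graded Gorenstein duality $\dR\Hom_R(-,R)$ does \emph{not} exchange the family $\{R(k)\}$ with the family $\{\mathbb{C}(k)\}$; it preserves each family, sending $R(k)\mapsto R(-k)$ and $\mathbb{C}(k)\mapsto \mathbb{C}(\varepsilon-k)[1-n]$. What actually drives the argument is the asymmetry of the Ext-pairing \emph{between} the two families: $\Hom^{\bullet}(R(j),\mathbb{C}(k))$ is concentrated in degree $0$ and detects $j=k$, while $\Hom^{\bullet}(\mathbb{C}(k),R(j))\cong \mathbb{C}[1-n]$ precisely when $k=j+\varepsilon$ (this is where $\omega_R\cong R(-\varepsilon)$ enters). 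It is this shift by $\varepsilon$ in the second computation, not an ``exchange,'' that produces the exceptional collection and decides on which side it lives. The deferred step is the real content of the theorem: exhibiting, inside a common admissible subcategory such as $\tT_i$ (equivalently Orlov's $\dD_{\ge i}$), two semiorthogonal decompositions whose quotients realize the two sides, and verifying generation. Asserting that ``the left orthogonal of the exceptional collection provides the image of $\Psi_i$'' presupposes exactly the identification you are trying to prove. As an outline pointing to Orlov's proof your proposal is sound, but it is not yet a proof.
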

In this paper we deal with the case of 
$\varepsilon \le 0$, so we only explain the
construction of $\Psi_i$. It is 
the composition of the following functors: 
\begin{align}\label{def:Psi}
\Psi_{i} \colon 
D^b \Coh(X) \stackrel{\dR \omega_i}{\to}
D^b (\grr R) \stackrel{\pi}{\to}
 D_{\rm{sg}}^{\rm{gr}}(R)
\stackrel{\rm{Cok}^{-1}}{\to} \HMF(W)
\end{align}
where $\rm{Cok}^{-1}$ is the inverse of 
(\ref{Cok}), $\pi$ is the natural projection and 
$\dR \omega_i$ is defined by 
\begin{align}\label{def:omega}
\dR \omega_i(E) \cneq
 \bigoplus_{j\ge i} 
\dR \Hom_{\oO_X}(\oO_X, E(j)). 
\end{align}
The functor $\Psi_i$ is not compatible with 
grade shift functors. 
If $\varepsilon=0$, i.e. $X$ is a 
Calabi-Yau stack, then their 
difference is described by a Seidel-Thomas 
twist functor~\cite{ST} on $D^b \Coh(X)$
\begin{align*}
\mathrm{ST}_{E}(\ast)
 \cneq \mathrm{Cone} \left( \dR \Hom_{\oO_X}(E, \ast) \otimes E
\to \ast  \right)
\end{align*}
for a spherical object $E \in D^b \Coh(X)$, 
e.g. a line bundle. 
The following result is suggested by 
Kontsevich and proved in~\cite{BFK}:

\begin{prop}\emph{(\cite[Proposition~5.8]{BFK})}\label{prop:grade}
If $\varepsilon =0$, the
following diagram commutes: 
\begin{align*}
\xymatrix{
D^b \Coh(X) \ar[r]^{\Psi_i} \ar[d]_{F_i} & \HMF(W) \ar[d]^{\tau} \\
D^b \Coh(X) \ar[r]^{\Psi_i} & \HMF(W). 
}
\end{align*}
Here $F_i \cneq \ST_{\oO_X(-i+1)} \circ \otimes \oO_X(1)$. 
\end{prop}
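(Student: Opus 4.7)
The plan is to reduce everything to $D_{\rm sg}^{\rm gr}(R)$ via the cokernel equivalence (\ref{Cok}), under which $\tau$ becomes the grade shift $M \mapsto M(1)$ and $\Psi_i = \pi \circ \dR \omega_i$. In this picture the desired natural isomorphism $\tau \circ \Psi_i \cong \Psi_i \circ F_i$ can be constructed by manipulating graded modules explicitly and comparing two distinguished triangles.

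First I would establish the identification
\begin{align*}
\tau \circ \Psi_i(E) \cong \Psi_{i-1}(E \otimes \oO_X(1)),
\end{align*}
which is immediate from a direct comparison: both sides have $j$-th component equal to $\dR \Hom_{\oO_X}(\oO_X, E(j+1))$ for $j \geq i-1$. Setting $E' = E \otimes \oO_X(1)$, the proposition reduces to the natural isomorphism
\begin{align*}
\Psi_{i-1}(E') \cong \Psi_i(\ST_{\oO_X(-i+1)}(E')).
\end{align*}

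Next I would compare two triangles in $D_{\rm sg}^{\rm gr}(R)$. Extracting the $j = i-1$ summand from $\dR\omega_{i-1}$ yields a short exact sequence of graded modules whose cokernel is concentrated in degree $i-1$, giving
\begin{align*}
\Psi_i(E') \to \Psi_{i-1}(E') \to \dR \Hom_{\oO_X}(\oO_X(-i+1), E') \otimes \mathbb{C}(-i+1).
\end{align*}
On the other hand, applying $\Psi_i$ to the defining triangle of $\ST_{\oO_X(-i+1)}$ produces
\begin{align*}
\dR \Hom_{\oO_X}(\oO_X(-i+1), E') \otimes \Psi_i(\oO_X(-i+1)) \to \Psi_i(E') \to \Psi_i(\ST_{\oO_X(-i+1)}(E')).
\end{align*}
After rotating, matching the third terms of these two triangles reduces the proposition to the identification $\Psi_i(\oO_X(-i+1)) \cong \mathbb{C}(-i+1)[-1]$, together with a compatibility of connecting maps.

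The key computation, and the main obstacle, is the identity $\Psi_i(\oO_X(-i+1)) \cong \mathbb{C}(-i+1)[-1]$, where the Calabi-Yau hypothesis $\varepsilon = 0$ is essential. Kodaira vanishing and Serre duality on $X$ give $\dR \Hom_{\oO_X}(\oO_X, \oO_X(k)) \cong H^0(X, \oO_X(k)) \cong R_k$ concentrated in degree zero for $k \geq 1$ (with any discrepancies in low degrees becoming perfect modules trivial in $D_{\rm sg}^{\rm gr}(R)$), so $\dR\omega_i(\oO_X(-i+1))$ is identified with the submodule $m R(-i+1) \subset R(-i+1)$. The short exact sequence $0 \to m R(-i+1) \to R(-i+1) \to \mathbb{C}(-i+1) \to 0$, together with the fact that $R(-i+1)$ is a free $R$-module of rank one and therefore zero in $D_{\rm sg}^{\rm gr}(R)$, then yields the identification. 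The final subtlety, namely verifying that the two triangles above fit into an actual morphism of triangles (so that the isomorphism of third terms promotes to a natural transformation between the two functors), should follow from the functoriality of $\dR\omega_i$ and of the Seidel--Thomas construction, with the evaluation map in each case being the natural one.
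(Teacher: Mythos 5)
The paper gives no proof of this proposition: it is quoted verbatim from Ballard--Favero--Katzarkov \cite[Proposition~5.8]{BFK}, so there is nothing internal to compare against. Your argument is essentially the standard one (and, as far as I can tell, the one underlying the cited result), and its main steps are sound: the identification $\tau\Psi_i(E)\cong\Psi_{i-1}(E\otimes\oO_X(1))$ is a correct degree-by-degree check; the truncation triangle $\dR\omega_i(E')\to\dR\omega_{i-1}(E')\to V\otimes\mathbb{C}(-i+1)$ with $V=\dR\Hom(\oO_X(-i+1),E')$ is an honest short exact sequence of complexes of graded modules; and the computation $\Psi_i(\oO_X(-i+1))\cong\mathbb{C}(-i+1)[-1]$ is exactly where $\varepsilon=0$ enters, via Kodaira vanishing ($H^{>0}(X,\oO_X(k))=H^{>0}(X,\omega_X(k))=0$ for $k\ge1$) and $H^0(X,\oO_X(k))\cong R_k$, giving $\dR\omega_i(\oO_X(-i+1))\cong m(-i+1)\cong\mathbb{C}(-i+1)[-1]$ in $D_{\rm sg}^{\rm gr}(R)$; this is consistent with Lemma~\ref{lem:filt:A} and Lemma~\ref{lem:fix} in the text. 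The one step you flag but do not carry out is the genuine crux: matching the connecting morphism of the truncation triangle with $\Psi_i$ applied to the evaluation map of the spherical twist. This can be done by observing that the canonical map $V\otimes R(-i+1)\to\dR\omega_{i-1}(E')$ (adjoint to the identity of $V$) restricts to $V\otimes m(-i+1)\to\dR\omega_i(E')$ and induces the identity on degree-$(i-1)$ quotients, so the ladder of short exact sequences identifies the two boundary maps; without this, the isomorphism of cones does not follow. Note also that this argument as written produces an isomorphism $\tau\Psi_i(E)\cong\Psi_i(F_i E)$ object by object; promoting it to a natural isomorphism of functors requires a dg or model-level refinement, though the objectwise statement suffices for every use made of the proposition in this paper.
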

\begin{rmk}
In~\cite[Proposition~5.8]{BFK}, a
comparison result similar to Proposition~\ref{prop:grade}
is obtained also for $\varepsilon \neq 0$. 
We only mention the case of $\varepsilon=0$ since we only 
use the result in this case. 
\end{rmk}

\begin{exam}\label{exam:n=2}
Let us consider the case of $n=2$ and $\varepsilon=0$. 
In the same situation as in Subsection~\ref{subsec:rmk2}, 
we have 
\begin{align*}
X \cong \left[ \mathrm{pt}/ \mathbb{Z}_{a_1}
  \right] \coprod  \left[ \mathrm{pt} / \mathbb{Z}_{a_2}  \right]. 
\end{align*}
Here $\mathbb{Z}_{a_i} \cneq \mathbb{Z}/a_i \mathbb{Z}$ acts on 
the smooth one point $\mathrm{pt}$
 trivially, and $[\ast / \ast ]$ means the quotient stack. 
Therefore we have the orthogonal decomposition
\begin{align*}
D^b \Coh(X) = \langle V_1^{0}, \cdots, V_{1}^{a_1-1}, \ 
V_2^{0}, \cdots, V_2^{a_2-1} \rangle
\end{align*}
for one dimensional 
$\mathbb{Z}_{a_i}$-representations $V_i^{j}$
with 
weight $j$. 
By Proposition~\ref{prop:grade}, the equivalence 
$\Psi_1$ identifies $\tau$ on $\HMF(W)$
with $F_1=\ST_{\oO_X} \circ \otimes 
\oO_X(1)$. 
The equivalence $F_1$ transforms $V_i^{j}$ 
in the following way: 
\begin{align*}
&V_1^{0} \mapsto V_1^{1} \mapsto \cdots \mapsto 
V_1^{a_1-1} \\
&\mapsto V_2^{0}[1] \mapsto 
\cdots \mapsto V_2^{a_2-1}[1] \mapsto V_1^{0}[2]. 
\end{align*}
In particular $\HMF(W)$ has the description 
stated in Subsection~\ref{subsec:rmk2}. 
\end{exam}

\subsection{Construction of t-structures}
In this subsection, we construct the hearts of 
bounded t-structures on $\HMF(W)$
when $\varepsilon \le 0$. 
We introduce the following notation:
for a triangulated category $\dD$ and 
a set of objects $\sS \subset \dD$, 
we denote by $\langle \sS \rangle_{\rm{ex}}$
the extension closure of $\sS$, i.e. 
the smallest extension-closed subcategory 
of $\dD$ which contains objects
in $\sS$. 
The constructions of our hearts 
are
 based on 
the semiorthogonal decomposition (\ref{sod})
and the following well-known fact: 
\begin{lem}\label{lem:t-st}
Let $\dD$ be a triangulated category and 
\begin{align*}
\dD= \langle \dD_{N}, \cdots, \dD_2, \dD_1 \rangle
\end{align*}
a semiorthogonal decomposition. Suppose that 
$\cC_i \subset \dD_i$ are hearts of bounded 
t-structures satisfying 
$\Hom^{\le 0}(\cC_j, \cC_i)=0$ for $j>i$. 
Then there is 
a bounded t-structure on $\dD$
whose heart $\cC$ is given by 
$\langle \cC_i : 1\le i \le N \rangle_{\rm{ex}}$. 
\end{lem}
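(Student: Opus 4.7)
My plan is to prove the lemma by induction on $N$, reducing everything to the case $N=2$. For the inductive step I would set $\dD' = \langle \dD_{N-1}, \ldots, \dD_1\rangle$ and invoke the inductive hypothesis to obtain a bounded t-structure on $\dD'$ with heart $\cC' = \langle \cC_i : i \le N-1\rangle_{\rm{ex}}$. The hypothesis $\Hom^{\le 0}(\cC_N, \cC') = 0$ would then be inherited from the given hypothesis by decomposing any $\cC'$-object as an iterated extension of $\cC_i$-objects with $i < N$ and applying the assumption piece by piece. Applying the $N=2$ case to $\dD = \langle \dD_N, \dD'\rangle$ would then produce the desired heart $\cC$ on all of $\dD$.

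For the $N=2$ case I would use the standard gluing recipe: declare $E \in \dD^{\le 0}$ (resp.\ $\dD^{\ge 0}$) when, in the semiorthogonal decomposition triangle $E_1 \to E \to E_2 \to E_1[1]$ with $E_i \in \dD_i$, both $E_i$ lie in $\dD_i^{\le 0}$ (resp.\ $\dD_i^{\ge 0}$). The orthogonality axiom $\Hom(\dD^{\le 0}, \dD^{\ge 1}) = 0$ reduces via the two semiorthogonal triangles to showing $\Hom(E_i, F_j) = 0$ for all pairs $(i,j) \in \{1,2\}^2$, and this would follow from three ingredients: (i) the individual t-structures on $\dD_i$ for the diagonal cases $(1,1)$ and $(2,2)$; (ii) semiorthogonality $\Hom(\dD_1, \dD_2) = 0$, extended to all shifts using that $\dD_1, \dD_2$ are triangulated subcategories, for $(1,2)$; and (iii) the hypothesis $\Hom^{\le 0}(\cC_2, \cC_1) = 0$, combined with the cohomological filtrations of $E_2 \in \dD_2^{\le 0}$ and $F_1 \in \dD_1^{\ge 1}$ whose factors live in $\cC_2[k]$ for $k \ge 0$ and in $\cC_1[l]$ for $l \le -1$ respectively, covering precisely the strictly negative-degree Homs that can arise, for $(2,1)$.

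The hard part will be constructing the truncation triangle $T \to E \to T'$ for an arbitrary $E \in \dD$. Using the individual truncations $\tau_i^{\le 0} E_i \to E_i \to \tau_i^{\ge 1} E_i$ in each $\dD_i$, I would first argue that the connecting map $c \colon E_2 \to E_1[1]$ of the semiorthogonal triangle canonically lifts to a map $\widetilde{c}^{-} \colon \tau_2^{\le 0} E_2 \to (\tau_1^{\le 0} E_1)[1]$: the potential obstruction is the composition $\tau_2^{\le 0} E_2 \to E_1[1] \to (\tau_1^{\ge 1} E_1)[1]$, which lies in $\Hom(\dD_2^{\le 0}, \dD_1^{\ge 0})$ and vanishes by the same calculation as in the previous step. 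Symmetrically, $c$ would induce a map $\widetilde{c}^{+} \colon \tau_2^{\ge 1} E_2 \to (\tau_1^{\ge 1} E_1)[1]$. These two lifts would define $T$ and $T'$ via the triangles $\tau_1^{\le 0} E_1 \to T \to \tau_2^{\le 0} E_2$ and $\tau_1^{\ge 1} E_1 \to T' \to \tau_2^{\ge 1} E_2$ (so that $T \in \dD^{\le 0}$ and $T' \in \dD^{\ge 1}$ by construction); a standard $3 \times 3$ (octahedral) argument would then assemble these, together with the two individual truncation triangles, into a commutative $3 \times 3$ diagram of distinguished triangles whose middle column is the desired truncation $T \to E \to T'$.

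Boundedness of the glued t-structure is inherited from the $\dD_i$: for any $E$, since $\tau_i^{\ge n} E_i$ and $\tau_i^{\le -n} E_i$ vanish for $n \gg 0$, so do the glued truncations. Finally, the heart $\dD^{\le 0} \cap \dD^{\ge 0}$ consists precisely of those $E$ whose semiorthogonal components $E_i$ lie in $\cC_i$, i.e.\ of all extensions of a $\cC_2$-object by a $\cC_1$-object; this full subcategory is extension-closed and contains both $\cC_1$ and $\cC_2$, so it coincides with $\langle \cC_1, \cC_2 \rangle_{\rm{ex}}$, yielding the identification of the heart with $\cC$.
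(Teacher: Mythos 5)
Your argument is correct and follows the same route as the paper: reduce by induction to the case $N=2$ and then glue the two t-structures across the semiorthogonal decomposition. The only difference is that the paper simply cites \cite[Lemma~2.1]{InPo} for the $N=2$ case, whereas you supply its (standard, BBD-style) proof in full — the componentwise definition of $\dD^{\le 0}$, $\dD^{\ge 0}$, the d\'evissage using $\Hom^{\le 0}(\cC_2,\cC_1)=0$ for the orthogonality and for lifting the connecting map, and the $3\times 3$ assembly of the truncation triangle are all exactly what that reference does.
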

\begin{proof}
The result is obviously reduced to the case of $N=2$, 
which is proved in~\cite[Lemma~2.1]{InPo}. 
\end{proof}

\begin{rmk}\label{rmk:filt}
In the situation of Lemma~\ref{lem:t-st}, 
any object $E \in \cC$ admits a filtration 
\begin{align*}
0=E_0 \subset E_1 \subset \cdots \subset E_N=E
\end{align*}
such that $E_i/E_{i-1}$ is an object in $\cC_i$. 
\end{rmk}

\begin{rmk}\label{rmk:ext}
If the abelian category $\cC_i$ is generated by 
an exceptional object $F_i \in \cC_i$, 
then the heart $\cC$ is 
the extension closure $\langle F_N, \cdots, F_1 \rangle_{\rm{ex}}$. 
In this case,  
the sequence $(F_N, \cdots, F_2, F_1)$
is called an ext-exceptional collection. 
\end{rmk}

We have the following proposition: 
\begin{prop}\label{prop:t-st}
Suppose that $\varepsilon \le 0$. 
For each $i\in \mathbb{Z}$, there is 
a bounded t-structure on $\HMF(W)$ whose heart 
$\aA_i$ is given by 
\begin{align*}
\aA_i= \langle \mathbb{C}(-i -\varepsilon), \cdots, 
\mathbb{C}(-i+1), \Psi_i \Coh(X) \rangle_{\rm{ex}}. 
\end{align*}
\end{prop}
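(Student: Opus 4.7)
The plan is to apply Lemma~\ref{lem:t-st} to Orlov's semi-orthogonal decomposition~(\ref{sod}). On each one-term summand $\langle\mathbb{C}(k)\rangle$ the exceptional object $\mathbb{C}(k)$ carries the obvious t-structure whose heart consists of finite direct sums of $\mathbb{C}(k)$, while on the last summand $\Psi_i$ transports the standard t-structure on $D^b\Coh(X)$ to the heart $\Psi_i\Coh(X)$. Their extension closure, in the order given by~(\ref{sod}), is exactly $\aA_i$, so by Lemma~\ref{lem:t-st} and Remark~\ref{rmk:ext} the task reduces to verifying the ext-exceptional vanishings
\begin{align*}
\Hom^{\le 0}_{\HMF(W)}(\mathbb{C}(k),\mathbb{C}(l)) &= 0 \text{ for } -i+1\le l<k\le -i-\varepsilon, \\
\Hom^{\le 0}_{\HMF(W)}(\mathbb{C}(k),\Psi_i \fF) &= 0 \text{ for } \fF\in \Coh(X),\ -i+1\le k\le -i-\varepsilon.
\end{align*}
The reverse-direction vanishings are automatic from the SOD in every cohomological degree, and no further vanishings are required since the other hearts are generated by single exceptional objects.

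For both families I would work through the equivalence~(\ref{Cok}) and exploit the internal $\mathbb{Z}$-grading of $D_{\rm{sg}}^{\rm{gr}}(R)$. Under $\mathrm{Cok}$, $\mathbb{C}(k)=(R/m)(k)$ lives in a single internal degree $-k$, while $\Psi_i\fF$ is represented by $\dR\omega_i(\fF)=\bigoplus_{j\ge i}\dR\Gamma(X,\fF(j))$, which is concentrated in internal gradings $\ge i$ and cohomological degrees $\ge 0$. Since $-k\le i-1$ for $k$ in the prescribed range, the internal-degree mismatch forces $\Hom_{\grr R}(\mathbb{C}(k), H^q\dR\omega_i(\fF))=0$ for every $q$, and then the hyper-Ext spectral sequence together with the vanishing of negative Ext out of a module yields $\Hom^{\le 0}_{D^b(\grr R)}(\mathbb{C}(k),\dR\omega_i(\fF))=0$. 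A parallel internal-degree argument, using that $(R/m)(l)$ sits in degree $-l<-k$ when $k>l$, handles $\Hom^{\le 0}_{D^b(\grr R)}(\mathbb{C}(k),\mathbb{C}(l))=0$.

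The main obstacle is descending these vanishings from $D^b(\grr R)$ to the Verdier quotient $D_{\rm{sg}}^{\rm{gr}}(R)\simeq\HMF(W)$, since a Verdier localization can in principle create new morphisms through roofs whose cones are perfect complexes. I would handle this by computing directly with the explicit $2$-periodic matrix-factorization model~(\ref{cok--}) of $\mathbb{C}(k)$, on which $\Hom^{\le 0}$ in the homotopy category is given by chain maps modulo null-homotopy. The internal $\mathbb{Z}$-grading of the terms of~(\ref{cok--}), combined with the periodicity $\tau^{\times d}=[2]$, provides uniform control over which internal degrees of the target a non-positive-degree chain map can hit; the inequalities $-k\le i-1$ and $-k<-l$ then preclude such chain maps, which establishes the required vanishings and, via Lemma~\ref{lem:t-st}, produces the heart $\aA_i$.
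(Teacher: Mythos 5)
Your reduction via Lemma~\ref{lem:t-st} to the two families of vanishings, and your computation of those vanishings inside $D^b(\grr R)$ by comparing internal degrees, both agree with the paper's proof. The gap is in the final descent step. You are right that a priori the Verdier quotient $\pi\colon D^b(\grr R)\to D_{\rm sg}^{\rm gr}(R)$ need not preserve $\Hom$-spaces, but your proposed fix --- a direct chain-level computation with the model (\ref{cok--}) --- does not go through as described. First, the required vanishing $\Hom^{\le 0}_{\HMF(W)}(\mathbb{C}(k),\Psi_i\fF)=0$ involves the object $\Psi_i\fF$, for which no explicit matrix factorization is given (producing one would require stabilizing a free resolution of $\omega_i(\fF)$), so there is nothing concrete to write chain maps into. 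Second, even for $\Hom^{\le 0}(\mathbb{C}(k),\mathbb{C}(l))$ the asserted mechanism is not sufficient: the summands of (\ref{cok--}) are of the form $A(dm+j)$ with $m\ge 0$, and a graded component $A(dm+k)\to A(dm'+l)$ is nonzero whenever $d(m'-m)+l-k$ is a non-negative element of the weight semigroup, which happens for many pairs $(m,m')$ regardless of the inequality $k>l$. The real content of the vanishing is that all such chain maps (in all the infinitely many negative degrees, unwound via $\tau^{\times d}=[2]$) are null-homotopic, and ``internal-degree control'' alone does not establish this.

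The paper avoids the problem entirely by invoking the stronger form of Orlov's theorem: his proof of (\ref{sod}) produces an \emph{admissible} subcategory $\tT_i\subset D^b(\grr R)$ with semiorthogonal decomposition $\langle\mathbb{C}(-i-\varepsilon),\dots,\mathbb{C}(-i+1),\dR\omega_i D^b\Coh(X)\rangle$, together with an equivalence $\tT_i\simeq\HMF(W)$ matching this decomposition with (\ref{sod}), in particular sending $\mathbb{C}(j)$ to $\mathbb{C}(j)$ and $\dR\omega_i(F)$ to $\Psi_i(F)$. Since $\tT_i$ is a \emph{full} subcategory of $D^b(\grr R)$, the $\Hom$-spaces you need in $\HMF(W)$ literally equal the ones you already computed in $D^b(\grr R)$, and no quotient-category argument is needed. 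To repair your proof, either cite this step of Orlov's construction or actually supply the homotopy computation you allude to; as written, the descent from $D^b(\grr R)$ to $\HMF(W)$ is the missing idea.
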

\begin{proof}
By following Orlov's argument
in~\cite[Theorem~2.5]{Orsin}, we see the following: 
there is an admissible subcategory 
$\tT_i \subset D^b(\grr R)$ with a semiorthogonal 
decomposition
\begin{align}\label{sod2}
\tT_i= \langle \mathbb{C}(-i-\varepsilon), \cdots, \mathbb{C}(-i+1), 
\dR \omega_i D^b \Coh(X)  \rangle 
\end{align}
and an equivalence 
\begin{align*}
\tT_i \stackrel{\sim}{\to} \HMF(W)
\end{align*}
which identifies the semiorthogonal decomposition
 (\ref{sod2}) with the RHS of (\ref{sod}). 
Here $\dR \omega_i$ is the functor defined by (\ref{def:omega}). 
Therefore by Lemma~\ref{lem:t-st}, 
it is enough to show that
\begin{align}\label{vanish1}
&\Hom^{\le 0}_{\grr R}(\mathbb{C}(j), \mathbb{C}(j'))=0 \\ 
\label{vanish2}
&\Hom^{\le 0}_{\grr R}(\mathbb{C}(j), \dR \omega_i \Coh(X))=0
\end{align}
for $j, j' \in [-i+1, -i-\varepsilon]$ with $j'<j$. 
The assertion (\ref{vanish1})
is obvious since $\mathbb{C}(j)$ is
a simple object in the heart $\grr R \subset D^b(\grr R)$. 
As for the assertion (\ref{vanish2}), 
since we have 
$\dR \omega_i (F) \in D^{\ge 0}(\grr R)$
for $F \in \Coh(X)$, 
it follows that  
\begin{align}\label{vanish3}
\Hom^{\le 0}_{\grr R}(\mathbb{C}(j), \dR \omega_i (F)) \cong
\Hom^{\le 0}_{\grr R}(\mathbb{C}(j), \omega_i(F)).
\end{align} 
Here $\omega_i(F) \in \grr R$ 
is the zero-th cohomology of $\dR \omega_i(F)$. 
Since $\omega_i(F) \in \grr R$ is concentrated on degree $\ge i$ parts, 
and $\mathbb{C}(j)$ is on degree $<i$ parts, 
the vector space (\ref{vanish3}) vanishes. 
\end{proof}

In what follows, 
we always assume that $\varepsilon \le 0$. 
We only focus on 
the case $i=1$ in the above proposition:
\begin{defi} 
Suppose that $\varepsilon \le 0$. 
We define 
$\aA_W \cneq \aA_1$
and 
$\Psi \cneq \Psi_1$, i.e.
\begin{align}\label{sodA}
\aA_W= \langle \mathbb{C}(-1-\varepsilon), \cdots, 
\mathbb{C}(0), \Psi \Coh(X) \rangle_{\rm{ex}}. 
\end{align}
\end{defi}

\subsection{Description of certain objects in $\aA_W$}
This subsection is devoted to 
investigate some objects in $\aA_W$, 
which will be used later. 
By definition, we call a
closed point $x \in X$ 
\textit{stacky} if 
the stabilizer group at $x$ is non-trivial. 
Let us describe 
$\Psi(\oO_x)$ for a non-stacky 
point $x\in X$. 
Note that $X$ is a closed substack
\begin{align*}
X \subset \left[(\mathbb{C}^n \setminus \{0\} )/\mathbb{C}^{\ast} \right]
\end{align*}
where $\mathbb{C}^{\ast}$ acts on $\mathbb{C}^n$ via 
weight $(a_1, \cdots, a_n)$. 
Hence $x \in X$ is represented by 
a point $(p_1, \cdots, p_n) \in \mathbb{C}^n$. 
We define the graded $R$-module $M(x)$ to be
\begin{align}\label{def:gM}
M(x) \cneq \bigoplus_{j\ge 1} \mathbb{C}e_j
\end{align}
where $e_j$ is concentrated on degree $j$, 
and the action of $x_i$ sends $e_j$
to $p_i e_{j+a_i}$. 
Obviously if $x \in X$ is non-stacky, 
then $\dR \omega_1(\oO_x)$ is 
a graded $R$-module and isomorphic to 
$M(x)$. 
The object $\Psi(\oO_x)$ is obtained 
by applying the inverse of 
(\ref{Cok}) to $M(x)$, after regarding 
it as an object in 
$D_{\rm{sg}}^{\rm{gr}}(R)$. 
Using the above description, we have the following lemma: 
\begin{lem}\label{lem:tau:ox}
For any non-stacky point $x\in X$, we have 
the exact sequence in $\aA_W$
\begin{align}\label{tau:ox}
0 \to \Psi(\oO_x) \to \tau \Psi(\oO_x) \to \mathbb{C}(0) \to 0. 
\end{align}
\end{lem}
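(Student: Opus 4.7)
The plan is to lift the desired sequence in $\HMF(W)$ from a tautological short exact sequence of graded $R$-modules via the equivalence $\Cok$ of~(\ref{Cok}) and the construction of $\Psi = \Psi_1$.

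First, since $x$ is non-stacky, the twisted skyscrapers $\oO_x(j)$ are all isomorphic to $\oO_x$, so $H^{>0}(X, \oO_x(j)) = 0$ and $H^0(X, \oO_x(j)) = \mathbb{C}$ for every $j \in \mathbb{Z}$. Hence $\dR \omega_1(\oO_x) = \omega_1(\oO_x) = \bigoplus_{j \geq 1} \mathbb{C}$, which is exactly the graded $R$-module $M(x)$ from~(\ref{def:gM}), while $\omega_0(\oO_x) = \bigoplus_{j \geq 0} \mathbb{C}$. The identification
\begin{align*}
\omega_0(\oO_x) \cong M(x)(1)
\end{align*}
as graded $R$-modules follows because both sides are one-dimensional in each degree $\geq 0$ and the $R$-action on each is determined by the same tuple of coordinates $(p_1, \dots, p_n)$ representing $x$. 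The tautological inclusion of graded submodules then gives the short exact sequence
\begin{align*}
0 \to M(x) \to M(x)(1) \to \mathbb{C}(0) \to 0
\end{align*}
in $\grr R$, whose cokernel is the quotient by the part of degree $\geq 1$.

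Next, I would apply the projection $\pi \colon D^b(\grr R) \to D_{\rm{sg}}^{\rm{gr}}(R)$ and then $\Cok^{-1}$, using that both functors intertwine the grade shifts (so the copy of $\tau$ on the matrix factorization side matches the grade shift $(1)$ on the module side). This converts the above short exact sequence into a distinguished triangle in $\HMF(W)$:
\begin{align*}
\Psi(\oO_x) \to \tau \Psi(\oO_x) \to \mathbb{C}(0) \to \Psi(\oO_x)[1].
\end{align*}
Finally, I would observe that~(\ref{sodA}) places both $\mathbb{C}(0)$ and $\Psi(\oO_x) \in \Psi \Coh(X)$ in $\aA_W$, so extension-closure of the heart forces $\tau\Psi(\oO_x)$ into $\aA_W$ as well; a distinguished triangle whose three vertices all lie in the heart of a bounded t-structure is automatically a short exact sequence there, yielding~(\ref{tau:ox}).

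The only delicate step is the identification $\omega_0(\oO_x) \cong M(x)(1)$. This crucially uses non-stackiness, since at a stacky point the dimensions $\dim H^0(\oO_x(j))$ depend on $j$ modulo the order of the stabilizer, and the two modules generally fail to agree; once this identification is available, the remainder of the argument is formal bookkeeping through Orlov's construction and the extension-closure of $\aA_W$.
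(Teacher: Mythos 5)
Your proposal is correct and follows essentially the same route as the paper: the paper's proof consists precisely of exhibiting the short exact sequence $0 \to M(x) \to M(x)(1) \to \mathbb{C}(0) \to 0$ in $\grr R$ and letting it descend through $\pi$ and $\mathrm{Cok}^{-1}$. Your additional details --- the identification $\dR\omega_1(\oO_x) \cong M(x)$, the compatibility of the functors with grade shift, and the observation that a triangle with two vertices in the heart $\aA_W$ yields a short exact sequence there --- are exactly the steps the paper leaves implicit.
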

\begin{proof}
The result obviously follows from the 
following exact sequence as graded $R$-modules
\begin{align*}
0 \to M(x) \to M(x)(1) \to \mathbb{C}(0) \to 0. 
\end{align*}
\end{proof}

Next let us consider the 
following object
\begin{align*}
\mathbb{C}(-\varepsilon) \in \HMF(W). 
\end{align*}
The above object 
is described in terms of 
the generators of $\aA_W$. 
We have the following lemma: 
\begin{lem}\label{lem:filt:A}
We have $\mathbb{C}(-\varepsilon)[-1] \in \aA_W$. 
Furthermore there is a filtration in $\aA_W$
\begin{align*}
0 \subset E_{-1} \subset E_0 \subset \cdots \subset E_{-1-\varepsilon}=\mathbb{C}(-\varepsilon)[-1]
\end{align*}
such that the following holds
for $0\le i \le -1-\varepsilon$
\begin{align*}
E_{-1} \cong \Psi(\omega_X), \quad 
E_{i}/E_{i-1} \cong \mathbb{C}(i) \otimes R_{-i-\varepsilon}.
\end{align*}
\end{lem}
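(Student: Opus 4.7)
The plan is to realize $\mathbb{C}(-\varepsilon)[-1]$ as a concrete graded $R$-module via the equivalence $\Cok$ of (\ref{Cok}) and to build the filtration from a natural degree-truncation. Since $R(-\varepsilon)$ is a free $R$-module and therefore vanishes in $D_{\rm{sg}}^{\rm{gr}}(R)$, the short exact sequence of graded $R$-modules
\[
0 \to m(-\varepsilon) \to R(-\varepsilon) \to \mathbb{C}(-\varepsilon) \to 0,
\]
with $m$ the maximal ideal of (\ref{mideal}), produces an isomorphism $M\cneq m(-\varepsilon)\cong \mathbb{C}(-\varepsilon)[-1]$ in $D_{\rm{sg}}^{\rm{gr}}(R)$, and hence in $\HMF(W)$.

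For each $j\in\mathbb{Z}$, let $M_{\ge j}\subset M$ denote the graded $R$-submodule of elements of degree $\ge j$. Because $M_k = m_{k-\varepsilon}$ vanishes for $k<1+\varepsilon$, we have
\[
0\subset M_{\ge 1}\subset M_{\ge 0}\subset\cdots\subset M_{\ge 1+\varepsilon}=M,
\]
and I would set $E_i\cneq M_{\ge -i}$ for $-1\le i\le -1-\varepsilon$. The successive quotient $M_{\ge -i}/M_{\ge -i+1}$ is concentrated in degree $-i$, has underlying vector space $M_{-i}=R_{-i-\varepsilon}$, and is annihilated by $m$ (since elements of positive degree in $R$ push it into the next submodule), so as a graded $R$-module it is isomorphic to $\mathbb{C}(i)\otimes R_{-i-\varepsilon}$. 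For the bottom piece, $\omega_X\cong\oO_X(-\varepsilon)$ by adjunction, so
\[
\dR\omega_1(\omega_X)=\bigoplus_{j\ge 1}\dR\Gamma(X,\oO_X(j-\varepsilon));
\]
the exact sequence $0\to\oO_{\mathbb{P}}(-d)\to\oO_{\mathbb{P}}\to\oO_X\to 0$ on $\mathbb{P}=\mathbb{P}(a_1,\ldots,a_n)$ together with Serre duality shows that higher cohomology vanishes for $j\ge 1$ and that the natural map $R_{j-\varepsilon}\to H^0(X,\oO_X(j-\varepsilon))$ is an isomorphism, giving $\omega_1(\omega_X)\cong M_{\ge 1}$ as graded $R$-modules and hence $\Psi(\omega_X)\cong E_{-1}$ in $\HMF(W)$.

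Finally, applying $\Cok^{-1}\circ\pi$ to the short exact sequences $0\to M_{\ge -i+1}\to M_{\ge -i}\to \mathbb{C}(i)\otimes R_{-i-\varepsilon}\to 0$ yields distinguished triangles in $\HMF(W)$, and I would argue by induction on $i$. Starting from $E_{-1}=\Psi(\omega_X)\in\Psi\Coh(X)\subset\aA_W$ and using that each $\mathbb{C}(i)\otimes R_{-i-\varepsilon}$ lies in $\aA_W$ as a direct sum of generators, the extension-closure of the heart of a bounded t-structure shows that each $E_i$ lies in $\aA_W$ and the triangles descend to short exact sequences in the abelian category $\aA_W$; setting $i=-1-\varepsilon$ in particular gives $\mathbb{C}(-\varepsilon)[-1]\in\aA_W$. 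The main obstacle I anticipate is the identification $\omega_1(\omega_X)\cong M_{\ge 1}$ in the third step: although the cohomological vanishings are standard for quasi-smooth weighted hypersurfaces, low-dimensional cases (where $X$ is zero-dimensional) and the presence of stacky points need careful verification, and one must also check that the isomorphism intertwines the $R$-module structures in the expected way.
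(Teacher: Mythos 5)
Your proposal is correct and follows essentially the same route as the paper: both reduce to $\mathbb{C}(-\varepsilon)[-1]\cong m(-\varepsilon)$ in $D^{\rm gr}_{\rm sg}(R)$, filter $m(-\varepsilon)$ by degree truncation (your $M_{\ge -i}$ is exactly the paper's $m_{\ge -i-\varepsilon}(-\varepsilon)$), identify the graded pieces with $\mathbb{C}(i)\otimes R_{-i-\varepsilon}$, and identify the bottom piece with $\dR\omega_1(\omega_X)\cong\bigoplus_{j\ge 1}H^0(X,\oO_X(j-\varepsilon))$. The only difference is that you spell out the cohomological vanishing on $\mathbb{P}(a_1,\ldots,a_n)$ that the paper simply asserts; your stated worries about the low-dimensional and stacky cases resolve affirmatively by the standard computation of cohomology of twisting sheaves on weighted projective stacks in the range $j\ge 1$.
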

\begin{proof}
Let $m \subset R$ be the maximal ideal (\ref{mideal}).  
We have the exact sequence in $\grr R$
\begin{align*}
0 \to m(-\varepsilon) \to R(-\varepsilon) \to \mathbb{C}(-\varepsilon) \to 0 
\end{align*}
which implies $\mathbb{C}(-\varepsilon)[-1] \cong m(-\varepsilon)$
in $D_{\rm{sg}}^{\rm{gr}}(R)$. 
Let $m_{\ge i} \subset m$ be the ideal generated by 
monomials with degree greater than or equal to $i$.  
We have the following filtration in $\grr R$
\begin{align}\label{filt:m}
m_{\ge 1-\varepsilon}\subset m_{\ge -\varepsilon}
\subset \cdots \subset m_{\ge 2} \subset m_{\ge 1}
=m
\end{align} 
such that the following holds: 
\begin{align*}
\left( m_{\ge j}/m_{\ge j+1} \right)(-\varepsilon) \cong
\mathbb{C}(-j-\varepsilon) \otimes R_{j}, \quad
1\le j\le -\varepsilon. 
\end{align*}
Therefore it is enough to show that 
$m_{\ge 1-\varepsilon}(-\varepsilon)$ is isomorphic to 
$\dR \omega_1(\omega_X)$ in $\grr R$. 
Since 
$\omega_X \cong \oO_X(-\varepsilon)$ and 
$H^k(X, \omega_X(j)) \cong 0$
for $k\neq 0$ and $j\ge 1$, we have 
\begin{align*}
\dR \omega_1(\omega_X) \cong 
\bigoplus_{j\ge 1} H^0(X, \oO_X(-\varepsilon +j)). 
\end{align*}
Obviously the RHS is isomorphic to $m_{\ge 1-\varepsilon}(-\varepsilon)$
as a graded $R$-module. 
\end{proof}

The above lemma can be applied 
to do some computations
on the left adjoint of $\Psi$, 
denoted by 
\begin{align*}
\Psi^{L} \colon \HMF(W) \to D^b \Coh(X).
\end{align*}
We have the following lemma:
\begin{lem}\label{lem:adj}
The object $\Psi^{L}(\mathbb{C}(0))$ is isomorphic to $\oO_X[1]$. 
\end{lem}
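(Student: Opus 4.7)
The plan is to identify $\Psi^L(\mathbb{C}(0))$ via the Yoneda lemma: I will compute $\Hom_{\HMF(W)}(\mathbb{C}(0), \Psi F)$ for an arbitrary $F \in D^b \Coh(X)$, show naturally that it equals $H^{-1}(X,F) = \Hom_{\oO_X}(\oO_X[1], F)$, and then invoke the adjunction $\Hom(\Psi^L \mathbb{C}(0), F) \cong \Hom(\mathbb{C}(0), \Psi F)$ to conclude $\Psi^L(\mathbb{C}(0)) \cong \oO_X[1]$.

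The first step is to dualize the source. Using the Serre functor on $\HMF(W)$ described in (\ref{Serre}), namely $\sS_W = \tau^{-\varepsilon}[n-2]$, together with $\tau^{-\varepsilon}(\mathbb{C}(0)) = \mathbb{C}(-\varepsilon)$, Serre duality gives
\begin{align*}
\Hom_{\HMF(W)}(\mathbb{C}(0), \Psi F) \cong \Hom_{\HMF(W)}(\Psi F, \mathbb{C}(-\varepsilon)[n-2])^{\vee}.
\end{align*}
This shifts the problem to understanding maps out of $\Psi F$ into the object $\mathbb{C}(-\varepsilon)[n-2]$, which is a shift of $\mathbb{C}(-\varepsilon)[-1]$, the very object described by Lemma~\ref{lem:filt:A}.

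Next I plug in the filtration of Lemma~\ref{lem:filt:A} to collapse the target. That lemma expresses $\mathbb{C}(-\varepsilon)[-1]$ as an iterated extension whose bottom piece is $\Psi(\omega_X)$ and whose successive quotients are $\mathbb{C}(i) \otimes R_{-i-\varepsilon}$ for $0 \le i \le -1-\varepsilon$. Because $\Psi D^b\Coh(X)$ is semiorthogonal to each $\mathbb{C}(j)$ with $-1-\varepsilon \le j \le 0$ by the decomposition (\ref{sodA}), every $\Hom^{*}(\Psi F, \mathbb{C}(i) \otimes R_{-i-\varepsilon})$ vanishes. Applying $\Hom(\Psi F, -)$ to the filtration (shifted by $n-1$) and repeatedly using this vanishing, the triangle sequence yields
\begin{align*}
\Hom(\Psi F, \mathbb{C}(-\varepsilon)[n-2]) \cong \Hom(\Psi F, \Psi(\omega_X)[n-1]) \cong \Hom_{\oO_X}(F, \omega_X[n-1])
\end{align*}
where the last isomorphism is full faithfulness of $\Psi$. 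I expect this collapsing step via the filtration and semiorthogonality to be the main technical heart of the argument; the rest is formal.

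Finally, since $\dim X = n-2$, Serre duality on the smooth projective variety (or DM stack) $X$ identifies
\begin{align*}
\Hom_{\oO_X}(F, \omega_X[n-1]) = \Hom_{\oO_X}(F, \omega_X[\dim X + 1]) \cong H^{-1}(X, F)^{\vee}.
\end{align*}
Combining with the Serre duality isomorphism on $\HMF(W)$, the two dualizations cancel and I obtain a natural isomorphism $\Hom(\mathbb{C}(0), \Psi F) \cong H^{-1}(X, F) \cong \Hom_{\oO_X}(\oO_X[1], F)$, valid functorially in $F$. Yoneda then delivers $\Psi^L(\mathbb{C}(0)) \cong \oO_X[1]$, as claimed.
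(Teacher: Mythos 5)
Your proof is correct and uses exactly the same ingredients as the paper's: the Serre functor formula $\sS_W=\tau^{-\varepsilon}[n-2]$, the filtration of $\mathbb{C}(-\varepsilon)[-1]$ from Lemma~\ref{lem:filt:A} together with semiorthogonality to collapse the target to $\Psi(\omega_X)$, and Serre duality on $X$. The only difference is packaging: the paper compresses all of this into the one-line identity $\Psi^L=\sS_X^{-1}\circ\Psi^R\circ\sS_W$ and evaluates it directly, whereas you unwind the same computation pointwise via Yoneda.
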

\begin{proof}
Let $\Psi^R$ is the right adjoint of $\Psi$. 
Note that $\Psi^L$ and $\Psi^R$ are related by 
\begin{align*}
\Psi^L=\sS_X^{-1} \circ \Psi^R \circ \sS_W
\end{align*} 
where 
$\sS_X=\otimes \omega_X[n-2]$ is the Serre 
functor of $D^b \Coh(X)$. 
By (\ref{Serre}), we have 
\begin{align*}
\Psi^L(\mathbb{C}(0)) &\cong 
\sS_X^{-1} \circ \Psi^R (\mathbb{C}(-\varepsilon))[n-2] \\
&\cong \sS_X^{-1}(\omega_X[1])[n-2] \\
&\cong \oO_X[1]. 
\end{align*}
Here the second isomorphism follows from Lemma~\ref{lem:filt:A}. 
\end{proof}

\begin{rmk}\label{rmk:ext:uni}
By the above lemma, it follows that 
\begin{align*}
\Hom_{\HMF(W)}^1(\mathbb{C}(0), \Psi(\oO_x))
\cong \Hom_X(\oO_X, \oO_x)
\end{align*}
which is one dimensional for $x\in X$. 
Since $\tau \Psi(\oO_x)$ is indecomposable, 
the exact sequence (\ref{tau:ox}) is a unique 
non-trivial extension. 
\end{rmk}

\begin{rmk}\label{rmk:ext1}
For $F \in \Coh(X)$, suppose that 
$\dR \omega_1(F)$ is a graded $R$-module. 
Then Lemma~\ref{lem:adj} and the argument in
Proposition~\ref{prop:t-st} imply
\begin{align*}
\Ext_{\grr R}^1(\mathbb{C}(0), \dR \omega_1(F))
\cong H^0(X, F). 
\end{align*}
For $u\in H^0(X, F)$, 
we have the corresponding extension in $\grr R$
\begin{align*}
0 \to \dR \omega_1(F) \to M_u \to \mathbb{C}(0) \to 0.
\end{align*}
The graded $R$-module $M_u$ is 
described in the following way:
as a graded $\mathbb{C}$-vector space, 
it is the direct sum $\mathbb{C}(0) \oplus \dR \omega_1(F)$, 
and the action of $x_i \in R$ sends 
$1 \in \mathbb{C}(0)$ to $u \cdot x_i \in H^0(X, F(1))$. 
 \end{rmk}

\subsection{Description of $\aA_W$ via quiver representations}
In this subsection, we assume that 
$n=2$ and describe the 
heart $\aA_W$ in terms of certain
quiver representations.
In this case,  
$X$ is a smooth zero dimensional Deligne-Mumford stack, 
and $\Coh(X)$ is generated by mutually orthogonal 
exceptional objects.
Therefore by (\ref{sodA}), the abelian category $\aA_W$ is 
the extension closure of an ext-exceptional collection. 
We first compute other Hom groups between these 
exceptional objects, and then 
describe $\aA_W$ via the ext-quivers with relations. 

 For some technical reason, we 
assume that 
$X$ does not contain stacky points, 
so it consists of finite number of 
smooth points. 
Then, after applying the 
coordinate change if necessary, 
$W$ is written as 
\begin{align*}
W=x_1 W_1 + x_2 W_2
\end{align*}
for some homogeneous elements 
$W_i \in A$ 
such that $x_1$ (resp.~$x_2$)
does not divide $W_2$ (resp.~$W_1$). 
The heart $\aA_W$ is described in
the following way:  
\begin{align*}
\aA_W=\langle \mathbb{C}(d-a_1-a_2-1), \cdots, \mathbb{C}(0), 
\Psi(\oO_x) : x\in X \rangle_{\rm{ex}}.
\end{align*}
Below we calculate the $\Hom$ groups between 
the above generators. 
\begin{lem}\label{lem:compute}
For $0< j < d-a_1 -a_2$, 
we have the following:
\begin{align}\label{com:ext}
\Hom^i_{\HMF(W)}(\mathbb{C}(j), \mathbb{C}(0)) 
 \cong \left\{ \begin{array}{cl}
R_{j}^{\vee}, & (i, j)=(1, a_1), (1, a_2) \\
R_{0}^{\vee}, & (i, j)=(2, a_1+a_2) \\
0, & \mbox{ otherwise. }
\end{array} \right. 
\end{align}
Moreover the natural map
\begin{align}\label{natural}
&\Hom^2(\mathbb{C}(a_1 +a_2), \mathbb{C}(0))^{\vee} \to \\
\notag
&\quad \bigoplus_{j, j' \in \{a_1, a_2\}}
\Hom^1(\mathbb{C}(j), \mathbb{C}(0))^{\vee}
\otimes \Hom^1(\mathbb{C}(j+j'), \mathbb{C}(j'))^{\vee}
\end{align}
sends $1 \in R_{0}$ to 
$x_1 \otimes x_2 - x_2 \otimes x_1$
under the isomorphism (\ref{com:ext}). 
\end{lem}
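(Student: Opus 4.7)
The plan is to apply Orlov's equivalence (\ref{Cok}) to translate the computation into the singularity category $D_{\rm{sg}}^{\rm{gr}}(R)$, in which both $\mathbb{C}(j)$ and $\mathbb{C}(0)$ are honest graded $R$-modules. For $i \ge 1$ the $\Hom^i$ agrees with the graded $\Ext^i_{\grr R}(\mathbb{C}(j), \mathbb{C}(0))$, and via the 2-periodicity $\tau^d = [2]$ of (\ref{taud}) one has in general $\Hom^i(\mathbb{C}(j), \mathbb{C}(0)) \cong \Hom^{i+2}(\mathbb{C}(j+d), \mathbb{C}(0))$. So a single infinite graded $R$-free resolution of $\mathbb{C}(0)$ will compute all the $\Hom^i$ simultaneously, and only grading-based vanishings need to be tracked.

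First I construct the minimal graded $R$-free resolution of $\mathbb{C}(0)$ explicitly, using the decomposition $W = x_1 W_1 + x_2 W_2$. Beyond the initial stage $R(-a_1) \oplus R(-a_2) \xrightarrow{(x_1,x_2)} R \to \mathbb{C}(0)$, the first syzygies are the Koszul relation $(x_2, -x_1)$ and the $W$-relation $(W_1, W_2)$, giving $F_2 = R(-a_1-a_2) \oplus R(-d)$ with differential $\phi$ whose columns are $(x_2, -x_1)^T$ and $(W_1, W_2)^T$. Similarly $F_3 = R(-a_1-d) \oplus R(-a_2-d)$ with differential $\psi$ whose columns are $(W_2, x_1)^T$ and $(-W_1, x_2)^T$, and the matrix identities $\phi\psi = \psi\phi = W\cdot I$ then force the 2-periodic continuation $F_{k+2} = F_k(-d)$.

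Applying $\Hom_{\grr R}(-, \mathbb{C}(-j))$ and using that $\Hom_{\grr R}(R(-k), \mathbb{C}(-j)) = \mathbb{C}$ exactly when $k=j$ (and is $0$ otherwise), a summand $R(-k)$ contributes exactly when $k=j$. The shifts $k$ appearing in the resolution are $0, a_1, a_2, a_1+a_2, d, a_1+d, a_2+d, a_1+a_2+d, 2d, \ldots$, and only $k \in \{a_1, a_2, a_1+a_2\}$ fall into the window $(0, d-a_1-a_2)$. In each of these cases the adjacent resolution terms contribute no Hom, so each $\Ext^i$ equals the single surviving $\mathbb{C}$, realising (\ref{com:ext}) under the natural identifications with $R_{a_1}^{\vee}$, $R_{a_2}^{\vee}$ and $R_0^{\vee}$; when $a_1 = a_2$, both $R(-a_1)$ summands of $F_1$ contribute simultaneously, matching $\dim R_{a_1} = 2$.

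For the Yoneda composition statement (\ref{natural}), the key observation is that in the window $j < d$ only the Koszul sub-resolution of $\mathbb{C}(0)$ — obtained by discarding the $W$-generated summands, starting with the $R(-d)$ piece of $F_2$ — is involved, and this sub-resolution is the honest Koszul resolution of $\mathbb{C}$ over $A$, whose $\Ext$ algebra is the exterior algebra $\bigwedge^{\bullet}(m/m^2)^{\vee}$. Consequently the Yoneda cup $\Ext^1 \otimes \Ext^1 \to \Ext^2$ in this range is the anti-symmetric wedge; the generator of $\Ext^2(\mathbb{C}(a_1+a_2), \mathbb{C}(0)) \cong R_0^{\vee}$ comes precisely from the Koszul column $(x_2, -x_1)^T$ of $\phi$, and dualizing through the identifications of the two $\Hom^1$ factors yields $1 \mapsto x_1 \otimes x_2 - x_2 \otimes x_1$. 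The main technical obstacle will be this last step: one must carefully verify that the canonical $R$-module identifications $\Hom^1(\mathbb{C}(j), \mathbb{C}(0))^{\vee} \cong R_j$ and their shifted analogues $\Hom^1(\mathbb{C}(j+j'), \mathbb{C}(j'))^{\vee}$ pair up, under the Yoneda product, exactly with the skew-symmetric Koszul wedge, so that the signs from the Koszul differential produce the anti-symmetric combination.
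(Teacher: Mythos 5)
Your computation of \eqref{com:ext} follows the paper's route exactly: identify $\Hom^i_{\HMF(W)}(\mathbb{C}(j),\mathbb{C}(0))$ with $\Ext^i_{\grr R}(\mathbb{C}(j),\mathbb{C}(0))$ and read off the answer from the $2$-periodic minimal graded free resolution of $\mathbb{C}(0)$ over $R$; your matrices $\phi,\psi$ are the paper's $h,h'$ up to reordering summands and signs, and the grading bookkeeping is the same. One small caution on the reduction step: justifying $\Hom^i_{\HMF(W)}\cong\Ext^i_{\grr R}$ for $i=1,2$ by appealing to the periodicity $\tau^{d}=[2]$ alone is not quite enough, since in the singularity category low-degree Homs are a priori stable (Tate) Exts; the paper instead gets this identification from Orlov's embedding of $\HMF(W)$ as the admissible subcategory $\tT_1\subset D^b(\grr R)$ containing the $\mathbb{C}(j)$ for $j$ in the relevant window, exactly as in the proof of Proposition~\ref{prop:t-st}. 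You should route the argument through that.

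For the Yoneda statement \eqref{natural} you take a genuinely different path. The paper lifts the cocycles $x_1^{\vee},x_2^{\vee}$ to explicit chain maps $g_1,g_2$ between shifted copies of the resolution, composes them, and reads off that $x_2^{\vee}\otimes x_1^{\vee}\mapsto -1^{\vee}$ and $x_1^{\vee}\otimes x_2^{\vee}\mapsto 1^{\vee}$; the antisymmetry is an output of the computation. You instead deduce antisymmetry structurally from the exterior-algebra nature of $\Ext_A^{\ast}(\mathbb{C},\mathbb{C})$. That is a correct and arguably cleaner idea, but as stated it has a soft spot: the ``Koszul sub-resolution'' is a subcomplex of the $R$-resolution only through homological degree $2$, and the actual lift of $x_1^{\vee}$ does \emph{not} avoid the $W$-columns (the paper's $g_1$ has entries $W_{11},W_{12}$ there); it is only after projecting onto the $R(-a_1-a_2)$ summand of $F_2$ --- which is where the whole of $\Ext^2(\mathbb{C}(a_1+a_2),\mathbb{C}(0))$ lives, since $a_1+a_2<d$ --- that the $W$-part drops out. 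The clean way to say this is via the change-of-rings map $\Ext^{\ast}_R(\mathbb{C},\mathbb{C})\to\Ext^{\ast}_A(\mathbb{C},\mathbb{C})$ induced by a comparison map from the Koszul resolution over $A$ to the $R$-resolution: this map is multiplicative, is an isomorphism on the graded pieces in question, and identifies the target product with the wedge. The residual sign verification you defer to the end is precisely the chain-level computation the paper carries out, so you are not avoiding it, only isolating it; note also that only the antisymmetric shape of the relation matters for Corollary~\ref{cor:quiver}, so the overall sign ambiguity is harmless.
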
 
\begin{proof}
By the same argument of Proposition~\ref{prop:t-st}, 
we have
\begin{align}\label{HMF=ext}
\Hom_{\HMF(W)}^i(\mathbb{C}(j), \mathbb{C}(0))
\cong \Ext_{\grr R}^i(\mathbb{C}(j), \mathbb{C}(0)). 
\end{align}
Then (\ref{com:ext}) is easily obtained 
by computing the RHS of (\ref{HMF=ext}) using 
the resolution: 
\begin{align}\label{resol}
\cdots \to
R(-a_1-d) \oplus & R(-a_2-d) \stackrel{h'}{\to} 
 R(-d) \oplus R(-a_1-a_2) \\
\notag
 &\stackrel{h}{\to} R(-a_1) \oplus R(-a_2)
\stackrel{(x_1, x_2)}{\to} R  \to \mathbb{C}(0) \to 0. 
\end{align}
Here $h$ and $h'$ given by matrices
\begin{align}\label{matrix:h}
h= \left( \begin{array}{cc}
W_1 & -x_2 \\
W_2 & x_1
\end{array} \right), \quad
h'= \left( \begin{array}{cc}
x_1 & x_2 \\
-W_2 & W_1
\end{array} \right).
\end{align}
Next we consider the map (\ref{natural}). 
We write $W_1$, $W_2$ as 
\begin{align*}
W_1=x_1 W_{11} + x_2 W_{12}, \quad
W_2= x_1 W_{21} + x_2 W_{21}
\end{align*}
for homogeneous elements $W_{k, l} \in A$. 
Let $x_k^{\vee} \in R_{a_k}^{\vee}$ be the dual basis of 
$x_k \in R_{a_k}$, 
and we regard them as elements of the RHS 
of (\ref{HMF=ext})
for $(i, j)=(1, a_k)$.
Then $x_1^{\vee}$ is represented by 
the morphism of complexes
\begin{align*}
\xymatrix{
R(a_1-d) \oplus R(-a_2) \ar[r]^{h(a_1)} \ar[d]_{g_1}  &  R(0) 
\oplus R(a_1-a_2) \ar[r]^{}\ar[d]_{\pi_1} & R(a_1) \ar[d] \\
R(-a_1) \oplus R(-a_2) \ar[r]^{(x_1, x_2)} & R(0) \ar[r] & 0. 
}
\end{align*}
Similarly $x_2^{\vee}$ is represented by 
\begin{align*}
\xymatrix{
R(a_2-d) \oplus R(-a_1) \ar[r]^{h(a_2)} \ar[d]_{g_2}  &  R(a_2 -a_1) 
\oplus R(0) \ar[r]^{}\ar[d]_{\pi_2} & R(a_2) \ar[d] \\
R(-a_1) \oplus R(-a_2) \ar[r]^{(x_1, x_2)} & R(0) \ar[r] & 0. 
}
\end{align*}
Here $\pi_i$ are projections onto the $i$-th factor, and 
$g_i$ are given by matrices
\begin{align*}
g_1 = \left(\begin{array}{cc}
W_{11} & 0 \\
W_{12} & -1
\end{array}\right), \quad 
g_2=\left(\begin{array}{cc}
W_{21} & 1 \\
W_{22} & 0
\end{array}\right). 
\end{align*}
The image of $x_2^{\vee} \otimes x_{1}^{\vee}$ 
by the dual of (\ref{natural})
is computed by composing the above morphisms of complexes.
 By 
restricting the map 
$\pi_2 \circ g_1(a_2)$
to the second component of 
$R(a_1 +a_2-d) \oplus R(0)$, 
we see that $x_2^{\vee} \otimes x_1^{\vee}$
is mapped to $-1^{\vee}$, 
where $1^{\vee}$ is the dual basis of $1 \in R_{0}$. 
Similarly $x_1^{\vee} \otimes x_2^{\vee}$
is mapped to $1^{\vee}$.  
By dualizing, we obtain the result. 
\end{proof}

\begin{rmk}\label{rmk:ext2}
By (\ref{com:ext}) and (\ref{HMF=ext}), an element 
$u \in R_{a_i}^{\vee}$ 
determines the extension in $\grr R$
\begin{align*}
0 \to \mathbb{C}(0) \to M_u \to \mathbb{C}(a_i) \to 0. 
\end{align*}
The graded $R$-module $M_u$
is
isomorphic to $\mathbb{C}(0) \oplus \mathbb{C}(a_i)$
as a graded $\mathbb{C}$-vector space, and 
the action of $x_i$ is given by 
sending $1 \in \mathbb{C}(0)$ to $u(x_i) \in \mathbb{C}(a_i)$. 
\end{rmk}

Next we compute the Hom
 groups between $\mathbb{C}(j)$ and 
$\Psi(\oO_x)$ for closed points $x\in X$.
Let $M(x)$ be the graded 
$R$-module defined by (\ref{def:gM}). 
We have the following lemma: 
\begin{lem}\label{lem:compute2}
Suppose that 
$0\le j < d-a_1 -a_2$, and 
$x\in X$ is represented by $(p_1, p_2) \in \mathbb{C}^2$. 
Then we have 
\begin{align}\label{comp2}
\Hom_{\HMF(W)}^{i}(\mathbb{C}(j), \Psi(\oO_x))
\cong \left\{ \begin{array}{cl}
\mathbb{C} u_j, & i=1, j\in [0, a_2) \\
\mathbb{C} v_j, & i=2, j\in [a_1, a_1 +a_2) \\
0, & \mbox{ otherwise. }
\end{array}  \right. 
\end{align} 
Here $u_j$ and $v_j$ are regarded as elements
\begin{align*}
&u_j=p_1 e_{-j+a_1} \oplus p_2 e_{-j +a_2} 
\in M(x)_{-j+a_1} \oplus M(x)_{-j+a_2} \\
&v_j=v e_{-j +d} \oplus e_{-j + a_1 + a_2}
\in M(x)_{-j +d} \oplus M(x)_{-j + a_1 + a_2} 
\end{align*}
where $v \cneq W_2(p_1, p_2)/p_1 = -W_1(p_1, p_2)/p_2$. 
If $j\in [a_1, a_1 +a_2)$, 
the natural map
\begin{align}\label{natural2}
&\Hom^2(\mathbb{C}(j), \Psi(\oO_x))^{\vee} \to \\
\notag
&\bigoplus_{j' \in \{j-a_1, j-a_2 \}}
\Hom^1(\mathbb{C}(j), \mathbb{C}(j'))^{\vee}
\otimes \Hom(\mathbb{C}(j'), \Psi(\oO_x))^{\vee}
\end{align}
sends the dual basis $v_j^{\vee}$
to $p_2 x_1 \otimes u_{j-a_1}^{\vee}
- p_1 x_2 \otimes u_{j-a_2}^{\vee}$
under the isomorphisms (\ref{com:ext}), (\ref{comp2}). 
(Here we set $u_{j-a_1}^{\vee}=0$ if 
$j-a_1 \ge a_1$. )
\end{lem}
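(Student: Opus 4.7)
The plan is to reduce the whole computation to one in the abelian category $\grr R$ of graded $R$-modules. By Orlov's equivalence (\ref{Cok}) and the construction (\ref{def:Psi}) of $\Psi$, the object $\Psi(\oO_x)$ is represented by the graded $R$-module $M(x)$ from (\ref{def:gM}), since for a non-stacky point $\dR\omega_1(\oO_x)$ is concentrated in degree zero. Arguing exactly as in the proof of Proposition~\ref{prop:t-st} and of Lemma~\ref{lem:compute}, the Hom groups in $\HMF(W)$ identify with Ext groups in $\grr R$:
\[
\Hom^i_{\HMF(W)}(\mathbb{C}(j), \Psi(\oO_x)) \cong \Ext^i_{\grr R}(\mathbb{C}(j), M(x)).
\]

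To compute the right-hand side, I would use the graded free resolution of $\mathbb{C}(j)$ obtained by shifting (\ref{resol}) by $j$, with differentials $(x_1,x_2)$, $h$, $h'$ from (\ref{matrix:h}), and with the $2$-periodic tail determined by $h$ and $h'$. Applying $\Hom_{\grr R}(-, M(x))$ yields a cochain complex whose low-degree part is
\[
M(x)_{-j} \to M(x)_{a_1-j} \oplus M(x)_{a_2-j} \xrightarrow{d^1} M(x)_{d-j} \oplus M(x)_{a_1+a_2-j} \xrightarrow{d^2} M(x)_{a_1+d-j} \oplus M(x)_{a_2+d-j},
\]
with $d^1(m_1,m_2) = (W_1 m_1 + W_2 m_2,\ -x_2 m_1 + x_1 m_2)$ and $d^2(n_1,n_2) = (x_1 n_1 - W_2 n_2,\ x_2 n_1 + W_1 n_2)$. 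Since $M(x)_k = \mathbb{C}e_k$ for $k\ge 1$ and vanishes otherwise, with $x_i$ acting as $p_i$-multiplication on the $e_k$'s, any polynomial operator acts by evaluation at $(p_1,p_2)$. Splitting into the three ranges $j\in[0,a_2)$, $[a_2,a_1)$, $[a_1,a_1+a_2)$, the relation $p_1 W_1(p) + p_2 W_2(p) = W(p) = 0$ is precisely what makes $u_j$ (respectively $v_j$) a cocycle, and a direct dimension count shows it generates the corresponding one-dimensional kernel. The hypotheses $x_1\nmid W_2$ and $x_2\nmid W_1$ ensure $p_1 p_2 \ne 0$ at every closed point of $X$, so that $v=W_2(p)/p_1=-W_1(p)/p_2$ is well defined. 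Vanishing of $\Hom^0$ is immediate from $M(x)_{-j}=0$, and vanishing of $\Hom^{\ge 3}$ follows by Serre duality (\ref{Serre}) combined with the heart property of $\aA_W$.

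For the formula concerning (\ref{natural2}), which is the transpose of the Yoneda pairing $\Ext^1 \otimes \Ext^1 \to \Ext^2$, I would represent $x_k^{\vee} \in R_{a_k}^{\vee} \cong \Hom^1(\mathbb{C}(j),\mathbb{C}(j-a_k))$ by the chain-level map $(e_1,e_2)\mapsto e_k$ on the first syzygy of $\mathbb{C}(j)$, lift it to a chain map of resolutions $\widetilde{\phi}_k^{\bullet}\colon P^{\bullet}_{\mathbb{C}(j)} \to P^{\bullet}_{\mathbb{C}(j-a_k)}[1]$ using the decompositions $W_\ell = x_1 W_{\ell 1} + x_2 W_{\ell 2}$ (mirroring the computation of (\ref{natural}) in Lemma~\ref{lem:compute}), and then compose with $u_{j-a_k}$. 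The contributions $p_1 W_{k1}(p)+p_2 W_{k2}(p) = W_k(p)$ reassemble on the first component and combine with $W(p)=0$ to produce $x_1^{\vee}\cdot u_{j-a_1} \propto p_2 v_j$ and $x_2^{\vee}\cdot u_{j-a_2} \propto -p_1 v_j$; dualizing gives the claimed image of $v_j^{\vee}$. The main obstacle is this last Yoneda step: although each individual piece is routine, the lifts $\widetilde{\phi}_k^{-2}$ are non-unique, and one must track the coefficients $W_{k\ell}(p)$ together with Yoneda's sign convention carefully enough for the factors $p_1, p_2$ to emerge cleanly as scalar multiples of the single cocycle $v_j$ that was fixed in the second paragraph.
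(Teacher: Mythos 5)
Your proof follows the paper's own argument essentially verbatim: identify $\Hom^i_{\HMF(W)}(\mathbb{C}(j),\Psi(\oO_x))$ with $\Ext^i_{\grr R}(\mathbb{C}(j),M(x))$, apply $\Hom_{\grr R}(-,M(x)(-j))$ to the resolution (\ref{resol}) to obtain the complex (\ref{com:M}) whose nonzero maps are all rank one, and compute the Yoneda product by the same chain-level lifting used for (\ref{natural}). The one quibble is your appeal to Serre duality for the vanishing of $\Hom^{\ge 3}$ --- as stated this needs additional input about where $\mathbb{C}(j-\varepsilon)$ sits relative to the heart $\aA_W$ --- but the $2$-periodic tail of the very complex you wrote down (rank-one maps between copies of $\mathbb{C}^2$ with vanishing composition, since $\det h(p)=\det h'(p)=W(p)=0$) already gives that vanishing directly, so nothing is lost.
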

\begin{proof}
Similarly to the proof of Lemma~\ref{lem:compute}, 
we have the isomorphism 
\begin{align}\label{isom:x}
\Hom_{\HMF(W)}^i(\mathbb{C}(j), \Psi(\oO_x))
\cong 
\Ext_{\grr R}^i(\mathbb{C}(j), M(x)).
\end{align}
Applying $\Hom_{\grr R}(\ast, M(x)(-j))$ to the 
exact sequence (\ref{resol}), 
the RHS of (\ref{isom:x})
 is computed by the $i$-th 
cohomology 
group of the following complex
\begin{align}\label{com:M}
0 \to M(x)_{-j} &\stackrel{(p_1, p_2)}{\lr} 
M(x)_{-j+a_1} \oplus M(x)_{-j+a_2} \\ &\quad 
\stackrel{^{t}h(p_1, p_2)}{\lr} 
\notag
M(x)_{-j+d} \oplus M(x)_{-j+a_1 +a_2} \\ &
\qquad \stackrel{^{t}h'(p_1, p_2)}{\lr}
\notag
M(x)_{-j+d+a_1} \oplus M(x)_{-j+d+a_2} \to \cdots. 
\end{align}
Here $h(p_1, p_2)$, $h'(p_1, p_2)$ are the
substitution of $(x_1, x_2)=(p_1, p_2)$ to the matrices
(\ref{matrix:h}). 
Then (\ref{comp2}) easily follows by noting that 
every non-zero maps in the complex (\ref{com:M})
are rank one. The image of $v_j^{\vee}$
by the map (\ref{natural2})
is computed similarly to (\ref{natural}), so we omit the detail. 
\end{proof}

The above computations enable us to describe $\aA_W$
in terms of quiver representations with 
relations. Here we only discuss the case of 
$a_1=a_2=1$. 
Recall that, given a set of objects $(F_N, \cdots, F_2, F_1)$,
the ext-quiver $\qQ(F_{\bullet})$ is defined as follows: 
the set of vertices is 
\begin{align*}
\{1, 2, \cdots, N\}
\end{align*} and 
the number of edges from $j$ to $j'$ is 
the dimension of $\Ext^1(F_j, F_{j'})$, which 
we identify with a basis of $\Ext^1(F_j, F_{j'})^{\vee}$. 
The following lemma may be well-known, but we include the 
proof later in Subsection~\ref{subsec:lem:equiv},
because of a lack of a reference. 
\begin{lem}\label{lem:equivalence}
Let $\dD$ be a triangulated category with finite dimensional 
Hom spaces, generated by 
an ext-exceptional collection $(F_N, \cdots, F_{2}, F_1)$. 
Let $\aA$ be the heart of a bounded t-structure on $\dD$
given by the extension closure of all $F_i$ for $1\le i\le N$. 
Suppose that 
there is a partition 
\begin{align*}
\{1, \cdots, N\} =P_1 \sqcup \cdots \sqcup P_l, \ 
j'>j \mbox{ if } j\in P_k, j' \in P_{k'}, k'>k. 
\end{align*}
such 
that, by setting $\widehat{F}_k \cneq \oplus_{j\in P_k}
F_j$, the following condition holds: 
\begin{align}\label{lem:cond}
\Ext^i(\widehat{F}_{k'}, \widehat{F}_{k})=0 \mbox{ unless }
(i, k'-k)=(1, 1), (2, 2). 
\end{align}
Then $\aA$ is equivalent to the abelian 
category of $\qQ(F_{\bullet})$-representations 
with relations generated by the images of the following 
natural maps for all $1\le k\le l$: 
\begin{align}\label{relation}
\Ext^2(\widehat{F}_{k+2}, \widehat{F}_k)^{\vee} 
\to \Ext^1(\widehat{F}_{k+2}, \widehat{F}_{k+1})^{\vee}
\otimes \Ext^1(\widehat{F}_{k+1}, \widehat{F}_k)^{\vee}. 
\end{align}
\end{lem}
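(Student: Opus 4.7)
The plan is to exhibit the equivalence concretely by assigning to each object of $\aA$ the representation whose vertex spaces record composition multiplicities and whose arrows record connecting extensions, then reverse the construction by iterated extension. The central feature exploited throughout is that (\ref{lem:cond}) forces all higher obstructions (beyond the $\Ext^2$ terms producing the relations) to vanish for shape reasons, so the ext-quiver with the quadratic relations captures everything.

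First I would collect preliminaries. By the ext-exceptional hypothesis each $F_j$ is a simple object of $\aA$: $\End(F_j) = \mathbb{C}$ and any subobject or quotient would have to sit in a filtration by $F_i$'s (cf.~Remark~\ref{rmk:filt}), forcing it to be $0$ or $F_j$ itself. Combined with Remark~\ref{rmk:filt}, $\aA$ is of finite length with exactly $\{F_1,\dots,F_N\}$ as simples. Moreover (\ref{lem:cond}) specialized to $k'=k$ says that, \emph{within} each block $P_k$, all nonzero $\Ext$-groups vanish, so the objects $F_j$ with $j\in P_k$ are mutually orthogonal; hence any extension among them splits.

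Next I would build the functor $\Phi\colon \aA\to \Rep(\qQ(F_\bullet),I)$. For $E\in\aA$, define $E^{(k)}\subset E$ to be the largest subobject whose composition factors lie in $\bigcup_{k'\le k}P_{k'}$; these form an ascending filtration and the previous paragraph ensures that each $E^{(k)}/E^{(k-1)}$ is a direct sum $\bigoplus_{j\in P_k} F_j^{\oplus m_{k,j}}$. Set $V_j$ to be this multiplicity space at each vertex $j$. For each basis element $\alpha\in \Ext^{1}(F_j,F_{j'})^{\vee}$ of an arrow (which by (\ref{lem:cond}) can only occur with $j\in P_{k+1}$, $j'\in P_{k}$), the connecting class of $0\to E^{(k)}\to E^{(k+1)}\to E^{(k+1)}/E^{(k)}\to 0$ lies in $\Ext^{1}(E^{(k+1)}/E^{(k)},E^{(k)})$, and pairing its $(j,j')$-component with $\alpha$ gives the associated linear map $V_j\to V_{j'}$. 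The image landing inside $\Rep(\qQ(F_\bullet),I)$ is checked directly: for $\beta\in\Ext^{1}(F_j,F_{j'})^{\vee}$ and $\gamma\in \Ext^{1}(F_{j'},F_{j''})^{\vee}$ with $j\in P_{k+2}$, $j'\in P_{k+1}$, $j''\in P_{k}$, the composition of the two connecting maps evaluates the Yoneda product of the two extension classes, and the relations (\ref{relation}) are precisely the vanishing of this composition when $\beta\otimes\gamma$ lies in the image of the dual map from $\Ext^{2}(\widehat{F}_{k+2},\widehat{F}_k)^{\vee}$.

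Finally I would prove that $\Phi$ is an equivalence. For essential surjectivity, given a representation $(V_j,f_\alpha)$ satisfying the relations, reconstruct the object blockwise: build $E^{(1)}=\bigoplus_{j\in P_1}F_j^{\oplus \dim V_j}$, then extend by $\bigoplus_{j\in P_2}F_j^{\oplus \dim V_j}$ using the class of $f$ read through $\Ext^{1}(\widehat{F}_2,\widehat{F}_1)$, and continue inductively; the $(k{+}2)$-th step is obstructed only by an $\Ext^{2}$-class that, by (\ref{lem:cond}), is exactly the Yoneda product of two successive $\Ext^{1}$-classes, and the imposed relations force it to vanish, while no obstructions of higher degree can appear because $\Ext^{i}(\widehat{F}_{k+i},\widehat{F}_k)=0$ for $i\geq 3$. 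Fully faithfulness follows by induction on length: given $E,E'\in\aA$, the filtrations $E^{(\bullet)},E'^{(\bullet)}$ reduce computing $\Hom_\aA(E,E')$ to iterated $\Hom$ and $\Ext^{1}$ between the $F_j$, which match the $\Hom$ in the representation category by construction. The main obstacle will be Step 4, and especially the essential surjectivity argument, where one needs to check that the algebraic recipe of iterated extensions converges to an object of $\aA$ without encountering unexpected higher obstructions; the sharp form of (\ref{lem:cond}) is what rules out any nonzero Massey products or higher $A_\infty$-operations, making the quadratic relations (\ref{relation}) both necessary and sufficient.
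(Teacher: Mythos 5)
Your proposal is correct and follows essentially the same route as the paper: the functor is built from the block filtration and the connecting extension classes, the relations are verified via the vanishing of composed extension classes, and essential surjectivity is proved block by block with the $\Ext^2$-obstruction killed by the relations together with condition (\ref{lem:cond}). The only cosmetic difference is that the paper organizes the inductive step as showing that $\Phi$ induces an isomorphism $\Ext^1_{\aA}(F_j,E')\to\Ext^1_{\mathrm{Rep}}(e_j,W')$, whereas you lift the desired extension class directly; these amount to the same computation.
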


The following corollary directly follows from 
Lemma~\ref{lem:compute}, Lemma~\ref{lem:compute2}
and Lemma~\ref{lem:equivalence}. 

\begin{cor}\label{cor:quiver}
Suppose that $a_1=a_2=1$
and we write 
\begin{align*}
X=\{p^{(i)}=(p_1^{(i)}, p_2^{(i)}) \in \mathbb{P}^1 : 1 \le i\le d \}.\end{align*} 
Then $\aA_W$ is equivalent to the 
category of representations of the quiver of the form
\begin{align}\label{quiver}
    \xygraph{!~:{@{=}|@{>}} !~-{@{>}}
{\bullet}*+!D{v^{(d-3)}} 
: _{X_2^{(d-3)}}^{X_1^{(d-3)}}[rr]{\bullet}*+!D{}
:[r] \cdots :[r]
    {\bullet}*+!D{v^{(1)}} : _{X_2^{(1)}}^{X_1^{(1)}}[rr]{\bullet}*+!D{v^{(0)}}
        (-^{\pi^{(d)}}[ru]{\bullet}*+!D{w^{(d)}}, -^{\pi^{(j)}}[r]{\vdots},
        -_{\pi^{(1)}}[rd]{\bullet}*+!D{w^{(1)}})
    }
\end{align}
with relations given by 
\begin{align*}
X_2^{(i-1)} X_1^{(i)}=X_1^{(i-1)}X_2^{(i)}, \quad
p_2^{(j)}\pi^{(j)}X_1^{(1)} = p_1^{(j)}\pi^{(j)} X_2^{(1)}
\end{align*}
for all $2\le i\le d-3$ and $1\le j\le d$. 
The vertex $v^{(i)}$ corresponds to 
$\mathbb{C}(i)$ and $w^{(j)}$ corresponds to 
$\Psi(\oO_{p^{(j)}})$. 
\end{cor}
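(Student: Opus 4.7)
The plan is to deduce the corollary directly by applying Lemma~\ref{lem:equivalence} to the generators of $\aA_W$ listed in (\ref{sodA}), feeding in the explicit $\Hom$-computations of Lemmas~\ref{lem:compute} and~\ref{lem:compute2}. Since $n=a_1=a_2=1$ gives $\varepsilon=2-d$, the description (\ref{sodA}) reads
\begin{align*}
\aA_W=\langle \mathbb{C}(d-3),\ldots,\mathbb{C}(0),\Psi(\oO_{p^{(1)}}),\ldots,\Psi(\oO_{p^{(d)}})\rangle_{\rm ex}.
\end{align*}
Because $X$ is a disjoint union of $d$ reduced points, each $\oO_{p^{(j)}}$ is exceptional in $D^b\Coh(X)$ and distinct points are orthogonal; since $\Psi$ is fully faithful, the same holds for the $\Psi(\oO_{p^{(j)}})$. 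Together with the obvious exceptionality of each $\mathbb{C}(i)$ (a simple object in $\grr R$), this exhibits the displayed sequence as an ext-exceptional collection.

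Next I would verify the hypothesis of Lemma~\ref{lem:equivalence} with the partition
\begin{align*}
P_1=\{\Psi(\oO_{p^{(1)}}),\ldots,\Psi(\oO_{p^{(d)}})\},\qquad P_{i+2}=\{\mathbb{C}(i)\}\ (0\le i\le d-3),
\end{align*}
so that $\widehat F_1=\bigoplus_j\Psi(\oO_{p^{(j)}})$ and $\widehat F_{i+2}=\mathbb{C}(i)$. Lemma~\ref{lem:compute} (with $a_1=a_2=1$ and arbitrary shifts) yields $\Ext^1(\mathbb{C}(i+1),\mathbb{C}(i))\cong R_1^\vee$ of dimension $2$, $\Ext^2(\mathbb{C}(i+2),\mathbb{C}(i))\cong R_0^\vee$ of dimension $1$, and all other $\Ext^{\le 2}$'s between the $\mathbb{C}(j)$'s vanish in the relevant range; Lemma~\ref{lem:compute2} yields $\Ext^1(\mathbb{C}(0),\Psi(\oO_{p^{(j)}}))\cong\mathbb{C}$, $\Ext^2(\mathbb{C}(1),\Psi(\oO_{p^{(j)}}))\cong\mathbb{C}$, and $\Ext^{\le 0}$ vanishing. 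Combined, these give $\Ext^i(\widehat F_{k'},\widehat F_k)=0$ except when $(i,k'-k)\in\{(1,1),(2,2)\}$, which is exactly condition (\ref{lem:cond}).

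With the hypothesis checked, Lemma~\ref{lem:equivalence} identifies $\aA_W$ with representations of the ext-quiver $\qQ(F_\bullet)$ modulo relations generated by the images of the maps (\ref{relation}). The $\Ext^1$ data above dictates the vertex set and the edges: from $v^{(i)}\leftrightarrow\mathbb{C}(i)$ to $v^{(i-1)}\leftrightarrow\mathbb{C}(i-1)$ there are two arrows, which I label $X_1^{(i)},X_2^{(i)}$ dual to $x_1,x_2\in R_1$; from $v^{(0)}\leftrightarrow\mathbb{C}(0)$ to $w^{(j)}\leftrightarrow\Psi(\oO_{p^{(j)}})$ there is one arrow $\pi^{(j)}$ dual to $u_0$. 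This is precisely the quiver (\ref{quiver}).

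Finally, the relations come from the second statements of Lemmas~\ref{lem:compute} and~\ref{lem:compute2}. For $k\ge 2$ (three consecutive $\mathbb{C}(\bullet)$'s), Lemma~\ref{lem:compute} says that $1\in R_0$ maps to $x_1\otimes x_2-x_2\otimes x_1$, which under the Yoneda-product dictionary translates to the path relation $X_2^{(i-1)}X_1^{(i)}=X_1^{(i-1)}X_2^{(i)}$ for $i=k\in[2,d-3]$. For $k=1$, Lemma~\ref{lem:compute2} with $j=1$, $a_1=a_2=1$ says $v_1^\vee\mapsto p_2^{(j)} x_1\otimes u_0^\vee-p_1^{(j)} x_2\otimes u_0^\vee$, which translates to $p_2^{(j)}\pi^{(j)}X_1^{(1)}=p_1^{(j)}\pi^{(j)}X_2^{(1)}$ for each $1\le j\le d$. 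These are exactly the stated relations. The main (and only) technical care needed is the bookkeeping in the final step: matching the direction of Yoneda composition with the left-to-right convention for composing paths in the quiver, and verifying that $X_1^{(i)},X_2^{(i)}$ are dual to $x_1,x_2$ in the isomorphism $\Ext^1(\mathbb{C}(i),\mathbb{C}(i-1))^\vee\cong R_1$ coming from (\ref{com:ext}); once these identifications are fixed consistently, the two computations feed directly into the statement.
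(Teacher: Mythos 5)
Your proposal is correct and follows exactly the route the paper intends: the paper states that the corollary ``directly follows from Lemma~\ref{lem:compute}, Lemma~\ref{lem:compute2} and Lemma~\ref{lem:equivalence},'' and your write-up simply supplies the details of that derivation (the partition, the verification of condition (\ref{lem:cond}), and the translation of the maps (\ref{natural}) and (\ref{natural2}) into the stated relations). No discrepancies to report.
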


By investigating the filtration (\ref{filt:m}),
we are able to describe 
$\mathbb{C}(-\varepsilon)[-1]$ 
in terms of a 
representation of a quiver 
(\ref{quiver}). 
The following corollary 
is a straightforward adaptation of 
Corollary~\ref{cor:quiver}, 
Remark~\ref{rmk:ext1} and Remark~\ref{rmk:ext2}: 
\begin{cor}\label{cor:quiver2}
In the situation of Corollary~\ref{cor:quiver}, 
the object $\mathbb{C}(-\varepsilon)[-1]$
in $\aA_W$ is 
the representation of the quiver (\ref{quiver}) given 
as follows: 
\begin{align}\label{quiver2}
    \xygraph{!~:{@{=}|@{>}} !~-{@{>}}
{\bullet}*+!D{R_1} : _{x_2}^{x_1}[rr]{\bullet}*+!D{R_2}
:[r] \cdots :[r]
    {\bullet}*+!D{R_{d-3}}
: _{x_2}^{x_1}[rr]{\bullet}*+!D{R_{d-2}}
        (-^{\pi^{(d)}}[ru]{\bullet}*+!D{\mathbb{C}}, -[r]{\vdots},
        -_{\pi^{(1)}}[rd]{\bullet}*+!D{\mathbb{C}})
    }
\end{align}
Here $\pi^{(j)} \colon R_{d-2} \to \mathbb{C}$
is the evaluation at $p^{(j)}=(p_1^{(j)}, p_2^{(j)})$. 
\end{cor}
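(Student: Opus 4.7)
The plan is to combine the filtration from Lemma~\ref{lem:filt:A} with the ext-quiver description of $\aA_W$ established in Corollary~\ref{cor:quiver}, and read off the vertex data and arrows from the extension classes.

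First, specializing Lemma~\ref{lem:filt:A} to $n=a_1=a_2=1$ (so $\varepsilon = 2-d$), I obtain a filtration
\begin{align*}
0 \subset E_{-1} \subset E_0 \subset \cdots \subset E_{d-3} = \mathbb{C}(-\varepsilon)[-1]
\end{align*}
in $\aA_W$ with $E_{-1} \cong \Psi(\omega_X)$ and $E_i/E_{i-1} \cong \mathbb{C}(i) \otimes R_{d-2-i}$ for $0 \le i \le d-3$. Since $X$ is the disjoint union of the $d$ non-stacky points $p^{(1)}, \dots, p^{(d)}$, every line bundle on $X$ is (non-canonically) isomorphic to $\oO_X$, so $\Psi(\omega_X) \cong \bigoplus_{j=1}^{d} \Psi(\oO_{p^{(j)}})$. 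Under the equivalence in Corollary~\ref{cor:quiver}, this already pins down the vertex data, assigning $R_{d-2-i}$ to $v^{(i)}$ and $\mathbb{C}$ to each $w^{(j)}$, as in (\ref{quiver2}).

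Second, to identify the arrows I would work with the concrete model for the filtration given by the ideal filtration $m_{\ge d-1}(-\varepsilon) \subset \cdots \subset m_{\ge 1}(-\varepsilon) = m(-\varepsilon)$ used in the proof of Lemma~\ref{lem:filt:A}, whose successive subquotients are the $R_j$'s with the $R$-module structure inherited from the ambient ring. Applying Remark~\ref{rmk:ext2} summand by summand to the two-step extension with quotients $\mathbb{C}(i-1)\otimes R_{d-1-i}$ and $\mathbb{C}(i)\otimes R_{d-2-i}$, the extension class decomposes as $x_1^{\vee} \otimes (\cdot x_1) + x_2^{\vee} \otimes (\cdot x_2)$, so the pair of arrows $X_1^{(i)}, X_2^{(i)}$ are precisely multiplication by $x_1$ and $x_2$. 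For the bottom extension $E_{-1} \subset E_0$ involving $\Psi(\omega_X)$, I would apply Remark~\ref{rmk:ext1} with $F = \omega_X$: the class in $\Ext^1(\mathbb{C}(0)\otimes R_{d-2}, \Psi(\omega_X)) \cong R_{d-2}^{\vee} \otimes H^0(X,\omega_X)$ is the one induced by the natural map $R_{d-2} \hookrightarrow R$, whose image in the summand $H^0(\{p^{(j)}\}, \omega_X) \cong \mathbb{C}$ is evaluation at $p^{(j)}$. Hence $\pi^{(j)}$ is identified with $r \mapsto r(p^{(j)})$, as claimed.

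Finally, the relations are automatic once the arrows are identified. The square relations $X_2^{(i-1)} X_1^{(i)} = X_1^{(i-1)} X_2^{(i)}$ follow from $x_1 x_2 = x_2 x_1$ in $R$, since both compositions are multiplication by $x_1 x_2$ on $R_{d-2-i}$. The boundary relation $p_2^{(j)} \pi^{(j)} X_1^{(1)} = p_1^{(j)} \pi^{(j)} X_2^{(1)}$ is the identity $p_2^{(j)} (x_1 r)(p^{(j)}) = p_1^{(j)} (x_2 r)(p^{(j)})$ for $r \in R_{d-3}$, which reduces to $p_2^{(j)} p_1^{(j)} r(p^{(j)}) = p_1^{(j)} p_2^{(j)} r(p^{(j)})$. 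The main obstacle is purely bookkeeping: carefully reconciling the indexing $i \leftrightarrow j = d-2-i$ between the Lemma~\ref{lem:filt:A} filtration and the ideal filtration $\{m_{\ge j}\}$, and propagating the identifications from $\grr R$ through $D_{\rm{sg}}^{\rm{gr}}(R)$ to $\HMF(W)$ without losing track of grade shifts.
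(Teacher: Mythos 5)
Your proposal is correct and follows exactly the route the paper takes: the paper presents this corollary as a "straightforward adaptation" of the filtration (\ref{filt:m}) from Lemma~\ref{lem:filt:A} together with Corollary~\ref{cor:quiver}, Remark~\ref{rmk:ext1} and Remark~\ref{rmk:ext2}, which are precisely the three ingredients you combine (vertex data from the subquotients $\mathbb{C}(i)\otimes R_{d-2-i}$ and $\Psi(\omega_X)\cong\bigoplus_j\Psi(\oO_{p^{(j)}})$, arrows from the extension classes). Your additional verification of the relations is not strictly needed, since any object of $\aA_W$ automatically yields a representation satisfying them under the equivalence of Corollary~\ref{cor:quiver}, but it is a harmless consistency check.
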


\subsection{Description of $\aA_W$ via coherent systems}
In this subsection, we assume that 
$\varepsilon=-1$ and 
describe the heart $\aA_W$
in terms of coherent systems on $X$.  
Let us recall the definition of coherent systems. 
\begin{defi}
A coherent system on a Deligne-Mumford stack $X$
is data
\begin{align*}
V\otimes \oO_X \stackrel{s}{\to} F
\end{align*}
where $V$ is a finite dimensional $\mathbb{C}$-vector 
space, $F \in \Coh(X)$ and 
$s$ is a morphism in $\Coh(X)$. 
\end{defi}
The category of coherent systems on $X$
is denoted by $\mathrm{Syst}(X)$. 
The set of morphisms is given by the 
commutative diagrams in $\Coh(X)$
\begin{align*}
\xymatrix{
V\otimes \oO_X \ar[r]^{s} \ar[d] & F \ar[d] \\
V'\otimes \oO_X \ar[r]^{s'} & F'. 
}
\end{align*}
Obviously $\mathrm{Syst}(X)$ is an abelian 
category. We have the following proposition:
\begin{prop}\label{prop:Syst}
Suppose that $\varepsilon=-1$. 
Then we have an equivalence of abelian categories
\begin{align*}
\Theta \colon 
\mathrm{Syst}(X) 
\stackrel{\sim}{\to} \aA_W. 
\end{align*}
\end{prop}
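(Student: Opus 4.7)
The plan is to construct $\Theta$ so that a coherent system corresponds to an extension in $\aA_W$ of a multiple of $\mathbb{C}(0)$ by $\Psi(F)$. The starting point is Lemma~\ref{lem:adj}: since $\Psi^L(\mathbb{C}(0)) \cong \oO_X[1]$, the adjunction $(\Psi^L,\Psi)$ yields, for any $F \in \Coh(X)$ and any finite dimensional $\mathbb{C}$-vector space $V$, a natural isomorphism
\begin{equation*}
\Ext^1_{\HMF(W)}(V \otimes \mathbb{C}(0),\, \Psi(F)) \;\cong\; V^{\vee} \otimes H^0(X, F) \;\cong\; \Hom_X(V \otimes \oO_X,\, F).
\end{equation*}
Given a coherent system $s \colon V \otimes \oO_X \to F$, I define $\Theta(s)$ as the middle term of the short exact sequence in $\aA_W$
\begin{equation*}
0 \to \Psi(F) \to \Theta(s) \to V \otimes \mathbb{C}(0) \to 0
\end{equation*}
whose extension class is the image of $s$ under the above isomorphism.

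Functoriality and exactness are formal: a morphism $(f,g)$ of coherent systems induces a compatible morphism of the two extensions by naturality of the isomorphism in $V$ and $F$, and exactness of $\Theta$ follows from the snake lemma in $\aA_W$ together with the exactness of $F \mapsto \Psi(F)$ and $V \mapsto V \otimes \mathbb{C}(0)$. For essential surjectivity, I would invoke Remark~\ref{rmk:filt} applied to (\ref{sodA}) specialized at $\varepsilon = -1$: every $E \in \aA_W$ fits into a short exact sequence
\begin{equation*}
0 \to \Psi(F) \to E \to \widehat{V} \to 0
\end{equation*}
with $F \in \Coh(X)$ and $\widehat{V} \in \langle \mathbb{C}(0)\rangle_{\rm{ex}}$. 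Since $\mathbb{C}(0)$ is exceptional (and hence simple in the abelian category $\langle \mathbb{C}(0)\rangle_{\rm{ex}}$), we have $\widehat{V} = V \otimes \mathbb{C}(0)$ for a unique finite dimensional $V$, and the extension class recovers a coherent system via the identification above.

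Full faithfulness is the main obstacle. The strategy is to compute $\Hom_{\aA_W}(\Theta(s), \Theta(s'))$ by applying the functors $\Hom(V \otimes \mathbb{C}(0), -)$ and $\Hom(\Psi(F), -)$ to the defining extension of $\Theta(s')$, then assembling the results via the defining extension of $\Theta(s)$. The key inputs are the vanishing $\Hom(\mathbb{C}(0), \Psi(F')) = 0$ (from the semiorthogonality in (\ref{sodA}), or equivalently from Lemma~\ref{lem:adj} using $\Hom_X(\oO_X[1], F') = 0$), the full faithfulness of $\Psi|_{\Coh(X)}$, and the $\Ext^1$-identification above. These inputs assemble into a four-term exact sequence exhibiting $\Hom_{\aA_W}(\Theta(s), \Theta(s'))$ as the kernel of
\begin{equation*}
\Hom_X(F, F') \oplus \Hom_{\mathbb{C}}(V, V') \;\longrightarrow\; \Hom_X(V \otimes \oO_X,\, F'), \qquad (g, f) \longmapsto g \circ s - s' \circ (f \otimes \id),
\end{equation*}
which is by definition $\Hom_{\mathrm{Syst}(X)}(s, s')$. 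The subtle point, which I would need to verify by tracing through the adjunction and Remark~\ref{rmk:ext:uni}, is that the connecting homomorphism in the long exact sequence is precisely the difference $g \circ s - s' \circ (f \otimes \id)$ under our chosen identification $\Ext^1(V \otimes \mathbb{C}(0), \Psi(F')) \cong \Hom_X(V \otimes \oO_X, F')$; once this is established, fully faithfulness, and hence the proposition, follows.
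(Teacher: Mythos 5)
Your proposal is correct and follows essentially the same route as the paper: both identify objects of $\aA_W$ with extensions of $V\otimes\mathbb{C}(0)$ by $\Psi(F)$ and use Lemma~\ref{lem:adj} together with the semiorthogonality $\Hom(\mathbb{C}(0),\Psi\Coh(X))=0$ to match extension classes with morphisms $V\otimes\oO_X\to F$. The only difference is presentational — the paper constructs the explicit inverse functor $\Theta'$ and declares the two composites to be the identity, whereas you verify full faithfulness and essential surjectivity directly; your Hom computation is a correct (and somewhat more detailed) account of what the paper leaves implicit.
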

\begin{proof}
Let us take a coherent system 
$(V\otimes \oO_X \stackrel{s}{\to} F)$
on $X$.  
By Lemma~\ref{lem:adj},
the morphism $s$ is regarded as an element
\begin{align*}
s' \in \Hom_{\HMF(W)}(V \otimes \mathbb{C}(0), \Psi(F)[1]). 
\end{align*} 
The cone of $s'$ determines an object in $\aA_W$. 
The
correspondence
\begin{align*}
\Theta \colon 
(F, s) \mapsto \Cone(s')
\end{align*}
is a functor
from $\mathrm{Syst}(X)$
to $\aA_W$
because, 
as in the proof of Proposition~\ref{prop:t-st}, 
we have the vanishing 
$\Hom(\mathbb{C}(0), \Psi \Coh(X))=0$.

Conversely, 
let us take an object $E \in \aA_W$. 
There is an exact sequence in $\aA_W$
\begin{align}\label{ex:sys}
0 \to \Psi(F) \to E \to V\otimes \mathbb{C}(0) \to 0
\end{align}
for a finite dimensional vector space $V$ and 
$F \in \Coh(X)$. 
By Lemma~\ref{lem:adj}, 
the extension class $\xi$
of (\ref{ex:sys}) 
is regarded as an element 
\begin{align*}
\xi' \in \Hom(V \otimes \oO_X, F).
\end{align*}
The pair $(F, \xi')$ determines an object in 
$\mathrm{Syst}(X)$. 
The correspondence 
\begin{align*}
\Theta' \colon E \mapsto (F, \xi')
\end{align*}
is a functor from $\aA_W$
to $\mathrm{Syst}(X)$
since 
$\langle \mathbb{C}(0), \Psi D^b \Coh(X) \rangle$
is a semiorthogonal decomposition of $\HMF(W)$. 

Obviously we have 
\begin{align*}
\Theta' \circ \Theta \cong \id_{\mathrm{Syst}(X)}, \quad 
\Theta \circ \Theta' \cong \id_{\aA_W}
\end{align*}
hence $\Theta$ is an equivalence.
\end{proof}

Combined with Lemma~\ref{lem:filt:A}
and Remark~\ref{rmk:ext1}, we immediately obtain the 
following corollary:
\begin{cor}\label{cor:canonical}
Suppose that $\varepsilon=-1$. 
Then the object $\mathbb{C}(1)[-1] \in \aA_W$ is 
given by 
\begin{align*}
\mathbb{C}(1)[-1] \cong \Theta \left( H^0(X, \oO_X(1)) \otimes \oO_X
\stackrel{s}{\to} \oO_X(1) \right). 
\end{align*}
Here $s$ is the canonical evaluation morphism. 
\end{cor}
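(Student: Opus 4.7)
The plan is to reduce the claim to Lemma~\ref{lem:filt:A} (applied with $\varepsilon=-1$) and then trace the resulting extension through the equivalence $\Theta$ of Proposition~\ref{prop:Syst}.

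First I would specialize Lemma~\ref{lem:filt:A} to $\varepsilon=-1$. The filtration range $0 \le i \le -1-\varepsilon$ collapses to the single value $i=0$, so $\mathbb{C}(1)[-1] \in \aA_W$ sits in a single short exact sequence
\begin{align*}
0 \to E_{-1} \to \mathbb{C}(1)[-1] \to E_0/E_{-1} \to 0,
\end{align*}
with $E_{-1} \cong \Psi(\omega_X) \cong \Psi(\oO_X(1))$ (since $\omega_X\cong\oO_X(-\varepsilon)$) and $E_0/E_{-1} \cong \mathbb{C}(0)\otimes R_{-\varepsilon} = R_1 \otimes \mathbb{C}(0)$. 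Using the standard identification $R_1 \cong H^0(X,\oO_X(1))$ (which is valid in this weighted projective setting because $d\ge 2$ kills the kernel and the relevant higher cohomology on $\mathbb{P}(a_1,\ldots,a_n)$ vanishes), this reads
\begin{align*}
0 \to \Psi(\oO_X(1)) \to \mathbb{C}(1)[-1] \to H^0(X,\oO_X(1))\otimes \mathbb{C}(0) \to 0.
\end{align*}

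Next I would apply the equivalence $\Theta$ of Proposition~\ref{prop:Syst}. By the construction there (and its quasi-inverse $\Theta'$), an exact sequence $0\to\Psi(F)\to E\to V\otimes\mathbb{C}(0)\to 0$ in $\aA_W$ corresponds to the coherent system whose underlying morphism $V\otimes\oO_X\to F$ is the extension class regarded as an element of $\Ext^1_{\HMF(W)}(\mathbb{C}(0),\Psi(F))\cong\Hom_X(\oO_X,F)=H^0(X,F)$ via Lemma~\ref{lem:adj}. Thus it suffices to identify the extension class of our sequence as the tautological class
\begin{align*}
\mathrm{id} \in H^0(X,\oO_X(1)) \otimes H^0(X,\oO_X(1))^{\vee} \cong \Ext^1(H^0(X,\oO_X(1))\otimes \mathbb{C}(0), \Psi(\oO_X(1))).
\end{align*}

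The main work, and the main obstacle, is this identification of the extension class. I would do it by passing to the graded $R$-module picture used in the proof of Lemma~\ref{lem:filt:A}: $\mathbb{C}(1)[-1]$ is represented by $m(1)$ in $D^{\rm gr}_{\rm sg}(R)$, the subobject $\Psi(\oO_X(1))$ by $m_{\ge 2}(1)\cong \dR\omega_1(\oO_X(1))$, and the quotient $R_1\otimes\mathbb{C}(0)$ by $(m/m_{\ge 2})(1)$. For each basis vector $x_i\in R_1$ of the quotient, the pullback extension is, as a graded $R$-module, $\dR\omega_1(\oO_X(1))\oplus\mathbb{C}\cdot x_i$, with $x_j$ acting on $x_i$ by $x_i x_j \in R_2 \subset \dR\omega_1(\oO_X(1))$. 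Comparing with the explicit formula of Remark~\ref{rmk:ext1}, the extension class associated to the basis vector $x_i$ is precisely $x_i\in R_1 = H^0(X,\oO_X(1))$.

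Finally, assembling these computations, the morphism $H^0(X,\oO_X(1))\otimes\oO_X\to\oO_X(1)$ extracted from the extension sends each $x_i\otimes 1$ to $x_i$, i.e.\ it is the identity on $H^0(X,\oO_X(1))$ tensored with the tautological map, which is exactly the canonical evaluation morphism. Applying $\Theta$ then gives the required isomorphism
\begin{align*}
\mathbb{C}(1)[-1] \cong \Theta\!\left(H^0(X,\oO_X(1))\otimes\oO_X \stackrel{s}{\to} \oO_X(1)\right),
\end{align*}
completing the proof.
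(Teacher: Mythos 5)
Your proposal is correct and follows exactly the route the paper takes: the paper derives this corollary as an immediate consequence of Lemma~\ref{lem:filt:A} (specialized to $\varepsilon=-1$, giving the extension of $R_1\otimes\mathbb{C}(0)$ by $\Psi(\omega_X)=\Psi(\oO_X(1))$) together with Remark~\ref{rmk:ext1} (identifying the extension class, via the graded-module description of $m(1)$, with the evaluation map). You have simply written out the details that the paper leaves implicit, and they check out.
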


\subsection{Proof of Lemma~\ref{lem:equivalence}}\label{subsec:lem:equiv}
Finally in this section, we give a proof of Lemma~\ref{lem:equivalence}. 
The proof is straightforward, and probably well-known. 
We 
recommend the readers to skip this subsection 
at the first reading. 
\begin{proof}
We denote by $I$ the set of relations generated 
by the images of (\ref{relation}). 
Let $\mathrm{Rep}(\qQ(F_{\bullet}), I)$ be the 
category of $\qQ(F_{\bullet})$-representations 
with relation $I$. 
We divide the proof into three steps. 
\begin{step}
\end{step}
We construct the 
functor 
\begin{align*}
\Phi \colon \aA \to \mathrm{Rep}(\qQ(F_{\bullet}), I)
\end{align*}
in the following way: 
for an object $E \in \aA$, it admits a filtration
\begin{align*}
0=E_0 \subset E_1 \subset \cdots \subset E_l=E
\end{align*}
such that $E_k/E_{k-1}$ is written as 
\begin{align*}
E_k/E_{k-1} \cong \bigoplus_{j\in P_{k}} F_j \otimes V_j
\end{align*} 
for finite dimensional vector spaces $V_j$. 
By the exact sequence
\begin{align}\label{eq:E}
0 \to E_k/E_{k-1} \to E_{k+1}/E_{k-1} \to E_{k+1}/E_k \to 0 
\end{align}
we obtain the linear maps 
\begin{align}\label{linear:VV}
\phi_{j' j} \colon 
V_{j'} \otimes \Ext^1(F_{j'}, F_j)^{\vee}
\to V_j 
\end{align} 
for $j'\in P_{k+1}$, $j \in P_k$, 
which defines the $\qQ(F_{\bullet})$-representation
$\Phi(E)$. 
In order to show that 
the representation $\Phi(E)$ satisfies the relation $I$, 
consider the composition of extension classes of (\ref{eq:E}): 
\begin{align}\label{compose}
\bigoplus_{j'' \in P_{k+2}}
F_{j''} \otimes V_{j''} \to
\bigoplus_{j' \in P_{k+1}} F_{j'} \otimes V_{j'}[1] 
\to \bigoplus_{j\in P_k} F_j \otimes V_j[2]. 
\end{align}
The above composition must vanish since it coincides
with the composition
\begin{align*}
E_{k+2}/E_{k+1} \to E_{k+1}/E_{k-1}[1] \to E_{k+1}/E_{k}[1] 
\to E_{k}/E_{k-1}[2]
\end{align*} 
where the left morphism is the extension class of
\begin{align*}
0 \to E_{k+1}/E_{k-1} \to E_{k+2}/E_{k-1} \to E_{k+2}/E_{k+1} \to 0.
\end{align*} 
By applying $\Hom(F_{j''}, \ast)$ 
for $j'' \in P_{k+2}$
 to (\ref{compose}), 
taking the adjunction
and $(j'', j)$-component for $j\in P_k$, we see that the map 
\begin{align}\notag
V_{j''} \otimes \bigoplus_{j' \in P_{k+1}}
 \left( \Ext^1(F_{j''}, F_{j'})^{\vee} \otimes 
\Ext^1(F_{j'}, F_j)^{\vee} \right)  \to V_j
\end{align}
given by the sum of the 
composition
\begin{align}\label{comp:II}
\sum_{j' \in P_{k+1}} 
\phi_{j' j} \circ 
\phi_{j'' j'}
\end{align}
is zero
on $V_{j''} \otimes I_{j'', j}$, where $I_{j'', j}$ 
is the image of (\ref{relation})
restricted to $\Ext^2(F_{j''}. F_{j})^{\vee}$-component. 
This implies that $\Phi(E)$ satisfies the relation $I$, hence 
it is an object in $\mathrm{Rep}(\qQ(F_{\bullet}), I)$.  

\begin{step}
\end{step}
The correspondence $E \mapsto \Phi(E)$ obviously 
determines a fully faithful functor from $\aA$
to $\mathrm{Rep}(\qQ(F_{\bullet}), I)$, since 
$(F_N, \cdots, F_2, F_1)$ is an ext-exceptional collection. 
It remains to show that $\Phi$ is essentially 
surjective. Let us take an object 
\begin{align*}
W \in \mathrm{Rep}(\qQ(F_{\bullet}), I).
\end{align*} 
It consists of 
finite dimensional vector spaces $V_j$ for $1\le j\le N$ and 
linear maps (\ref{linear:VV}) 
whose composition (\ref{comp:II}) is zero on $V_{j''} \otimes I_{j'', j}$
for $(j'', j) \in P_{k+2} \times P_k$. 
We need to show the existence of 
$E \in \aA$ so that $\Phi(E) \cong W$. 

By the induction on $l$, we may assume that the assertion 
holds for $l-1$. 
We set full subcategories 
$\aA_{k} \subset \dD$ as follows: 
\begin{align*}
\aA_{k} \cneq \langle F_j \colon j \in P_{k'}, 
1\le k' \le k \rangle_{\rm{ex}}. 
\end{align*}
Let $\qQ(F_{\bullet}')$ be the ext-quiver 
for $\aA_{l-1}$ and define the relation $I'$
by restricting $I$ 
to $\qQ(F_{\bullet}')$. 
The category $\mathrm{Rep}(\qQ(F_{\bullet}'), I')$ 
is naturally considered as a subcategory of 
$\mathrm{Rep}(\qQ(F_{\bullet}), I)$,
and there is an exact sequence 
\begin{align}\label{ex:W}
0 \to W' \to W \to W_{l} \to 0
\end{align}
where $W' \in \mathrm{Rep}(\qQ(F_{\bullet}'), I')$
and $W_l$ is written as 
\begin{align*}
W_l \cong \bigoplus_{j\in P_l} e_j \otimes V_j.
\end{align*}
Here $e_j$ is the simple object
in $\mathrm{Rep}(\qQ(F_{\bullet}), I)$ corresponding 
to the vertex $j$. 

By the assumption of the induction, there 
is an object
$E' \in \aA_{l-1}$
 such 
that $\Phi(E') \cong W'$. 
By the exact sequence (\ref{ex:W}), 
it is enough to show that the map 
\begin{align}\label{ind:ex}
\alpha \colon 
\Ext^1_{\aA}(F_{j}, E') 
\to \Ext^1_{\mathrm{Rep}}(e_j, W')
\end{align}
induced by $\Phi$
is an isomorphism
for all $j\in P_l$. 
Here we have written $\mathrm{Rep}(\qQ(F_{\bullet}), I)$
just as $\mathrm{Rep}$ for simplicity. 
\begin{step}
\end{step}
We show that the morphism 
(\ref{ind:ex}) is an isomorphism. 
Let us consider the exact sequence in $\aA$
\begin{align}\label{ex:fin}
&0 \to E'' \to E' \to E_{l-1} \to 0
\end{align}
with $E'' \in \aA_{l-2}$ and $E_{l-1}$ is written as
\begin{align*}
E_{l-1} \cong \bigoplus_{j' \in P_{l-1}} F_{j'} \otimes V_{j'}. 
\end{align*}
By the condition (\ref{lem:cond}), 
for $j\in P_l$, 
we see that $\Ext^1_{\aA}(F_j, E'')=0$ and 
there is a natural isomorphism
\begin{align*}
\Ext^2_{\dD}(F_j, E'') \stackrel{\cong}{\to} 
\bigoplus_{j''\in P_{l-2}}
\Ext^2_{\dD}(F_j, F_{j''} ) \otimes V_{j''}.
\end{align*}
Therefore 
applying $\Hom(F_j, \ast)$
 to (\ref{ex:fin}), we obtain 
the exact sequence 
\begin{align}\label{exex1}
0 \to \Ext^1_{\aA}(F_j, E') \to 
\bigoplus_{j' \in P_{l-1}}&\Ext^1_{\aA}(F_j,  F_{j'} ) \otimes V_{j'}  \\
\notag
&\stackrel{\beta}\to \bigoplus_{j'' \in P_{l-2}}
\Ext^2_{\dD}(F_j, F_{j''}) \otimes V_{j''}. 
\end{align}
On the other hand, there is an
exact sequence in $\mathrm{Rep}(\qQ(F_{\bullet}), I)$
\begin{align*}
0 \to W'' \to W' \to \bigoplus_{j' \in P_{l-1}}
e_{j'} \otimes V_{j'} \to 0
\end{align*} 
such that $W''\cong \Phi(E'')$. 
Since $\Ext^1_{\rm{Rep}}(e_{a}, e_{a'})=0$
unless $a\in P_{k}$, $a' \in P_{k'}$ with $k-k'=1$,  
we have $\Ext^1_{\rm{Rep}}(e_{j}, W'')=0$
for any $j\in P_l$. 
Therefore we obtain the commutative diagram
\begin{align}\notag
\xymatrix{
0 \ar[r] & \Ext^1_{\aA}(F_j, E') \ar[r] \ar[d]^{\alpha} & 
\qquad
\bigoplus_{j' \in P_{l-1}} \Ext^1_{\aA}(F_j, F_{j'}) \otimes V_{j'} \ar[d]^{\gamma} \\
0 \ar[r] & \Ext^1_{\rm{Rep}}(e_j, W') \ar[r]^{\delta} & 
\qquad 
\bigoplus_{j' \in P_{l-1}} \Ext^1_{\rm{Rep}}(e_j, e_{j'}) \otimes V_{j'}. 
}
\end{align}
Here $\gamma$ is an isomorphism induced by $\Phi$.  
By the above diagram, it follows that $\alpha$ is injective.  
On the other hand, the composition 
\begin{align*}
\beta \circ \gamma^{-1} \circ \delta \colon 
\Ext^1_{\rm{Rep}}(e_j, W') \to \bigoplus_{j'' \in P_{l-2}}
 \Ext^2_{\dD}(F_j, F_{j''}) \otimes V_{j''}
\end{align*}
vanishes, since any object 
given by an extension class in the LHS 
satisfies the relation $I$. 
Therefore $\alpha$ is surjective, hence an isomorphism. 
\end{proof}

\section{Construction of Gepner type stability conditions}\label{sec:const}
In this section, we propose a general recipe 
on a construction of a desired Gepner type 
stability condition. 
We first compute the central charge $Z_G$ in terms of 
generators of $\aA_W$, and try to 
construct $\sigma_G$
via tilting of $\aA_W$. 
In what follows we assume that 
the stack $X$ in (\ref{DMW}) is a smooth 
projective variety, i.e. $X$ does not contain stacky points. 
As in the previous section, we denote 
by $\Psi \cneq \Psi_1$ the 
Orlov's fully faithful functor
from $D^b \Coh(X)$ to $\HMF(W)$, 
which is an equivalence if $\varepsilon=0$. 
\subsection{Computation of the central charge ($\varepsilon=0$ case)}
In this subsection, we explain 
how to compute $Z_G$ in the case of $\varepsilon=0$. 
If $\varepsilon=0$, $X$ is a Calabi-Yau manifold 
of dimension $n-2$, 
and $\aA_W$ is equivalent to 
$\Coh(X)$
via $\Psi$. 
 Let us consider the group homomorphism 
given by 
\begin{align}\label{ghomx}
Z_G \circ \Psi \colon K(X) \to \mathbb{C}. 
\end{align}
The above group homomorphism 
is described in terms of Chern characters 
on $K(X)$. Indeed, a fundamental theory on 
Hochschild homology groups 
implies that $\Psi$
induces the isomorphism
$\Psi_{\ast} \colon 
\mathrm{HH}_0(X) \stackrel{\cong}{\to}
\mathrm{HH}_0(W)$
such that the following diagram commutes: 
 (cf.~\cite[Section~1]{PoVa})
\begin{align*}
 \xymatrix{
D^b \Coh(X) \ar[r]^{\Psi} \ar[d]_{\ch} & \HMF(W) \ar[d]^{\ch} \\
\mathrm{HH}_0(X) \ar[r]^{\Psi_{\ast}} & \mathrm{HH}_0(W). 
}
\end{align*}
We also have the Hochschild-Kostant-Rosenberg
isomorphism 
\begin{align*}
\mathrm{HH}_0(X) \stackrel{\cong}{\to}
\mathrm{H\Omega}_0(X) \cneq \bigoplus_{j=0}^{n-2}
H^j(X, \Omega_X^j)
\end{align*}
such that its composition with 
$\ch \colon D^b \Coh(X) \to \mathrm{HH}_0(X)$
coincides with the classical Chern character map~\cite[Theorem~4.5]{Cal2}. 
Since our central charge $Z_G$ factors through the 
Chern character map on $\HMF(W)$ 
(cf.~Remark~\ref{rmk:Chern})
and the Poincar\'e pairing on $\mathrm{H\Omega}_0(X)$ is perfect, 
the group homomorphism (\ref{ghomx})
is written as
\begin{align*}
E \mapsto \sum_{j=0}^{n-2} \int_X \alpha_j \cdot \ch_j(E)
\end{align*}
for some $\alpha_j \in H^{n-2-j, n-2-j}(X)$. 
Here by an abuse of notation, we also denote by 
$\ch(E) \in \mathrm{H\Omega}_0(X)$ the classical Chern character
of $E \in K(X)$, and by $\ch_j(E)$ its $H^{j, j}(X)$-component.  

On the other hand, let us consider the 
autoequivalence $F$ of $D^b \Coh(X)$
defined by 
\begin{align*}
F \cneq \ST_{\oO_X} \circ \otimes \oO_X(1).
\end{align*}
By Proposition~\ref{prop:grade}, the 
above autoequivalence 
corresponds to the grade shift functor 
$\tau$ on $\HMF(W)$
via the equivalence $\Psi$. 
By the Riemann-Roch theorem, 
the autoequivalence $F$ acts on $\ch(E)$ for $E \in D^b \Coh(X)$
in the following way
\begin{align*}
F_{\ast} \colon 
\ch(E) \mapsto e^{H} \ch(E) -\left(\int_{X} e^{H}\ch(E) \td_X \right) \cdot 1. 
\end{align*}
Here $H$ is the first Chern class of $\oO_X(1)$. 
The above action naturally extends to the linear 
isomorphism on $\mathrm{H\Omega}_0(X)$, given by the 
composition of the matrices
\begin{align}\notag
M\cneq  
\left( \begin{array}{cccc}
1-t_{n-2} & -t_{n-3} & \cdots & -t_{0} \\
0 & 1 & \cdots & 0 \\
\vdots & \vdots & \ddots & 0 \\
0 & 0 & \cdots & 1
\end{array} \right)
\left( \begin{array}{cccc}
1 & 0& \cdots & 0 \\
H & 1 & \cdots & 0 \\
\vdots & \vdots & \ddots & 0 \\
\frac{H^{n-2}}{(n-2)!} & \frac{H^{n-3}}{(n-3)!} & \cdots & 1
\end{array} \right). 
\end{align}
Here $t_j$ is the $H^{j, j}(X)$-component of 
$\mathrm{td}_X$, and we
regard an element in $\mathrm{H\Omega}_0(X)$ as a column vector.  
The Gepner type property of the central 
charge $Z_G$ is translated into the 
following linear equation on $\alpha_i$: 
\begin{align}\label{linear:eq}
(\alpha_0, \cdots, \alpha_{n-2}) \cdot M =
e^{2\pi \sqrt{-1}/d} \cdot (\alpha_0, \cdots, \alpha_{n-2}). 
\end{align}
By Lemma~\ref{lem:Hoch}, the solution space (\ref{linear:eq})
must be one dimensional, so it determines 
the group homomorphism (\ref{ghomx}) uniquely 
up to a scalar multiplication.  

In practice, it is more convenient to work with 
a smaller subspace in $\mathrm{H\Omega}_0(X)$. 
Let $\mathbb{C} \langle H \rangle$ be the 
subspace in $\mathrm{H\Omega}_0(X)$ defined by 
\begin{align*}
\mathbb{C} \langle H \rangle 
\cneq \bigoplus_{j=0}^{n-2} \mathbb{C} H^{j}. 
\end{align*}
We have the following lemma: 
\begin{lem}
The solution space of (\ref{linear:eq})
is contained in $\mathbb{C} \langle H \rangle$. 
\end{lem}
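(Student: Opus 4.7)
The plan is to recast (\ref{linear:eq}) as an explicit eigenvector equation on $\mathrm{H\Omega}_0(X)$ via Poincar\'e duality, then write down the unique (up to scalar) solution and observe directly that it lies in $\mathbb{C}\langle H\rangle$.

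Under the perfect Poincar\'e pairing $\int_X$ on $\mathrm{H\Omega}_0(X)$, the row $(\alpha_0,\dots,\alpha_{n-2})$ with $\alpha_j\in H^{n-2-j,n-2-j}(X)$ corresponds to the element $\widetilde{v}:=\sum_j\alpha_j\in\mathrm{H\Omega}_0(X)$, and the desired conclusion $\alpha_j\in\mathbb{C}\cdot H^{n-2-j}$ for all $j$ is precisely the statement that $\widetilde{v}\in\mathbb{C}\langle H\rangle$. Equation (\ref{linear:eq}) translates into the eigenvector equation $M^*(\widetilde{v})=e^{2\pi\sqrt{-1}/d}\,\widetilde{v}$, where $M^*$ is the adjoint of $M$ under $\int_X$. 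A direct calculation, starting from $F_*(x)=e^H x-(\int_X e^H x\cdot\td_X)\cdot 1$ and using that multiplication by $e^H$ is self-adjoint under $\int_X$, yields
\[ M^*(y)=e^H y-\bigl(\textstyle\int_X y\bigr)\cdot e^H\td_X. \]

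Setting $\lambda:=e^{2\pi\sqrt{-1}/d}$ and $c:=\int_X\widetilde{v}$, the equation becomes $(e^H-\lambda)\widetilde{v}=c\cdot e^H\td_X$. Since $e^H$ is unipotent (upper triangular with $1$'s on the diagonal in the Hodge grading), the operator $e^H-\lambda$ is invertible on $\mathrm{H\Omega}_0(X)$ for $\lambda\ne 1$. In particular, $c=0$ would force $\widetilde{v}=0$; for a nonzero eigenvector we therefore have
\[ \widetilde{v}=c\cdot(e^H-\lambda)^{-1}(e^H\td_X),\qquad c\ne 0. \]
Now $X$ is a smooth hypersurface in the smooth weighted projective stack $\mathbb{P}(a_1,\dots,a_n)$, so the normal bundle sequence $0\to T_X\to T_{\mathbb{P}(a)}|_X\to\oO_X(d)\to 0$ together with the weighted Euler sequence for $T_{\mathbb{P}(a)}$ expresses every Chern class of $T_X$, and hence $\td_X$, as a polynomial in $H$. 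Thus $\td_X\in\mathbb{C}\langle H\rangle$. Since multiplication by $e^H$ preserves the subring $\mathbb{C}\langle H\rangle$ and $(e^H-\lambda)^{-1}$ is a polynomial in $e^H$ on the finite-dimensional space $\mathrm{H\Omega}_0(X)$, it follows that $(e^H-\lambda)^{-1}(e^H\td_X)\in\mathbb{C}\langle H\rangle$, and therefore $\widetilde{v}\in\mathbb{C}\langle H\rangle$.

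By Lemma~\ref{lem:Hoch} the $\lambda$-eigenspace is one-dimensional, so this exhibits the unique (up to scalar) solution as lying in $\mathbb{C}\langle H\rangle$, which is the assertion. The step requiring care is the passage from the matrix presentation of $M$ to the compact formula for $M^*$; this is a routine but easily mishandled unwinding of the two factors of $M$ against the pairing $\int_X$, and it is the only place where the specific form of the correction term $-t_j$ in the first row of $M$ enters.
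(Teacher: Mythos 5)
Your proof is correct, and it reaches the conclusion by a genuinely different (more computational) route than the paper. The paper argues structurally: it splits $\mathrm{H\Omega}_0(X)=\mathbb{C}\langle H\rangle\oplus\mathbb{C}\langle H\rangle^{\perp}$ with respect to the Poincar\'e pairing, observes that $F_{\ast}$ preserves both summands because $\td_X\in\mathbb{C}\langle H\rangle$ (so the rank-one correction term annihilates the orthogonal complement), and notes that on $\mathbb{C}\langle H\rangle^{\perp}$ the action reduces to cup product with $e^{H}$, which is unipotent; hence no eigenvalue $e^{2\pi\sqrt{-1}/d}\neq 1$ can occur on that summand, and the $\lambda$-eigenvector of the adjoint must lie in $\mathbb{C}\langle H\rangle$. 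You instead compute the adjoint explicitly — your formula $M^{*}(y)=e^{H}y-\bigl(\int_X y\bigr)e^{H}\td_X$ is the correct unwinding of $F_{\ast}(x)=e^{H}x-\bigl(\int_X e^{H}x\,\td_X\bigr)\cdot 1$ against the pairing — rewrite the eigenvector equation as $(e^{H}-\lambda)\widetilde{v}=c\,e^{H}\td_X$, and invert $e^{H}-\lambda$ using the same unipotency. Both arguments rest on exactly the same two inputs (unipotency of multiplication by $e^{H}$, and $\td_X\in\mathbb{C}\langle H\rangle$), but yours buys more: it yields the closed formula $\widetilde{v}=c\,(e^{H}-\lambda)^{-1}(e^{H}\td_X)$ for a basis of the one-dimensional solution space of (\ref{linear:eq}), which could in principle replace the explicit row-reduction the paper carries out case by case to determine the $\alpha_j^{\dag}$; and you supply the justification (weighted Euler sequence plus the normal bundle sequence) for $\td_X\in\mathbb{C}\langle H\rangle$, a fact the paper invokes without comment. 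The only step deserving the caution you already flag is the adjoint computation itself, and it is right.
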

\begin{proof}
Let $\mathbb{C}\langle H \rangle^{\perp}$
be the orthogonal complement of 
$\mathbb{C} \langle H \rangle$ in $\mathrm{H\Omega}_0(X)$
with respect to the Poincar\'e paring. We have the direct
sum decomposition
\begin{align*}
\mathrm{H\Omega}_0(X) = \mathbb{C} \langle H \rangle
\oplus \mathbb{C}\langle H \rangle^{\perp}
\end{align*}
and $F_{\ast}$ preserves the 
above direct summands. 
Since $\td_X \in \mathbb{C} \langle H \rangle$, 
$F_{\ast}$ acts on $\mathbb{C} \langle H \rangle^{\perp}$
via multiplication by $e^{H}$, 
which is unipotent. Hence all the eigenvectors
of the action $F_{\ast}$
on $\mathbb{C}\langle H \rangle^{\perp}$ have eigenvalue $1$, 
which implies that the solution space of (\ref{linear:eq}) 
is contained in 
$\mathbb{C} \langle H \rangle$.  
\end{proof}
By the above lemma, it is enough to 
solve the equation (\ref{linear:eq}) 
for $\alpha_j \in \mathbb{C}H^{n-2-j}$. 
The ambiguity of the scalar multiplication 
is fixed by the following lemma:
\begin{lem}\label{lem:fix}
We have the equality
\begin{align}\label{int:alpha}
\int_X \alpha_0 = \prod_{j=1}^{n} \left(1-e^{-2a_j \pi \sqrt{-1}/d} \right). 
\end{align}
\end{lem}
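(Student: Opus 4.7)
The plan is to evaluate both sides of (\ref{int:alpha}) on the structure sheaf $\oO_X \in D^b\Coh(X)$, which I expect to give the cleanest direct computation since the higher Chern characters of $\oO_X$ vanish.

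First I would note that $\ch_0(\oO_X) = 1$ and $\ch_j(\oO_X) = 0$ for $j > 0$, so the formula $Z_G(\Psi(E)) = \sum_{j=0}^{n-2} \int_X \alpha_j \cdot \ch_j(E)$ derived above simplifies immediately to
\begin{align*}
Z_G(\Psi(\oO_X)) = \int_X \alpha_0.
\end{align*}
It therefore suffices to identify $\Psi(\oO_X)$ with a matrix factorization on which $Z_G$ can be evaluated by the recipe (\ref{ZG}).

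Next I would identify the object. Since we are in the case $\varepsilon = 0$, Theorem~\ref{thm:Orlov} asserts that $\Psi = \Psi_1$ is an equivalence, so the left adjoint $\Psi^{L}$ is simply its inverse. Lemma~\ref{lem:adj} gives $\Psi^{L}(\mathbb{C}(0)) \cong \oO_X[1]$, which under the equivalence translates into
\begin{align*}
\Psi(\oO_X) \cong \mathbb{C}(0)[-1].
\end{align*}
Hence $Z_G(\Psi(\oO_X)) = -Z_G(\mathbb{C}(0))$, using that $Z_G$ factors through the Grothendieck group.

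Finally, Example~\ref{exam:C0} computes $Z_G(\mathbb{C}(0))$ explicitly from the resolution (\ref{cok--}) of $\mathbb{C}(0)$ by free graded modules, yielding
\begin{align*}
Z_G(\mathbb{C}(0)) = -\prod_{j=1}^{n}\left(1 - e^{-2a_j \pi \sqrt{-1}/d}\right).
\end{align*}
Combining the three displays gives the claimed equality. There is essentially no obstacle: the statement is a bookkeeping exercise assembling Lemma~\ref{lem:adj} and Example~\ref{exam:C0}, with the only conceptual point being that $\varepsilon = 0$ forces $\Psi$ to be an equivalence so that the adjunction in Lemma~\ref{lem:adj} becomes an actual inverse.
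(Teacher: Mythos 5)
Your proof is correct and follows exactly the paper's own argument: use Lemma~\ref{lem:adj} together with the fact that $\varepsilon=0$ makes $\Psi$ an equivalence to get $\Psi(\oO_X)\cong \mathbb{C}(0)[-1]$, then evaluate the homomorphism (\ref{ghomx}) on $\oO_X$ and invoke the computation of $Z_G(\mathbb{C}(0))$ from Example~\ref{exam:C0}. The sign bookkeeping ($\mathbb{C}(0)[-1]$ flipping the sign of $-\prod_j(1-e^{-2a_j\pi\sqrt{-1}/d})$) also matches.
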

\begin{proof}
By Lemma~\ref{lem:adj}, the object 
$\Psi^{L}(\mathbb{C}(0))$ is isomorphic to 
$\oO_X[1]$. Since $\varepsilon=0$, the functor $\Psi$ is 
an equivalence, 
hence $\Psi^{L}=\Psi^{-1}$. It follows that 
$\Psi(\oO_X)$
is isomorphic to $\mathbb{C}(0)[-1]$. 
Then the equality (\ref{int:alpha}) follows by 
applying the homomorphism (\ref{ghomx}) to $\oO_X$, and 
using the computation in Example~\ref{exam:C0}. 
\end{proof}

Now the $\alpha_j \in \mathbb{C} H^{n-2-j}$ are
uniquely determined by the equation 
(\ref{linear:eq}) and the normalization (\ref{int:alpha}). 
However for our purpose, it is more convenient to 
consider a different normalization of $Z_G \circ \Psi$. 
Namely we write $Z_G \circ \Psi$ 
as a multiple of some non-zero complex number 
and a central charge on $\HMF(W)$ whose image 
of $\Psi(\oO_x)$ is $-1$.  
This is possible by the following lemma: 
\begin{lem}\label{lem:constant}
For any $x\in X$, we have $Z_G(\Psi(\oO_x))=-C_W$
where $C_W$ is given by 
\begin{align}\label{CW}
C_W \cneq -(1-e^{2\pi \sqrt{-1}/d})^{-1}
\prod_{j=1}^{n} \left(1-e^{-2a_j \pi \sqrt{-1}/d} \right)
\end{align}
which satisfies
\begin{align}\label{thetaW}
C_W \in \mathbb{R}_{>0} e^{\sqrt{-1} \pi \theta_W}, \quad
\theta_W= 
\frac{1}{2}(n-1) -
\frac{1}{d} \left( \sum_{j=1}^{n} a_j +1  \right).
\end{align}
\end{lem}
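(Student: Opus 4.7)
The plan is to read off $Z_G(\Psi(\oO_x))$ from the exact sequence in Lemma~\ref{lem:tau:ox}, using the built-in Gepner property of the central charge $Z_G$, and then unfold the argument of the resulting complex number with the standard half-angle factorization $1-e^{\sqrt{-1}\alpha}=-2\sqrt{-1}\sin(\alpha/2)e^{\sqrt{-1}\alpha/2}$.

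First, under the standing hypothesis that $X$ does not contain stacky points, Lemma~\ref{lem:tau:ox} applies to every $x\in X$ and produces the short exact sequence
\[
0\to \Psi(\oO_x)\to \tau\Psi(\oO_x)\to \mathbb{C}(0)\to 0
\]
in $\aA_W$. I note that the homomorphism $Z_G$ defined by (\ref{ZG}) satisfies $Z_G\circ\tau = e^{2\pi\sqrt{-1}/d}\,Z_G$ tautologically, because the grade shift $P^{\bullet}\mapsto P^{\bullet}(1)$ replaces each exponent $n_{i,j}$ by $n_{i,j}+1$ and so rescales every summand in the supertrace by the same factor $e^{2\pi\sqrt{-1}/d}$. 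Applying $Z_G$ to the exact sequence and using this identity then gives
\[
\bigl(e^{2\pi\sqrt{-1}/d}-1\bigr)\,Z_G(\Psi(\oO_x)) \;=\; Z_G(\mathbb{C}(0)).
\]
Substituting the value $Z_G(\mathbb{C}(0))=-\prod_{j=1}^n\bigl(1-e^{-2a_j\pi\sqrt{-1}/d}\bigr)$ recorded in Example~\ref{exam:C0} and dividing gives $Z_G(\Psi(\oO_x))=-C_W$ with $C_W$ as in (\ref{CW}). In particular the value is independent of $x$, which is in any case automatic from $K$-theoretic constancy of $[\oO_x]$ on the connected variety $X$.

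For the second half I compute the argument. Since $W$ has an isolated singularity at the origin, every weight satisfies $0<a_j<d$, so $\sin(a_j\pi/d)>0$ and $\sin(\pi/d)>0$. Using
\[
1-e^{\sqrt{-1}\alpha}=2\sin(\alpha/2)\,e^{\sqrt{-1}(\alpha/2-\pi/2)},\qquad
1-e^{-\sqrt{-1}\alpha}=2\sin(\alpha/2)\,e^{\sqrt{-1}(\pi/2-\alpha/2)}
\]
for $\alpha\in(0,2\pi)$, each factor of the product over $j$ contributes modulus $2\sin(a_j\pi/d)$ and argument $\pi/2-a_j\pi/d$, while the reciprocal $(1-e^{2\pi\sqrt{-1}/d})^{-1}$ contributes modulus $(2\sin(\pi/d))^{-1}$ and argument $\pi/2-\pi/d$. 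Summing the arguments yields
\[
-C_W=\frac{\prod_{j=1}^n 2\sin(a_j\pi/d)}{2\sin(\pi/d)}\cdot
\exp\!\Bigl(\sqrt{-1}\pi\Bigl(\tfrac{n+1}{2}-\tfrac{1}{d}\bigl(\textstyle\sum_j a_j+1\bigr)\Bigr)\Bigr),
\]
and then multiplying by $-1=e^{-\sqrt{-1}\pi}$ subtracts $\pi$ from the phase, producing exactly $\theta_W=\tfrac{1}{2}(n-1)-\tfrac{1}{d}(\sum_j a_j+1)$ and the positive real modulus claimed.

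No real obstacle is expected: the only point of care is the bookkeeping of principal arguments (in particular the choice of sign when shifting by $-1$ at the end), and the verification that all the sines appearing are strictly positive, which is immediate from the isolated-singularity condition $0<a_j<d$.
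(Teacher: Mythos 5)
Your proof is correct and follows essentially the same route as the paper: the value $-C_W$ is extracted from the exact sequence of Lemma~\ref{lem:tau:ox} combined with $Z_G\circ\tau=e^{2\pi\sqrt{-1}/d}Z_G$ and the computation of $Z_G(\mathbb{C}(0))$ in Example~\ref{exam:C0}, and the phase $\theta_W$ is obtained from the same half-angle identity $1-e^{-2\pi\sqrt{-1}\theta}=2\sin(\pi\theta)\,e^{(\frac12-\theta)\pi\sqrt{-1}}$ that the paper invokes. You merely spell out the bookkeeping (including the positivity $0<a_j<d$ and the choice of branch when multiplying by $-1$) that the paper leaves implicit.
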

\begin{proof}
The equality (\ref{CW}) follows from 
Example~\ref{exam:C0} and Lemma~\ref{lem:tau:ox}. 
The property (\ref{thetaW}) follows from 
\begin{align*}
1-e^{-2\pi \sqrt{-1}\theta} = 2 \sin \pi \theta
\cdot e^{\left(\frac{1}{2}-\theta \right)\pi \sqrt{-1}}.
\end{align*}
\end{proof}
We summarize the result in this subsection 
as follows: 
\begin{prop}
Suppose that $\varepsilon=0$. Then for $E \in D^b \Coh(X)$, 
the central charge 
$Z_G(\Psi(E))$ is written as 
\begin{align}\label{Zdag}
Z_G(\Psi(E))=C_W \sum_{j=0}^{n-2} \int_{X} \alpha_j^{\dag} 
\cdot \ch_j(E)
\end{align}
where $(\alpha_0^{\dag}, \cdots, \alpha_{n-2}^{\dag})$
satisfies $\alpha_j^{\dag} \in \mathbb{C}H^{n-2-j}$, 
and it is the unique solution of the linear equation
\begin{align}\label{sol:unique}
(\alpha_0^{\dag}, \cdots, \alpha_{n-2}^{\dag}) \cdot M=
e^{2\pi \sqrt{-1}/d} \cdot (\alpha_0^{\dag}, \cdots, 
\alpha_{n-2}^{\dag}), \quad \alpha_{n-2}^{\dag}=-1.
\end{align}
\end{prop}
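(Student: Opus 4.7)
The plan is to assemble the computations performed earlier in this subsection into the stated normalized form. The earlier discussion has already shown, via the HKR isomorphism together with the compatibility $\Psi_\ast \circ \ch = \ch \circ \Psi$, that $Z_G \circ \Psi \colon K(X) \to \mathbb{C}$ factors as $E \mapsto \sum_{j=0}^{n-2} \int_X \alpha_j \cdot \ch_j(E)$ for some $\alpha_j \in H^{n-2-j, n-2-j}(X)$. By Proposition~\ref{prop:grade}, the grade shift $\tau$ on $\HMF(W)$ corresponds under $\Psi$ to $F = \ST_{\oO_X} \circ (\otimes \oO_X(1))$, so the Gepner type relation $\tau_\ast Z_G = e^{2\pi \sqrt{-1}/d} Z_G$ translates into the eigenvalue equation (\ref{linear:eq}) for the tuple $(\alpha_0, \dots, \alpha_{n-2})$. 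Combined with the preceding lemma, this tuple already lies in $\bigoplus_j \mathbb{C}H^{n-2-j}$, so only the normalization remains.

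To obtain (\ref{Zdag}), I would set $\alpha_j^\dag := \alpha_j / C_W$; this is well-defined since $C_W \neq 0$ by Lemma~\ref{lem:constant}, it satisfies the same homogeneous linear equation by linearity, and substitution gives (\ref{Zdag}). To identify $\alpha_{n-2}^\dag$, I would evaluate (\ref{Zdag}) at a skyscraper sheaf $E = \oO_x$ for a closed point $x \in X$. Since $\ch_j(\oO_x) = 0$ for $j < n-2$ and $\int_X \ch_{n-2}(\oO_x) = 1$, the formula collapses to $Z_G(\Psi(\oO_x)) = C_W \cdot \alpha_{n-2}^\dag$, where $\alpha_{n-2}^\dag$ is identified with its unique scalar value in $\mathbb{C}H^0 = \mathbb{C}$. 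Lemma~\ref{lem:constant} gives $Z_G(\Psi(\oO_x)) = -C_W$, hence $\alpha_{n-2}^\dag = -1$.

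Finally, the uniqueness claim reduces to Lemma~\ref{lem:Hoch}: the eigenspace of $\tau_\ast$ on $\mathrm{HH}_0(W)$ with eigenvalue $e^{2\pi \sqrt{-1}/d}$ is one-dimensional, and since the dualization of an eigenspace of a finite-order automorphism preserves dimensions, the same holds for $\tau_\ast^\vee$ on $\mathrm{HH}_0(W)^\vee$. Because $\varepsilon = 0$ makes $\Psi$ an equivalence, the induced $\Psi_\ast$ is an isomorphism, and transporting through this yields a one-dimensional eigenspace for $F_\ast^\vee$ on $\mathrm{H\Omega}_0(X)^\vee$; the preceding lemma confines it to $\mathbb{C}\langle H\rangle^\vee$. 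Thus the space of solutions to the homogeneous version of (\ref{sol:unique}) is a single complex line, and the condition $\alpha_{n-2}^\dag = -1$ singles out a unique element. The only point that could obstruct this last step is the possibility $\alpha_{n-2}^\dag = 0$ forcing an incompatible normalization, but this is ruled out precisely by $Z_G(\Psi(\oO_x)) = -C_W \neq 0$ computed above, so no genuine obstacle arises.
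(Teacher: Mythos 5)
Your proposal is correct and follows essentially the same route as the paper: the HKR factorization plus Proposition~\ref{prop:grade} give the eigenvalue equation, the preceding lemma confines the solution to $\mathbb{C}\langle H\rangle$, Lemma~\ref{lem:Hoch} gives one-dimensionality, and Lemma~\ref{lem:constant} fixes the normalization $\alpha_{n-2}^{\dag}=-1$ via $Z_G(\Psi(\oO_x))=-C_W$. The only (immaterial) difference is that you evaluate directly on a skyscraper sheaf rather than passing through the intermediate normalization $\int_X\alpha_0$ of Lemma~\ref{lem:fix}.
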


\subsection{Computation of the central charge ($\varepsilon<0$ case)}
\label{subsec:central2}
The purpose of this subsection is to 
reduce the computation of $Z_G$
in the case $\varepsilon <0$ to that of 
the case $\varepsilon=0$. 
The strategy is to embed $X$ into 
$n-2-\varepsilon$-dimensional Calabi-Yau manifold $\widehat{X}$
and relate $Z_G$ with the central charge on $\widehat{X}$. 
We set
\begin{align*}
\widehat{A} \cneq 
\mathbb{C}[x_1, \cdots, x_{n}, x_{n+1}, \cdots, x_{n-\varepsilon}]
\end{align*}
and consider the element $\widehat{W} \in \widehat{A}$ defined by
\begin{align*}
\widehat{W} \cneq W + x_{n+1}^{d} + \cdots + x_{n-\varepsilon}^d. 
\end{align*}
Since we assume that $X$ does not contain stacky points, the 
stack
\begin{align*}
\widehat{X} \cneq (\widehat{W}=0) 
\subset \mathbb{P}(a_1, \cdots, a_n, 1, \cdots, 1)
\end{align*}
also does not contain stacky points. 
The variety $\widehat{X}$ is a projective Calabi-Yau 
manifold with dimension $n-2-\varepsilon$, 
which contains $X$ as a zero locus 
$x_{n+1}= \cdots =x_{n-\varepsilon}=0$. 

Let $\widehat{R}$ be the graded ring 
$\widehat{A}/(\widehat{W})$. 
There is a natural push-forward functor
\begin{align*}
i_{\ast} \colon 
D_{\rm{sg}}^{\rm{gr}}(R) \to D_{\rm{sg}}^{\rm{gr}}(\widehat{R})
\end{align*}
by regarding a graded $R$ module as a 
graded $\widehat{R}$-module
via the surjection $\widehat{R} \twoheadrightarrow R$. 
(cf.~\cite{UedaM}.)
Combined with the equivalence (\ref{Cok})
and the functor $\Psi=\Psi_1$ in (\ref{def:Psi}), 
we obtain the diagram
\begin{align}\label{commute}
\xymatrix{
\HMF(W) \ar[r]^{i_{\ast}} & \HMF(\widehat{W}) \\
D^b \Coh(X) \ar[r]^{i_{\ast}} \ar[u]^{\Psi} & D^b \Coh(\widehat{X}) 
\ar[u]_{\widehat{\Psi}}. 
}
\end{align}
Here $i\colon X \to \widehat{X}$ is the inclusion and 
$\widehat{\Psi}$ is the equivalence,
obtained by applying the same construction of 
$\Psi$ to $\widehat{W}$
and $\widehat{X}$. 
We have the following lemma: 
\begin{lem}
The diagram (\ref{commute}) is commutative. 
\end{lem}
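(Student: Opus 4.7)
\medskip

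\noindent\textbf{Proposal.} The plan is to unwind the definition $\Psi=\mathrm{Cok}^{-1}\circ\pi\circ\dR\omega_{1}$ and its counterpart $\widehat{\Psi}=\mathrm{Cok}^{-1}\circ\widehat{\pi}\circ\dR\omega_{1}^{\widehat{X}}$, and then trace an object $E\in D^{b}\Coh(X)$ through the two sides of the square, verifying that the natural isomorphisms at each intermediate stage assemble into a functorial isomorphism $i_{\ast}\Psi(E)\cong\widehat{\Psi}(i_{\ast}E)$ in $\HMF(\widehat{W})$. Since $\mathrm{Cok}$ is an equivalence, it suffices to compare the images under $\pi\circ\dR\omega_{1}$ and $\widehat{\pi}\circ\dR\omega_{1}^{\widehat{X}}$ in $D_{\rm sg}^{\rm gr}(\widehat{R})$, after applying the forgetful pushforward $i_{\ast}\colon D_{\rm sg}^{\rm gr}(R)\to D_{\rm sg}^{\rm gr}(\widehat{R})$ on one side.

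First, I would identify $\dR\omega_{1}^{\widehat{X}}(i_{\ast}E)$ with $i_{\ast}\dR\omega_{1}(E)$ as graded $\widehat{R}$-modules. By the projection formula (or simply $\dR\Hom_{\oO_{\widehat{X}}}(\oO_{\widehat{X}},i_{\ast}E(j))\cong\dR\Hom_{\oO_{X}}(\oO_{X},E(j))$, since $i$ is a closed embedding and $i^{\ast}\oO_{\widehat{X}}=\oO_{X}$, $i^{\ast}\oO_{\widehat{X}}(j)=\oO_{X}(j)$), we get an isomorphism of graded $\mathbb{C}$-vector spaces
\begin{align*}
\dR\omega_{1}^{\widehat{X}}(i_{\ast}E)=\bigoplus_{j\ge1}\dR\Hom_{\oO_{\widehat{X}}}(\oO_{\widehat{X}},i_{\ast}E(j))\cong\bigoplus_{j\ge1}\dR\Hom_{\oO_{X}}(\oO_{X},E(j))=\dR\omega_{1}(E).
\end{align*}
The $\widehat{R}$-action on the left factors through the surjection $\widehat{R}\twoheadrightarrow R$, because the defining equations of $X\subset\widehat{X}$ in the extra coordinates $x_{n+1},\ldots,x_{n-\varepsilon}$ act as zero on $i_{\ast}E(j)$; this gives the isomorphism as graded $\widehat{R}$-modules.

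Next, I would observe that $\pi$ and $\widehat{\pi}$ are compatible with the forgetful functor $i_{\ast}$ at the level of module categories: the restriction-of-scalars functor sends $D^{b}(\grr R)\to D^{b}(\grr \widehat{R})$ and sends perfect complexes of $R$-modules into the (thick closure of the) subcategory which becomes zero in $D_{\rm sg}^{\rm gr}(\widehat{R})$ if we use that $R$ has finite projective dimension over $\widehat{R}$ (indeed it is a Koszul-type resolution in the variables $x_{n+1},\ldots,x_{n-\varepsilon}$), so $i_{\ast}\circ\pi\cong\widehat{\pi}\circ i_{\ast}$. Combining the two isomorphisms,
\begin{align*}
\widehat{\pi}\circ\dR\omega_{1}^{\widehat{X}}(i_{\ast}E)\cong\widehat{\pi}\circ i_{\ast}\dR\omega_{1}(E)\cong i_{\ast}\circ\pi\circ\dR\omega_{1}(E),
\end{align*}
and finally applying $\mathrm{Cok}^{-1}$ gives $\widehat{\Psi}(i_{\ast}E)\cong i_{\ast}\Psi(E)$, with the isomorphism functorial in $E$.

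The main technical point to be careful about is the second step: verifying that the pushforward functor $i_{\ast}\colon D_{\rm sg}^{\rm gr}(R)\to D_{\rm sg}^{\rm gr}(\widehat{R})$ is well-defined and that it intertwines $\pi$ and $\widehat{\pi}$. This rests on the fact that $R$ is a complete intersection inside $\widehat{R}$ cut out by the regular sequence $x_{n+1},\ldots,x_{n-\varepsilon}$, so that a graded projective $R$-module has a finite graded free resolution when restricted to $\widehat{R}$, hence maps to zero in $D_{\rm sg}^{\rm gr}(\widehat{R})$. Once this is in place, the remaining arguments are formal manipulations with adjunctions and the explicit definition of $\Psi$ and $\widehat{\Psi}$.
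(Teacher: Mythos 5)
Your proposal is correct and follows essentially the same route as the paper: the heart of the argument is the adjunction $\dR\Hom_{\widehat{X}}(\oO_{\widehat{X}}(j), i_{\ast}E)\cong \dR\Hom_X(\oO_X(j),E)$ identifying $\dR\omega_1^{\widehat{X}}(i_{\ast}E)$ with $i_{\ast}\dR\omega_1(E)$, after which everything follows from the definitions of $\Psi$ and $\widehat{\Psi}$. The additional verification that $i_{\ast}$ descends to the singularity categories (via the Koszul resolution of $R$ over $\widehat{R}$) is a point the paper delegates to the cited reference of Ueda when introducing the push-forward functor, so your spelling it out is a harmless elaboration rather than a new idea.
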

\begin{proof}
The result follows from 
the definitions of $\Psi$, $\widehat{\Psi}$ and the adjunction
for $E \in D^b \Coh(X)$
\begin{align*}
\bigoplus_{j\ge 1} \dR \Hom_{\widehat{X}}(\oO_{\widehat{X}}(j), i_{\ast}E)
\cong \bigoplus_{j\ge 1} \dR \Hom_X(\oO_X(j), E).
\end{align*}
\end{proof}

The top arrow $i_{\ast}$
of the diagram (\ref{commute}) 
obviously commutes with grade shift functors on both 
sides. Also by the functoriality of the Hochschild homologies, 
we have the push-forward functor
\begin{align*}
i_{\ast} \colon \mathrm{HH}_{\ast}(W) \to \mathrm{HH}_{\ast}(\widehat{W})
\end{align*}
which preserves the one dimensional 
eigenspaces 
in Lemma~\ref{lem:Hoch} on both sides. 
By Remark~\ref{rmk:Chern}, the composition
\begin{align*}
\widehat{Z}_G \circ i_{\ast} \colon K(\HMF(W)) \to K(\HMF(\widehat{W})) \to 
\mathbb{C}
\end{align*}
differs from $Z_G$ by a scalar constant, where $\widehat{Z}_G$ is the 
central charge (\ref{ZG})
on $\HMF(\widehat{W})$
 applied for $\widehat{W}$. 
Since $i_{\ast} \mathbb{C}(0)=\mathbb{C}(0)$, it 
follows that
\begin{align*}
Z_G(P)=(1-e^{2\pi \sqrt{-1}/d})^{\varepsilon} \widehat{Z}_G (i_{\ast}P)
\end{align*}
for any $P \in \HMF(W)$
by comparing $Z_G(\mathbb{C}(0))$ and 
$\widehat{Z}_G(\mathbb{C}(0))$ given in Example~\ref{exam:C0}.
In particular, using the diagram (\ref{commute}), 
it follows that 
\begin{align*}
Z_G(\Psi(\oO_x))
&=
(1-e^{2\pi \sqrt{-1}/d})^{-1}
\prod_{j=1}^{n} \left(1-e^{2a_j \pi \sqrt{-1}/d} \right) \\
&=: -C_W
\end{align*} 
where $C_W$ coincides with the one 
defined in (\ref{CW}). 
As a summary, we have the following: 
\begin{prop}
Suppose that $\varepsilon<0$. 
For $E \in D^b \Coh(X)$, 
the central charge
$Z_G(\Psi(E))$ is written as
\begin{align}\label{write:negative}
Z_G(\Psi(E))=
C_W \sum_{j=0}^{n-2-\varepsilon} \int_{\widehat{X}} 
\widehat{\alpha}_j^{\dag} \cdot
\ch_j(i_{\ast}E)
\end{align}
where $(\widehat{\alpha}_0^{\dag}, \cdots, 
\widehat{\alpha}_{n-2-\varepsilon}^{\dag})$
satisfies
\begin{align*}
\widehat{\alpha}_j^{\dag} \in \mathbb{C}\widehat{H}^{n-2-\varepsilon -j}, 
\quad 
\widehat{H} \cneq c_1(\oO_{\widehat{X}}(1))
\end{align*}
and it is the unique solution of the equation (\ref{sol:unique})
for $\widehat{X}$. 
\end{prop}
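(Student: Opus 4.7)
The plan is to reduce the $\varepsilon < 0$ computation to the Calabi--Yau case $\widehat{X}$, where the preceding formula (\ref{Zdag}) applies. First I would verify that the diagram (\ref{commute}) is commutative. The key point is that for $E \in D^b\Coh(X)$, the module $\dR\omega_1(i_\ast E)$ computed on $\widehat{X}$ agrees with $\dR\omega_1(E)$ computed on $X$, via the adjunction
\[
\bigoplus_{j\ge 1} \dR\Hom_{\widehat X}(\oO_{\widehat X}(j), i_\ast E) \;\cong\; \bigoplus_{j\ge 1} \dR\Hom_X(\oO_X(j), E),
\]
and the surjection $\widehat R \twoheadrightarrow R$ identifies the two images in the singularity categories. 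Hence $i_\ast \circ \Psi \simeq \widehat\Psi \circ i_\ast$.

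Next I would compare $Z_G$ with $\widehat{Z}_G \circ i_\ast$. The push-forward $i_\ast \colon \HMF(W) \to \HMF(\widehat W)$ commutes with the grade shifts $\tau$ and $\widehat\tau$ by construction, so by functoriality it induces a map on Hochschild homology that intertwines the $\tau_\ast$-actions. By Lemma~\ref{lem:Hoch} and Remark~\ref{rmk:Chern}, both $Z_G$ and $\widehat{Z}_G \circ i_\ast$ lie in the one-dimensional eigenspace of $\tau_\ast^\vee$ with eigenvalue $e^{2\pi\sqrt{-1}/d}$ in $\mathrm{HH}_0(W)^\vee$, so they differ by a scalar. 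To pin this scalar down I evaluate both on $\mathbb{C}(0) \in \HMF(W)$, whose image under $i_\ast$ is $\mathbb{C}(0) \in \HMF(\widehat W)$; Example~\ref{exam:C0} gives
\[
\frac{Z_G(\mathbb{C}(0))}{\widehat{Z}_G(\mathbb{C}(0))}
= \prod_{j=1}^{n}(1-e^{-2a_j\pi\sqrt{-1}/d}) \Big/ \prod_{j=1}^{n-\varepsilon} (1-e^{-2\widehat a_j \pi\sqrt{-1}/d})
= (1-e^{-2\pi\sqrt{-1}/d})^{\varepsilon},
\]
since the $n{-}\varepsilon$ additional weights $a_{n+1},\ldots,a_{n-\varepsilon}$ all equal $1$. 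A brief algebraic check (using the same identity $1 - e^{-2\pi\sqrt{-1}\theta} = 2\sin(\pi\theta)e^{(1/2-\theta)\pi\sqrt{-1}}$ as in Lemma~\ref{lem:constant}) confirms that the resulting formula for $Z_G(\Psi(\oO_x))$ still equals $-C_W$ with $C_W$ as in (\ref{CW}); this is the uniform normalization we want.

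Finally I would combine these with the $\varepsilon = 0$ proposition applied to the Calabi--Yau hypersurface $\widehat X$. Using the commutativity of (\ref{commute}),
\[
Z_G(\Psi(E)) \;=\; (1-e^{-2\pi\sqrt{-1}/d})^{\varepsilon} \,\widehat{Z}_G(\widehat\Psi(i_\ast E))
\;=\; (1-e^{-2\pi\sqrt{-1}/d})^{\varepsilon}\, C_{\widehat W} \sum_{j=0}^{n-2-\varepsilon} \int_{\widehat X} \widehat\alpha_j^{\dag} \cdot \ch_j(i_\ast E),
\]
and the prefactor combines with $C_{\widehat W}$ to give exactly $C_W$, yielding (\ref{write:negative}). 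Uniqueness of the tuple $(\widehat\alpha_0^{\dag},\ldots,\widehat\alpha_{n-2-\varepsilon}^{\dag}) \in \bigoplus_j \mathbb{C}\widehat H^{n-2-\varepsilon-j}$ satisfying (\ref{sol:unique}) for $\widehat X$ is inherited directly from the $\varepsilon = 0$ case. The main subtlety, and the only point requiring care, is the bookkeeping of the scalar constants relating $Z_G$, $\widehat Z_G\circ i_\ast$, $C_W$ and $C_{\widehat W}$; once this is done correctly, the rest is a formal consequence of the commutative diagram and the previous proposition.
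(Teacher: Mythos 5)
Your proposal is correct and follows essentially the same route as the paper: commutativity of the diagram (\ref{commute}) via the adjunction for $i_\ast$, the observation that $Z_G$ and $\widehat{Z}_G\circ i_\ast$ both span the one-dimensional $e^{2\pi\sqrt{-1}/d}$-eigenspace of $\tau_\ast^{\vee}$ and hence agree up to a scalar, and fixing that scalar by evaluating on $\mathbb{C}(0)$ before invoking the $\varepsilon=0$ proposition for $\widehat{X}$. Your bookkeeping of the constant (and its consistency with $C_W$ via Example~\ref{exam:C0}) matches the paper's computation, so nothing further is needed.
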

Note that $\widehat{\alpha}_j^{\dag}$ is computed 
by the argument in the previous subsection, 
since $\widehat{X}$ is Calabi-Yau. 
Later we will use the following data:
\begin{align}\label{Zdag:com}
Z_G(\Psi(\oO_x))
&=-C_W \\ \notag
&\in \mathbb{R}_{>0} e^{\sqrt{-1}\pi (\theta_W + 1)}
\\
\notag
Z_G(\mathbb{C}(j)) 
&=C_W e^{2\pi j \sqrt{-1}/d}\left( 1- e^{2\pi \sqrt{-1}/d} \right) \\
\notag
 &\in \mathbb{R}_{>0} e^{\sqrt{-1} \pi \left(\theta_W 
+ \frac{1}{d} +\frac{2j}{d}
+ \frac{3}{2} \right)}.
\end{align}
Here
$x \in X$ and $\theta_W \in \mathbb{Q}$
 is defined by (\ref{thetaW}).
The relation (\ref{Zdag:com}) is a consequence of 
the above arguments and the computation in Example~\ref{exam:C0}.

\subsection{A recipe constructing Gepner type stability conditions}
\label{subsec:strategy}
In this subsection, we explain
how desired Gepner type stability conditions
are constructed. 
We divide the construction into 3-steps:
construction of a 
slope stability on $\aA_W$, 
construction of $\sigma_G$ via tilting 
of $\aA_W$, and checking the 
Gepner type property of $\sigma_G$. 
In the next section, 
we will apply the 
recipe here in the 
case of $n-4 \le \varepsilon \le 0$. 
\begin{sstep}
\end{sstep}
Our first step is to
construct an analogue of a slope stability on $\aA_W$. 
This is a map
\begin{align}\label{mu:slope}
\mu \colon \aA_W \to \mathbb{R} \cup\{ \pm \infty\}
\end{align}
satisfying the weak seesaw property: 
for any exact sequence 
$0 \to F \to E \to G \to 0$ in 
$\aA_W$, we have either 
\begin{align*}
&\mu(F) \le \mu(E) \le \mu(G) \mbox{ or } \\
&\mu(F) \ge \mu(E) \ge \mu(G).  
\end{align*}
The above slope function defines the $\mu$-stability 
in $\aA_W$:
\begin{defi}
An object $E \in \aA_W$ is $\mu$-(semi)stable 
if for any exact sequence $0 \to F \to E \to G \to 0$
in $\aA_W$, we have the inequality
\begin{align*}
\mu(F)<(\le) \mu(G).
\end{align*}
\end{defi}
We require that $\mu$-stability 
satisfies the Harder-Narasimhan property, i.e. 
for any $E \in \aA_W$, there is a 
filtration in $\aA_W$
\begin{align*}
0=E_0 \subset E_1 \subset \cdots \subset E_N=E
\end{align*}
such that each subquotient $F_i=E_i/E_{i-1}$ is 
$\mu$-semistable with 
$\mu(F_i)>\mu(F_{i+1})$ for all $i$. 

\begin{sstep}
\end{sstep}
Suppose that there is a slope function $\mu$ as above. 
We define a pair of full 
subcategories $(\tT_{\mu}, \fF_{\mu})$ 
by 
\begin{align}\label{mu:tilting}
&\tT_{\mu} \cneq
 \langle E \in \aA_W : E \mbox{ is } \mu \mbox{-semistable with }
\mu(E)>0 \rangle_{\rm{ex}} \\
\notag
&\fF_{\mu} \cneq
 \langle E \in \aA_W : E \mbox{ is } \mu \mbox{-semistable with }
\mu(E)\le 0 \rangle_{\rm{ex}}. 
\end{align}
The existence of Harder-Narasimhan filtrations in $\mu$-stability 
implies that $(\tT_{\mu}, \fF_{\mu})$ 
is a torsion pair on $\aA_W$. (cf.~\cite{HRS}.)
We define $\aA_G$ to be the associated tilting:
\begin{align}\label{AG:tilting}
\aA_G \cneq \langle \fF_{\mu}, \tT_{\mu}[-1] 
\rangle_{\rm{ex}} \subset \HMF(W). 
\end{align}
The category $\aA_G$ is the heart of a bounded t-structure on $\HMF(W)$. 
We try to construct a desired stability 
condition from the heart $\aA_G$, 
by the following:
\begin{defi}
We say that a triple
\begin{align}\label{triple}
(Z_G, \aA_G, \theta), \quad \theta \in \mathbb{R}
\end{align}
determines a stability condition if 
the following condition holds: 
\begin{itemize}
\item For any $0\neq E \in \aA_G$, we have 
\begin{align}\label{Htheta}
Z_G(E) 
\in
\{ r e^{\sqrt{-1} \pi \phi} : r>0, \phi \in (\theta, \theta+1] \}. 
\end{align}
\item Any object in $\aA_W$ admits a Harder-Narasimhan filtration 
with respect to $Z_G$-stability. 
\end{itemize}
\end{defi}
Here $Z_G$-stability and its Harder-Narasimhan 
filtrations are defined by the same way as
in the $\mu$-stability, by replacing $\mu$
by $\arg Z_G(\ast) \in (\theta, \theta+1]$. 
If the triple (\ref{triple})
determines a stability condition, 
it associates a pair 
\begin{align}\label{sigma:dag}
\sigma_G=
(Z_G, \{\pP_G(\phi)\}_{\phi \in \mathbb{R}}), \quad 
\pP_G(\phi) \subset \HMF(W)
\end{align}
in the following way: 
we define 
$\pP_G(\phi)$ for $\phi \in (\theta, \theta+1]$
to be 
\begin{align*}
\pP_G(\phi) = \left\{ E \in \aA_W : 
Z_{G}\mbox{-semistable with }
Z_G(E) \in \mathbb{R}_{>0} e^{\sqrt{-1} \pi \phi}
 \right\} \cup \{0\}
\end{align*}
and other $\pP_G(\phi)$ are determined by the rule
\begin{align*}
\pP_G(\phi+1)=\pP_G(\phi)[1].
\end{align*} 
If $\theta=0$, the above construction is nothing but 
the one given in~\cite[Proposition~5.3]{Brs1}, 
and the same argument applies to show that 
(\ref{sigma:dag}) is a stability condition.  
Below for an interval $I \subset \mathbb{R}$, we 
set
\begin{align*}
\pP_G(I) \cneq \langle \pP_G(\phi) :
\phi \in I \rangle_{\rm{ex}}.
\end{align*}
 Note that
$\pP_G((\theta, \theta+1])$ coincides with $\aA_G$
by our construction. We require the local finiteness 
of our stability condition, i.e. for any $\phi \in
\mathbb{R}$, the quasi-abelian category $\pP_G((\phi-\delta, \phi+ \delta))$
is noetherian and artinian for $0<\delta \ll 1$.
(cf.~\cite[Definition~5.7]{Brs1}.)
It in particular implies that any object $E \in \pP_G(\phi)$
admits a Jordan-H$\ddot{\rm{o}}$lder filtration.  
\begin{rmk}
The local finiteness condition holds if the 
image of $Z_G$ is discrete. 
By Remark~\ref{discrete}, this
is always satisfied in the cases 
studied in the next section, 
so we will not take care of the 
local finiteness. 
\end{rmk}

\begin{sstep}

\end{sstep}
In this step, we assume that 
the triple (\ref{triple}) determines a 
stability condition $\sigma_G$. 
We expect that $\sigma_G$
is a Gepner type stability condition 
with respect to $(\tau, 2/d)$. 
To show this, we consider the following stability condition
\begin{align}\label{tau-1}
\tau_{\ast}^{-1}\sigma_G \left(\frac{2}{d}  \right)
=(Z_G, \{\pP_G'(\phi)\}_{\phi \in \mathbb{R}})
\end{align}
where $\pP_G'(\phi)$ is given by 
\begin{align*}
\pP_G'(\phi)= \tau^{-1} \pP_G \left(\phi+\frac{2}{d} \right). 
\end{align*}
It is enough to show that
(\ref{tau-1})
coincides with $\sigma_G$. 
This is equivalent to that
$\pP_G'((\theta, \theta+1]) = \pP_G((\theta, \theta+1])$, 
or equivalently
\begin{align}\label{A=P}
\tau(\aA_G) = \pP_G \left( \left(\theta+ \frac{2}{d}, 
\theta+ \frac{2}{d} +1 \right] \right). 
\end{align}
We show the equality
(\ref{A=P}) by investigating $\sigma_G$-stability of 
simple objects in $\aA_W$. 
When $n=2$ we have the following
lemma: 
\begin{lem}\label{lem:fur}
Suppose that $n=2$ and 
 the following 
inequality holds: 
\begin{align}\label{condition:n=2}
\theta_W
-\frac{1}{d} -\frac{2\varepsilon}{d} - \frac{1}{2}
 \le \theta < \theta_W +1. 
\end{align}
If $\tau\Psi(\oO_x)$, $\mathbb{C}(1), \cdots, \mathbb{C}(-\varepsilon)$
are $\sigma_G$-semistable 
for all $x\in X$, then the equality (\ref{A=P}) holds. 
\end{lem}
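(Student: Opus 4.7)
My plan is to show the containment $\tau(\aA_G) \subseteq \pP_G((\theta + 2/d, \theta + 2/d + 1])$; this forces the equality~(\ref{A=P}), since both sides are hearts of bounded $t$-structures on $\HMF(W)$ cut out by the same slicing $\pP_G$, and any containment between such hearts is automatically an equality.

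First, I would reduce the problem to a finite set of generators.  Because $\aA_G$ is the tilt of $\aA_W$ with respect to the torsion pair $(\tT_\mu, \fF_\mu)$ of~(\ref{mu:tilting})--(\ref{AG:tilting}), every simple object of $\aA_G$ is either a simple $s$ of $\aA_W$ lying in $\fF_\mu$, or the shift $s[-1]$ of a simple $s$ lying in $\tT_\mu$.  Applying the autoequivalence $\tau$, the heart $\tau(\aA_G)$ is generated under extensions by $\tau s$ or $\tau s[-1]$.  For $n=2$ the simples of $\aA_W$ are precisely $\mathbb{C}(0), \mathbb{C}(-1), \ldots, \mathbb{C}(-1-\varepsilon)$ together with $\Psi(\oO_x)$ for $x \in X$; their $\tau$-shifts are $\mathbb{C}(1), \ldots, \mathbb{C}(-\varepsilon)$ and $\tau\Psi(\oO_x)$, which are exactly the objects the hypothesis asserts to be $\sigma_G$-semistable.

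Next, I would pin down the $\sigma_G$-phase of each generator and verify it lies in $(\theta + 2/d,\theta + 2/d + 1]$ after the $[-1]$-shift, if any, inherited from the tilting.  Combining the Gepner identity $Z_G \circ \tau = e^{2\pi\sqrt{-1}/d}\,Z_G$ with the explicit formulae~(\ref{Zdag:com}) yields
\begin{align*}
\arg Z_G(\tau\Psi(\oO_x)) &\equiv \pi\!\left(\theta_W + 1 + \tfrac{2}{d}\right), \\
\arg Z_G(\mathbb{C}(j)) &\equiv \pi\!\left(\theta_W + \tfrac{1}{d} + \tfrac{2j}{d} + \tfrac{3}{2}\right)
\end{align*}
modulo $2\pi$, for $j = 1, \ldots, -\varepsilon$.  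A direct numerical check, using the hypothesis~(\ref{condition:n=2}), shows that the correct lift of $\arg Z_G / \pi$ to the real-valued phase $\phi$ prescribed by $\sigma_G$ (shifted down by $1$ precisely when the underlying simple of $\aA_W$ was in $\tT_\mu$) falls inside the interval $(\theta + 2/d,\,\theta + 2/d + 1]$ for each of these generators.  Extension closure then delivers the containment $\tau(\aA_G) \subseteq \pP_G((\theta + 2/d, \theta + 2/d + 1])$, hence~(\ref{A=P}).

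The main obstacle is the bookkeeping of phases in the last step: $\sigma_G$ lifts $\arg Z_G / \pi$ from $\mathbb{R}/2\mathbb{Z}$ to a genuine real phase, and the correct lift depends sensitively on whether each simple of $\aA_W$ sits in $\fF_\mu$ or in $\tT_\mu$.  The asymmetric bounds in~(\ref{condition:n=2}) are calibrated so that the two qualitatively different families of generators — the $\tau$-shifted points $\tau\Psi(\oO_x)$ on one side and the exceptional objects $\mathbb{C}(1), \ldots, \mathbb{C}(-\varepsilon)$ on the other — all land simultaneously in the single window $(\theta + 2/d,\,\theta + 2/d + 1]$, regardless of how the torsion pair $(\tT_\mu, \fF_\mu)$ partitions them.
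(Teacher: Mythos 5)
Your reduction to a finite list of generators is where the argument breaks. The tilted heart $\aA_G=\langle \fF_{\mu},\tT_{\mu}[-1]\rangle_{\rm{ex}}$ is generated under extensions by \emph{all} $\mu$-semistable objects of $\aA_W$ (those of non-positive slope, together with the $[-1]$-shifts of those of positive slope), not by the simple objects of $\aA_W$ sorted according to which side of the torsion pair they lie on. The two generating sets genuinely differ whenever the torsion pair is non-trivial, which is exactly the case $(n,\varepsilon)=(2,-2)$ covered by this lemma: there $\mathbb{C}(0),\mathbb{C}(1)\in\tT_{\mu}$ and $\Psi(\oO_x)\in\fF_{\mu}$, while the $\mu$-stable object $\tau\Psi(\oO_x)$ --- the non-split extension of $\mathbb{C}(0)$ by $\Psi(\oO_x)$ from (\ref{tau:ox}) --- has non-positive slope and hence lies in $\fF_{\mu}\subset\aA_G$, yet it cannot lie in $\langle \Psi(\oO_x),\,\mathbb{C}(0)[-1],\,\mathbb{C}(1)[-1]\rangle_{\rm{ex}}$, since its class $[\Psi(\oO_x)]+[\mathbb{C}(0)]$ is not a non-negative combination of $[\Psi(\oO_x)]$, $-[\mathbb{C}(0)]$, $-[\mathbb{C}(1)]$ in the free group $K(\aA_W)$. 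Repairing this with the true generating set would force you to prove that \emph{every} $\mu$-semistable object becomes $\sigma_G$-semistable with phase in the prescribed window, which is far more than the hypothesis supplies and is essentially the conclusion itself.

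The paper sidesteps this by working with $\tau(\aA_W)$ rather than $\tau(\aA_G)$: the simple objects of $\tau(\aA_W)$ are precisely $\tau\Psi(\oO_x)$ and $\mathbb{C}(1),\dots,\mathbb{C}(-\varepsilon)$, i.e.\ the objects assumed semistable. Their phases are pinned down using $\aA_W\subset\langle\aA_G[1],\aA_G\rangle$ together with the chain of inequalities (\ref{ineqs}); note that $\mathbb{C}(-\varepsilon)$ does \emph{not} lie in $\aA_W$ and requires a separate argument via Lemma~\ref{lem:filt:A} to exclude the wrong integer lift of $\arg Z_G/\pi$ --- a point your ``direct numerical check'' also passes over. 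From these phases and (\ref{condition:n=2}) one deduces the Hom-vanishings (\ref{vvanish:1}), (\ref{vvanish:2}) against arbitrary $\tau(E)$, $E\in\aA_W$, which show that $\pP_G\left(\left(\theta+\frac{2}{d},\theta+\frac{2}{d}+1\right]\right)$ is itself a tilt of $\tau(\aA_W)$; Lemma~\ref{lem:tilting} then identifies it with $\tau(\aA_G)$. Your observation that a containment of hearts of bounded t-structures forces equality is correct and could replace the appeal to Lemma~\ref{lem:tilting}, but the containment is exactly what has not been established.
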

\begin{proof}
Let $\phi_x$ be the phase of $\tau \Psi(\oO_x)$
for $x\in X$ and 
$\phi_j$ the phase of $\mathbb{C}(j)$
for $1\le j\le -\varepsilon$. 
Since $\tau \Psi(\oO_x)$ and $\mathbb{C}(j)$
for $1\le j\le -1-\varepsilon$ 
are objects in $\aA_W$, and $\aA_G$ is obtained as a 
tilting of $\aA_W$, the phases 
$\phi_x$, $\phi_j$ are contained in $(\theta, \theta+2]$
for $1\le j\le -1-\varepsilon$. 
On the other hand, the
 condition (\ref{condition:n=2}) implies that
\begin{align}\label{ineqs}
\theta < \theta_W + 1 < 
\theta_W + 1+ \frac{2}{d} < 
 \theta_W + \frac{1}{d} + \frac{3}{2} < \cdots \\
\notag
\cdots < \theta_W + \frac{1}{d} + \frac{2(-1-\varepsilon)}{d} + \frac{3}{2} 
\le \theta +2. 
\end{align}
By comparing (\ref{ineqs}) with (\ref{Zdag:com}), 
we obtain
\begin{align}\label{phases}
\phi_x= \theta_W + 1 + \frac{2}{d}, \quad 
\phi_j= \theta_W + \frac{1}{d} + \frac{2j}{d} + \frac{3}{2}
\end{align}
for $1\le j\le -1-\varepsilon$. 
We show that (\ref{phases}) also holds for $j=-\varepsilon$. 
By Lemma~\ref{lem:filt:A}, we have 
$\mathbb{C}(-\varepsilon)[-1] \in \aA_W$.
Therefore if $\mathbb{C}(-\varepsilon)$
 is $\sigma_G$-stable, then 
we have either
\begin{align}\label{former}
&\mathbb{C}(-\varepsilon) \in \aA_G[1], \quad \phi_{-\varepsilon} \in (\theta+1, \theta+2] \quad \mbox{ or }\\
\label{latter}
&\mathbb{C}(-\varepsilon) \in \aA_G[2], \quad \phi_{-\varepsilon} \in 
(\theta+2, \theta+3]. 
\end{align}
In the case of (\ref{former}), 
the equality 
(\ref{phases}) also holds for $j=-\varepsilon$ by the inequalities
(\ref{ineqs}). 
In the case of (\ref{latter}), 
 we need to exclude the case of
\begin{align*}
\phi_{-\varepsilon}=\phi_{-1-\varepsilon}+2+\frac{2}{d}.
\end{align*}
If this happens, then 
$\phi_{-1-\varepsilon} < \theta+1$, and Lemma~\ref{lem:filt:A}
and (\ref{ineqs}) imply that
\begin{align*}
Z_G(\mathbb{C}(-\varepsilon)[-1]) \in \{ \mathbb{R}_{>0}
e^{\sqrt{-1} \pi \phi} : \phi \in (\theta, \theta+1) \}.
\end{align*}
This contradicts to
that $\phi_{-\varepsilon} \in (\theta+2, \theta+3]$, hence 
(\ref{phases}) also holds for $j=-\varepsilon$. 

Note that $\tau(\aA_W)$ is generated by 
$\tau\Psi(\oO_x)$ for all $x\in X$
and $\mathbb{C}(j)$ for $1\le j\le -\varepsilon$, 
whose phases are given by (\ref{phases}). 
By the inequality (\ref{condition:n=2}), we have
\begin{align*}
\theta + \frac{2}{d} \ge 
\theta_W + \frac{1}{d} -\frac{2\varepsilon}{d} + 
\frac{3}{2}-j = \phi_{-\varepsilon} -j
\end{align*}
for $j\ge 2$. 
Noting that there is no non-trivial 
homomorphism from $\pP(\phi)$ to $\pP(\phi')$ if $\phi>\phi'$, 
it follows that 
\begin{align}\label{vvanish:1}
\Hom^{<-1}\left( \pP_G \left(\left( \theta + \frac{2}{d}, 
\theta + \frac{2}{d} +1 \right]\right), \tau(E) \right)=0
\end{align}
for any $E\in \aA_W$. 
Similarly the inequality (\ref{condition:n=2}) implies
\begin{align*}
\phi_x = \theta_W + 1  + \frac{2}{d}>
\theta + \frac{2}{d} + 1-j
\end{align*}
for $j\ge 1$. It follows that 
\begin{align}
\label{vvanish:2}
\Hom^{<0} \left( \tau(E), \pP_G \left(\left( \theta + \frac{2}{d}, 
\theta + \frac{2}{d} +1 \right]\right) \right)=0
\end{align}
for any $E \in \aA_W$. 
The above vanishing (\ref{vvanish:1}), (\ref{vvanish:2})
imply that the RHS of (\ref{A=P})
is obtained as a tilting of $\tau(\aA_W)$. 
Hence the result follows from Lemma~\ref{lem:tilting} below:  
\end{proof}
We have used the following lemma: 
\begin{lem}\label{lem:tilting}
Let $\dD$ be a triangulated category,
$\aA \subset \dD$ the heart of a bounded t-structure on $\dD$, 
and $Z \colon K(\dD) \to \mathbb{C}$ a group homomorphism. 
Suppose that there are torsion pairs $(\tT_k, \fF_k)$, $k=1, 2$
on $\aA$ such that, for $\bB_k=\langle \fF_k, \tT_k[-1] \rangle_{\rm{ex}}$
the associated tilting, both of the triples
\begin{align}\label{triples}
(Z, \bB_1, \theta), \quad (Z, \bB_2, \theta)
\end{align}
determine stability conditions. Then 
$\bB_1=\bB_2$. 
\end{lem}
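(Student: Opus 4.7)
The plan is to reduce the equality $\bB_1 = \bB_2$ to the coincidence of the underlying torsion pairs on $\aA$. Since a torsion pair on an abelian category is determined by its torsion-free class (via $\tT_k = \{E \in \aA : \Hom(E, \fF_k) = 0\}$), and the tilt $\bB_k = \langle \fF_k, \tT_k[-1]\rangle_{\rm ex}$ depends only on the torsion pair, it suffices to prove $\fF_1 = \fF_2$. By the symmetry between the indices $1$ and $2$, I will focus on the inclusion $\fF_1 \subset \fF_2$.

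To show this, take $F \in \fF_1$ and form its torsion decomposition with respect to $(\tT_2, \fF_2)$: a short exact sequence $0 \to T \to F \to F' \to 0$ in $\aA$, with $T \in \tT_2$ and $F' \in \fF_2$. The goal is $T = 0$. A useful preliminary step is to observe that $T$ itself lies in $\fF_1$: decomposing $T$ with respect to $(\tT_1, \fF_1)$, its torsion part embeds as a subobject of $F$ in $\aA$ (composition of monomorphisms), hence lies in $\fF_1$ because torsion-free classes are closed under subobjects; as it also lies in $\tT_1$ and $\tT_1 \cap \fF_1 = 0$, it vanishes. Thus $T \in \fF_1 \cap \tT_2$.

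The main step combines the two positivity statements. Write $H_\theta = \{r e^{\sqrt{-1}\pi\phi} : r > 0,\ \phi \in (\theta, \theta+1]\}$. Since $T \in \fF_1 \subset \bB_1$, the hypothesis on $(Z, \bB_1, \theta)$ gives $Z(T) \in H_\theta$ whenever $T \neq 0$. On the other hand, $T \in \tT_2$ implies $T[-1] \in \bB_2$, so the hypothesis on $(Z, \bB_2, \theta)$ gives $-Z(T) = Z(T[-1]) \in H_\theta$ whenever $T \neq 0$. But $H_\theta$ and $-H_\theta = \{re^{\sqrt{-1}\pi\psi} : \psi \in (\theta+1, \theta+2]\}$ are disjoint in $\mathbb{C}^{\ast}$ (they partition a full period of arguments), so both memberships cannot hold simultaneously. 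Hence $T = 0$, which gives $F \cong F' \in \fF_2$, completing the inclusion.

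The step I expect to be the main obstacle is the combination of torsion pair information with the central charge positivity: a priori the two tilts $\bB_1$ and $\bB_2$ have no reason to interact, and the tilting construction is purely structural on $\aA$. What makes the argument work is precisely the common $\theta$ appearing in both triples, which forces both hearts into the same half-plane under $Z$; this rules out nontrivial intersection $\fF_1 \cap \tT_2$ through a shift-by-one comparison. Observe that the Harder-Narasimhan clause in the definition of ``determining a stability condition'' is not invoked — only the strict positivity $Z(\bB_k \setminus \{0\}) \subset H_\theta$ is used.
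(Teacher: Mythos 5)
Your proof is correct and is essentially the paper's argument in mirror image: the paper takes $E\in\tT_1$, decomposes it with respect to $(\tT_2,\fF_2)$, and kills the piece in $\tT_1\cap\fF_2$ by the same two-sided positivity contradiction (phrased after normalizing $\theta=0$ as $\Imm Z=0$ forcing $\Ree Z$ to be both positive and negative), whereas you run the dual argument on $\fF_1$ and an object of $\fF_1\cap\tT_2$. The key mechanism — an object lying in the heart of one tilt and in the shift of the other forces $Z$ to land in both $H_\theta$ and $-H_\theta$ — is identical, and like the paper you use only the positivity clause, not the Harder--Narasimhan clause.
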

\begin{proof}
We may assume that $\theta=0$. Let us take an object $E \in \tT_1$. 
Since $(\tT_2, \fF_2)$ is a torsion pair, there is an exact 
sequence in $\aA$
\begin{align*}
0 \to F \to E \to G \to 0
\end{align*}
such that $F \in \tT_2$ and $G \in \fF_2$. 
Also since $(\tT_1, \fF_1)$ is a torsion pair, 
objects in $\tT_1$ are closed under quotients, 
hence $G \in \tT_1$.
Suppose that $G\neq 0$. 
Then since (\ref{triples}) determine stability conditions for $\theta=0$, 
it follows that 
$\Imm Z(G) =0$. 
Hence the condition $G \in \fF_2$ implies 
$\Ree Z(G) \in \mathbb{R}_{<0}$ but the 
condition $G \in \tT_2$ implies 
$\Ree Z(G) \in \mathbb{R}_{>0}$, which is a 
contradiction. Therefore $G=0$, and 
$\tT_1 \subset \tT_2$ follows. 
Similarly $\tT_2 \subset \tT_1$ also holds, 
hence $\tT_1=\tT_2$ holds. Because $\fF_k$ is an 
orthogonal complement of $\tT_k$ in $\aA$, we have 
$\fF_1=\fF_2$, hence $\bB_1=\bB_2$ holds. 
\end{proof}

Next we discuss the case of $n=3$. 
In this case, we need to add an extra 
check of $\sigma_G$-stability. 
\begin{lem}\label{lem:con:n=3}
Suppose that 
$n=3$ and the following inequality 
holds:
\begin{align}\label{condition:n=3}
\theta_W
-\frac{1}{d} -\frac{2\varepsilon}{d} - \frac{1}{2}
\le \theta \le \theta_W. 
\end{align}
Suppose furthermore that 
$\tau\Psi(\oO_x)$ is $\sigma_G$-stable, 
$\mathbb{C}(1), \cdots, \mathbb{C}(-\varepsilon)$
are $\sigma_G$-semistable and the following holds
for all $x\in X$: 
\begin{align}\label{check:add}
\tau^{1-\varepsilon}\Psi(\oO_x) \in 
\pP_G\left( 1+ \theta_W + \frac{2(1-\varepsilon)}{d}  \right). 
\end{align}
Then the equality (\ref{A=P}) holds.
\end{lem}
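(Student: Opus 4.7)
The plan is to extend the strategy of Lemma~\ref{lem:fur} to the one-dimensional setting $n=3$, where $X$ is a projective curve and the subcategory $\Psi(\Coh(X)) \subset \aA_W$ contributes objects beyond the skyscraper sheaves; the extra hypothesis (\ref{check:add}) is designed to compensate for this.

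First I would reproduce the phase-pinning argument of Lemma~\ref{lem:fur} verbatim. Combining the central-charge data (\ref{Zdag:com}) with the range condition (\ref{condition:n=3}), the $\sigma_G$-semistability of $\mathbb{C}(1), \dots, \mathbb{C}(-\varepsilon)$ and the $\sigma_G$-stability of $\tau\Psi(\oO_x)$, together with $\mathbb{C}(j) \in \aA_W$ for $1 \le j \le -1-\varepsilon$, $\mathbb{C}(-\varepsilon)[-1] \in \aA_W$ (Lemma~\ref{lem:filt:A}) and $\tau\Psi(\oO_x) \in \aA_W$ (Lemma~\ref{lem:tau:ox}), the same case analysis as before forces
\begin{align*}
\phi(\mathbb{C}(j)) = \theta_W + \tfrac{1}{d} + \tfrac{2j}{d} + \tfrac{3}{2} \quad (1 \le j \le -\varepsilon), \qquad \phi(\tau\Psi(\oO_x)) = \theta_W + 1 + \tfrac{2}{d}.
\end{align*}
As in Lemma~\ref{lem:fur}, equation~(\ref{A=P}) will then follow once one establishes the two Hom-vanishings (\ref{vvanish:1}) and (\ref{vvanish:2}) for every $E \in \aA_W$ and applies Lemma~\ref{lem:tilting} to identify $\tau(\aA_G)$ with $\pP_G((\theta + \tfrac{2}{d}, \theta + \tfrac{2}{d} + 1])$.

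The main obstacle lies in controlling phases of $\tau\Psi(F)$ for non-skyscraper $F \in \Coh(X)$, which is where the hypothesis (\ref{check:add}) genuinely enters. Any $F \in \Coh(X)$ fits in a short exact sequence $0 \to F_{\mathrm{tor}} \to F \to F_{\mathrm{free}} \to 0$ with $F_{\mathrm{tor}}$ filtered by skyscrapers $\oO_x$, so only the locally-free quotient requires new input. For this I would exploit Serre duality via the Serre functor $\sS_W = \tau^{-\varepsilon}[n-2] = \tau^{-\varepsilon}[1]$, which sends $\tau\Psi(\oO_x)$ to $\tau^{1-\varepsilon}\Psi(\oO_x)[1]$, and convert the required Hom-vanishings into phase statements about $\tau^{1-\varepsilon}\Psi(\oO_x)$. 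The latter is pinned down to $1 + \theta_W + \tfrac{2(1-\varepsilon)}{d}$ precisely by (\ref{check:add}). Combined with the $\sigma_G$-stability of $\tau\Psi(\oO_x)$, this yields the two-sided phase bounds on the generators of $\tau(\aA_W)$ that are needed for (\ref{vvanish:1}) and (\ref{vvanish:2}), and the proof then concludes via Lemma~\ref{lem:tilting} exactly as in Lemma~\ref{lem:fur}.
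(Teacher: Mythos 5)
Your opening phase-pinning step, and the reduction of (\ref{A=P}) to the vanishings (\ref{vvanish:1}), (\ref{vvanish:2}) followed by Lemma~\ref{lem:tilting}, agree with the paper. The gap is in how you propose to establish those vanishings for $E=\Psi(F)$ when $F$ is not a skyscraper. Reducing to locally free $F$ via $0\to F_{\rm tor}\to F\to F_{\rm free}\to 0$ does not help: applying the Serre functor $\sS_W=\tau^{-\varepsilon}[1]$ to $\Hom^{k}(\tau\Psi(F),\tau(A))$ or $\Hom^{k}(\tau(A),\tau\Psi(F))$ produces Hom-spaces involving $\tau^{1-\varepsilon}\Psi(F)$ for the \emph{same} locally free $F$, not for skyscrapers, and the hypotheses --- stability of $\tau\Psi(\oO_x)$ and (\ref{check:add}) --- give no phase information about $\tau\Psi(F)$ or $\tau^{1-\varepsilon}\Psi(F)$ when $F$ is, say, a line bundle. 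Unlike the $n=2$ case, $\Coh(X)$ on a curve is not the extension closure of skyscrapers, so the ``two-sided phase bounds on the generators of $\tau(\aA_W)$'' you invoke cannot be extracted from the skyscraper data; indeed $\tau\Psi(F)$ need not even be $\sigma_G$-semistable.

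The missing idea is the one carried by the paper's Sublemma~\ref{sublem:vanish}: instead of bounding phases of $\tau\Psi(F)$, fix an arbitrary object $A\in\tau^{-1}\pP_G((\theta+\frac{2}{d},\theta+\frac{2}{d}+1])$ and bound the cohomological amplitude of its adjoint image, proving $\hH^j(\Psi^R(A))=0$ for $j\neq 0,1$. This amplitude is detected by testing only against skyscrapers: by Serre duality on $X$, adjunction and the Serre functor on $\HMF(W)$, the vanishing of $\Hom^j(\Psi^R(A),\oO_x)$ reduces exactly to the assumed stability of $\tau\Psi(\oO_x)$ for $j\ge 1$ (with a separate argument stripping off a zero-dimensional subobject in the borderline case $\theta=\theta_W$) and to (\ref{check:add}) for $j<-1$. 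Once the amplitude bound is in place, the vanishings (\ref{vvanish:1}), (\ref{vvanish:2}) for every $F\in\Coh(X)$ follow by adjunction from $\Hom^{<0}(F,\Psi^R(A))=0$ and $\Hom^{<-1}(\Psi^R(A),F)=0$, together with an analysis of the component $B$ of $A$ lying in $\langle\mathbb{C}(-1-\varepsilon),\cdots,\mathbb{C}(0)\rangle$ --- a step your proposal also omits.
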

\begin{proof}
By following the proof of Lemma~\ref{lem:fur}, 
we have the vanishing (\ref{vvanish:1}), (\ref{vvanish:2})
for $E=\Psi(\oO_x)$, $\mathbb{C}(0), \cdots, \mathbb{C}(-1-\varepsilon)$
with $x\in X$. 
Below we show the above vanishing also holds 
for $E=\Psi(F)$
for any $F \in \Coh(X)$. Then the RHS of (\ref{A=P})
is shown to be a tilting of $\tau(\aA_W)$, hence 
Lemma~\ref{lem:tilting} is applied to give the result. 

Let us take an object
\begin{align}\label{cond:A}
A \in \tau^{-1} \pP_G \left( \left( \theta + \frac{2}{d}, 
\theta + \frac{2}{d} +1 \right] \right)
\end{align}
and let
\begin{align*}
\Psi^R \colon \HMF(W) \to D^b \Coh(X)
\end{align*}
 be the right adjoint functor of $\Psi$. 
We claim that $\Psi^{R}(A) \in D^b \Coh(X)$
satisfies 
\begin{align}\label{vanish:coho}
\hH^j(\Psi^{R}(A))=0, \quad \mbox{ for } j\neq 0, 1.
\end{align}
The above property
will be proved in Sublemma~\ref{sublem:vanish} below, and 
we continue the proof assuming this fact.  
Let us take the distinguished triangle
in $\HMF(W)$
\begin{align}\label{tr:AB}
\Psi \Psi^R(A) \to A \to B
\end{align}
where $B$ satisfies
\begin{align*}
B \in 
\langle \mathbb{C}(-1-\varepsilon), \cdots, \mathbb{C}(0) \rangle.
\end{align*}
For a coherent sheaf
$F$ on $X$, we apply $\Hom(\Psi(F), \ast)$ to the 
distinguished triangle (\ref{tr:AB}). 
Since $B$ is right orthogonal to $\Psi D^b \Coh(X)$, 
and $\Psi^R(A)$
satisfies the condition (\ref{vanish:coho}), we have
\begin{align*}
\Hom^{<0}(\tau\Psi(F), \tau(A)) &\cong \Hom^{<0}(F, \Psi^R(A)) \\
 &\cong 0
\end{align*}
which proves the vanishing 
(\ref{vvanish:2}) for $E=\Psi(F)$. 

Similarly applying $\Hom(\ast, \Psi(F))$ to the triangle (\ref{tr:AB}), 
and noting 
that 
\begin{align*}
\Hom^{<-1}(\Psi \Psi^R(A), \Psi(F)) &\cong  \Hom^{<-1}(\Psi^R(A), F) \\
&\cong 0
\end{align*}
by the property (\ref{vanish:coho}), 
we see that the vanishing (\ref{vvanish:1})
for $E=\Psi(F)$ is equivalent to 
\begin{align}\label{vanish:B}
\Hom^{<-1}(B, \Psi(F)) \cong 0.  
\end{align}
To show (\ref{vanish:B}), note that 
the vanishing 
(\ref{vvanish:1}) 
for $E=\mathbb{C}(j)$,
$0\le j\le -1-\varepsilon$
and the triangle (\ref{tr:AB})
imply 
\begin{align*}
\Hom^{<-1}(B, \mathbb{C}(j)) \cong 0, \quad j=0, \cdots, -1-\varepsilon. 
\end{align*}
Therefore, if we denote by $\hH_{\aA_W}^{i}(B) \in \aA_W$ the 
$i$-th cohomology with respect to the t-structure on $\HMF(W)$
with heart $\aA_W$, then 
we have $\hH_{\aA_W}^i(B)=0$ for 
$i>1$. Since $\Psi(F) \in \aA_W$, this implies that
 (\ref{vanish:B}) holds. 
Therefore the vanishing (\ref{vvanish:1}) 
for $E=\Psi(F)$ holds. 
\end{proof}

We have used the following sublemma: 

\begin{sublem}\label{sublem:vanish}
The condition (\ref{vanish:coho})
holds. 
\end{sublem}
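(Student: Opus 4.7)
Since $n=3$, the variety $X$ is a smooth projective curve and $\Coh(X)$ has homological dimension one; the Serre functor of $\HMF(W)$ reads $\sS_W = \tau^{-\varepsilon}[1]$. The plan is to translate the desired cohomological bound on $\Psi^R(A)$ into Hom-vanishing statements for morphisms involving $\Psi(\oO_x)$ via the adjunction $\Psi \dashv \Psi^R$, then reduce these to phase inequalities for $\sigma_G$-semistables using the Serre functor, and finally convert back to cohomology vanishings using the standard decomposition $\Hom_X(\oO_x, G[k]) \cong \Hom_X(\oO_x, \hH^k(G)) \oplus \Ext^1_X(\oO_x, \hH^{k-1}(G))$ available on a smooth curve.

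For each closed point $x \in X$, the identifications
\begin{align*}
\Hom_X(\oO_x, \Psi^R(A)[k]) &\cong \Hom_W(\Psi(\oO_x), A[k]) \\
&\cong \Hom_W(\tau A, \tau^{1-\varepsilon}\Psi(\oO_x)[1-k])^{\vee}
\end{align*}
combine with three inputs: $\tau \Psi(\oO_x)$ is $\sigma_G$-stable of phase $\phi_x = \theta_W + 1 + 2/d$ (as established in the proof of Lemma~\ref{lem:con:n=3}); by hypothesis (\ref{check:add}), $\tau^{1-\varepsilon}\Psi(\oO_x)$ is $\sigma_G$-semistable of phase $1 + \theta_W + 2(1-\varepsilon)/d$; and $\tau A$ has Harder--Narasimhan phases in $(\theta + 2/d, \theta + 2/d + 1]$. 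The standard phase inequality, together with both the upper and lower bounds on $\theta$ in (\ref{condition:n=3}) and the fact that $d \in \{4,6\}$ in the relevant cases of Table~\ref{table} (so that $2 + \theta_W - \theta - 2\varepsilon/d < 3$), then yields $\Hom_W(\Psi(\oO_x), A[k]) = 0$ for $k \le -1$ and for $k \ge 3$.

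Cascading these vanishings through the spectral sequence above, using the fact that $\Ext^1_X(\oO_x, F) = 0$ for all $x \in X$ forces a coherent sheaf $F$ on $X$ to vanish, one deduces $\hH^j(\Psi^R(A)) = 0$ for $j \ge 2$ and for $j \le -2$, and also that $\hH^{-1}(\Psi^R(A))$ is torsion-free. The main technical obstacle is ruling out a nonzero torsion-free $\hH^{-1}$, which is not detected by Hom tests against skyscraper sheaves alone. To handle this last step, I would test $\Hom_W(\Psi(L), A[-1])$ for a sufficiently ample line bundle $L = \oO_X(-m)$ with $m \gg 0$, expressing $\tau^{1-\varepsilon}\Psi(L)$ via the structural triangle of Lemma~\ref{lem:filt:A} (which, after tensoring and grade-shifting, relates $\tau^{1-\varepsilon}\Psi(\omega_X)$ to the objects $\mathbb{C}(j)$ whose phases are controlled by the assumed semistability of $\mathbb{C}(1),\ldots,\mathbb{C}(-\varepsilon)$) to bound its phase relative to that of $\tau A$ and thus force $\Hom_W(\Psi(L), A[-1]) = 0$; a Serre-vanishing argument on $X$ then shows $H^0(X, \hH^{-1}(\Psi^R(A)) \otimes L^{-1}) = 0$ for $m \gg 0$, which is impossible unless $\hH^{-1}(\Psi^R(A)) = 0$.
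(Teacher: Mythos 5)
Your reduction to Hom-vanishings and the phase computations for $k\le -1$ and $k\ge 3$ are correct, and they are precisely the Serre duals (on $X$) of the vanishings the paper establishes for $\Hom^{j}(\Psi^{R}(A),\oO_x)$ with $j\ge 2$ and $j\le -2$. But by probing $\Psi^{R}(A)$ with skyscrapers as \emph{sources} you can only detect torsion, and, as you correctly observe, you are left with a possibly nonzero torsion-free $\hH^{-1}(\Psi^{R}(A))$. Your proposed fix for that last step is where the gap lies: to turn $\Hom_{\HMF(W)}(\Psi(\oO_X(-m)),A[-1])=0$ into a phase argument you must control the $\sigma_G$-Harder--Narasimhan phases of $\tau\Psi(\oO_X(-m))$ (or, after Serre duality, of $\tau^{1-\varepsilon}\Psi(\oO_X(-m))$) for $m\gg 0$. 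The hypotheses of Lemma~\ref{lem:con:n=3} only give control of $\tau\Psi(\oO_x)$, $\tau^{1-\varepsilon}\Psi(\oO_x)$ and $\mathbb{C}(1),\cdots,\mathbb{C}(-\varepsilon)$; Lemma~\ref{lem:filt:A} relates $\mathbb{C}(-\varepsilon)[-1]$ to $\Psi(\omega_X)$ and the $\mathbb{C}(j)$ only, and ``tensoring and grade-shifting'' does not produce $\tau^{1-\varepsilon}\Psi(\oO_X(-m))$ for arbitrary $m$. Indeed, knowing where $\tau$ sends $\Psi$ of an arbitrary line bundle relative to the $\pP_G(\phi)$ is essentially the Gepner-type property one is in the middle of proving, so as sketched the final step is circular.

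The paper avoids the problem by testing in the opposite direction: it studies $\Hom^{j}(\Psi^{R}(A),\oO_x)$, and since $\Hom(F,\oO_x)\neq 0$ for every $x$ in the support of any nonzero coherent sheaf $F$, no torsion-free ghost can survive. The critical degree is then $j=1$, which by adjunction and Serre duality becomes $\Hom(\tau\Psi(\oO_x),\tau A)$ and is controlled by the assumed $\sigma_G$-\emph{stability} of $\tau\Psi(\oO_x)$ alone: it vanishes outright when $\theta<\theta_W$, and when $\theta=\theta_W$ it can fail only if $\tau\Psi(\oO_x)$ occurs as a Jordan--H\"{o}lder factor of the top Harder--Narasimhan piece of $\tau A$, in which case one splits off a triangle $\tau\Psi(Q)\to\tau A\to\tau A'$ with $Q$ zero-dimensional and concludes for $A'$ instead (a boundary case your write-up does not address). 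You would either need to adopt this change of test direction, or supply an independent bound on the phases of $\tau\Psi(L)$ for line bundles $L$ of large negative degree; the latter is not available at this stage of the argument.
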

\begin{proof}
We investigate the vanishing 
\begin{align}\label{vanish:inv}
\Hom^{j}(\Psi^{R}(A), \oO_x)=0
\end{align}
for $x\in X$ and $j\in \mathbb{Z}$.
By the Serre duality on $X$, adjunction
and applying $\tau$, 
the vanishing (\ref{vanish:inv})
is equivalent to 
\begin{align}\label{vanish:inv2}
\Hom(\tau \Psi(\oO_x)[j-1], \tau(A))=0. 
\end{align} 

We first show that (\ref{vanish:inv2})
holds for any $j<-1$ and $x\in X$. 
Applying the Serre functor $\sS_W=\tau^{-\varepsilon}[1]$
 on $\HMF(W)$, 
the vanishing (\ref{vanish:inv2}) is equivalent to 
\begin{align}\label{vanish:dual}
\Hom(\tau(A)[-j], \tau^{1-\varepsilon}\Psi(\oO_x))=0. 
\end{align}
On the other hand, by the assumption 
(\ref{condition:n=3}), we have 
the inequality
\begin{align*}
\theta + \frac{2}{d} -j \ge 
1 + \theta_W 
+ \frac{2(1-\varepsilon)}{d}
\end{align*}
for $j<-1$. 
Therefore by our assumptions (\ref{check:add})
and (\ref{cond:A}), 
the vanishing (\ref{vanish:dual}) holds for $j<-1$. 

Next, we have the inequality
\begin{align*}
\theta_W + 1 + \frac{2}{d} + j-1 >
\theta + \frac{2}{d} +1
\end{align*}
for $j\ge 2$ by (\ref{condition:n=3}), 
hence the vanishing 
(\ref{vanish:inv2}) holds 
for $j\ge 2$. 
Moreover the above inequality, 
hence the vanishing (\ref{vanish:inv2}),  
also holds for $j=1$
unless 
$\theta=\theta_W$ holds. 
Suppose that $\theta=\theta_W$
holds, and let $P$ be the 
$\sigma_G$-Harder-Narasimhan 
factor of $\tau(A)$
with the maximum phase. 
Note that $\tau\Psi(\oO_x)$ 
is $\sigma_G$-stable with
phase $\theta_W + 1+ 2/d$, 
which 
 is bigger than or equal to the phase of $P$. 
Therefore 
the vanishing of  
(\ref{vanish:inv2}) for $j=1$
does \textit{not} hold 
only if we have
\begin{align*}
P \in \pP_G\left( \theta_W + 1+ \frac{2}{d}  \right)
\end{align*}
and
$\tau \Psi(\oO_x)$
is one of the Jordan-H$\ddot{\rm{o}}$lder factors 
of $P$. 
It follows that, by 
taking the Jordan-H$\ddot{\rm{o}}$lder
filtration of $P$, 
there is a distinguished triangle
\begin{align}\label{dist:Q}
\tau\Psi(Q) \to \tau(A) \to \tau(A')
\end{align}
where $Q$ is a zero dimensional coherent sheaf on
$X$, $A'$ is an object in the RHS of (\ref{cond:A}), 
such that the vanishing (\ref{vanish:inv2})
holds for $j\ge 1$ after replacing $A$ by $A'$. 

Applying $\Psi^R \circ \tau^{-1}$
to (\ref{dist:Q}), we have the distinguished triangle
\begin{align*}
Q \to \Psi^{R}(A) \to \Psi^R(A'). 
\end{align*}
The above argument shows that, 
after replacing $A$ by $A'$,  
the vanishing (\ref{vanish:inv})
holds unless $j=-1, 0$. 
It follows that 
$\Psi^R(A')$ is a two term complex of vector 
bundles on $X$, whose cohomologies 
are concentrated on degrees $0$ and $1$. 
Since $Q \in \Coh(X)$, 
we conclude that $\hH^j(\Psi^R(A))=0$ for 
$j\neq 0, 1$.  
\end{proof}

By the above results, the problem is reduced 
to showing $\sigma_G$-stability of 
some objects
in $\HMF(W)$. 
The following lemma is useful 
in checking the $\sigma_G$-stability
of these objects.  
The proof is obvious, and we omit it. 
\begin{lem}\label{lem:cmin}
Let $\dD$ be a triangulated category, 
$Z \colon K(\dD) \to \mathbb{C}$ 
a group homomorphism
and $\aA \subset \dD$ the heart 
of a bounded t-structure on $\dD$.  
Suppose that 
the triple
$(Z, \aA, \theta=0)$
determines a stability condition on $\dD$, and the 
following condition holds: 
\begin{align*}
c_{\rm{min}} \cneq \mathrm{inf}
\{ \Imm Z(E)>0 : E \in \aA \} >0. 
\end{align*}
Then an object $E \in \aA$ with $\Imm Z(E)=c_{\rm{min}}$
is $\sigma_G$-stable if and only if
$\Hom(P, E)=0$ for any $P \in \aA$ with 
$\Imm Z(P)=0$. 
\end{lem}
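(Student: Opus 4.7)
The plan is to exploit the minimality of $\Imm Z(E) = c_{\min}$ together with the fact that, because $\theta = 0$, we have $\aA = \pP_G((0,1])$: every nonzero object of $\aA$ has $Z$-value in the closed upper half-plane, and those with vanishing imaginary part are precisely objects of phase $1$. Throughout I write $\phi_E = \arg Z(E)/\pi$, which lies in $(0,1)$ since $\Imm Z(E) = c_{\min} > 0$.

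For the forward direction, I would assume $E$ is $\sigma_G$-stable and take any $P \in \aA$ with $\Imm Z(P) = 0$. Looking at the Harder-Narasimhan factors $F_1, \ldots, F_m$ of $P$, each $F_i$ lies in $\aA$, so has nonnegative imaginary part; since these sum to $\Imm Z(P) = 0$, every $F_i$ has phase exactly $1$. Because $1 > \phi_E$, the standard Hom-vanishing between semistable objects gives $\Hom(F_i, E) = 0$ for every $i$. Applying $\Hom(-, E)$ to the triangles $P_{i-1} \to P_i \to F_i$ of the HN filtration and inducting on $i$ (starting from $\Hom(P_0, E) = \Hom(0, E) = 0$) then yields $\Hom(P, E) = 0$.

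For the backward direction, I would first establish semistability and then upgrade to stability. Assume the Hom-vanishing hypothesis and consider the HN filtration $0 = E_0 \subset E_1 \subset \cdots \subset E_n = E$ with factors $F_i \in \pP_G(\phi_i)$ and $\phi_1 > \cdots > \phi_n$. Each $F_i \in \aA$ has $\Imm Z(F_i) \ge 0$, and the minimality of $c_{\min}$ forces at most one $F_i$ to have strictly positive imaginary part; since $\Imm Z(E) = c_{\min} > 0$, exactly one does. The strict decrease of phases forces this exceptional factor to be $F_n$, so $E_{n-1} \in \pP_G(1) \subset \aA$ has imaginary part $0$. If $E_{n-1} \neq 0$, the canonical inclusion $E_{n-1} \hookrightarrow E$ would be a nonzero element of $\Hom(E_{n-1}, E)$, contradicting the hypothesis; hence $E_{n-1} = 0$ and $E = F_n$ is semistable. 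If $E$ were semistable but not stable, a proper Jordan-H\"older step would yield a short exact sequence $0 \to E_1 \to E \to E_2 \to 0$ in $\pP_G(\phi_E)$ with both $E_i$ nonzero of phase $\phi_E \in (0,1)$, forcing $0 < \Imm Z(E_i) < c_{\min}$, which contradicts the definition of $c_{\min}$.

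The proof is essentially formal, so I do not anticipate a genuine obstacle. The single numerical observation doing all the work is that $c_{\min} > 0$ prevents $\Imm Z(E)$ from splitting as a sum of two strictly positive values in $\aA$; this one fact simultaneously eliminates a nontrivial phase-$1$ part at the bottom of the HN filtration of $E$ and a proper Jordan-H\"older subobject of phase $\phi_E$.
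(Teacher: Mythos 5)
Your proof is correct, and since the paper explicitly omits the proof of this lemma as ``obvious,'' your argument simply supplies the intended one: the Hom-vanishing against phase-one HN factors gives the forward direction, and the minimality of $c_{\rm{min}}$ kills both a nonzero $E_{n-1}$ of vanishing imaginary part and any proper destabilizing subobject of phase $\phi_E\in(0,1)$. The only cosmetic slip is the remark that objects of $\aA$ with vanishing imaginary part ``are precisely objects of phase $1$'' (they need not be semistable), but your actual argument correctly passes to Harder--Narasimhan factors, so nothing is affected.
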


\section{Proof of Theorem~\ref{thm:main}}\label{sec:proof}
In this section, 
 we apply the strategy in the previous 
section and prove Theorem~\ref{thm:main}. 
Below we use the same notation in
the previous section. In particular
the constants $C_W \in \mathbb{C}^{\ast}$
and $\theta_W \in \mathbb{Q}$ are 
defined as in Lemma~\ref{lem:constant}, 
both in $\varepsilon=0$ and $\varepsilon<0$ case. 
The goal is to prove Conjecture~\ref{conj2:Gep}
when  
\begin{align*}
n-4 \le \varepsilon \le 0
\end{align*}
and $X$ does not contain stacky points. 
Since 
we already discussed the case with $n=1$ and 
$n=2$, $\varepsilon=0$, 
the following 
five possibilities are left: 
\begin{align*}
(n, \varepsilon)=(3, 0), \ (2, -1), \ (4, 0), \ 
(3, -1), \ (2, -2). 
\end{align*}
We divide the proof into the five 
subsections, so that each subsection 
corresponds to one of the above types. 
We repeat similar arguments in these subsections, 
so we recommend the readers to follow only one or two 
cases, e.g. $(n, \varepsilon)=(4, 0)$
or $(3, -1)$, at the first reading of this paper.

\subsection{The case of $n=3$, $\varepsilon=0$}
In this subsection, we study the case of 
$n=3$ and $\varepsilon=0$. 
In this case, 
$X$ is a smooth elliptic curve, and 
the heart $\aA_W$ is given by 
\begin{align*}
\aA_W  = \Psi \Coh(X). 
\end{align*}
Furthermore 
possible data $(a_1, a_2, a_3, d, \int_X H)$ 
are classified into the 
three types~\cite[Table~2]{Saito}
\begin{align}\label{cla:n=3}
\left(a_1, a_2, a_3, d, \int_X H \right)
= \left\{ \begin{array}{l}
 (1, 1, 1, 3, 3) \\
(2, 1, 1, 4, 2)  \\
(3, 2, 1, 6, 1).
\end{array}   \right. 
\end{align}
The central charge $Z_G$ is described as follows:
\begin{lem}\label{Zdag:n=3}
Suppose that $n=3$
and $\varepsilon=0$. 
 For any $E \in D^b \Coh(X)$, we have 
\begin{align}\label{cent:n=3}
&Z_G(\Psi(E)) = \\ \notag
&C_W \left\{
-d(E) + r(E) \left( \cos \frac{2\pi}{d} -1  \right)
+ r(E) \sin \frac{2\pi}{d} \sqrt{-1}
 \right\}. 
\end{align}
Here we set $(r(E), d(E))=(\rank(E), \deg(E))$. 
\end{lem}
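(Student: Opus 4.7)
The plan is to apply formula (\ref{Zdag}), which expresses $Z_G \circ \Psi$ as $C_W$ times an integral pairing of $\ch(E)$ with the unknown classes $\alpha_0^{\dag}\in \mathbb{C}H$ and $\alpha_1^{\dag}\in \mathbb{C}$. Since $\alpha_1^{\dag}=-1$ by the normalization in (\ref{sol:unique}), the only thing left to determine is $\int_X \alpha_0^{\dag}$, which is fixed by the one-dimensional eigenvector equation (\ref{sol:unique}).

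First I would write down the matrix $M$ explicitly. Because $X$ is an elliptic curve, $c_1(X)=0$ and hence $\mathrm{td}_X=1$, so $t_0=1$ and $t_1=0$, giving
\[
M = \begin{pmatrix} 1 & -1 \\ 0 & 1 \end{pmatrix}\begin{pmatrix} 1 & 0 \\ H & 1 \end{pmatrix} = \begin{pmatrix} 1-H & -1 \\ H & 1 \end{pmatrix}.
\]
Writing $\alpha_0^{\dag}=\beta H$ for some $\beta\in \mathbb{C}$ and using $H^2=0$ on a curve, the equation $(\alpha_0^{\dag},\alpha_1^{\dag})\cdot M = e^{2\pi\sqrt{-1}/d}(\alpha_0^{\dag},\alpha_1^{\dag})$ splits degree by degree into two scalar relations. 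The $H^0$-component reads
\[
\beta h = e^{2\pi\sqrt{-1}/d}-1, \qquad h := \int_X H,
\]
which determines $\int_X \alpha_0^{\dag}$, while the $H^{1,1}$-component forces an auxiliary identity relating $h$ and $e^{2\pi\sqrt{-1}/d}$.

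By Lemma~\ref{lem:Hoch} the eigenspace for eigenvalue $e^{2\pi\sqrt{-1}/d}$ is one-dimensional and non-trivial, so this auxiliary identity is automatically satisfied; nevertheless I would verify it directly by plugging in the three values of $(d,h)$ from (\ref{cla:n=3}) as a sanity check. With $\int_X \alpha_0^{\dag}=e^{2\pi\sqrt{-1}/d}-1$ in hand, substituting into (\ref{Zdag}) gives
\[
Z_G(\Psi(E)) = C_W\bigl\{\,r(E)(e^{2\pi\sqrt{-1}/d}-1) - d(E)\,\bigr\},
\]
and expanding $e^{2\pi\sqrt{-1}/d}=\cos(2\pi/d)+\sqrt{-1}\sin(2\pi/d)$ produces exactly (\ref{cent:n=3}).

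The proof is essentially linear algebra once $M$ is computed. The only subtlety is bookkeeping: the matrix entries and the vector components of $(\alpha_0^{\dag},\alpha_1^{\dag})$ live in different cohomological degrees, so one must interpret the matrix–vector product degree by degree, keeping track of which terms integrate against $H$ and which against $1$. This is the main place where a careless calculation could go astray, but no real obstacle arises.
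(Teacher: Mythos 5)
Your proposal is correct and follows essentially the same route as the paper: the paper's proof is precisely to write the eigenvector equation $(\alpha_0^{\dag},-1)(M-e^{2\pi\sqrt{-1}/d}\,\mathrm{id})=0$ for the $2\times 2$ matrix $M$ and read off $\int_X\alpha_0^{\dag}=e^{2\pi\sqrt{-1}/d}-1$ from one component, then substitute into (\ref{Zdag}). Your extra remark that the other component yields a consistency identity (equivalent to $2\cos(2\pi/d)=2-\int_X H$, which holds for all three types in (\ref{cla:n=3})) is a harmless and correct addition that the paper leaves implicit.
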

\begin{proof}
The equation (\ref{sol:unique}) becomes
\begin{align}\notag
(\alpha_0^{\dag}, -1)\left(\begin{array}{cc}
1-e^{2\pi \sqrt{-1}/d} -\int_X H & -1 \\
H & 1-e^{2\pi \sqrt{-1}/d}
\end{array}  \right)=0,
\end{align}
giving $\int_X \alpha_0^{\dag}=e^{2\pi \sqrt{-1}/d}-1$. 
\end{proof}

We set
the slope function $\mu$ in (\ref{mu:slope})
to be the constant function $\mu=-1$, 
so that the resulting heart $\aA_G$ in (\ref{AG:tilting}) 
coincides with $\aA_W$. 
We have the following result: 
\begin{prop}\label{prop:30}
Suppose that $n=3$ and $\varepsilon=0$.
Then the triple
\begin{align}\label{triple:30}
(Z_G, \aA_W, \theta=\theta_W)
\end{align} determines a Gepner 
type stability condition $\sigma_G$
on $\HMF(W)$
with respect to 
$(\tau, 2/d)$. 
\end{prop}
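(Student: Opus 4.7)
I will first verify that the triple $(Z_G, \aA_W, \theta_W)$ determines a stability condition in the sense of Subsection~\ref{subsec:strategy}, and then invoke Lemma~\ref{lem:con:n=3} to deduce the Gepner type property with respect to $(\tau, 2/d)$.

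For the positivity condition (\ref{Htheta}), I would substitute an arbitrary $0\neq F\in\Coh(X)$ into the formula (\ref{cent:n=3}): $Z_G(\Psi(F))/C_W = -d(F) + r(F)(\cos(2\pi/d)-1) + r(F)\sin(2\pi/d)\sqrt{-1}$. Since $X$ is a smooth projective curve we have $r(F)\ge 0$, and for $d\in\{3,4,6\}$ (cf.~(\ref{cla:n=3})) we have $\sin(2\pi/d)>0$; thus the imaginary part is nonnegative, and when it vanishes $F$ is a nonzero torsion sheaf and $d(F)>0$, so the real part is strictly negative. Hence $Z_G(\Psi(F))/C_W$ has phase in $(0,1]$, and after multiplying by $C_W$ the value $Z_G(\Psi(F))$ lies in the wedge (\ref{Htheta}) with $\theta=\theta_W$. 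For the existence of Harder-Narasimhan filtrations, I would observe that for locally free $F$ the phase of $Z_G(\Psi(F))$ is a strictly increasing function of the classical slope $d(F)/r(F)$, while every nonzero torsion sheaf has phase exactly $\theta_W+1$. Hence $Z_G$-stability on $\aA_W$ coincides via $\Psi$ with slope stability on the smooth curve $X$, which admits HN filtrations. Local finiteness is automatic since $Z_G$ takes values in the discrete ring $\mathbb{Z}[e^{2\pi\sqrt{-1}/d}]$ (cf.~Remark~\ref{discrete}).

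With $\sigma_G$ in hand, I would apply Lemma~\ref{lem:con:n=3}. The inequality (\ref{condition:n=3}) reduces to $\theta_W-1/d-1/2\le\theta_W\le\theta_W$, which holds, and the semistability requirement on $\mathbb{C}(1),\dots,\mathbb{C}(-\varepsilon)$ is vacuous. It remains to check that $\tau\Psi(\oO_x)$ is $\sigma_G$-stable and lies in $\pP_G(1+\theta_W+2/d)$ for every $x\in X$. By Proposition~\ref{prop:grade}, $\tau\Psi(\oO_x)\cong\Psi(F_1(\oO_x))$ with $F_1=\ST_{\oO_X}\circ\otimes\oO_X(1)$; since $\oO_X(1)\otimes\oO_x\cong\oO_x$, the definition of the Seidel-Thomas twist together with the short exact sequence $0\to\oO_X(-x)\to\oO_X\to\oO_x\to 0$ identifies $F_1(\oO_x)$ with $\oO_X(-x)[1]$, so $\tau\Psi(\oO_x)\cong\Psi(\oO_X(-x))[1]$. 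Since $\oO_X(-x)$ is a line bundle on the elliptic curve $X$ it is slope stable, hence $\sigma_G$-stable in $\aA_W$ by the first part, and so its shift is $\sigma_G$-stable. A direct computation in (\ref{cent:n=3}) with $(r,d)=(1,-1)$ yields $Z_G(\Psi(\oO_X(-x)))=C_W e^{2\pi\sqrt{-1}/d}$ of phase $\theta_W+2/d$, and shifting by one produces the phase $1+\theta_W+2/d$ demanded by (\ref{check:add}).

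The main obstacle is mild in this case: because $\Psi$ is already an equivalence and slope stability on an elliptic curve is classical, only the Seidel-Thomas computation of $F_1(\oO_x)$ and the phase bookkeeping need genuine care. The more serious difficulties in Theorem~\ref{thm:main} will appear in the cases where $\aA_W$ contains the auxiliary generators $\mathbb{C}(j)$ for $1\le j\le -\varepsilon$, where a nontrivial tilting is required and the $\sigma_G$-semistability of these exceptional generators must be established by hand.
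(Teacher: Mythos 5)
Your proposal is correct and follows essentially the same route as the paper: verify the triple satisfies (\ref{Htheta}) and the Harder--Narasimhan property (the paper delegates this to~\cite[Example~5.4]{Brs1}, whose argument you reproduce explicitly), then invoke Lemma~\ref{lem:con:n=3} together with the identification $\tau\Psi(\oO_x)\cong\Psi(\oO_X(-x))[1]$ from Proposition~\ref{prop:grade}. Your Seidel--Thomas twist computation and the phase bookkeeping giving $1+\theta_W+2/d$ match the paper's (unstated) verification, so there is nothing to correct.
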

\begin{proof}
Note that, since $X$ is an elliptic curve, 
the space of stability conditions on $D^b \Coh(X)$ is 
completely described in~\cite[Section~9]{Brs1}. 
Since $\aA_W= \Psi \Coh(X)$, 
and the central charge $Z_G$ is given by (\ref{cent:n=3}), 
it follows that the triple (\ref{triple:30}) satisfies 
the condition (\ref{Htheta}).
Then the same argument of~\cite[Example~5.4]{Brs1}
shows that the triple (\ref{triple:30})
determines a stability condition $\sigma_G$
on $\HMF(W)$. 
Now we are going to apply Lemma~\ref{lem:con:n=3}. 
Note that the inequality (\ref{condition:n=3}) is satisfied in
this case. For a point $x\in X$, we have 
\begin{align*}
\tau \Psi(\oO_x) \cong \Psi(\oO_X(-x)[1])
\end{align*} 
by Proposition~\ref{prop:grade}. 
Since the above object is $\sigma_G$-stable
with phase $1+ \theta_W + 2/d$,
Lemma~\ref{lem:con:n=3} implies that 
$\sigma_G$ is a Gepner type stability condition 
with respect to $(\tau, 2/d)$. 
\end{proof}

\subsection{The case of $n=2$, $\varepsilon=-1$}
In this subsection, we study 
the case of $n=2$, $\varepsilon=-1$. 
In this case, 
$X$ is a finite number of
smooth points, and 
the heart $\aA_W$ is given by 
\begin{align*}
\aA_W= \langle \mathbb{C}(0), \Psi(\oO_x) : 
x\in X \rangle_{\rm{ex}}. 
\end{align*}
The following lemma immediately follows from (\ref{Zdag:com}): 
\begin{lem}\label{ZG:2-1}
If we write the 
K-theory class of an object $E \in \aA_W$
as 
\begin{align*}
[E] = v_0 [\mathbb{C}(0)] + \sum_{x\in X} w_x [\Psi(\oO_x)]
\end{align*}
and set $w \cneq \sum_{x\in X} w_x$, 
then $Z_G(E)$ is given by
\begin{align*}
Z_G(E)= C_W \left\{ -w + v_0 \left( 1-\cos \frac{2\pi}{d} \right)
-v_0 \sin \frac{2\pi}{d} \sqrt{-1}  \right\}. 
\end{align*}
\end{lem}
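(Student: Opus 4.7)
The plan is to obtain the stated formula by direct evaluation, using the already computed values of $Z_G$ on the generators $\mathbb{C}(0)$ and $\Psi(\oO_x)$ of the heart $\aA_W$, and then invoking linearity of the central charge on the Grothendieck group.

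First I would recall the two key specializations of formula (\ref{Zdag:com}). For any $x \in X$ we have $Z_G(\Psi(\oO_x)) = -C_W$. Setting $j=0$ in the second line of (\ref{Zdag:com}) gives
\[
Z_G(\mathbb{C}(0)) = C_W\bigl(1 - e^{2\pi\sqrt{-1}/d}\bigr)
 = C_W\!\left\{\left(1 - \cos\tfrac{2\pi}{d}\right) - \sin\tfrac{2\pi}{d}\,\sqrt{-1}\right\}.
\]

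Next, since $Z_G$ is a group homomorphism $K(\HMF(W)) \to \mathbb{C}$ (as recalled just before Remark~\ref{rmk:Chern}), I would simply expand:
\[
Z_G(E) = v_0\, Z_G(\mathbb{C}(0)) + \sum_{x \in X} w_x\, Z_G(\Psi(\oO_x))
 = v_0\, Z_G(\mathbb{C}(0)) - C_W \sum_{x \in X} w_x.
\]
Substituting the two values above and using $w = \sum_x w_x$ yields exactly the claimed expression.

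There is no genuine obstacle here: the lemma is a bookkeeping statement that records, in the specific case $n=2$, $\varepsilon=-1$, the numerical content of (\ref{Zdag:com}) relative to the generators of $\aA_W$ identified in the semiorthogonal decomposition (\ref{sodA}). The only thing to be mildly careful about is that $\{[\mathbb{C}(0)], [\Psi(\oO_x)]\}_{x \in X}$ do indeed generate the classes of objects of $\aA_W$ in $K(\HMF(W))$, which follows directly from the description $\aA_W = \langle \mathbb{C}(0), \Psi(\oO_x) : x \in X\rangle_{\mathrm{ex}}$ together with the fact that classes of extensions add in $K$-theory.
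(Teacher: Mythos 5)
Your proposal is correct and matches the paper's argument: the paper simply states that the lemma ``immediately follows from (\ref{Zdag:com})'', which is exactly the linearity-plus-substitution computation you carry out, using $Z_G(\Psi(\oO_x))=-C_W$ and $Z_G(\mathbb{C}(0))=C_W\bigl(1-e^{2\pi\sqrt{-1}/d}\bigr)$. Your added remark about the generators of $K$-theory is a harmless elaboration of the same bookkeeping.
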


In the notation used in Subsection~\ref{subsec:central2}, 
the polynomial $\widehat{W}$ is of type $(a_1, a_2, 1, d)$, which 
must belong to one of the classifications in (\ref{cla:n=3}).
Furthermore, applying
 Lemma~\ref{lem:compute2} and Lemma~\ref{lem:equivalence}, 
we see that $\aA_W$ is equivalent to
the abelian category of representations
 of a certain quiver $\qQ$. 
By the above arguments, 
the possible types of 
$(a_1, a_2, d, \sharp X, \qQ)$ are 
classified as follows:  
\begin{align*}
&
(a_1, a_2, d, \sharp X)=(1, 1, 3, 3), \quad 
\qQ=    \xygraph{
        {\bullet}*+!D{}
        (:^{\pi^{(3)}}[ru]{\bullet}*+!D{}, 
:^{\pi^{(2)}}[r]{\bullet}*+!D{},      
        :_{\pi^{(1)}}[rd]{\bullet}*+!D{})
    } \\
&
(a_1, a_2, d, \sharp X)=(2, 1, 4, 2), \quad
\qQ= \xygraph{
       {\bullet}*+!D{}
        (:^{\pi^{(2)}}[ru]{\bullet}*+!D{},      
        :_{\pi^{(1)}}[rd]{\bullet}*+!D{})
    } \\
&
(a_1, a_2, d, \sharp X)=(3, 2, 6, 1), \quad
\qQ= \xygraph{
       {\bullet}*+!D{}
        :_{\pi^{(1)}}[r]{\bullet}*+!D{}     
            }
\end{align*}
Here the left vertex of the quiver $\qQ$
corresponds to $\mathbb{C}(0)$, and the right
vertices correspond to $\Psi(\oO_x)$ for $x\in X$. 
In all the above cases, 
we set $\mu=-1$ so that the 
heart (\ref{AG:tilting}) coincides with $\aA_W$. 
We have the following result: 
\begin{prop}\label{prop:2-1}
Suppose that $n=2$ and $\varepsilon=-1$. 
Then the triple 
\begin{align}\label{triple:2-1}
(Z_G, \aA_W, \theta), \quad \frac{5}{6} + \theta_W 
\le \theta < 1 + \theta_W
\end{align}
determines a Gepner type stability condition $\sigma_G$
on $\HMF(W)$ with respect to $(\tau, 2/d)$. 
\end{prop}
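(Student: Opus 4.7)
The plan is to apply the three-step recipe from Subsection~\ref{subsec:strategy}, taking the slope function to be the constant $\mu \equiv -1$, so that the tilted heart $\aA_G$ of (\ref{AG:tilting}) equals $\aA_W$.

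First I would verify that the triple $(Z_G, \aA_W, \theta)$ determines a stability condition in the sense of (\ref{Htheta}). Writing the class of any non-zero $E \in \aA_W$ as $v_0[\mathbb{C}(0)] + \sum_x w_x[\Psi(\oO_x)]$ with $v_0, w_x \ge 0$ and $w = \sum_x w_x$, Lemma~\ref{ZG:2-1} gives
\begin{align*}
Z_G(E)/C_W = -w + v_0(1 - \cos(2\pi/d)) - v_0 \sin(2\pi/d)\sqrt{-1}.
\end{align*}
Since $d \in \{3, 4, 6\}$, one has $\sin(2\pi/d) > 0$, so the imaginary part is $\le 0$, vanishing only when $v_0 = 0$. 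A direct case analysis shows that the phase of this quantity, as $(v_0, w)$ ranges over non-zero pairs of non-negative integers, lies in $[1, \phi_{\max}]$, where $\phi_{\max}$ equals $11/6, 7/4, 5/3$ for $d = 3, 4, 6$ respectively; this interval is contained in $(\theta - \theta_W, \theta - \theta_W + 1]$ precisely when $\theta - \theta_W \in [5/6, 1)$, which is the hypothesis. The Harder-Narasimhan property is automatic: combining Lemma~\ref{lem:compute}, Lemma~\ref{lem:compute2}, Lemma~\ref{lem:equivalence} (and Corollary~\ref{cor:quiver} when $(a_1, a_2) = (1, 1)$), the heart $\aA_W$ is equivalent to the category of finite-dimensional representations of an explicit finite star-shaped quiver with relations, hence is of finite length.

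Next I would establish the Gepner property via Lemma~\ref{lem:fur}. Its numerical hypothesis (\ref{condition:n=2}) reads $\theta_W + 1/d - 1/2 \le \theta < \theta_W + 1$, which is implied by the hypothesis of the proposition since $1/d - 1/2 \le 5/6$ for all $d \ge 3$. It remains to show the $\sigma_G$-semistability of $\tau\Psi(\oO_x)$ for all $x \in X$ and of $\mathbb{C}(1) = \mathbb{C}(-\varepsilon)$. For $\tau\Psi(\oO_x)$, Lemma~\ref{lem:tau:ox} together with Remark~\ref{rmk:ext:uni} exhibit it as the unique non-split extension $0 \to \Psi(\oO_x) \to \tau\Psi(\oO_x) \to \mathbb{C}(0) \to 0$ between two distinct simple objects of $\aA_W$, so its only non-zero proper subobject is $\Psi(\oO_x)$, whose phase $\theta_W + 1$ computed from Lemma~\ref{ZG:2-1} is strictly less than the phase $\theta_W + 1 + 2/d$ of $\tau\Psi(\oO_x)$; hence $\tau\Psi(\oO_x)$ is actually $\sigma_G$-stable.

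For $\mathbb{C}(1)[-1] \in \aA_W$, Lemma~\ref{lem:filt:A} yields a filtration $0 \subset \Psi(\omega_X) \subset \mathbb{C}(1)[-1]$ with quotient $\mathbb{C}(0) \otimes R_1$. When $d = 6$ one has $R_1 = 0$ and $\mathbb{C}(1)[-1] \cong \Psi(\oO_x)$, which is trivially semistable. In the remaining cases $d = 3, 4$, I would work in the quiver description: the underlying representation of $\mathbb{C}(1)[-1]$ has central vertex $R_1$ (of dimension $2$ and $1$ respectively) and leaves $\mathbb{C}$ at the points $p^{(j)} \in X$, with arrows given by evaluation. A subobject corresponds to a subspace $V_0 \subset R_1$ together with subspaces $V_j \subset \mathbb{C}$ containing the evaluations of $V_0$ at $p^{(j)}$. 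Because the $p^{(j)}$ are distinct in $\mathbb{P}(a_1, a_2)$, any non-zero $V_0$ evaluates non-trivially at enough leaves that the class of every proper subobject, when fed into Lemma~\ref{ZG:2-1}, yields a phase strictly below the phase of $\mathbb{C}(1)[-1]$. The main obstacle is this last semistability check in the case $d = 3$, where the representation has the richest structure: the phase gap between $\mathbb{C}(1)[-1]$ (of phase $\theta_W + 3/2$) and the closest candidate sub (a rank-one central subspace surjecting onto all three leaves, of phase $\theta_W + 7/6$) is only $1/3$, so the enumeration of sub-representations must be carried out carefully and genuinely uses the distinctness of the three points of $X \subset \mathbb{P}^1$.
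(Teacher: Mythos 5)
Your proposal is correct and follows essentially the same route as the paper: the trivial tilting $\mu\equiv -1$ so that $\aA_G=\aA_W$, verification of (\ref{Htheta}) from Lemma~\ref{ZG:2-1} together with the finite-length property of $\aA_W$ for Harder--Narasimhan filtrations, and then Lemma~\ref{lem:fur} reduced to the stability of $\tau\Psi(\oO_x)$ (via the unique non-split extension of Lemma~\ref{lem:tau:ox} and Remark~\ref{rmk:ext:uni}) and of $\mathbb{C}(1)[-1]$ (via the quiver description of its subobjects). The paper's Lemma~\ref{lem:30:check} packages your final phase comparison as the inequality $\dim V^{(0)}\le \sharp X-1-\lvert I\rvert$, where $I$ indexes the vanishing leaves, which is exactly the enumeration you sketch.
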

\begin{proof}
By Lemma~\ref{ZG:2-1},
it follows that the triple (\ref{triple:2-1})
satisfies the condition (\ref{Htheta}). 
Since $\aA_W$ is noetherian and artinian,
the triple (\ref{triple:2-1}) satisfies the Harder-Narasimhan 
property
(cf.~\cite[Proposition~2.4]{Brs1}) hence it determines a 
stability
 condition $\sigma_G$ on $\HMF(W)$. 
In order to show the Gepner type property of $\sigma_G$, 
it is enough to check the
assumption of Lemma~\ref{lem:fur}. 
Since the inequality (\ref{condition:n=2})
is satisfied, 
it remains to show the $\sigma_G$-stability 
of $\tau \Psi(\oO_x)$ and $\mathbb{C}(1)$
for all $x\in X$, which 
we prove in the next lemma. 
\end{proof}

\begin{lem}\label{lem:30:check}
The objects $\tau \Psi(\oO_x)$ and $\mathbb{C}(1)$
are $\sigma_G$-stable. 
\end{lem}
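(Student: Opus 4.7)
The plan is to verify both $\sigma_G$-stability assertions by directly enumerating proper nonzero subobjects in the finite-length abelian category $\aA_W$, whose quiver description is recorded just after the statement of Proposition~\ref{prop:2-1}. Phases are computed via Lemma~\ref{ZG:2-1} and the formulas (\ref{Zdag:com}), and I use that $\aA_G = \aA_W$ in the present case (since $\mu \equiv -1$), so that $\sigma_G$-stability of $\mathbb{C}(1)$ is equivalent to $\sigma_G$-stability of $\mathbb{C}(1)[-1]$, which lies in $\aA_W$ by Lemma~\ref{lem:filt:A}.

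For $\tau\Psi(\oO_x)$ the argument is immediate. The nonsplit extension
\begin{align*}
0 \to \Psi(\oO_x) \to \tau\Psi(\oO_x) \to \mathbb{C}(0) \to 0
\end{align*}
of Lemma~\ref{lem:tau:ox}, combined with Remark~\ref{rmk:ext:uni}, identifies $\tau\Psi(\oO_x)$ with the quiver representation of dimension one at the $\mathbb{C}(0)$-vertex and at the $\Psi(\oO_x)$-vertex, with the unique connecting arrow nonzero, and dimension zero elsewhere. The only proper nonzero subrepresentation is therefore $\Psi(\oO_x)$, of phase $\theta_W + 1$, strictly less than $\phi(\tau\Psi(\oO_x)) = \theta_W + 1 + 2/d$, so $\tau\Psi(\oO_x)$ is $\sigma_G$-stable.

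For $\mathbb{C}(1)$ I would split into the three possible types $(a_1, a_2, d, \sharp X) \in \{(1,1,3,3),(2,1,4,2),(3,2,6,1)\}$. When $(a_1,a_2,d,\sharp X) = (3,2,6,1)$, $R_1 = 0$, so Lemma~\ref{lem:filt:A} gives $\mathbb{C}(1)[-1] \cong \Psi(\oO_x)$, which is simple in $\aA_W$ and hence stable. When $(2,1,4,2)$, $\dim R_1 = 1$ and $\mathbb{C}(1)[-1]$ is the quiver representation of dimension one at each vertex with both arrows nonzero; any proper nonzero subrepresentation is a sum of one or two $\Psi(\oO_x)$'s of phase $\theta_W + 1 < \theta_W + 5/4 = \phi(\mathbb{C}(1)[-1])$. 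In the remaining (main) case $(1,1,3,3)$, $\mathbb{C}(1)[-1]$ is the representation with $R_1$ (of dimension two) at the central vertex and $\mathbb{C}$ at each of the three right vertices, the arrows being evaluation maps $\pi^{(j)} \colon R_1 \to \mathbb{C}$ at three distinct points of $\mathbb{P}^1$. Using that the three lines $\ker \pi^{(j)} \subset R_1$ are distinct, the proper nonzero subrepresentations up to isomorphism have class $(v_0, w) \in \{(0, k) : 1 \le k \le 3\} \cup \{(1, 2), (1, 3)\}$; the corresponding phases from Lemma~\ref{ZG:2-1} are $\theta_W + 1$, $\theta_W + 4/3$ and $\theta_W + 7/6$, all strictly less than $\phi(\mathbb{C}(1)[-1]) = \theta_W + 3/2$.

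The main obstacle is the bookkeeping in type $(1,1,3,3)$: one must correctly identify the admissible line-subobjects at the central vertex, noting that a one-dimensional $V \subset R_1$ permits dropping exactly the single $w^{(j_0)}$-component with $V = \ker \pi^{(j_0)}$, and verify the strict phase inequality in every subcase. No deeper technical difficulty arises, since $\aA_W$ has finite length and the image of $Z_G$ is discrete, so stability is ultimately detected by a finite arithmetic comparison.
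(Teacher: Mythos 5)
Your proof is correct and takes essentially the same route as the paper: the $\tau\Psi(\oO_x)$ part is identical, and for $\mathbb{C}(1)$ the paper likewise realizes $\mathbb{C}(1)[-1]$ as the quiver representation with $R_1$ at the central vertex, bounds a subobject's central component by the intersection of the kernels of the evaluation maps $\pi^{(j)}$, and compares arguments of $Z_G$. The only (cosmetic) difference is that the paper handles all three types at once via the uniform bound $\dim V^{(0)} \le \sharp X - 1 - \lvert I \rvert$, where $I$ is the set of indices with $W^{(i)}=0$, whereas you enumerate the types $(1,1,3,3)$, $(2,1,4,2)$, $(3,2,6,1)$ separately.
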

\begin{proof}
As for $\tau\Psi(\oO_x)$, it fits into a unique 
non-trivial 
extension in $\aA_W$
by Lemma~\ref{lem:tau:ox} and Remark~\ref{rmk:ext:uni}
\begin{align*}
0 \to \Psi(\oO_x) \to \tau\Psi(\oO_x) \to \mathbb{C}(0) \to 0. 
\end{align*}
Therefore $\Psi(\oO_x)$
is the only non-trivial subobject
of $\tau\Psi(\oO_x)$ in $\aA_W$. 
Comparing the argument of $Z_G(\ast)$, we see that 
$\tau\Psi(\oO_x)$ is $Z_G$-stable. 

As for $\mathbb{C}(1)$, note 
that the object $\mathbb{C}(1)[-1]$
is contained in $\aA_W$ by Lemma~\ref{lem:filt:A}. 
Let $0\neq F \in \aA_W$ be a proper
subobject of $\mathbb{C}(1)[-1]$
in $\aA_W$. As in Corollary~\ref{cor:quiver2}, 
the inclusion $F \subset \mathbb{C}(1)[-1]$
is represented by the following inclusion of 
quiver representations:
\begin{align*}
F=
 \xygraph{!~:{@{>}} !~-{@{}}
        {\bullet}*+!D{V^{(0)}}
        (:^{}[ru]{\bullet}*+!D{W^{(\sharp X)}}, 
-[r]{\vdots}*+!D{}, 
        :_{}[rd]{\bullet}*+!D{W^{(1)}})
    }
\quad 
\subset 
\quad
\mathbb{C}(1)[-1]=
 \xygraph{!~:{@{>}} !~-{@{}}
        {\bullet}*+!D{R_1}
        (:^{\pi^{(\sharp X)}}[ru]{\bullet}*+!D{\mathbb{C}}, 
-[r]{\vdots}*+!D{},      
        :_{\pi^{(1)}}[rd]{\bullet}*+!D{\mathbb{C}})
    }
\end{align*}
Here $R_1$ is the space of degree one 
elements in $R=\mathbb{C}[x_1, x_2]$, whose dimension is $\sharp X -1$. 
 The morphisms $\pi^{(i)} \colon R_1 \to \mathbb{C}$ are 
evaluations at the points in $X$, and 
$V^{(0)}$, $W^{(i)}$ are finite dimensional
vector spaces with $V^{(0)} \subset R_1$, 
$\dim W^{(i)} \le 1$. 
Let $I$ be the subset of $i \in 
\{1, \cdots, \sharp X\}$ 
satisfying $W^{(i)}=0$. Then we have 
\begin{align*}
V^{(0)} \subset \bigcap_{i \in I} \Ker(\pi^{(i)})
\end{align*}
hence $\dim V^{(0)} \le \sharp X -1 - \lvert I \rvert$. 
Since we have
\begin{align*}
Z_G(\mathbb{C}(1)[-1]) &=
C_W \left\{ -\sharp X +(\sharp X -1) \cdot 
(1-e^{2\pi \sqrt{-1}/d})  \right\} \\ 
Z_G(F) &=C_W \left\{ \lvert I \rvert - \sharp X +
\dim V^{(0)} \cdot (1-e^{2\pi \sqrt{-1}/d}) \right\}
 \end{align*}
it follows that 
the argument of $Z_G(F)$ is less than 
that of $Z_G(\mathbb{C}(1)[-1])$ in 
$(\pi \theta, \pi \theta + \pi]$.
Therefore $\mathbb{C}(1)[-1]$ is $\sigma_G$-stable. 
\end{proof}

\subsection{The case of $n=4$, 
$\varepsilon=0$}
In this subsection, we study the 
case of $(n, \varepsilon)=(4, 0)$. 
In this case, 
 $X$ is a smooth projective K3 surface, 
and the heart $\aA_W$ is given by 
\begin{align*}
\aA_W= \Psi \Coh(X). 
\end{align*}
By Theorem~\ref{thm:Orlov}, the triangulated
 category $\HMF(W)$ is equivalent to 
$D^b \Coh(X)$ via the equivalence $\Psi$. 
On the other hand, the  
 spaces of stability conditions on 
K3 surfaces are 
studied by Bridgeland~\cite{Brs2}. 
Combining the techniques in~\cite{Brs2}
with the arguments in Subsection~\ref{subsec:strategy}, 
we construct a desired Gepner type 
stability condition on $\HMF(W)$. 

Let us first describe the matrix $M$
which appears in the equation (\ref{linear:eq}).
By writing $\int_X H^2 =2m$
for $m\in \mathbb{Z}_{\ge 1}$, 
the matrix $M$ is given by 
\begin{align}\label{matrix:M}
M=
\left( \begin{array}{ccc}
-1-m& -H & -1 \\
H & 1 & 0 \\
m & H & 1
\end{array}  \right). 
\end{align}
By using the above description,  
we give a classification of 
possible types. 
\begin{lem}\label{lem:n=4}
If $n=4$, the possible data 
$(a_1, a_2, a_3, a_4, d, \int_{X} H^2)$
is classified into the two types:
\begin{align}\label{types:n=4}
\left(a_1, a_2, a_3, a_4, d, \int_{X} H^2 \right)
=\left\{ \begin{array}{l}
(1, 1, 1, 1, 4, 4) \\
(3, 1, 1, 1, 6, 2). 
\end{array}  \right. 
\end{align}
\end{lem}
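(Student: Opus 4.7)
The plan is to derive two elementary divisibility constraints from the hypotheses and then enumerate. By Lemma~\ref{coprime}, I may assume $\gcd(a_1, a_2, a_3, a_4) = 1$, and $\varepsilon = 0$ gives $d = a_1 + a_2 + a_3 + a_4$. The ordering $a_1 \geq a_2 \geq a_3 \geq a_4$ is in force throughout. The two key observations are:

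\textbf{(a)} For each $i$ with $a_i > 1$, the vertex $v_i = [0{:}\cdots{:}1{:}\cdots{:}0]$ is a stacky point of $\mathbb{P}(a_1, a_2, a_3, a_4)$ with stabilizer $\mu_{a_i}$. The no-stacky-points assumption forces $v_i \notin X$, so $W(v_i) \neq 0$; equivalently, the monomial $x_i^{d/a_i}$ appears in $W$, so $a_i \mid d$, i.e.\ $a_i \mid \sum_{j \neq i} a_j$.

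\textbf{(b)} Whenever $a_i, a_j > 1$ and $\gcd(a_i, a_j) > 1$, the coordinate line $L_{ij} = \{x_k = 0 : k \notin \{i, j\}\}$ is pointwise stacky (the stabilizer at any point of $L_{ij}$ contains $\mu_{\gcd(a_i, a_j)}$). By (a) the restriction $W|_{L_{ij}}$ contains the monomials $x_i^{d/a_i}$ and $x_j^{d/a_j}$, hence is a non-zero polynomial on $L_{ij} \cong \mathbb{P}^1$, so $X \cap L_{ij} \neq \emptyset$, contradicting the hypothesis. Therefore $\gcd(a_i, a_j) = 1$ whenever $a_i, a_j > 1$.

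With (a) and (b) in hand, I would enumerate by $k = \#\{i : a_i = 1\}$. The case $k=4$ is $(1,1,1,1,4)$. For $k=3$, we have $a_1 > 1 = a_2 = a_3 = a_4$, and (a) gives $a_1 \mid 3$, hence $a_1 = 3$ and $(3,1,1,1,6)$. For $k=2$, with $a_1 \geq a_2 > 1$ and $a_3 = a_4 = 1$, the relation $a_1 \mid a_2 + 2$ together with $a_1 \geq a_2$ forces $a_1 \in \{a_2, a_2+1, a_2+2\}$; combining with $a_2 \mid a_1 + 2$ and the pairwise coprimality in (b) eliminates every sub-case (e.g.\ $a_1 = a_2 + 2$ yields $a_2 \mid 4$, so $(a_1, a_2) \in \{(4,2), (6,4)\}$, both of which violate $\gcd(a_1, a_2) = 1$). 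For $k \leq 1$ an analogous short case analysis based on $a_3$ (or $a_4$), using (b) to impose full pairwise coprimality among the weights $> 1$ together with the divisibilities $a_i \mid \sum_{j \neq i} a_j$, rules out all possibilities. I expect this bookkeeping to be the main obstacle, though each individual sub-case is immediate.

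Finally, for the two surviving types I compute $\int_X H^2$ via the standard degree formula for a quasi-smooth weighted hypersurface, $\int_X H^{n-2} = d / (a_1 a_2 a_3 a_4)$, giving $4/1 = 4$ for $(1,1,1,1,4)$ and $6/3 = 2$ for $(3,1,1,1,6)$, as claimed.
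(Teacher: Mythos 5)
Your proof is correct, but it takes a genuinely different route from the paper's. The paper never touches the arithmetic of the weights until the last line: it uses the matrix $M$ representing $F_{\ast}=(\ST_{\oO_X}\circ\otimes\oO_X(1))_{\ast}$ on $\mathbb{C}\langle H\rangle$, computes $\det(M-\lambda\cdot\id)=-(\lambda+1)\{\lambda^2+(m-2)\lambda+1\}$ with $2m=\int_XH^2$, and notes that the Gepner condition forces $e^{2\pi\sqrt{-1}/d}$ to be a root, so that $2\cos(2\pi/d)$ must equal the integer $\pm(m-2)$; since $d=\sum a_i\ge 4$ this leaves only $(m,d)=(2,4),(1,6)$, and the weights follow from $\sum a_i=d$ after discarding $(2,2,1,1)$ by the stacky-point hypothesis. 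Thus in the paper $\int_XH^2$ falls out of the eigenvalue equation and the weights are an afterthought, whereas for you the weights are primary and $\int_XH^2$ is computed afterwards by the degree formula $d/(a_1a_2a_3a_4)$. Your observations (a) and (b) are correct and do suffice, and your route is more elementary (it uses only the no-stacky-points hypothesis, no Hochschild homology or central charges), at the cost of more case bookkeeping. Two small remarks. First, the appeal to Lemma~\ref{coprime} is misplaced: that lemma is a reduction for the conjecture, not for a classification statement; but it is also unnecessary, since $\gcd(a_1,\dots,a_4)>1$ would make every point of $X$ stacky (or is excluded directly by your (b)). Second, for the cases $k\le 1$ that you leave as ``analogous bookkeeping,'' the fastest closure is to note that (a) gives $a_i\mid d$ for every $i$ with $a_i>1$, so by the pairwise coprimality of (b) the product of all weights exceeding $1$ divides $d=\sum_j a_j$; when three or more weights exceed $1$ this product is at least $6a_1>\sum_j a_j$, a contradiction, so only $k\in\{2,3,4\}$ survive and you have already handled those.
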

\begin{proof}
By (\ref{matrix:M}), we have
\begin{align*}
\det (M-\lambda \cdot \id)=
-(\lambda+1)\{ \lambda^2 +(m-2)\lambda +1\}. 
\end{align*}
Since $\det (M-\lambda \cdot \id)=0$
for $\lambda= e^{\pm 2\pi \sqrt{-1}/d}$, we have
\begin{align*}
2 \cos \left(2\pi/d \right)=m-2.
\end{align*} 
This is only possible for $(m, d)=(2, 4), (1, 6)$. 
In the latter case, the possibility 
$(a_1, a_2, a_3, a_4)=(2, 2, 1, 1)$ is excluded 
since the stack $X$ always contains a stacky
point. Therefore we obtain the 
classification (\ref{types:n=4}).
\end{proof}

\begin{rmk}\label{K3:cla}
In the type $(1, 1, 1, 1, 4, 4)$
case, $X$ is a quartic K3 surface. 
In the type $(3, 1, 1, 1, 6, 2)$ case, 
$X$ is a double cover of $\mathbb{P}^2$.  
\end{rmk}

In order to describe the central charge $Z_G$, 
it is convenient to use the twisted 
Mukai vector. 
For $B \in H^2(X, \mathbb{R})$, we set 
\begin{align}\label{twist}
v^{B}(E) \cneq e^{-B} \ch(E) \sqrt{\mathrm{td}_X}. 
\end{align}
We denote by $v_i^B(E)$ the $H^{i, i}(X)$-component of 
$v^B(E)$. 
The central charge $Z_G$ is computed in the following way: 
\begin{lem}\label{lem:ZG:4}
By setting $B=-H/2$, the following 
holds for $E \in D^b \Coh(X)$: 
\begin{align}\label{expand}
Z_G(&\Psi(E))=
\\\notag        
&C_W \left( 
-v_2^B(E) + \frac{d}{8} v_0^B(E) + \right. 
\left.
\frac{1}{2}\sqrt{\frac{d}{H^2}} H \cdot v_1^B(E) \sqrt{-1} \right). 
\end{align}
\end{lem}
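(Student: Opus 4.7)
I will prove the lemma by explicit computation using the general recipe (\ref{Zdag}). The plan has three steps: solve the eigenvector equation for $Z_G$, expand the Mukai vector $v^B(E)$ for the twist $B = -H/2$, and match the two expressions.

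With $w = e^{2\pi\sqrt{-1}/d}$, $m = \frac{1}{2}\int_X H^2$, and the ansatz $\alpha_0^\dag = \gamma_0 H^2$, $\alpha_1^\dag = \gamma_1 H$, $\alpha_2^\dag = -1$, the linear system (\ref{sol:unique}) reduces, after separating degrees in $H$, to $\gamma_1 = w/(1-w)$ and $\gamma_0 = (w-1)/(2m)$, together with the compatibility condition $w^2 + (m-2)w + 1 = 0$---which is precisely the characteristic polynomial identity established in Lemma~\ref{lem:n=4}. Substituting back into (\ref{Zdag}) yields
\[ Z_G(\Psi(E))/C_W = -\!\int_X \ch_2(E) + \frac{w}{1-w}\int_X H\cdot c_1(E) + (w-1)\,\rk(E). \]

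For the matching step, I will compute $v^B(E) = e^{-B}\ch(E)\sqrt{\mathrm{td}_X}$ using $\sqrt{\mathrm{td}_X} = 1 + \frac{1}{2}\mathrm{td}_2$ (valid for any K3 since $\mathrm{td}_2^2 = 0$ on a surface) and $e^{H/2} = 1 + H/2 + H^2/8$. A direct expansion yields explicit formulas for the components $v_j^B(E)$ in terms of $\rk(E), c_1(E), \ch_2(E), H, m$; then separating real and imaginary parts via $w/(1-w) = -\frac{1}{2} + \frac{\sqrt{-1}}{2}\cot(\pi/d)$ reduces the identification to the two numerical identities $d - 2m = 8\cos(2\pi/d)$ and $\cot(\pi/d) = \sqrt{d/(2m)}$---both equivalent to $d + 2m = 8$, which one verifies directly on the two admissible pairs $(d,m) \in \{(4,2), (6,1)\}$ classified by Lemma~\ref{lem:n=4}.

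The main obstacle is essentially notational rather than conceptual: the clean form of (\ref{expand}) depends crucially on the specific twist $B = -H/2$, which is the unique $B \in \mathbb{Q}H$ producing case-independent coefficients $-1$, $d/8$, and $\frac{1}{2}\sqrt{d/H^2}$ in front of the three components of $v^B(E)$. Once this twist is fixed, the remainder is bookkeeping guided by the classification.
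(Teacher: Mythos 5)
Your proposal is correct and follows the paper's own route: you solve the eigenvector equation (\ref{sol:unique}) and obtain exactly the paper's solution $\alpha_1^{\dag}=\frac{w}{1-w}H$, $\int_X\alpha_0^{\dag}=w-1$ (your $\gamma_0=(w-1)/(2m)$, $\gamma_1=w/(1-w)$), and then carry out the ``straightforward computation'' that the paper leaves implicit, reducing the match with the twisted Mukai vector to numerical identities verified on the two admissible pairs $(d,\int_X H^2)$ from Lemma~\ref{lem:n=4}. The only difference is that you make explicit the bookkeeping (expansion of $e^{H/2}$ and $\sqrt{\td_X}$, the identity $d+2\cdot\frac{1}{2}\int_X H^2=8$) that the paper compresses into one sentence.
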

\begin{proof}
By (\ref{matrix:M}), 
the equation (\ref{sol:unique})
becomes 
\begin{align*}
(\alpha_0^{\dag}, \alpha_1^{\dag}, -1)
\left( \begin{array}{ccc}
-1-m-e^{2\pi \sqrt{-1}/d} & -H & -1 \\
H & 1-e^{2\pi \sqrt{-1}/d} & 0 \\
m & H & 1-e^{2\pi \sqrt{-1}/d}
\end{array}  \right) =0
\end{align*}
which gives
\begin{align*}
\int_X \alpha_0^{\dag}=e^{2\pi \sqrt{-1}/d}-1, \quad
\alpha_1^{\dag}=
\frac{e^{2\pi \sqrt{-1}/d}}{1-e^{2\pi \sqrt{-1}/d}}H. 
\end{align*}
Applying the classification in Lemma~\ref{lem:n=4}, 
a straightforward computation shows the result. 
\end{proof}
\begin{rmk}\label{rmk:int:40}
The equality 
(\ref{expand}) is written as the integral
\begin{align}\label{central:K3}
Z_G(\Psi(E))=
-C_W \int_X e^{-\sqrt{-1} \omega} v^{-H/2}(E)
\end{align}
where $\omega \in \mathbb{R}_{>0} H$ satisfies 
$\int_X \omega^2 = d/4$.
\end{rmk}

By setting $B=-H/2$, we
consider the slope function 
$\mu$ on 
$\aA_W=\Psi\Coh(X)$, defined by 
\begin{align*}
\mu(\Psi(E)) &= \frac{v_1^B (E) \cdot H}{v_0^B(E)}.
\end{align*}
Here $\mu(\Psi(E))$ is defined to be $\infty$ if 
$E$ is a torsion sheaf. 
The above $\mu$-stability 
coincides with the classical slope stability condition
on $\Coh(X)$ via $\Psi$. 
As we discussed in Subsection~\ref{subsec:strategy}, 
the slope function $\mu$ defines a torsion pair (\ref{mu:tilting}) 
on $\aA_W$, and 
the associated tilting $\aA_G$
is given by (\ref{AG:tilting}).
We have the following result: 
\begin{prop}\label{prop:40}
Suppose that $n=4$ and $\varepsilon =0$. 
Then the triple
\begin{align}\label{triple:40}
(Z_G, \aA_G, \theta=\theta_W -1)
\end{align}
determines a Gepner type stability condition
$\sigma_G$ 
on $\HMF(W)$ with respect to $(\tau, 2/d)$. 
\end{prop}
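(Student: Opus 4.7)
Since $\varepsilon = 0$, the functor $\Psi\colon D^b\Coh(X) \simto \HMF(W)$ is an equivalence, and Proposition~\ref{prop:grade} identifies the grade shift $\tau$ with $F \cneq \ST_{\oO_X} \circ {\otimes \oO_X(1)}$ on $D^b\Coh(X)$. Transferring the problem across $\Psi$, Lemma~\ref{lem:ZG:4} together with Remark~\ref{rmk:int:40} exhibits $Z_G \circ \Psi$ as a Bridgeland-type central charge
\[
E \longmapsto -C_W \int_X e^{-\sqrt{-1}\omega}\, v^{-H/2}(E),
\]
where $\omega \in \mathbb{R}_{>0}H$ is determined by $\int_X \omega^2 = d/4$. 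The slope $\mu$ differs from the classical $H$-slope by the constant $H^2/2$, so $\aA_G$ is exactly the tilt of $\Coh(X)$ at slope $H\cdot B = -H^2/2$ that appears in Bridgeland's construction of stability conditions on K3 surfaces.

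My plan for step (c) of Subsection~\ref{subsec:strategy} is to invoke Bridgeland's theorem from \cite{Brs2}: for $\omega$ in the ample cone, the pair $(Z_{B,\omega}, \aA_{B,\omega})$ determines a stability condition in the distinguished connected component of $\Stab(D^b\Coh(X))$, provided $Z_{B,\omega}$ does not vanish on any spherical sheaf whose Mukai vector lies in the relevant region. For the two K3 surfaces allowed by Lemma~\ref{lem:n=4}, this reduces to checking finitely many $(-2)$-classes of the form $(r,cH,s)$ with $2rs - c^2H^2 = -2$, which can be carried out by direct inspection of the Mukai lattice. The choice $\theta = \theta_W - 1$ is forced by the tilt: torsion sheaves satisfy $\mu = \infty$, so $\Psi(\oO_x)[-1] \in \aA_G$ and $Z_G(\Psi(\oO_x)[-1]) = C_W$, placing its phase at $\theta_W$, the top of the interval $(\theta_W-1,\theta_W]$. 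Harder--Narasimhan and local finiteness follow from noetherianity of $\aA_G$ and discreteness of the image of $Z_G$ (Remark~\ref{discrete}).

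For the Gepner property (step 3 of Subsection~\ref{subsec:strategy}), the identity $Z_G \circ \tau^{-1} = e^{-2\pi\sqrt{-1}/d} Z_G$ is automatic, so $\tau_*\sigma_G$ and $\sigma_G\cdot(2/d)$ share the same central charge. Both should lie in the distinguished component of $\Stab(\HMF(W))$: the second by a one-parameter deformation from $\sigma_G$, and the first because autoequivalences preserve connected components. By the covering space description of the distinguished component in \cite{Brs2}, the fiber of the central-charge map over any point is a torsor under the double-shift subgroup $\mathbb{Z}\cdot[2]$, so $\tau_*\sigma_G = \sigma_G\cdot(2/d)\cdot[2k]$ for some $k\in\mathbb{Z}$. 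Iterating $d$ times and using $\tau^d = [2]$ yields $\sigma_G\cdot 2 = \sigma_G\cdot 2 \cdot [2kd]$, forcing $k=0$ and hence the desired identity.

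The \textbf{main obstacle} is verifying that $Z_G$ avoids every spherical wall. In both classified cases one has $\omega^2 = d/(4H^2) \le 1$, placing $(B,\omega)$ outside the ``geometric chamber'' $\omega^2 > 2$ where Bridgeland's argument is easiest, so the non-vanishing of $Z_G$ on spherical $\aA_G$-objects must be checked by hand using the explicit Mukai lattice of the quartic K3 and of the double cover of $\mathbb{P}^2$. A related subtlety is ensuring that $\sigma_G$ really lies in the \emph{distinguished} component, which requires exhibiting a continuous path from $\sigma_G$ to a stability condition with $\omega^2 \gg 0$ along which no central charge vanishes on a spherical class; this is essentially a walls-and-chambers computation in the complexified ample cone.
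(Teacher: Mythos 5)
Your construction of $\sigma_G$ itself follows the same route as the paper: transfer to $D^b\Coh(X)$ via $\Psi$, recognize $Z_G$ as a Bridgeland central charge $-C_W\int_X e^{-\sqrt{-1}\omega}v^{-H/2}(\ast)$, and reduce to the non-vanishing of $Z_G$ on spherical classes via \cite[Lemma~6.2]{Brs2}. One caveat: this check is not a finite inspection of the Mukai lattice, since the Picard lattice of $X$ is not assumed to be $\mathbb{Z}H$; the paper's Lemma~\ref{lem:spherical} first uses $v^B(E)^2=-2$ and the Hodge index theorem to reduce to line bundles with $l^2=0$, and then excludes these by the Castelnuovo bound (quartic case) and the double-cover geometry ($d=6$ case). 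Your plan is in the right spirit but underestimates this step.

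The genuine gap is in your argument for the Gepner property. First, the assertion that $\tau_{\ast}\sigma_G$ lies in the distinguished component because ``autoequivalences preserve connected components'' does not follow: autoequivalences permute the components, and the statement that every autoequivalence preserves the distinguished component of $\Stab(D^b\Coh(X))$ is precisely Bridgeland's open conjecture for K3 surfaces, not a formal fact. Second, and more seriously, even granting this, the fiber of the covering map $\Stab^{\dagger}\to \pP_0^{+}(X)$ over a fixed central charge is a torsor under the full group of deck transformations, i.e.\ the autoequivalences preserving the component and acting trivially on cohomology; this group contains, for instance, squares of spherical twists $\ST_E^2$, and is strictly larger than $\mathbb{Z}\cdot[2]$. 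So equality of central charges does not force $\tau_{\ast}\sigma_G=\sigma_G\cdot(2/d)\cdot[2k]$, and your iteration argument cannot start. The paper circumvents all of this by a direct argument: it proves that $\tau\Psi(\oO_x[-1])\cong\Psi(I_x)$ is $\sigma_G$-stable (Lemma~\ref{lem:Ix:stable}, which itself needs the stability of $\Psi(\oO_X)$ from \cite[Proposition~7.4.1]{BMT} and a short lattice computation), deduces from \cite[Lemma~10.1]{Brs2} that the heart of $\tau_{\ast}^{-1}\sigma_G(2/d)$ is a tilt of $\Psi\Coh(X)$, and then concludes by the uniqueness of tilts compatible with a fixed central charge (Lemma~\ref{lem:tilting}). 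You would need to supply an argument of this kind — in particular the stability of the objects $\Psi(I_x)$ — to close the proof.
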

\begin{proof}
Instead of the triple (\ref{triple:40}), 
we consider the triple
\begin{align}\label{triple:400}
(Z_G^{\dag} \cneq Z_G/C_W, \aA_G[1], \theta=0).
\end{align}
Obviously, the 
triple (\ref{triple:40}) determines a stability 
condition if and only if the triple (\ref{triple:400})
determines a stability condition. 
If the latter holds, then
by Remark~\ref{rmk:int:40},  
the
resulting stability condition is $\Psi_{\ast}$ of 
one of  
stability conditions on $D^b \Coh(X)$
constructed in~\cite[Section~6]{Brs2}.  
Therefore, applying~\cite[Lemma~6.2]{Brs2}, 
it is enough to show the following: 
for any spherical torsion free
sheaf $E$ on $X$, one has 
$Z_G^{\dag}(\Psi(E)) \notin \mathbb{R}_{\le 0}$. 
The proof of this fact requires some more
arguments, so we leave the proof to Lemma~\ref{lem:spherical}
below. We continue the proof assuming this fact. 

By the above argument and 
Lemma~\ref{lem:spherical},  
the triple (\ref{triple:40})
determines a stability condition $\sigma_G$
on $\HMF(W)$. 
It remains to show that $\sigma_G$ is Gepner 
type with respect to $(\tau, 2/d)$. 
To show this, let us consider the 
objects $\tau \Psi(\oO_x[-1])$ for $x\in X$. 
By Proposition~\ref{prop:grade}, we have
\begin{align*}
\tau \Psi(\oO_x[-1]) \cong \Psi(I_x) \in \aA_G[1] 
\end{align*}
where $I_x \subset \oO_X$ is the ideal sheaf 
which defines $x$. 
In Lemma~\ref{lem:Ix:stable} below, 
we see that $\Psi(I_x)$ is $\sigma_G$-stable, 
hence it lies in $\pP_G(\theta_W + 2/d)$. 
If we set $\sigma_G'$ to be the stability 
condition defined as in (\ref{tau-1}), this implies that 
\begin{align*}
\Psi(\oO_x[-1]) \in \tau^{-1} \pP_G \left( \theta_W + \frac{2}{d}
  \right) =\pP_G'(\theta_W)
\end{align*}
 for any $x\in X$, and it is 
$\sigma_G'$-stable. 
Since $X$ is a K3 surface, this implies that 
$\pP_G'((\theta_{W}-1, \theta_W])$
is obtained as a tilting of $\aA_W=\Psi(\Coh(X))$, 
by the argument of~\cite[Lemma~10.1]{Brs2}.
Therefore
$\pP_G'((\theta_W-1, \theta_W])$
coincides with $\aA_G$
by Lemma~\ref{lem:tilting}, 
which shows
 that $\sigma_G$ is Gepner type with respect to $(\tau, 2/d)$. 
\end{proof}

\begin{lem}\label{lem:spherical}
For any spherical torsion free sheaf $E$ on 
$X$, one has $Z_G^{\dag}(\Psi(E)) \notin \mathbb{R}_{\le 0}$. 
\end{lem}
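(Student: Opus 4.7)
The plan is to convert the condition $Z_G^{\dag}(\Psi(E)) \in \mathbb{R}_{\le 0}$ into numerical inequalities on the Mukai vector $v^{-H/2}(E)$, use the spherical condition and the Hodge index theorem to force $r = 1$, and then rule out the remaining rank one case using the explicit geometry of the K3 surfaces classified in Lemma~\ref{lem:n=4}.

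First, by Remark~\ref{rmk:int:40} I would rewrite the condition as
\begin{align*}
Z_G^{\dag}(\Psi(E)) = -\int_X e^{-\sqrt{-1}\omega}\, v^{-H/2}(E),
\end{align*}
with $\omega \in \mathbb{R}_{>0} H$ and $\int_X \omega^2 = d/4$. Hence $Z_G^{\dag}(\Psi(E)) \in \mathbb{R}_{\le 0}$ is equivalent to $\omega \cdot v_1^B(E) = 0$ and $v_2^B(E) \ge (\omega^2/2)\, v_0^B(E)$, where $B = -H/2$. Setting $r \cneq v_0^B(E)$, we have $r \ge 1$ since $E$ is torsion free. The Hodge index theorem applied to $v_1^B \in H^{1,1}(X,\mathbb{R})$ with $H$ ample gives $(v_1^B)^2 \le 0$, which together with the Mukai identity $(v_1^B)^2 - 2r v_2^B(E) = -2$ coming from the spherical condition $\chi(E,E) = 2$ yields $r \cdot v_2^B(E) \le 1$. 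Combined with $v_2^B(E) \ge r\omega^2/2 = rd/8$, this produces $r^2 d \le 8$; since $d \in \{4,6\}$ by Lemma~\ref{lem:n=4}, we must have $r = 1$.

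In the rank one case, a torsion-free sheaf $E = I_Z \otimes L$ satisfies $\chi(E,E) = 2 - 2\length(Z)$, so the spherical hypothesis forces $Z = \emptyset$ and $E = L$ is a line bundle. The remaining numerical constraints become $L^2 = 0$ and $H \cdot L = -H^2/2$, i.e.\ $H \cdot L = -2$ for the quartic K3 and $H \cdot L = -1$ for the double cover of $\mathbb{P}^2$. Since $H$ is ample and $H \cdot L < 0$, the line bundle $L$ is not effective; Riemann-Roch gives $\chi(L) = 2$, and then Serre duality forces $h^0(-L) \ge 2$, so the class $-L$ would be represented by an effective divisor $D$ with $D^2 = 0$ and $H \cdot D \in \{1, 2\}$ moving in at least a pencil on $X$.

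The main obstacle, and the key remaining step, is to show that no such effective divisor $D$ exists. By adjunction on the K3 surface, any irreducible component $D_i$ of $D$ satisfies $D_i^2 = 2g(D_i) - 2$, so components with $D_i^2 \le 0$ are rational ($(-2)$-curves) or elliptic ($0$-curves). For the quartic K3 $X \subset \mathbb{P}^3$, an irreducible elliptic component of $H$-degree $2$ would give a non-degenerate elliptic curve of degree $2$ in $\mathbb{P}^3$, which does not exist, while a reducible configuration $D = D_1 + D_2$ with $H D_i = 1$ would require two distinct lines in $\mathbb{P}^3$ meeting with intersection number $2$, also impossible. For the double cover $\pi \colon X \to \mathbb{P}^2$, any irreducible curve of $H$-degree $1$ must map birationally onto a line in $\mathbb{P}^2$ and is hence rational with self-intersection $-2$, so again no effective $D$ of the required type exists. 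This contradiction completes the proof.
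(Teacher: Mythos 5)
Your proof is correct and follows essentially the same route as the paper: the Mukai pairing of a spherical sheaf together with the Hodge index theorem forces rank one, and the remaining case is a line bundle $L$ with $L^2=0$ and $H\cdot L=-H^2/2$, which is then excluded by the projective geometry of the two K3 surfaces in Lemma~\ref{lem:n=4}. The only divergence is the endgame, where the paper passes to the elliptic fibration defined by $\lvert -L\rvert$ and applies the Castelnuovo bound while you analyse an effective member $D\in\lvert -L\rvert$ componentwise; your version works, though in the quartic case you should also dispose of the non-reduced possibility $D=2D_1$ with $H\cdot D_1=1$ (immediate, since such a $D_1$ is a line on $X$, hence a $(-2)$-curve, giving $D^2=-8\neq 0$).
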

\begin{proof}
This is equivalent to saying that, for any 
spherical torsion free 
sheaf $E$ on $X$ with 
$v_1^{B}(E) \cdot H=0$, one has 
\begin{align}\label{spherical}
-v_2^B(E) + \frac{d}{8}  v_0^B(E) >0. 
\end{align}
Since $E$ is a spherical 
sheaf, we have $v^{B}(E)^2=-2$, where the 
square is defined in the Mukai lattice of $X$.  
(cf.~\cite[Lemma~5.1]{Brs2}.)
Combined with
 $v_1^{B}(E) \cdot H=0$ and the Hodge 
index theorem, we have
\begin{align*}
0 \ge v_1^{B}(E)^2 = 2v_0^B(E) v_2^B(E) -2. 
\end{align*}
Noting that $v_0^{B}(E)>0$, we obtain 
$v_2^B(E) \le 1/v_0^{B}(E)$, hence
\begin{align}\label{ineq:vector}
-v_2^B(E) + \frac{d}{8}  
v_0^B(E) \ge \frac{1}{v_0^B(E)}
\left\{ \frac{d}{8} v_{0}^{B}(E)^2  -1 \right\}. 
\end{align}
The RHS of (\ref{ineq:vector}) is positive if $v_0^B(E) \ge 2$. 
Therefore we may assume that $v_0^B(E)=1$, i.e. 
$E$ is rank one. 
Since $E$ is a spherical sheaf, this implies that 
$E$ is a line bundle. 

Let $l$ be the first Chern class of $E$. Since $v_1^B(E) \cdot H =0$, 
we have $l \cdot H = -H^2 /2$, and $v_2^B(E)$ is written as
\begin{align*}
v_2^B(E)= \frac{l^2}{2} -\frac{H^2}{8} + 1. 
\end{align*}
Suppose by a contradiction that (\ref{spherical})
does not hold for a line bundle $E$. 
Then, combining with the Hodge index theorem, we obtain
\begin{align*}
\frac{H^2}{4} -2 + \frac{d}{4} \le l^2 \le \frac{H^2}{4}. 
\end{align*} 
Since we have only two cases, $(d, H^2)=(4, 4), (6, 2)$, 
and $l^2$ is an even integer, it follows that $l^2=0$. 
Also since $-l \cdot H$ is either $2$ (when $X$ is a quartic K3 surface)
or $1$ (when $X$ is a double cover of $\mathbb{P}^2$)
it follows that 
the linear system $|-l|$ defines an elliptic fibration
$X \to \mathbb{P}^1$.  
In particular, its general fiber is
 a smooth elliptic curve $C \in |-l|$ in 
$X$. 
However this is a contradiction: when $X$ is a quartic K3 
surface,  
we have the Castelnuovo inequality
\begin{align*}
g(C) \le \frac{1}{2} (H \cdot C-1)(H\cdot C -2) =0
\end{align*}
which contradicts to $g(C)=1$. 
When $X$ is a double cover of $\mathbb{P}^2$, 
if $\pi \colon X \to \mathbb{P}^2$ is the double 
cover, then $H \cdot C=1$ implies that 
$\pi|_{C}$ is an isomorphism between $C$ and a line in 
$\mathbb{P}^2$, which contradicts to that $C$ is an elliptic 
curve. Therefore (\ref{spherical}) also holds for a line bundle $E$. 
\end{proof}

\begin{lem}\label{lem:Ix:stable}
The object $\Psi(I_x) \in \aA_G[1]$ is $\sigma_G$-stable. 
\end{lem}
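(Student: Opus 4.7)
The plan is to transport the statement through the equivalence $\Psi$ and reinterpret it as $\sigma_{\omega,B}$-stability of the ideal sheaf $I_x \in D^b\Coh(X)$, where $B=-H/2$ and $\omega \in \mathbb{R}_{>0}H$ with $\int_X \omega^2 = d/4$ is the Bridgeland stability data from Remark~\ref{rmk:int:40}. Using Lemma~\ref{lem:ZG:4} one computes $Z_G^{\dag}(\Psi(I_x))$ explicitly (e.g.\ $1+\sqrt{-1}$ in the type $(1,1,1,1,4,4)$ case and $3/2+(\sqrt{3}/2)\sqrt{-1}$ in the type $(3,1,1,1,6,2)$ case), so the phase of $\Psi(I_x)$ inside $\aA_G[1]$ is pinned down and lies strictly in $(0,1/2)$.

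Next I would suppose for contradiction that there is a destabilizing sequence $0 \to F \to \Psi(I_x) \to G \to 0$ in $\aA_G[1]$, and pass to a Jordan--H\"older factor of maximal phase (allowed since $\im Z_G$ is discrete). Taking the long exact sequence of cohomology in the standard t-structure and using $\hH^{\neq 0}(I_x)=0$, one gets $\hH^{-1}(F)=0$, so $F = \hH^0(F)$ is an honest coherent sheaf in $\tT_\mu$, and there is an exact sequence in $\Coh(X)$ of the form
\[
0 \to \hH^{-1}(G) \to F \to I_x \to \hH^0(G) \to 0
\]
with $\hH^{-1}(G) \in \fF_\mu$ and $\hH^0(G) \in \tT_\mu$. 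A further HN reduction lets me assume $F$ is $\mu_B$-semistable.

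From here I would combine the destabilizing phase inequality
\[
\Imm Z_G^{\dag}(\Psi(I_x))\cdot \Ree Z_G^{\dag}(F) \le \Ree Z_G^{\dag}(\Psi(I_x))\cdot \Imm Z_G^{\dag}(F)
\]
with the Bogomolov inequality $\Delta(F):=v_1^B(F)^2-2v_0^B(F)v_2^B(F)\ge -2v_0^B(F)^2$ for $\mu_B$-semistable sheaves and the Hodge index bound $v_1^B(F)^2\cdot H^2 \le (H\cdot v_1^B(F))^2$. Together with the existence of the nonzero map $F \to I_x$ (which forces $r:=v_0^B(F)\ge 1$ and cuts down the admissible $c_1$ since the image sits inside $I_x \subset \oO_X$), these inequalities should confine $v^B(F)$ to a short finite list of candidates.

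The main obstacle, and where the work is genuinely case-specific, will be ruling out the remaining rank-one candidates $F = \oO_X(-C)\otimes I_W \hookrightarrow I_x$ for effective divisor classes $C$ on $X$. For these the phase inequality pushes $H\cdot C$ close to its minimal value, and the argument should parallel the K3-geometric arguments already used in the proof of Lemma~\ref{lem:spherical}: on the quartic K3 one invokes the Castelnuovo-type inequality $g(C) \le (H\cdot C-1)(H\cdot C-2)/2$, and on the double cover of $\mathbb{P}^2$ one notes that a curve $C$ with $H\cdot C=1$ cannot be realized as an embedded elliptic curve. Once these are excluded, $\Psi(I_x)$ is $\sigma_G$-stable, which is the input needed for the tilting argument in the proof of Proposition~\ref{prop:40} via Lemma~\ref{lem:tilting}.
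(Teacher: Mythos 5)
Your overall strategy---transport to $D^b\Coh(X)$, reduce a destabilizing subobject to a $\mu_B$-semistable sheaf mapping to $I_x$, then squeeze its Mukai vector with Bogomolov and Hodge index---is a reasonable large-volume-style toolkit, and it is genuinely different from what the paper does; but as written it has a gap that starts with the central charge computation. With $B=-H/2$ one has $v^B(I_x)=(1,H/2,H^2/8)$, so Lemma~\ref{lem:ZG:4} gives $Z_G^{\dag}(\Psi(I_x))=\sqrt{-1}$ in the quartic case and $\tfrac{1}{2}+\tfrac{\sqrt{3}}{2}\sqrt{-1}$ in the double-cover case, not $1+\sqrt{-1}$ and $\tfrac{3}{2}+\tfrac{\sqrt{3}}{2}\sqrt{-1}$. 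In particular, for $(d,H^2)=(4,4)$ the phase of $\Psi(I_x)$ is exactly $1/2$, not strictly inside $(0,1/2)$. This is not cosmetic: a destabilizer $P_1$ then only needs $\Imm Z_G^{\dag}(P_1)=1/2$ and $\Ree Z_G^{\dag}(P_1)\le 0$, and the borderline case $Z_G^{\dag}(P_1)=\sqrt{-1}/2$ (which would make $I_x$ strictly semistable) must be excluded. All the tools you invoke---Bogomolov, Hodge index, the destabilizing phase inequality---are non-strict inequalities, so they cannot by themselves kill this equality case; some integrality input is unavoidable. Moreover, since $\omega^2=d/4\le 3/2$ you are far from the large-volume limit, so the step ``these inequalities confine $v^B(F)$ to a short finite list'' is exactly where the work lies and is not carried out; the Castelnuovo/double-cover geometry you import from Lemma~\ref{lem:spherical} addresses spherical sheaves with $v_1^B\cdot H=0$, which is not the situation of a potential destabilizer here (where $H\cdot v_1^B(F)$ is a small positive half-integer).

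For comparison, the paper's proof sidesteps all of this by exploiting discreteness of $\Imm Z_G^{\dag}$ on $\aA_G[1]$, whose positive values lie in $\tfrac{1}{2}\sqrt{d/H^2}\cdot\mathbb{Z}_{\ge 1}$. In the $(6,2)$ case $\Imm Z_G^{\dag}(\Psi(I_x))$ is already the minimal positive value, so Lemma~\ref{lem:cmin} reduces stability to a $\Hom$-vanishing against objects with vanishing imaginary part. In the $(4,4)$ case it is twice the minimal value, and the paper pins down any potential destabilizer $P_1$ by combining (i) the $Z_G^{\dag}$-stability of $\Psi(\oO_X)$ (quoted from \cite{BMT}) together with $I_x\subset\oO_X$ to force $\arg Z_G^{\dag}(P_1)<3\pi/4$, (ii) half-integrality of $\Ree Z_G^{\dag}$ to improve this to $\arg\le\pi/2$, and (iii) a parity argument on the Mukai vector ($l\cdot H+2r=1$ versus $l\cdot H+2s=0$) to exclude $Z_G^{\dag}(P_1)=\sqrt{-1}/2$. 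If you want to salvage your route you will need, at minimum, to correct the central charge values and to supply an integrality or parity argument playing the role of step (iii).
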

\begin{proof}
It is enough to show the $Z_G^{\dag}$-stability of 
$\Psi(I_x)$ with respect to the triple (\ref{triple:400}). 
By Lemma~\ref{lem:ZG:4}, we have the following:
\begin{align}\label{note:imm}
\Imm Z_G^{\dag} (\aA_G[1]) \subset
\left\{ \frac{1}{2} \sqrt{\frac{d}{H^2}} \times \mathbb{Z}_{\ge 0} \right\}. 
\end{align}

We first consider the case $(d, H^2)=(6, 2)$. In this case, 
the imaginary part of $Z_G^{\dag}(\Psi(I_x))$ is $\sqrt{3}/2$, that 
is the smallest positive value of the RHS of (\ref{note:imm}). 
Therefore by Lemma~\ref{lem:cmin}, 
the $Z_G^{\dag}$-stability of $\Psi(I_x)$ follows from 
\begin{align}\label{vanish:P}
\Hom(P, \Psi(I_x))=0
\end{align}
for any $P \in \aA_G[1]$ with $\Imm Z_G^{\dag}(P)=0$. 
The above vanishing holds since $\Psi^{-1}(P)$ is 
given by an iterated extensions of zero dimensional sheaves and 
objects $U[1]$ for torsion free $\mu$-stable sheaves with $\mu(U)=0$. 

Next we consider the case $(d, H^2)=(4, 4)$. 
In this case, $Z_G^{\dag}(\Psi(I_x))=\sqrt{-1}$, 
whose imaginary part is the twice of 
the smallest positive value $1/2$
of the RHS of 
(\ref{note:imm}). 
Therefore, besides the vanishing (\ref{vanish:P}), we need to 
show the following: 
if there is an exact sequence in $\aA_G[1]$
\begin{align*}
0 \to P_1 \to \Psi(I_x) \to P_2 \to 0
\end{align*}
with $\Imm Z_G^{\dag}(P_i)=1/2$, 
then we have the inequality in $(0, \pi]$
\begin{align*}
\arg Z_G^{\dag}(P_1) < \arg Z_G^{\dag}
(\Psi(I_x)) = \frac{\pi}{2}. 
\end{align*}
In order to show this, we first observe that 
$\Psi(\oO_X) \in \aA_G[1]$ is $Z_G^{\dag}$-stable 
by~\cite[Proposition~7.4.1]{BMT}. 
Since $\Psi(I_x)$ is a subobject of $\Psi(\oO_X)$
in $\aA_G[1]$, 
and $Z_G^{\dag}(\Psi(\oO_X))= -1 + \sqrt{-1}$, 
we have 
\begin{align*}
\arg Z_G^{\dag}(P_1) < \arg Z_G^{\dag}(\Psi(\oO_X)) = \frac{3}{4} \pi. 
\end{align*}
Since the image of $Z_G^{\dag}$ is contained in 
$\frac{1}{2}\mathbb{Z} + \frac{\sqrt{-1}}{2} \mathbb{Z}$, 
and $\Imm Z_G^{\dag}(P_1)=1/2$, 
the above inequality implies that 
$\arg Z_G^{\dag}(P_1) \le \pi /2$. 
It remains to exclude the following case:
\begin{align*}
Z_G^{\dag}(P_1)=Z_G^{\dag}(P_2)= \frac{\sqrt{-1}}{2}.
\end{align*}
Indeed, we see that there is no object $P \in \aA_G[1]$
such that $Z_G^{\dag}(P)= \sqrt{-1}/2$. 
Suppose that such an object $P$ exists, and 
let
\begin{align*}
\ch(\Psi^{-1}P) \sqrt{\td_X} =(r, l, s) \in H^0(X) \oplus H^2(X)
\oplus H^4(X)
\end{align*}
be the (untwisted) Mukai vector of $\Psi^{-1}P$. 
Then the condition $Z_G^{\dag}(P) =\sqrt{-1}/2$
is equivalent to 
\begin{align*}
l \cdot H + 2r=1, \quad l \cdot H + 2s =0. 
\end{align*}
This is a contradiction since both of $r$ and $s$ are
integers. 

\end{proof}

\subsection{The case of $n=3$,
$\varepsilon=-1$}
In this subsection, we study the case of $(n, \varepsilon)=(3, -1)$. 
This case seems to be the most interesting case studied in 
this paper, as a construction of $\sigma_G$ has to do 
with the study of coherent systems on the 
smooth projective curve $X$. 
Indeed
in Proposition~\ref{prop:Syst}, 
we constructed an equivalence  
 $\Theta$
between the category of coherent systems on $X$
and the heart $\aA_W$, given by 
\begin{align*}
\aA_W = \langle \mathbb{C}(0), \Psi \Coh(X) \rangle_{\rm{ex}}. 
\end{align*} 
Below we abbreviate $\Theta$ and 
regard any coherent system $(\oO_X^{\oplus R} \to F)$
on $X$
as an object in $\aA_W$.  

In the notation of Subsection~\ref{subsec:central2},
the polynomial $\widehat{W}$ 
is of type $(a_1, a_2, a_3, 1, d)$, which must 
belong to
one of the classifications (\ref{types:n=4}).
There are two possibilities:
\begin{align*}
(a_1, a_2, a_3, d)= \left\{ \begin{array}{ll}
(1, 1, 1, 4), & X \mbox{ is a genus 3 curve }\\
(3, 1, 1, 6), & X \mbox{ is a genus 2 curve. }
\end{array} \right. 
\end{align*} 
The Calabi-Yau manifold $\widehat{X}$ is
a K3 surface which is either a quartic surface or a 
double cover of $\mathbb{P}^2$, and it 
contains $X$ as an element in the linear system 
$X \in \lvert \widehat{H} \rvert$. 
The central charge $Z_G$ is described 
in terms of coherent systems as follows: 
\begin{lem}\label{lem:ZG:3}
For any coherent system $(\oO_X^{\oplus R} \to F)$
on $X$, we have 
\begin{align}\notag
&Z_G(\oO_X^{\oplus R} \to F) = \\
\notag
&C_W \left\{
-d(F) + R \left( 1- \cos \frac{2\pi}{d}  \right)
+ \frac{\sqrt{\widehat{H}^2 d}}{2} \left( r(F) - \frac{R}{2}
 \right) \sqrt{-1} \right\}. 
\end{align} 
Here we set $(r(F), d(F))=(\rank(F), \deg(F))$. 
\end{lem}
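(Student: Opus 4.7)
The plan is to reduce the computation to the K3 case via the embedding $i \colon X \hookrightarrow \widehat{X}$ introduced in Subsection~\ref{subsec:central2}, and then combine the formula of Lemma~\ref{lem:ZG:4} with the direct evaluation of $Z_G$ on $\mathbb{C}(0)$ coming from (\ref{Zdag:com}). Concretely, by Proposition~\ref{prop:Syst}, the object $\Theta(\oO_X^{\oplus R} \to F) \in \aA_W$ fits in an exact sequence
\begin{align*}
0 \to \Psi(F) \to \Theta(\oO_X^{\oplus R} \to F) \to R \cdot \mathbb{C}(0) \to 0,
\end{align*}
so its K-theory class splits as $[\Psi(F)] + R[\mathbb{C}(0)]$, and additivity of $Z_G$ reduces the lemma to computing $Z_G(\Psi(F))$ and $Z_G(\mathbb{C}(0))$ separately. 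The second is immediate from (\ref{Zdag:com}): $Z_G(\mathbb{C}(0)) = C_W(1 - e^{2\pi\sqrt{-1}/d})$, whose real and imaginary parts are $C_W(1-\cos(2\pi/d))$ and $-C_W\sin(2\pi/d)$.

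For the first, I would note that under the classification after Lemma~\ref{lem:ZG:3}, $\widehat{X}$ is a K3 surface of type $(1,1,1,1,4,4)$ or $(3,1,1,1,6,2)$, and $X \in |\widehat{H}|$ is a divisor of arithmetic genus $g(X) = \widehat{H}^2/2 + 1$. Using the commutative diagram (\ref{commute}) together with the formula (\ref{write:negative}) for $\varepsilon<0$, we obtain $Z_G(\Psi(F)) = Z_G(\widehat{\Psi}(i_*F))$, which by Lemma~\ref{lem:ZG:4} equals
\begin{align*}
C_W \left( -v_2^B(i_*F) + \tfrac{d}{8} v_0^B(i_*F) + \tfrac{1}{2}\sqrt{\tfrac{d}{\widehat{H}^2}}\widehat{H}\cdot v_1^B(i_*F)\sqrt{-1} \right), \quad B=-\widehat{H}/2.
\end{align*}
The twisted Mukai vector of $i_*F$ is computed by Grothendieck--Riemann--Roch: the Mukai vector of $i_*F$ is $(0,\,r(F)\widehat{H},\,d(F) + r(F)(1-g(X)))$, and multiplying by $e^{\widehat{H}/2}$ and using $1-g(X) = -\widehat{H}^2/2$ gives, after cancellation,
\begin{align*}
v^B(i_*F) = (0,\,r(F)\widehat{H},\,d(F)).
\end{align*}
Substituting yields $Z_G(\Psi(F)) = C_W(-d(F) + \tfrac{r(F)}{2}\sqrt{d\widehat{H}^2}\,\sqrt{-1})$.

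Adding the two contributions and collecting real and imaginary parts, the lemma will follow provided one verifies the numerical identity
\begin{align*}
\sin(2\pi/d) = \tfrac{1}{4}\sqrt{d\widehat{H}^2},
\end{align*}
so that the imaginary part $C_W\bigl(\tfrac{r(F)\sqrt{d\widehat{H}^2}}{2} - R\sin(2\pi/d)\bigr)$ equals $C_W\tfrac{\sqrt{d\widehat{H}^2}}{2}(r(F)-R/2)$. This identity is a routine check in the two admissible cases: $(d,\widehat{H}^2) = (4,4)$ gives $\sin(\pi/2) = 1 = \sqrt{16}/4$, and $(d,\widehat{H}^2) = (6,2)$ gives $\sin(\pi/3) = \sqrt{3}/2 = \sqrt{12}/4$. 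The main (and only) nontrivial point is the GRR computation of $v^B(i_*F)$; once that is in place the lemma is purely algebraic.
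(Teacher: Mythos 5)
Your proposal is correct and follows essentially the same route as the paper, which simply cites (\ref{write:negative}), (\ref{Zdag:com}) and Lemma~\ref{lem:ZG:4} and leaves the computation to the reader; you have supplied exactly those details, including the GRR evaluation $v^{-\widehat{H}/2}(i_*F)=(0, r(F)\widehat{H}, d(F))$ and the check $\sin(2\pi/d)=\tfrac{1}{4}\sqrt{d\widehat{H}^2}$ in both admissible cases. One minor slip: the intermediate identity $Z_G(\Psi(F))=Z_G(\widehat{\Psi}(i_*F))$ is only true up to the scalar $(1-e^{2\pi\sqrt{-1}/d})^{\varepsilon}$ relating $C_W$ and $C_{\widehat{W}}$, but the displayed formula you actually use is precisely (\ref{write:negative}) with the correct constant $C_W$, so the argument is unaffected.
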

\begin{proof}
The result follows from (\ref{write:negative}), (\ref{Zdag:com}), 
Lemma~\ref{lem:ZG:4} and
an easy computation. 
\end{proof}
By setting $Z_G^{\dag} \cneq Z_G/C_W$,
we define 
the slope function $\mu$ on $\aA_W$ by 
\begin{align}\notag
\mu(\oO_X^{\oplus R} \to F) & \cneq
\frac{- \Imm Z_G^{\dag}(\oO_X^{\oplus R} \to F)}{R} \\
\label{slope:mu}
&= \frac{\sqrt{\widehat{H}^2 d}}{2} \cdot \left( \frac{1}{2} - 
\frac{r(F)}{R} \right). 
\end{align}
Here we set $\mu(\ast)=-\infty$ if $R=0$. 
\begin{lem}
Any object in $\aA_W$ admits a Harder-Narasimhan filtration 
with respect to $\mu$-stability. 
\end{lem}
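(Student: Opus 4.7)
The plan is to verify the classical sufficient conditions for the existence of Harder--Narasimhan filtrations in an abelian category equipped with a slope function: namely, that $\aA_W$ is noetherian, that $\mu$ satisfies the weak seesaw property, and that for every object the set of slopes of its subobjects is finite. Once these are in place, the filtration is built inductively in the standard way.

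First I would use the equivalence $\Theta \colon \mathrm{Syst}(X) \simto \aA_W$ of Proposition~\ref{prop:Syst} to identify a subobject of $E=(\oO_X^{\oplus R} \stackrel{s}{\to} F)$ in $\aA_W$ with a pair $(V',F')$, where $V' \subset \mathbb{C}^R$ is a subspace, $F' \subset F$ is a coherent subsheaf, and $s(V' \otimes \oO_X) \subset F'$. Since $\dim V' \le R$ and $\Coh(X)$ is noetherian (as $X$ is a smooth projective curve), any ascending chain of such pairs stabilizes, so $\aA_W$ is noetherian.

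Next I would verify the seesaw. Setting $D(E) \cneq R(E)$ and $N(E) \cneq -\Imm Z_G^{\dag}(E) = \frac{\sqrt{\widehat{H}^2 d}}{2}(R(E)/2 - r(F_E))$, both $N$ and $D$ are additive on short exact sequences and $D \ge 0$. In a sequence $0 \to E' \to E \to E'' \to 0$ with $D(E'), D(E'') > 0$, the slope $\mu(E) = N(E)/D(E)$ is a weighted mean of $\mu(E')$ and $\mu(E'')$, hence lies between them. If $D(E') = 0$, then $E'$ corresponds via $\Theta$ to an object of $\Psi \Coh(X)$, so $N(E') = -\frac{\sqrt{\widehat{H}^2 d}}{2}\, r(F_{E'}) \le 0$; therefore $\mu(E) \le \mu(E'')$ while $\mu(E') = -\infty$, confirming the seesaw. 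The case $D(E'') = 0$ is symmetric. An immediate consequence is that the set of slopes of subobjects of a fixed $E$ is finite, since $R(E')$ and $r(F_{E'})$ range in finite sets of integers.

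With these ingredients in place, the HN filtration is constructed in the standard manner. By finiteness of slopes the supremum $\mu_{\max}(E)$ over subobjects of $E$ is attained, and by the noetherian property there is a maximal subobject $E_1$ among those with slope $\mu_{\max}(E)$. The seesaw together with the maximality of $E_1$ forces $E_1$ to be $\mu$-semistable and $\mu_{\max}(E/E_1) < \mu_{\max}(E)$. Iterating on $E/E_1$ yields the desired filtration, which terminates in finitely many steps because only finitely many slopes can occur among the factors. The main technical point is this final step, where one must extract the semistability of $E_1$ and the strict slope drop for the quotient from the weak seesaw and the maximality of $E_1$; this is routine but requires the combined use of noetherian-ness and the additivity of $N$ and $D$.
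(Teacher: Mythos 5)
Your preliminary steps are sound: $\aA_W$ is indeed noetherian, the weak seesaw for the pair $(N,D)=(-\Imm Z_G^{\dag},R)$ holds, and the observation that a fixed object admits only finitely many slopes among its subobjects is correct and genuinely useful. The gap is in the final step, which you describe as routine: with only the \emph{weak} seesaw, the maximal subobject of maximal slope need not be $\mu$-semistable, because $\mu$-semistability in the sense of the paper cannot be tested on subobjects alone. Concretely, take $E=\mathbb{C}(0)\oplus\Psi(\oO_x)$, i.e.\ the coherent system $(\oO_X\stackrel{0}{\to}\oO_x)$. Every nonzero subobject has slope at most $\mu(E)=\sqrt{\widehat{H}^2d}/4$, and $E$ itself attains this maximum, so your algorithm selects $E_1=E$ and declares it semistable; but the quotient $(0\to\oO_x)$ has $R=0$, hence slope $-\infty<\mu(\mathbb{C}(0))$, so $E$ is \emph{not} $\mu$-semistable (its HN filtration is $0\subset\mathbb{C}(0)\subset E$). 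The source of the failure is that objects $(0\to Q)$ with $Q$ zero-dimensional have $N=D=0$ yet are assigned slope $-\infty$ by convention, so they destabilize as quotients while being invisible to any subobject test. This is precisely the point the paper flags when it says the hypothesis of the standard Harder--Narasimhan criterion is not satisfied here.

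The paper's proof circumvents this by introducing the torsion pair $(\cC,\cC^{\perp})$ on $\aA_W$, where $\cC$ consists of the systems $(\oO_X^{\oplus R}\stackrel{s}{\to}F)$ with $s$ surjective and $\cC^{\perp}$ of those with $R=0$; one first establishes the HN property inside the noetherian and artinian quasi-abelian category $\cC$ (where semistability \emph{can} be characterized by exact sequences within $\cC$), and then splices in the $\cC^{\perp}$-part, all of whose objects have slope $-\infty$ and are automatically semistable. If you wish to retain your approach, you must first strip off the maximal quotient lying in $\cC^{\perp}$ before searching for maximal destabilizers; as written, your induction can output non-semistable factors.
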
 
\begin{proof}
Although $\aA_W$ is a
noetherian abelian category, we have to take 
 a little care since the 
condition in~\cite[Proposition~2.12]{Tcurve1} is not satisfied in 
this case. 
Instead, we apply the argument
used in~\cite[Theorem~2.29]{Tolim}. 
Let $\cC \subset \aA_W$ be the 
subcategory consisting of objects
$(\oO_X^{\oplus R} \stackrel{s}{\to} F)$
such that $s$ is surjective. 
Note that the right orthogonal complement 
$\cC^{\perp}$ consists of objects 
of the form $(0 \to F')$. 
Any object $(\oO_X^{\oplus R} \stackrel{s}{\to} F) \in \aA_W$
fits into the exact sequence
\begin{align}\label{ex:system}
0 \to (\oO_X^{\oplus R} \stackrel{s}{\to} \Imm s) \to 
(\oO_X^{\oplus R} \stackrel{s}{\to} F) \to (0 \to \Cok(s)) \to 0  
\end{align}
showing that $(\cC, \cC^{\perp})$ is a torsion pair 
on $\aA_W$. Furthermore, since 
$\mu(\ast)=-\infty$
on $\cC^{\perp}$, 
we can easily see the following:
 an object $E \in \cC$ is $\mu$-semistable if and only 
if for any exact sequence $0 \to E_1 \to E \to E_2 \to 0$
in $\cC$, we have $\mu(E_1) \le \mu(E_2)$. 
(cf.~\cite[Lemma~2.27]{Tolim}.)
Since $\cC$ is a noetherian and artinian 
quasi-abelian category, an argument similar to~\cite[Theorem~2.29]{Tolim}
shows that any object in $\cC$ admits a $\mu$-Harder-Narasimhan 
filtration. Combined with (\ref{ex:system}), 
any object in $\aA_W$ admits a $\mu$-Harder-Narasimhan filtration. 
\end{proof}

\begin{rmk}\label{rmk:surj}
By the proof of the above lemma, 
we see the following: if an object $(\oO_X^{\oplus R} \stackrel{s}{\to} F)$
is $\mu$-semistable, then either $s$ is surjective or 
$R=0$. 
\end{rmk}

Before discussing 
the construction of $\sigma_G$, we 
review Clifford type theorem for coherent systems 
established by Newstead-Lange~\cite{LaNe}. 
For a smooth projective curve $C$ and $\alpha \in \mathbb{R}_{>0}$, 
recall that the $\alpha$-stability 
on $\mathrm{Syst}(C)$ is defined by the 
slope function
\begin{align}\label{slope:alpha}
(\oO_C^{\oplus R} \to F) \mapsto 
\frac{d(F) + \alpha \cdot R}{r(F)}. 
\end{align}
Here the above slope function is set to be $\infty$
if $r(F)=0$. 
\begin{thm}\emph{(\cite[Theorem~2.1, Remark~2.3]{LaNe})}\label{thm:Cli}
Let $C$ be a smooth projective curve of genus $g(C) \ge 2$, 
and $\alpha \in \mathbb{R}_{>0}$. 
Then for any $\alpha$-stable coherent system 
$(\oO_C^{\oplus R} \to F)$ with 
$0 \le d(F) < 2g(C) \cdot r(F)$, we have 
\begin{align}\label{ineq:Cli}
R \le \frac{d(F)}{2} + r(F). 
\end{align}
Moreover if $C$ is non-hyperelliptic, 
then the equality holds in (\ref{ineq:Cli})
only if $(\oO_X^{\oplus R} \to F)$ is isomorphic to either
\begin{align}\label{exclude}
H^0(\oO_C)\otimes \oO_C \to \oO_C \  \mbox{ or } \
H^0(\omega_C) \otimes \oO_{C} \to \omega_C. 
\end{align}
\end{thm}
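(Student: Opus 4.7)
The plan is to proceed by induction on $r(F)$, with the classical Clifford theorem furnishing the base case and $\alpha$-stability driving the inductive step. Recall that the $\alpha$-slope is $\mu_\alpha(\oO_C^{\oplus R'} \to F') = (d(F') + \alpha R')/r(F')$, and $\alpha$-stability means that every proper subsystem with $r(F') > 0$ has strictly smaller $\alpha$-slope than the whole, while subsystems with $r(F')=0$ have slope $+\infty$ and hence always destabilize.

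First I would handle $r(F) = 1$. Modulo torsion, $F$ is a line bundle $L$, and $\alpha$-stability forces the $R$ sections spanning $V$ to be linearly independent (a kernel would give a subsystem of slope $+\infty$), so $R \le h^0(L)$. For $0 \le d(L) \le 2g(C) - 2$ the classical Clifford inequality yields $h^0(L) \le d(L)/2 + 1$; for $d(L) = 2g(C) - 1$ the line bundle $L$ is non-special and Riemann--Roch gives $h^0(L) = d(L) + 1 - g(C) \le d(L)/2 + 1$. In either case $R \le d(F)/2 + 1$, as required.

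For the inductive step $r(F) \ge 2$, pick a nonzero $s \in V$, let $F_1 \subset F$ be the saturated rank-one subsheaf it generates, and form the short exact sequence of coherent systems
\[
0 \to (\langle s \rangle \to F_1) \to (V \to F) \to (V/\langle s \rangle \to F/F_1) \to 0.
\]
A direct slope computation shows that $\alpha$-stability of $(V \to F)$ forces the quotient system to have $\alpha$-slope strictly larger than $(V \to F)$. By an elementary Harder--Narasimhan argument every $\alpha$-stable factor of the quotient therefore has rank $\le r(F) - 1$ and lies in the range where induction applies. Summing the resulting Clifford bounds for the $\alpha$-stable factors and adding the rank-one contribution of $(\langle s \rangle \to F_1)$ (handled by the base case) yields $R \le d(F)/2 + r(F)$.

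The main obstacle is the equality case together with the non-hyperelliptic hypothesis. Equality $R = d(F)/2 + r(F)$ propagates through the induction and forces each rank-one factor to saturate classical Clifford; when $C$ is non-hyperelliptic this restricts the factor to be either $(H^0(\oO_C)\otimes\oO_C \to \oO_C)$ or $(H^0(\omega_C) \otimes \oO_C \to \omega_C)$. One must then show that a nontrivial extension of two such extremal factors breaks either $\alpha$-stability or the equality, so that the only extremal systems are the two listed in (\ref{exclude}). This step requires analyzing the extension classes --- most notably $\Ext^1(\omega_C, \oO_C) \cong H^1(\omega_C^{-1})$ --- and tracking how the induced evaluation $V \otimes \oO_C \to F$ behaves under such an extension, and it is the most delicate part of the argument.
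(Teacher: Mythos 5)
A preliminary remark: the paper does not prove this statement at all --- it is quoted verbatim from Lange--Newstead \cite{LaNe}, so there is no internal proof to compare yours against, and your proposal has to stand on its own. It does not, because the inductive step has two concrete gaps, both located in the sentence ``summing the resulting Clifford bounds for the $\alpha$-stable factors.'' First, the quotient system $(V/\langle s\rangle \to F/F_1)$ may contain nonzero sections that evaluate to zero, namely the classes of those $v\in V$ whose image lands inside the saturated subsheaf $F_1$ (this happens whenever $\dim (V\cap H^0(F_1))\ge 2$). These produce Harder--Narasimhan/Jordan--H\"older factors of the form $(\mathbb{C}^{j}\to 0)$, which contribute $j$ to $R$ but $0$ to $\tfrac{d}{2}+r$; the factorwise Clifford bound is false for such factors, so the sum does not close. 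Second, even for the positive-rank stable factors $(V_i\to F_i)$ you must check the inductive hypothesis $0\le d(F_i)<2g(C)\cdot r(F_i)$, and you only assert it. $\alpha$-stability controls the $\alpha$-slopes $(d_i+\alpha R_i)/r_i$ of subquotients, and for $\alpha\gg 0$ these carry essentially no information about the degrees $d_i$; indeed for large $\alpha$ stability says little more than that $V\hookrightarrow H^0(F)$ and that $V$ generically generates $F$, so no naive d\'evissage into lower-rank stable pieces with controlled degrees is available. This is exactly where the difficulty of the theorem lives; the published argument gets the needed degree control by first passing to the subsheaf $F'\subseteq F$ generated by $V$, for which $\alpha$-semistability does force $d(F')/r(F')\le d(F)/r(F)$, and then bounding $h^0(F')$ directly rather than splitting off rank-one subsystems.

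The equality clause is also not proved: you explicitly defer the analysis of extensions, but this is not a routine afterthought. For instance, a rank-two degree-$(2g-2)$ system with $R=g+1$ obtained as an extension of $(H^0(\omega_C)\otimes\oO_C\to\omega_C)$ by $(H^0(\oO_C)\otimes\oO_C\to\oO_C)$ satisfies $R=\tfrac{d}{2}+r$ exactly, so to conclude that only the two systems in (\ref{exclude}) achieve equality you must show no such extension (or any higher-rank analogue) is ever $\alpha$-stable; nothing in your outline does this, and the extension-group computation you mention would only be the beginning of that argument. As written, both halves of the theorem remain open.
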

Using the above result, we have the 
following lemma, which 
plays a crucial role in constructing 
a Gepner type stability condition on $\HMF(W)$:
\begin{lem}\label{lem:crucial}
Let $(\oO_X^{\oplus R} \stackrel{s}{\to} F)$ be a $\mu$-stable object
in $\aA_W$ such that $R=2r(F)>0$. Then we have 
\begin{align}\label{dRc}
d(F)> R\left( 1- \cos \frac{2\pi}{d} \right). 
\end{align}
\end{lem}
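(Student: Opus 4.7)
The plan is to reduce the lemma to an application of Theorem~\ref{thm:Cli} (Lange--Newstead Clifford bound) by first extracting structural consequences of $\mu$-stability, then upgrading $\mu$-stability to $\alpha$-stability for $\alpha \gg 1$.

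First, I would extract structural consequences of $\mu$-stability. By Remark~\ref{rmk:surj}, $s$ is surjective. The subobject $(\ker(V \to H^0(F)) \otimes \oO_X \to 0) \hookrightarrow (V \otimes \oO_X \to F)$ has $\mu$-slope $\sqrt{\widehat{H}^2 d}/4$, the maximal value, whereas the ambient system has $\mu$-slope $0$ (since $R = 2r(F)$); thus $\mu$-stability forces the kernel to vanish, so $V \hookrightarrow H^0(F)$ is injective and $R \leq h^0(F)$. Similarly, the canonical quotient $(V \otimes \oO_X \to F/F_{\mathrm{tors}})$ also has $\mu$-slope $0$, and strict $\mu$-stability of quotients forces $F_{\mathrm{tors}} = 0$, so $F$ is a globally generated vector bundle with $d(F) \geq 0$.

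Second, I would show $(V \otimes \oO_X \to F)$ is $\alpha$-stable for all sufficiently large $\alpha > 0$. For any proper subsystem $(V' \otimes \oO_X \to F')$ with $r(F') > 0$, $\mu$-stability gives $|V'|/r(F') < 2$, and by integrality $2 - |V'|/r(F') \geq 1/r(F')$; the desired $\alpha$-slope inequality is equivalent to
\[
\frac{d(F')}{r(F')} - \frac{d(F)}{r(F)} < \alpha\!\left(2 - \frac{|V'|}{r(F')}\right),
\]
whose left side is bounded above uniformly in $(V', F')$ by a constant depending only on $F$ (the slopes of subsheaves of a coherent sheaf on a smooth projective curve are bounded above), while the right side exceeds $\alpha/r(F)$, so the inequality holds once $\alpha$ is large enough. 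Subsystems with $r(F') = 0$ admit no nontrivial examples, since torsion-freeness of $F$ forces $F' = 0$ and injectivity of $V \hookrightarrow H^0(F)$ then forces $V' = 0$.

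Third, I invoke Theorem~\ref{thm:Cli}. If $d(F) \geq 2 g(X) r(F)$, then since $g(X) \in \{2, 3\}$ we have $d(F) \geq 4 r(F) \geq R > R(1 - \cos(2\pi/d))$ already. Otherwise $0 \leq d(F) < 2 g(X) r(F)$ and the theorem gives $R \leq d(F)/2 + r(F)$; with $R = 2 r(F)$ this becomes $d(F) \geq R$. For $d = 6$ (where $X$ is the hyperelliptic genus 2 curve and $1 - \cos(2\pi/d) = 1/2$), the target $d(F) > r(F)$ follows from $d(F) \geq 2r(F) > r(F)$ since $r(F) > 0$. For $d = 4$ (where $X$ is a smooth plane quartic, a \emph{non-hyperelliptic} genus 3 curve, and $1 - \cos(2\pi/d) = 1$), the target is $d(F) > R$; equality would trigger the Clifford equality clause of Theorem~\ref{thm:Cli}, forcing the $\alpha$-stable system to be either $(H^0(\oO_X) \otimes \oO_X \to \oO_X)$ with $(R, r(F)) = (1, 1)$ or $(H^0(\omega_X) \otimes \oO_X \to \omega_X)$ with $(R, r(F)) = (3, 1)$, and neither satisfies $R = 2 r(F)$, contradicting our hypothesis. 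The hard parts will be the upgrade to $\alpha$-stability in the second step (requiring the uniform slope bound on subsheaves) and the final exclusion of the Clifford equality case for $d = 4$, which crucially uses non-hyperellipticity of the plane quartic.
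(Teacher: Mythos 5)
Your proposal takes the same route as the paper's proof of Lemma~\ref{lem:crucial}: surjectivity of $s$ from Remark~\ref{rmk:surj}, disposal of the range $d(F)\ge 2g(X)r(F)$ by direct comparison with $R(1-\cos(2\pi/d))$, promotion of $\mu$-stability to $\alpha$-stability for $\alpha\gg 0$, and then Theorem~\ref{thm:Cli} together with the exclusion of the equality systems (\ref{exclude}) for $d=4$ using $R=2r(F)$ and non-hyperellipticity of the plane quartic. The details you add beyond the paper's terse write-up --- injectivity of $V\to H^0(F)$, the integrality bound $2-\dim V'/r(F')\ge 1/r(F')$, the uniform upper bound on slopes of subsheaves --- are correct and are exactly what the paper leaves implicit.

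The one step that does not hold up is the deduction that $F$ is torsion free. Writing $T\subset F$ for the torsion subsheaf, the subobject $(0\to T)$ has slope $-\infty$ under the convention attached to (\ref{slope:mu}), so the exact sequence $0\to(0\to T)\to E\to(\oO_X^{\oplus R}\to F/T)\to 0$ satisfies $\mu(\mathrm{sub})<\mu(\mathrm{quot})$ even though the quotient has the same slope $0$ as $E$; the definition of $\mu$-stability only compares sub against quotient, so no contradiction arises. Torsion can genuinely occur: on the genus two curve take $F=\omega_X\oplus\oO_y$ and let $V\subset H^0(\omega_X)\oplus\mathbb{C}=H^0(F)$ be the graph of a linear functional on $H^0(\omega_X)$ not vanishing on the sections killed at $y$; one checks directly that this is a $\mu$-stable system with $R=2r(F)=2$ and $T=\oO_y\neq 0$. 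This matters because your handling of subsystems with $r(F')=0$ in Step 2 invokes torsion-freeness, and a nonzero torsion subsheaf gives a subsystem of infinite $\alpha$-slope, so such an $E$ is not $\alpha$-stable for any $\alpha$ and Theorem~\ref{thm:Cli} does not apply to it directly. (The paper's own proof is silent on the same point, so the gap is inherited rather than introduced.) The repair is routine: $(\oO_X^{\oplus R}\to F/T)$ is again $\mu$-stable, since proper subsystems and their numerical invariants correspond bijectively under pullback along $F\to F/T$, and it is torsion free with the same $R$ and $r$ and with $d(F/T)\le d(F)$; running your argument on this quotient yields (\ref{dRc}) for $F/T$ and hence for $F$, the $d=4$ equality case being excluded either by $T\neq 0$ (which forces $d(F)>d(F/T)\ge R$) or, when $T=0$, by your non-hyperellipticity argument.
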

\begin{proof}
By the $\mu$-stability, the morphism $s$ 
is surjective (cf.~Remark~\ref{rmk:surj})
hence $d(F) \ge 0$.
Also the RHS of (\ref{dRc}) is $R$ if $d=4$ and $R/2$ if $d=6$. 
In both cases, they are smaller than $2g(X) \cdot r(F)=g(X) \cdot R$, 
so we may assume that $d(F) < 2g(X) \cdot r(F)$. 
Comparing the $\mu$-stability in
(\ref{slope:mu}) and the $\alpha$-stability in (\ref{slope:alpha}), 
we see that the $\mu$-stable object
$(\oO_X^{\oplus R} \to F)$ is $\alpha$-stable for 
$\alpha \gg 0$, 
since the set of quotient sheaves of $F$ with bounded 
above degrees is bounded. 
 Therefore, 
applying Theorem~\ref{thm:Cli}, we obtain 
the inequality $d(F) \ge R$. 
It remains to check that the equality $d(F)=R$ does 
not hold if $d=4$. In this case, $X$ is a quartic 
curve, so it is not hyperelliptic. Also 
$H^0(\oO_C)$ is one dimensional, $H^0(\omega_C)$ is three dimensional, 
hence the objects (\ref{exclude}) do not satisfy our 
assumption $R=2r(F)$. Therefore 
the case $d(F)=R$ is excluded. 
\end{proof}

As we discussed in Subsection~\ref{subsec:strategy}, 
the slope function $\mu$ defines a torsion pair (\ref{mu:tilting}) 
on $\aA_W$, and 
the associated tilting $\aA_G$
is given by (\ref{AG:tilting}).
We have the following result:
\begin{prop}\label{prop:3-1}
Suppose that $n=3$ and $\varepsilon=-1$. Then 
the triple
\begin{align}\label{triple:3-1}
(Z_G, \aA_G, \theta=\theta_W)
\end{align}
determines a Gepner type stability condition $\sigma_G$
on 
$\HMF(W)$ with respect to $(\tau, 2/d)$. 
\end{prop}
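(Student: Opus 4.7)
The plan is to implement the three-step recipe of Subsection~\ref{subsec:strategy}, using the equivalence $\Theta \colon \mathrm{Syst}(X) \simto \aA_W$ from Proposition~\ref{prop:Syst} and the Lange--Newstead Clifford bound of Lemma~\ref{lem:crucial} as the main technical inputs. Step 1 of the recipe is already in place: the slope function $\mu$ of (\ref{slope:mu}) and its Harder--Narasimhan property define the tilted heart $\aA_G = \langle \fF_\mu, \tT_\mu[-1]\rangle_{\rm ex}$.

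For Step 2, I would verify the axiom (\ref{Htheta}) for $(Z_G, \aA_G, \theta_W)$. Lemma~\ref{lem:ZG:3} gives $\Imm Z_G^\dag = -R\mu$ on coherent systems with $R > 0$ and $\Imm Z_G^\dag = \tfrac{\sqrt{\widehat{H}^2 d}}{2}\, r(F)$ on pure sheaves $(0 \to F)$, so $\Imm Z_G^\dag \geq 0$ on $\aA_G$ with equality precisely for nonzero torsion sheaves and for $\mu$-semistable systems with $\mu = 0$. In the first case $\Ree Z_G^\dag = -d(F) < 0$ is immediate, while in the second case Jordan--H\"older decomposition combined with Remark~\ref{rmk:surj} reduces us to $\mu$-stable systems with $R = 2r(F) > 0$, where Lemma~\ref{lem:crucial} provides exactly the strict inequality $d(F) > R(1 - \cos(2\pi/d))$ needed to force $\Ree Z_G^\dag < 0$. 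The Harder--Narasimhan property will then follow from discreteness of $Z_G$-values in $\mathbb{Z}[e^{2\pi \sqrt{-1}/d}]$ (Remark~\ref{discrete}) by a standard argument.

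For Step 3, I would apply Lemma~\ref{lem:con:n=3}. Since $\theta_W = 0$ in both rows of Table~1 and $d \in \{4, 6\}$, the hypothesis (\ref{condition:n=3}) reduces to $1/d \leq 1/2$, which holds. All remaining stability checks exploit Lemma~\ref{lem:cmin} with $c_{\min} = \tfrac{\sqrt{\widehat{H}^2 d}}{4}$. Via Lemma~\ref{lem:tau:ox}, $\tau\Psi(\oO_x) \cong \Theta(\oO_X \to \oO_x)$ satisfies $(R, r(F), d(F)) = (1, 0, 1)$, so $\tau\Psi(\oO_x)[-1] \in \aA_G$ attains $c_{\min}$, and the required Hom-vanishing for $P \in \aA_G$ with $\Imm Z_G^\dag(P) = 0$ reduces to $\Ext^{-1}_{\aA_W}(P, \tau\Psi(\oO_x)) = 0$, which is automatic since both lie in the common heart $\aA_W$. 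For $\mathbb{C}(1)$, Corollary~\ref{cor:canonical} identifies $\mathbb{C}(1)[-1]$ with the canonical coherent system $(H^0(\oO_X(1)) \otimes \oO_X \to \oO_X(1))$; a direct computation with Lemma~\ref{lem:ZG:3} places this object in $\tT_\mu$ when $d = 4$ (so $\mathbb{C}(1)[-2] \in \aA_G$ attains $c_{\min}$ and Lemma~\ref{lem:cmin} applies) and in $\fF_\mu$ when $d = 6$ (so $\mathbb{C}(1)[-1] \in \aA_G$ lies on the negative real axis, where semistability is automatic because $(\theta_W, \theta_W+1]$-phases are bounded above by $1$). Finally, the extension $0 \to \tau\Psi(\oO_x) \to \tau^2\Psi(\oO_x) \to \mathbb{C}(1) \to 0$ obtained by applying $\tau$ to Lemma~\ref{lem:tau:ox}, together with $Z_G \circ \tau = e^{2\pi \sqrt{-1}/d} Z_G$, produces $Z_G^\dag(\tau^2\Psi(\oO_x)) = -e^{4\pi \sqrt{-1}/d}$, yielding the phase $1 + \theta_W + 4/d$ required in (\ref{check:add}), with semistability secured by the same Lemma~\ref{lem:cmin} mechanism.

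The main technical obstacle will be the case analysis forced by the two values $d = 4$ and $d = 6$: the objects $\mathbb{C}(1)$ and $\tau^2\Psi(\oO_x)$ sit in different cohomological shifts of $\aA_G$ in the two cases, and in the $d=4$ case the Lemma~\ref{lem:cmin} argument must be distinguished from the automatic-semistability argument used when $d=6$. Underpinning the placement of $\mathbb{C}(1)$ and $\tau^2\Psi(\oO_x)$ at the correct shift is the $\mu$-semistability of the canonical coherent system $(H^0(\oO_X(1)) \otimes \oO_X \to \oO_X(1))$ and of $\tau^2\Psi(\oO_x) \in \aA_W$, respectively; establishing these is the most delicate step of the proof and rests on a careful subsystem analysis in $\mathrm{Syst}(X)$ invoking Theorem~\ref{thm:Cli}.
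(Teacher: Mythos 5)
Your overall route is the paper's: the same three-step recipe, the same reduction via Lemma~\ref{lem:con:n=3} to the stability of $\tau\Psi(\oO_x)$, $\mathbb{C}(1)$ and $\tau^2\Psi(\oO_x)$, and the same use of the Lange--Newstead bound (Lemma~\ref{lem:crucial}) to rule out $Z_G^{\dag}(E)\in\mathbb{R}_{\ge 0}$ when $\Imm Z_G^{\dag}(E)=0$. Steps~1 and~2 and your treatments of $\tau\Psi(\oO_x)$ and $\mathbb{C}(1)$ match the paper's Lemmas~\ref{lem:tauPsi} and~\ref{lem:check:C1} (your ``automatic semistability on the negative real axis'' for $\mathbb{C}(1)[-1]$ when $d=6$ is a legitimate shortcut, since only semistability of $\mathbb{C}(1)$ is needed in Lemma~\ref{lem:con:n=3}).

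The gap is in your treatment of $\tau^2\Psi(\oO_x)$, precisely the step you flag as most delicate. You propose to place $\tau^2\Psi(\oO_x)[-1]$ in $\aA_G$ by proving its $\mu$-semistability and then to conclude via Lemma~\ref{lem:cmin}. Both halves fail in one of the two cases. Computing the cone of $\tau\Psi(\oO_x)[-1]\to\mathbb{C}(1)[-1]$ identifies $\tau^2\Psi(\oO_x)[-1]$ with the coherent system $(\oO_X\otimes R_{1,x}\to\oO_X(1)\otimes I_x)$, where $R_{1,x}\subset R_1$ is the subspace vanishing at $x$. For $d=6$ this is $(\oO_X\to\oO_X(x'))$, which is \emph{not} $\mu$-semistable: the subobject $(\oO_X\stackrel{\id}{\to}\oO_X)$ has slope $-\sqrt{3}/2$ while the quotient $(0\to\oO_{x'})$ has slope $-\infty$. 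Membership in $\aA_G$ must instead be argued from the fact that both Harder--Narasimhan factors have non-positive slope, and the subsequent $Z_G^{\dag}$-stability then does come from Lemma~\ref{lem:cmin} (here $\Imm Z_G^{\dag}=\sqrt{3}/2=c_{\min}$), using the non-splitness of that extension to kill morphisms from phase-one objects. Conversely, for $d=4$ the system $(\oO_X\otimes R_{1,x}\to\oO_X(1)\otimes I_x)$ \emph{is} $\mu$-stable with $\mu=0$, but then $Z_G^{\dag}(\tau^2\Psi(\oO_x)[-1])=-1$ has vanishing imaginary part, so Lemma~\ref{lem:cmin} does not apply; semistability there follows instead from phase-one maximality (or from the $\mu$-stability itself). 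So your case analysis is inverted: the $\mu$-semistability argument works only for $d=4$ and the $c_{\min}$ argument only for $d=6$, and as written your plan would stall on the false $\mu$-semistability claim in the $d=6$ case.
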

\begin{proof}
Obviously 
the triple (\ref{triple:3-1}) determines a 
stability condition if and only if the triple
\begin{align}\label{triple:3-11}
(Z_G^{\dag}=Z_G/C_W, \aA_G, \theta=0)
\end{align}
determines a stability condition. 
By the construction of $\aA_G$, any non-zero 
object
$E \in \aA_G$ satisfies 
$\Imm Z_G^{\dag}(E) \ge 0$. 
Moreover
Lemma~\ref{lem:ZG:3} and Lemma~\ref{lem:crucial}
imply that if 
$\Imm Z_G^{\dag}(E)=0$, 
then $\Ree Z_G^{\dag}(E)<0$ holds. 
Therefore the triple (\ref{triple:3-11})
satisfies the condition (\ref{Htheta}). 
Also by Lemma~\ref{lem:ZG:3}, the image of 
$Z_G^{\dag}$ is discrete, which enables us 
to apply the same argument of~\cite[Proposition~7.1]{Brs2} 
to prove the Harder-Narasimhan property
of the triple (\ref{triple:3-11}). 
Therefore the triple (\ref{triple:3-11}), hence (\ref{triple:3-1}), 
determines a stability condition. 
Let 
\begin{align*}
\sigma_G=(Z_G, \{\pP_G(\phi)\}_{\phi \in \mathbb{R}})
\end{align*}
be the stability condition on $\HMF(W)$
determined by the triple (\ref{triple:3-1}). 
We need to show that $\sigma_G$ is Gepner type with respect to 
$(\tau, 2/d)$. 
Note that, in our situation, the triple (\ref{triple:3-1})
satisfies the inequality in (\ref{condition:n=3}).
 Therefore, by Lemma~\ref{lem:con:n=3}, 
it is enough to check the $\sigma_G$-stability 
of $\tau\Psi(\oO_x)$, $\mathbb{C}(1)$ and 
$\tau^2 \Psi(\oO_x)$. The stabilities of these 
objects are proved in Lemma~\ref{lem:tauPsi}, Lemma~\ref{lem:check:C1}
and Lemma~\ref{lem:tau2Psi} below. 
\end{proof}
Below we check the $\sigma_G$-stability of 
the objects $\tau \Psi(\oO_x)$, $\mathbb{C}(1)$
and $\tau^2 \Psi(\oO_x)$. 
Let $\sigma_G^{\dag}$ be the stability condition 
on $\HMF(W)$ determined by the triple (\ref{triple:3-11}). 
It differs from $\sigma_G$ by an action of $\mathbb{C}$, 
so it is enough to check the $\sigma_G^{\dag}$-stability
of these objects. 
\begin{lem}\label{lem:tauPsi}
For any $x\in X$, the object $\tau \Psi(\oO_x)$ is $\sigma_G^{\dag}$-stable. 
\end{lem}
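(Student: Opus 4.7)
The plan is to show $\sigma_G^{\dag}$-stability of $\tau\Psi(\oO_x)$, equivalently the $Z_G^{\dag}$-stability of $\tau\Psi(\oO_x)[-1]$ as an object of $\aA_G$, by applying Lemma~\ref{lem:cmin}. The first step is to identify $\tau\Psi(\oO_x)$ as a coherent system under the equivalence $\Theta$ of Proposition~\ref{prop:Syst}. From Lemma~\ref{lem:tau:ox} combined with the one-dimensionality of $\Ext^1(\mathbb{C}(0), \Psi(\oO_x))$ noted in Remark~\ref{rmk:ext:uni}, the object $\tau\Psi(\oO_x)$ must correspond to the coherent system $(\oO_X \stackrel{\mathrm{ev}_x}{\to} \oO_x)$, for which $R = 1$, $r(F) = 0$, and $d(F) = 1$. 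Lemma~\ref{lem:ZG:3} then gives $\mu(\tau\Psi(\oO_x)) = \sqrt{\widehat{H}^2 d}/4 > 0$, so $\tau\Psi(\oO_x) \in \tT_{\mu}$, and hence $\tau\Psi(\oO_x)[-1] \in \aA_G$ with $\Imm Z_G^{\dag}(\tau\Psi(\oO_x)[-1]) = \sqrt{\widehat{H}^2 d}/4$.

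Next I would verify that this value realizes $c_{\min} \cneq \inf\{\Imm Z_G^{\dag}(E) > 0 : E \in \aA_G\}$. For $E \in \aA_G$, decomposing into its $\aA_W$-cohomologies $H^{-1}(E) \in \tT_{\mu}$ and $H^0(E) \in \fF_{\mu}$ and applying Lemma~\ref{lem:ZG:3}, one sees $\Imm Z_G^{\dag}(E) \in \frac{\sqrt{\widehat{H}^2 d}}{4}\mathbb{Z}_{\ge 0}$, where contributions from $H^0(E)$ are non-negative and those from $H^{-1}(E)$ are strictly positive. Thus $c_{\min} = \sqrt{\widehat{H}^2 d}/4$, and is achieved by $\tau\Psi(\oO_x)[-1]$.

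By Lemma~\ref{lem:cmin} it then suffices to check that $\Hom_{\HMF(W)}(P, \tau\Psi(\oO_x)[-1]) = 0$ for every $P \in \aA_G$ with $\Imm Z_G^{\dag}(P) = 0$. The strict positivity of $-\Imm Z_G^{\dag}$ on $\tT_{\mu} \setminus \{0\}$ combined with non-negativity on $\fF_{\mu}$ forces $H^{-1}(P) = 0$, so $P \in \fF_{\mu} \subset \aA_W$. Since $P$ and $\tau\Psi(\oO_x)$ then both lie in the heart $\aA_W$, the Hom group equals $\Ext^{-1}_{\aA_W}(P, \tau\Psi(\oO_x))$, which vanishes by the t-structure axiom. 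The genuinely delicate point is not this final vanishing — which is essentially automatic once the setup is in place — but rather the bookkeeping that confirms the explicit realization of $c_{\min}$ by $\tau\Psi(\oO_x)[-1]$ using the classification $(\widehat{H}^2, d) \in \{(4, 4), (2, 6)\}$ and the concrete identification of $\tau\Psi(\oO_x)$ with a coherent system of the claimed numerical type.
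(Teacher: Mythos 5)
Your argument is correct and follows the paper's proof essentially verbatim: identify $\tau\Psi(\oO_x)$ with the coherent system $(\oO_X \to \oO_x)$ via Lemma~\ref{lem:tau:ox}, observe that $\Imm Z_G^{\dag}(\tau\Psi(\oO_x)[-1])$ realizes the minimal positive value of $\Imm Z_G^{\dag}$ on $\aA_G$, and conclude by Lemma~\ref{lem:cmin} together with the vanishing $\Hom(\aA_W, \aA_W[-1])=0$ (the paper packages the reduction of the test objects $P$ to $\aA_W$ as Sublemma~\ref{Im=0}). The only inference stated too quickly is ``$\mu(\tau\Psi(\oO_x))>0$, so $\tau\Psi(\oO_x)\in\tT_{\mu}$'': membership in $\tT_{\mu}$ also requires $\mu$-semistability, which here is immediate since the unique proper nonzero subobject $(0\to\oO_x)$ has slope $-\infty$.
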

\begin{proof}
By Lemma~\ref{lem:tau:ox}, the object $\tau \Psi(\oO_x)$
is an object in $\aA_W$, given by 
the coherent system 
$(\oO_X \to \oO_x)$. 
It is $\mu$-stable with $\mu(\ast)>0$, hence 
$\tau \Psi(\oO_x)[-1] \in \aA_G$. 
On the other hand, we have
\begin{align}\label{image:3-1}
\Imm Z_G^{\dag}(\aA_G) \subset \left\{
\frac{\sqrt{\widehat{H}^2 d}}{4} \times \mathbb{Z}_{\ge 0}  \right\}. 
\end{align}
The imaginary part of $Z_G^{\dag}(\tau\Psi(\oO_x)[-1])$ is
$\sqrt{\widehat{H}^2 d}/4$, which is the 
smallest positive number in the RHS of (\ref{image:3-1}). 
By Lemma~\ref{lem:cmin}, it is enough to check that
there is no non-zero 
morphism from any object $P \in \aA_G$
with $\Imm Z_G^{\dag}(P)=0$ to 
$\Psi(\oO_x)[-1]$. 
Since $\tau\Psi(\oO_x)[-1] \in \aA_W[-1]$, 
and $P \in \aA_W$
by Sublemma~\ref{Im=0} below, 
there is no non-zero 
morphism from $P$ to $\tau \Psi(\oO_x)[-1]$. 
 \end{proof}
We have used the following sublemma. 
The proof is obvious from the construction of $\aA_G$, 
and we omit it. 
\begin{sublem}\label{Im=0}
A non-zero object $P \in \aA_G$ satisfies $\Imm Z_G^{\dag}(P)=0$
if and only if $P \in \aA_W$, and it is 
given by an iterated extensions of 
$\mu$-stable coherent systems 
$(\oO_X^{\oplus R} \to F)$ with 
$\mu(\oO_X^{\oplus R} \to F)=0$, and coherent systems 
of the form $(0 \to \oO_y)$ for $y\in X$. 
\end{sublem}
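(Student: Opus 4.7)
The plan is to combine the definition of $\aA_G$ as a tilting with a sign analysis of $\Imm Z_G^{\dag}$ on the two halves $\tT_\mu, \fF_\mu$ of the torsion pair on $\aA_W$. First I would record, from Lemma~\ref{lem:ZG:3} and definition (\ref{slope:mu}), that any coherent system $E=(\oO_X^{\oplus R}\to F')$ satisfies $\Imm Z_G^{\dag}(E)=-R\,\mu(E)$ when $R>0$, and $\Imm Z_G^{\dag}(E)=\tfrac{1}{2}\sqrt{\widehat{H}^2 d}\,r(F')$ when $R=0$. Consequently every object of $\tT_\mu$ has $\Imm Z_G^{\dag}\le 0$, with equality only for the zero object, while every object of $\fF_\mu$ has $\Imm Z_G^{\dag}\ge 0$, with vanishing characterized termwise by the two kinds of factors appearing in the sublemma.

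Given $P\in\aA_G$, the tilting supplies a short exact sequence in $\aA_G$
\[
0\to T[-1]\to P\to F\to 0,\qquad T\in\tT_\mu,\ F\in\fF_\mu,
\]
so $\Imm Z_G^{\dag}(P)=-\Imm Z_G^{\dag}(T)+\Imm Z_G^{\dag}(F)$ is a sum of two non-negative quantities by the previous step. The hypothesis $\Imm Z_G^{\dag}(P)=0$ therefore forces $T=0$ and $\Imm Z_G^{\dag}(F)=0$, whence $P=F\in\fF_\mu\subset\aA_W$. This proves the first half of the sublemma.

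It then remains to refine the structure of $P=F$. I would take the $\mu$-Harder--Narasimhan filtration of $F$ in $\aA_W$; each semistable factor has $\mu\le 0$ and $\Imm Z_G^{\dag}\ge 0$, and these non-negative summands must individually vanish for the total to be zero. The sign computation above pins each factor down as either (i) a $\mu$-semistable coherent system with $\mu=0$, which further decomposes by Jordan--Hölder into $\mu$-stable pieces with $\mu=0$, or (ii) a coherent system $(0\to F')$ with $F'$ a torsion sheaf on $X$, which is an iterated extension of the simple objects $(0\to\oO_y)$ of $\aA_W$ for $y\in X$. The converse direction is immediate from the extension-closedness of $\fF_\mu$ together with additivity of $Z_G^{\dag}$. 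No essential obstacle arises: the only substantive ingredient is the sign computation at the outset, and the Jordan--Hölder filtration on the $\mu=0$ semistable subcategory is supplied by noetherianity of $\aA_W$ together with the discreteness of $Z_G^{\dag}$ noted in (\ref{image:3-1}).
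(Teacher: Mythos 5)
Your argument is correct and is exactly the sign analysis the author has in mind: the paper in fact omits the proof entirely (``The proof is obvious from the construction of $\aA_G$, and we omit it''), and your computation $\Imm Z_G^{\dag}=-R\mu$ for $R>0$, together with the torsion-pair decomposition and the termwise vanishing of the non-negative summands, supplies precisely the intended justification. The only (harmless) slip is the direction of the canonical short exact sequence for the tilted heart, which runs $0\to F\to P\to T[-1]\to 0$ with $F\in\fF_{\mu}$, $T\in\tT_{\mu}$, rather than the order you wrote; since you only use additivity of $Z_G^{\dag}$ on $K$-theory, nothing in the argument is affected.
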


Next we check the stability of $\mathbb{C}(1)$. 
\begin{lem}\label{lem:check:C1}
The object $\mathbb{C}(1)$ is $\sigma_G^{\dag}$-stable. 
\end{lem}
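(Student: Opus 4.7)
The plan is to use Corollary~\ref{cor:canonical} to identify $K := \mathbb{C}(1)[-1] \in \aA_W$ with the canonical coherent system $(H^0(\omega_X) \otimes \oO_X \to \omega_X)$ (noting that $\oO_X(1) \cong \omega_X$ when $\varepsilon = -1$), and then handle separately the two cases of Lemma~\ref{lem:n=4} applied to $\widehat{W}$: $(a_1,a_2,a_3,d) = (1,1,1,4)$ with $X$ a non-hyperelliptic genus $3$ curve and $\widehat{H}^2 = 4$, and $(3,1,1,6)$ with $X$ a genus $2$ curve and $\widehat{H}^2 = 2$. Since $\sigma_G^{\dag}$-stability is invariant under shift, it is equivalent to show that $K$ lies in some $\pP_G^{\dag}(\phi)$.

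As a first step, I would verify that $K$ is $\mu$-stable in $\aA_W$. Any sub-coherent system of $K$ has the form $(V' \otimes \oO_X \to F')$ with $F' \subseteq \omega_X$ and $V' \subseteq H^0(F')$; since $|\omega_X|$ is base-point-free and $h^0(\omega_X(-D)) \le g - 1$ strictly for any effective $D > 0$, a direct slope comparison eliminates every destabilizing proper sub. A computation using Lemma~\ref{lem:ZG:3} gives $\mu(K) = \frac{1}{3} > 0$ in the genus $3$ case and $\mu(K) = 0$ in the genus $2$ case.

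In the genus $3$ case, $K \in \tT_\mu$, so $\mathbb{C}(1)[-2] = K[-1] \in \aA_G$, and a direct computation yields $Z_G^{\dag}(\mathbb{C}(1)[-2]) = 1 + \sqrt{-1}$. Since $\Imm Z_G^{\dag}(\mathbb{C}(1)[-2]) = 1$ is the minimum positive value $\sqrt{\widehat{H}^2 d}/4$ in (\ref{image:3-1}), Lemma~\ref{lem:cmin} reduces stability to the vanishing $\Hom(P, \mathbb{C}(1)[-2]) = 0$ for all $P \in \aA_G$ with $\Imm Z_G^{\dag}(P) = 0$. By Sublemma~\ref{Im=0}, any such $P$ lies in $\aA_W$, and since $\mathbb{C}(1)[-2] \in \aA_W[-1]$ the vanishing is immediate from the fact that $\aA_W$ is the heart of a bounded t-structure.

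The main obstacle is the genus $2$ case, where $K \in \fF_\mu \subset \aA_G$ with $Z_G^{\dag}(K) = -1$, i.e.\ phase $1$, the maximal phase in $\aA_G$; semistability is automatic but stability is delicate. The plan is: for any proper non-zero sub $A \hookrightarrow K$ in $\aA_G$ with $Z_G^{\dag}(A) \in \mathbb{R}_{<0}$, Sublemma~\ref{Im=0} places $A$ in $\aA_W$, and combining the triangle $A \to K \to B$ with $B \in \aA_G$ and the fact that $K \in \aA_W$ forces $A \in \fF_\mu$. Applying the long exact sequence of $\aA_W$-cohomology to this triangle yields $H^{-1}_{\aA_W}(B) \hookrightarrow A$ with $H^{-1}_{\aA_W}(B) \in \tT_\mu$; but $\Hom(\tT_\mu, \fF_\mu) = 0$, so $H^{-1}_{\aA_W}(B) = 0$ and $A \hookrightarrow K$ is already an inclusion in $\aA_W$. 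A final enumeration of $\aA_W$-subs of $K$ with $\Imm Z_G^{\dag} = 0$ — the constraint $r(F') = R'/2$ forces $R' \in \{0, 2\}$, and for $R' = 2$ the requirement $2 = \dim V' \le h^0(\omega_X(-D))$ together with $h^0(\omega_X(-D)) \le 1$ for any effective $D > 0$ on a genus $2$ curve forces $D = 0$ and $A = K$ — completes the proof.
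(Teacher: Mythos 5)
Your proof is correct and follows essentially the same route as the paper: identify $\mathbb{C}(1)[-1]$ with the canonical coherent system via Corollary~\ref{cor:canonical}, check its $\mu$-stability, then in the $d=4$ (genus $3$) case apply Lemma~\ref{lem:cmin} together with Sublemma~\ref{Im=0} to $\mathbb{C}(1)[-2]\in\aA_G$, and in the $d=6$ (genus $2$) case show the maximal-phase object $\mathbb{C}(1)[-1]$ has no proper nonzero subobject of the same phase. The only cosmetic differences are that the paper verifies $\mu$-stability through Sublemma~\ref{sublem:line} (the quotient's sheaf is zero-dimensional, hence of maximal slope) where you bound $\dim V'$ by $h^0(\omega_X(-D))$, and that you spell out the reduction from $\aA_G$-subobjects to $\aA_W$-subobjects that the paper compresses into the phrase ``follows from its $\mu$-stability and Sublemma~\ref{Im=0}''.
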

\begin{proof}
By 
Corollary~\ref{cor:canonical}, 
we have
the isomorphism (again we abbreviate $\Theta$)
\begin{align*}
\mathbb{C}(1)[-1] \cong 
(\oO_X \otimes R_1 \stackrel{s}{\to} \oO_X(1))
\end{align*}
such that $s$ is the natural evaluation map. 
In particular $H^0(s)$ is an isomorphism, and 
$s$ is surjective since $\oO_X(1)$ is globally generated.
Then we apply 
Sublemma~\ref{sublem:line} below
to show that $\mathbb{C}(1)[-1]$ is $\mu$-stable. 
The slope $\mu(\mathbb{C}(1)[-1])$ equals to $1/6$ if $d=4$ and $0$
if $d=6$. 
In the former case, 
$\mathbb{C}(1)[-2] \in \aA_G$, and 
the imaginary part
of $Z_G^{\dag}(\mathbb{C}(1)[-2])$ is $1$, which is the 
smallest positive value of the RHS of (\ref{image:3-1}). 
Hence $\sigma_G^{\dag}$-stability of $\mathbb{C}(1)[-2]$
follows from the same argument of 
Lemma~\ref{lem:tauPsi}. 
In the latter case, 
$\mathbb{C}(1)[-1] \in \aA_G$ and 
the imaginary 
part of $Z_G^{\dag}(\mathbb{C}(1)[-1])$ is $0$.
Hence
the $\sigma_G^{\dag}$-stability of $\mathbb{C}(1)[-1]$
follows from 
its $\mu$-stability
and Sublemma~\ref{Im=0}. 
\end{proof}
\begin{sublem}\label{sublem:line}
For an object $(\oO_X^{\oplus R} \stackrel{s}{\to} \lL) \in \aA_W$
with $R>0$ and 
$\lL \in \Pic(X)$, it is $\mu$-stable if
$s$ is surjective and 
$H^0(s)$ is injective. 
\end{sublem}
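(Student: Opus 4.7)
The plan is to exploit the equivalence $\Theta \colon \mathrm{Syst}(X) \stackrel{\sim}{\to} \aA_W$ from Proposition~\ref{prop:Syst} and enumerate proper nonzero subobjects of $E = (\oO_X^{\oplus R} \stackrel{s}{\to} \lL)$ directly in $\mathrm{Syst}(X)$. Any such subobject $A$ takes the form $(V_A \otimes \oO_X \stackrel{s_A}{\to} F_A)$ with $V_A \subset \mathbb{C}^R$ and $F_A \subset \lL$ a subsheaf, where $s_A$ is the restriction of $s$. Because $\lL$ is a line bundle on the smooth curve $X$, the subsheaf $F_A$ is either $0$ or a line subbundle $\lL(-D)$ for some effective divisor $D \ge 0$, and accordingly $F_B \cneq \lL/F_A$ is either $\lL$, $\oO_D$, or $0$.

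First I would invoke the two hypotheses to cut down the case analysis. Injectivity of $H^0(s)$ forces that if $F_A = 0$ then $V_A \subset \ker H^0(s) = 0$, so $A = 0$. Surjectivity of $s$ forces that if $V_A = \mathbb{C}^R$ and $F_A = \lL(-D)$, then $s$ would factor through $\lL(-D) \subset \lL$, so $D = 0$ and $A = E$. This leaves two nontrivial families: (a) $F_A = \lL$, $V_A \subsetneq \mathbb{C}^R$; (b) $F_A = \lL(-D)$ with $D > 0$, $V_A \subsetneq \mathbb{C}^R$.

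Then I would compute slopes. In both remaining cases $r(F_B) = 0$ and $R_B = R - \dim V_A > 0$, so $\mu(B) = \frac{\sqrt{\widehat{H}^2 d}}{4}$ is finite and positive. If $V_A = 0$ then $\mu(A) = -\infty < \mu(B)$ and we are done. If $V_A \neq 0$, then $r(F_A) = 1$ and $R_A = \dim V_A \ge 1$, giving $\mu(A) = \frac{\sqrt{\widehat{H}^2 d}}{2}\bigl(\tfrac{1}{2} - \tfrac{1}{\dim V_A}\bigr)$, which is strictly less than $\frac{\sqrt{\widehat{H}^2 d}}{4}$ because $1/\dim V_A > 0$. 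Combined with Harder-Narasimhan existence established earlier, this verifies $\mu(A) < \mu(B)$ for every proper nonzero subobject, proving $\mu$-stability.

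The argument is essentially a case analysis, so there is no serious obstacle; the only point that requires a moment of care is the correct identification of subobjects of $E$ in $\mathrm{Syst}(X)$ and the verification that surjectivity of $s$ really does rule out $V_A = \mathbb{C}^R$ with $D > 0$ (which uses that $s$ is a surjection of sheaves, not merely of global sections). Once this is done, the slope inequality reduces to the elementary observation $\tfrac{1}{2} - \tfrac{1}{\dim V_A} < \tfrac{1}{2}$.
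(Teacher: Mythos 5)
Your proof is correct and follows essentially the same route as the paper: both arguments use injectivity of $H^0(s)$ to force the subobject's sheaf part to be nonzero and surjectivity of $s$ to force the quotient's vector space part to be nonzero, and then observe that the quotient, having a torsion (or zero) sheaf part, attains the maximal slope $\frac{\sqrt{\widehat{H}^2 d}}{4}$ while the subobject's slope is strictly smaller. The only cosmetic difference is that the paper compares $\mu(E)$ with the quotient's slope and implicitly invokes the seesaw property, whereas you compare the slopes of the subobject and quotient directly via an explicit enumeration of subobjects.
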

\begin{proof}
Suppose that $s$ is surjective and $H^0(s)$ is injective. Let
\begin{align*}
0 \to (\oO_X^{\oplus R_1} \to \lL_1)
\to (\oO_X^{\oplus R} \to \lL) \to (\oO_X^{\oplus R_2} \to \lL_2) \to 0
\end{align*}
be an exact sequence of coherent systems. 
It is enough to show the inequality
\begin{align}\label{check:stable}
\mu(\oO_X^{\oplus R} \to \lL) < 
\mu(\oO_X^{\oplus R_2} \to \lL_2). 
\end{align}
By our assumption, $\lL_1 \neq 0$ and $R_2 \neq 0$, 
hence $\lL_2$ is a zero dimensional sheaf. 
Therefore the RHS of (\ref{check:stable}) equals to $1/2$, while 
the LHS of (\ref{check:stable}) is less than $1/2$. 
Hence (\ref{check:stable}) holds. 
\end{proof}
It remains to prove the following lemma: 
\begin{lem}\label{lem:tau2Psi}
The object $\tau^2 \Psi(\oO_x)$ is
an object in $\pP_G(\theta_W + 1+4/d)$. 
\end{lem}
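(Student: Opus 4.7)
The goal is to verify the condition \eqref{check:add} of Lemma~\ref{lem:con:n=3} in the case $n=3, \varepsilon=-1$, i.e.\ that $\tau^2 \Psi(\oO_x)$ is $\sigma_G$-semistable with phase $\theta_W + 1 + 4/d$. The plan is to combine the distinguished triangle obtained by applying $\tau$ to Lemma~\ref{lem:tau:ox} with the previously established $\sigma_G$-stability of $\tau\Psi(\oO_x)$ and $\mathbb{C}(1)$.

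First, I would verify that the claimed phase is consistent with the central charge. By the Gepner-type property of $Z_G$, which is built into its definition, one has $Z_G(\tau^{2}\Psi(\oO_x)) = e^{4\pi\sqrt{-1}/d}\cdot Z_G(\Psi(\oO_x)) = -C_W \, e^{4\pi\sqrt{-1}/d}$, whose argument is $\pi(\theta_W + 1 + 4/d)$ by the computation \eqref{Zdag:com}. So any $\sigma_G$-semistable object with this numerical class must have phase $\theta_W + 1 + 4/d$ up to an integer shift, and it suffices to pin down the correct shift and verify semistability.

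Second, applying the autoequivalence $\tau$ to the exact sequence $0 \to \Psi(\oO_x) \to \tau\Psi(\oO_x) \to \mathbb{C}(0) \to 0$ of Lemma~\ref{lem:tau:ox} produces the distinguished triangle
\[
\tau\Psi(\oO_x) \to \tau^{2}\Psi(\oO_x) \to \mathbb{C}(1) \to \tau\Psi(\oO_x)[1].
\]
Both endpoints are $\sigma_G$-stable: $\tau\Psi(\oO_x)$ by Lemma~\ref{lem:tauPsi} with phase $\theta_W + 1 + 2/d$, and $\mathbb{C}(1)$ by Lemma~\ref{lem:check:C1}. Since $\phi(\tau\Psi(\oO_x)) < \theta_W + 1 + 4/d < \phi(\mathbb{C}(1))$ in both cases $d=4$ and $d=6$, the expected phase of $\tau^{2}\Psi(\oO_x)$ lies strictly between the phases of the endpoints, so one cannot read off $\sigma_G$-semistability directly as an extension within a single heart. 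Nevertheless, indecomposability of $\tau^{2}\Psi(\oO_x)$, which follows from the indecomposability of the graded $R$-module $M(x)(2)$, prevents the triangle from splitting, ruling out the direct sum structure that would destroy semistability.

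Third, I would verify semistability by analyzing potential destabilizing subobjects via the triangle. For any would-be sub $F \hookrightarrow \tau^{2}\Psi(\oO_x)$ with $\phi(F) > \theta_W + 1 + 4/d$, the composition $F \to \mathbb{C}(1)$ and the induced map to the fiber over $\tau\Psi(\oO_x)$ are constrained by the $\sigma_G$-stability of the endpoints via the vanishing of $\Hom$'s from higher-phase to lower-phase semistable objects. Combining this with the Serre duality $\sS_W = \tau[1]$ (coming from \eqref{Serre} with $n=3, \varepsilon=-1$), which identifies $\tau^{2}\Psi(\oO_x)[1]$ with $\sS_W(\tau\Psi(\oO_x))$ and hence transfers information about $\Hom$'s to and from $\tau\Psi(\oO_x)$, should allow ruling out all candidate destabilizing subs and quotients.

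The main technical obstacle is the case $d=4$, where $\mathbb{C}(1) \in \aA_G[2]$ while $\tau\Psi(\oO_x) \in \aA_G[1]$ live in different shifts of the heart, so the cohomology of $\tau^{2}\Psi(\oO_x)$ with respect to the $\aA_G$-t-structure spreads across two degrees and the direct extension analysis within a single heart fails. Here the Serre-duality approach becomes essential: one uses $\sS_W(\tau\Psi(\oO_x)) = \tau^{2}\Psi(\oO_x)[1]$ together with the $\sigma_G$-stability of $\tau\Psi(\oO_x)$ and the fact that the Serre functor acts predictably on stable objects (a consistency check that does not a priori use the Gepner property, but only the central-charge identity above), to conclude the required stability and phase. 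By contrast, for $d=6$ the analysis simplifies since $\mathbb{C}(1)[-1]$ lies in $\fF_\mu$ and the triangle is a genuine short exact sequence $0 \to \tau\Psi(\oO_x)[-1] \to \tau^{2}\Psi(\oO_x)[-1] \to \mathbb{C}(1)[-1] \to 0$ in $\aA_G$, and semistability reduces to showing the extension is non-split, which again follows from indecomposability.
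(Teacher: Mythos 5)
Your proposal starts from the right triangle, $\tau\Psi(\oO_x)\to\tau^{2}\Psi(\oO_x)\to\mathbb{C}(1)$, but the two devices you then rely on do not close the argument. First, the Serre-duality step is circular: here $\sS_W=\tau[1]$, so knowing how $\sS_W$ moves $\sigma_G$-semistable objects among the $\pP_G(\phi)$ is exactly equivalent to knowing how $\tau$ does, which is the Gepner property that this whole verification is meant to establish; a Serre functor does not preserve an arbitrary stability condition, so there is no unconditional ``consistency check'' to appeal to. Second, for $d=6$ the reduction of semistability to non-splitness of $0\to\tau\Psi(\oO_x)[-1]\to\tau^{2}\Psi(\oO_x)[-1]\to\mathbb{C}(1)[-1]\to 0$ is not valid: in a configuration with $\phi(\mathrm{sub})<\phi(E)<\phi(\mathrm{quot})$, a destabilizing subobject need not induce a splitting (it could surject onto $\mathbb{C}(1)[-1]$ while meeting $\tau\Psi(\oO_x)[-1]$ in a nonzero proper subobject), so indecomposability rules out too little. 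For $d=4$ you correctly note that the two endpoints sit in $\aA_G[1]$ and $\aA_G[2]$, but are then left with no working mechanism.

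The missing idea, which is how the paper actually proceeds, is an explicit identification of the object inside $\aA_W\simeq\langle\mathbb{C}(0),\Psi\Coh(X)\rangle_{\rm ex}$ viewed as $\mathrm{Syst}(X)$: the morphism of coherent systems $(\oO_X\otimes R_1\to\oO_X(1))\to(\oO_X\to\oO_x)$ realizing the triangle is surjective in both components, so $\tau^{2}\Psi(\oO_x)[-1]$ is itself the coherent system $(\oO_X\otimes R_{1,x}\to\oO_X(1)\otimes I_x)$. For $d=4$ two generic elements of $R_{1,x}$ cut out $x$ transversally, so this system is $\mu$-stable of slope $0$ by Sublemma~\ref{sublem:line}; hence the object lies in the single tilt $\fF_{\mu}\subset\aA_G$ after all and its $\sigma_G$-stability follows at once, contrary to your expectation that the $\aA_G$-cohomology must spread over two degrees. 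For $d=6$ one rewrites the system as $(\oO_X\to\oO_X(x'))$ and filters it by the two $\mu$-stable negative-slope pieces $(\oO_X\stackrel{\id}{\to}\oO_X)$ and $(0\to\oO_{x'})$ --- a decomposition different from your triangle --- then applies the minimal-imaginary-part criterion of Lemma~\ref{lem:cmin} together with Sublemma~\ref{Im=0}. Without this concrete description, the ``analysis of potential destabilizing subobjects'' in your third step has nothing to bite on.
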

\begin{proof}
Applying $\tau$ to the exact sequence (\ref{tau:ox}), 
we obtain the distinguished triangle in $\HMF(W)$
\begin{align*}
\tau \Psi(\oO_x) \to \tau^2 \Psi(\oO_x) \to \mathbb{C}(1). 
\end{align*}
Combined with Lemma~\ref{lem:tau:ox}
and Corollary~\ref{cor:canonical}, 
the object $\tau^2 \Psi(\oO_x)$ is obtained 
as a cone of the morphism
of coherent systems:
\begin{align*}
\xymatrix{
\oO_X \otimes R_1 \ar[r]^{s} \ar[d]_{\gamma_1} & \oO_X(1) \ar[d]^{\gamma_2} \\
\oO_X \ar[r] & \oO_x. 
}
\end{align*}
Here $s$ is the evaluation map and 
$(\gamma_1, \gamma_2)$ is the morphism of coherent systems. 
Since $(\gamma_1, \gamma_2)$ is non-zero, both of 
$\gamma_1$, $\gamma_2$ are non-zero, hence
they are surjective. 
Therefore the object
$\tau^2 \Psi(\oO_x)[-1]$ is given by the coherent system
\begin{align}\label{tau:2}
(\oO_X \otimes R_{1, x} \stackrel{s_x}{\to} \oO_X(1) \otimes I_x).  
\end{align}
Here $R_{1, x}$ is the subspace of $R_1$ which vanishes at $x$, 
and $I_x$ is the ideal sheaf which defines $x$. 

First suppose that $d=4$. In this case, $R_{1, x}$ is two dimensional, 
and $s_x$ is surjective since any of two lines in $\mathbb{P}^2$
determined by generic two elements in $R_{1, x}$
intersect only at $x$ transversally. 
Also $H^{0}(s_x)$ is injective since $H^0(s)$ is an isomorphism. 
Therefore the coherent system (\ref{tau:2}) is 
$\mu$-stable by Sublemma~\ref{sublem:line}. 
Since $\mu=0$ for the coherent system (\ref{tau:2}), 
we have $\tau^2 \Psi(\oO_x)[-1] \in \aA_G$, and 
it is $\sigma_G$-stable. 
In particular $\tau^2 \Psi(\oO_x)$ 
is an object in $\pP_G(\theta_W +2)$. 

Next suppose that $d=6$. In this case, $R_{1, x}$ is one 
dimensional, and $s_x$ is not surjective at $x$. 
In particular the coherent system (\ref{tau:2}) 
is not $\mu$-semistable. 
However we can show the $Z_G^{\dag}$-stability, 
hence $\sigma_G^{\dag}$-stability, of (\ref{tau:2}) in the 
following way: 
the sheaf $\oO_X(1) \otimes I_x$ is 
isomorphic to $\oO_X(x')$ for another point $x'\in X$, and 
(\ref{tau:2}) is
isomorphic to the coherent system 
\begin{align*}
(\oO_X \stackrel{s'}{\to} \oO_X(x'))
\end{align*}
where $s'$ is a natural inclusion. 
There is an exact sequence of coherent systems
\begin{align}\label{ex:system2}
0 \to (\oO_X \stackrel{\id}{\to} \oO_X) 
\to (\oO_X \to \oO_X(x')) \to (0 \to \oO_{x'}) \to 0. 
\end{align}
Both of the objects $(\oO_X \stackrel{\id}{\to} \oO_X)$
and $(0 \to \oO_{x'})$ are $\mu$-stable with 
negative slopes, hence the object (\ref{tau:2}) 
is an object in $\aA_G$. 
Also the imaginary part of $Z_G^{\dag}(\oO_X \to \oO_X(x'))$
is $\sqrt{3}/2$, which is the smallest
positive value of the RHS of (\ref{image:3-1}). 
Therefore by Lemma~\ref{lem:cmin}, 
it is enough to check that there is 
no non-zero morphism from $P \in \aA_G$ with 
$\Imm Z_G^{\dag}(P)=0$ to the object $(\oO_X \to \oO_X(x'))$. 
This follows from Sublemma~\ref{Im=0}, since both of 
$(\oO_X \stackrel{\id}{\to} \oO_X)$
and $(0 \to \oO_{x'})$ are $\mu$-stable with 
negative slopes, and the exact sequence (\ref{ex:system2}) does not split. 
\end{proof}

\subsection{The case of $n=2$, 
$\varepsilon=-2$}
Finally in this subsection, 
we study the case of $(n, \varepsilon)=(2, -2)$. 
In this case, $X$ is a finite number of smooth points, 
represented by points 
$p^{(j)}=(p_1^{(j)}, p_2^{(j)}) \in \mathbb{C}^2$
for $1\le j\le \sharp X$. 
The heart $\aA_W$ is given by 
\begin{align*}
\aA_W = \langle \mathbb{C}(1), \mathbb{C}(0), 
\Psi(\oO_x) : x \in X \rangle_{\rm{ex}}. 
\end{align*}
By Lemma~\ref{lem:compute2} and Lemma~\ref{lem:equivalence}, 
the heart $\aA_W$ is equivalent to
the abelian category of representations of a certain 
quiver $\qQ$ with relations. 
As in the previous subsection, 
we have the following possibilities:
 \begin{align*}
&
(a_1, a_2, d, \sharp X)=(1, 1, 4, 4), \quad 
\qQ=    \xygraph{!~-{@{=}|@{>}} !~:{@{>}}
{\bullet}*+!D{} -^{X_1}_{X_2}[rr]
        {\bullet}*+!D{}
        (:^{\pi^{(4)}}[ru]{\bullet}*+!D{}, 
:_{\pi^{(3)}}[rru]{\bullet}*+!D{},      
        :_{\pi^{(1)}}[rd]{\bullet}*+!D{}, 
:^{\pi^{(2)}}[rrd]{\bullet}*+!D{})
    } \\
&
(a_1, a_2, d, \sharp X)=(3, 1, 6, 2), \quad
\qQ= \xygraph{
{\bullet}*+D{} :^{X_2}[rr]
       {\bullet}*+!D{}
        (:^{\pi^{(2)}}[ru]{\bullet}*+!D{},      
        :_{\pi^{(1)}}[rd]{\bullet}*+!D{})
    } 
\end{align*}
Also by Corollary~\ref{cor:quiver}, 
in the $d=4$ case, we put the 
the following relation:
\begin{align}\label{Q:relation}
p_2^{(j)} \pi^{(j)} X_1 = p_1^{(j)} \pi^{(j)} X_2, \quad 
1\le j \le 4. 
\end{align}
There is no relation in the $d=6$ case. 
By (\ref{Zdag:com}) and the above 
classification, 
the central charge $Z_G$ is given as follows: 
\begin{lem}\label{ZG:2-2}
If we write the K-theory class of 
$E \in \aA_W$ as 
\begin{align}\label{K-th}
[E]= v_1 [\mathbb{C}(1)] + v_0[ \mathbb{C}(0)] + \sum_{j=1}^{\sharp X}
w_j
[\Psi(\oO_{p^{(j)}})]
\end{align}
for $v_j, w_j \in \mathbb{Z}_{\ge 0}$, 
then we 
have 
\begin{align*}
Z_G(E)= C_W &\left\{  -w + \left( 1- \cos \frac{2\pi}{d} \right) v_0 
+ v_1 \right. \\
&+\left. \left\{ -\sin \frac{2\pi}{d} v_0 + \left( \sin \frac{2\pi}{d} -
\sin \frac{4\pi}{d}  \right)v_1 \right\} \sqrt{-1} \right\}. 
\end{align*}
Here we set 
$w \cneq \sum_{j=1}^{\sharp X} w_j$. 
\end{lem}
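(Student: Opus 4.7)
The proof should be essentially a computation using linearity of $Z_G$, together with the formulas already recorded in (\ref{Zdag:com}). First I would expand $[E]$ according to (\ref{K-th}) and reduce to computing $Z_G$ on each of the three types of generators: $\mathbb{C}(1)$, $\mathbb{C}(0)$, and $\Psi(\oO_{p^{(j)}})$ for $1 \le j \le \sharp X$. The contribution from the $\Psi(\oO_{p^{(j)}})$ summands is immediate from (\ref{Zdag:com}), which gives $Z_G(\Psi(\oO_{p^{(j)}})) = -C_W$ independently of $j$, so together they contribute $-C_W w$.

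Next I would compute $Z_G(\mathbb{C}(0))$ and $Z_G(\mathbb{C}(1))$ directly from the formula $Z_G(\mathbb{C}(j)) = C_W e^{2\pi j\sqrt{-1}/d}(1 - e^{2\pi\sqrt{-1}/d})$ in (\ref{Zdag:com}). Expanding into real and imaginary parts gives
\[
Z_G(\mathbb{C}(0))/C_W = (1 - \cos(2\pi/d)) - \sin(2\pi/d)\sqrt{-1},
\]
\[
Z_G(\mathbb{C}(1))/C_W = (\cos(2\pi/d) - \cos(4\pi/d)) + (\sin(2\pi/d) - \sin(4\pi/d))\sqrt{-1}.
\]

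The one step where the classification enters is the simplification of the real part of $Z_G(\mathbb{C}(1))$. Since we are in the case $(n,\varepsilon) = (2,-2)$ and $X$ contains no stacky points, the earlier classification forces $d \in \{4,6\}$, and a direct check gives $\cos(2\pi/4) - \cos(4\pi/4) = 1$ and $\cos(2\pi/6) - \cos(4\pi/6) = 1$, so the coefficient of $v_1$ in the real part is simply $1$ in both cases. (The imaginary part coefficient $\sin(2\pi/d) - \sin(4\pi/d)$ is left as is since it differs in the two cases.) Assembling the contributions by linearity then yields the stated formula; there is no real obstacle, only some bookkeeping, and the only thing one must not forget is the appeal to the classification to collapse $\cos(2\pi/d) - \cos(4\pi/d)$ to $1$.
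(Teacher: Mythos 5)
Your proposal is correct and is exactly the computation the paper intends: the lemma is stated with the preamble ``By (\ref{Zdag:com}) and the above classification,'' and your argument simply carries out that linearity computation, including the one nontrivial point that $\cos(2\pi/d)-\cos(4\pi/d)=1$ holds precisely because the classification forces $d\in\{4,6\}$. Nothing is missing.
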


By setting $Z_G^{\dag} \cneq Z_G/C_W$, we
define the slope function $\mu$ on $\aA_W$ by 
\begin{align*}
\mu(E) & \cneq \frac{\Ree Z_G^{\dag}(E)}{w} \\
&=\frac{1}{w} \left(v_1 + \left(1-\cos \frac{2\pi}{d} \right)v_0  \right) -1. 
\end{align*}
Here we set $\mu(E)=\infty$ if $w=0$. 
The above slope function defines the $\mu$-stability 
on $\aA_W$. 
Since $\aA_W$ is noetherian and artinian, 
any object in $\aA_W$ admits a $\mu$-Harder-Narasimhan filtration.
(cf.~\cite[Proposition~2.12]{Tcurve1}.)
We prepare the following lemma:
\begin{lem}\label{lem:2-2}
For any non-zero $\mu$-stable object $E \in \aA_W$ with 
$\mu(E)=0$, we have $\Imm Z_G^{\dag}(E)<0$. 
\end{lem}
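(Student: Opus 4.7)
The plan is to split along the two cases $d = 4$ and $d = 6$ listed just before Lemma~\ref{ZG:2-2} and to work directly with the quiver description of $\aA_W$ from Corollary~\ref{cor:quiver} (for $d = 4$) and its analogue for $d = 6$. Writing $[E]$ as in~(\ref{K-th}) and applying Lemma~\ref{ZG:2-2}, the hypothesis $\mu(E) = 0$ reads $w = v_1 + (1 - \cos(2\pi/d))\,v_0$, while
\[
\Imm Z_G^{\dag}(E) \;=\; \begin{cases} v_1 - v_0 & (d = 4), \\ -\tfrac{\sqrt{3}}{2}\, v_0 & (d = 6), \end{cases}
\]
where $Z_G^\dag \cneq Z_G/C_W$, so the task reduces to showing $v_0 > v_1$ when $d = 4$ and $v_0 > 0$ when $d = 6$.

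The $d = 6$ case should be essentially immediate. The relevant quiver has only the arrow $X_2 \colon V^{(1)} \to V^{(0)}$ and the projections $\pi^{(j)} \colon V^{(0)} \to W^{(j)}$, with no relations, so $v_0 = 0$ forces every arrow of $E$ to vanish, making $E$ split as a direct sum of the simples $\mathbb{C}(1)^{\oplus v_1}$ and $\bigoplus_{j=1,2} \Psi(\oO_{p^{(j)}})^{\oplus w_j}$. None of these simples has slope $0$, which contradicts $\mu$-stability of $E$ with $\mu(E) = 0$.

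For $d = 4$ with $v_1 > 0$ (the case $v_1 = 0$ being trivial, since it gives $v_0 = w > 0$), I first extract three consequences of $\mu$-stability by testing against natural subobjects of $E$: (i) for any nonzero $v \in \bigcap_j \ker \pi^{(j)}$, the subrepresentation $\mathbb{C} v \subset V^{(0)}$ (with all other vertices zero) is a subobject of slope $+\infty$, so $\Pi \colon V^{(0)} \to \bigoplus_j W^{(j)}$ is injective; (ii) the saturation $F' \cneq (V^{(1)}, V^{(0)}, (\pi^{(j)}(V^{(0)}))_j)$ has $w(F') = \sum_j \dim \pi^{(j)}(V^{(0)})$, and any failure of surjectivity of some $\pi^{(j)}$ gives $w(F') < v_1 + v_0$ and hence $\mu(F') > 0$, contradicting $\mu$-stability, so every $\pi^{(j)}$ is surjective; (iii) for any nonzero $v \in \ker \Phi$ with $\Phi \cneq (X_1, X_2) \colon V^{(1)} \to V^{(0)} \oplus V^{(0)}$, the subrepresentation $\mathbb{C} v \subset V^{(1)}$ (with all other vertices zero) is a subobject of slope $+\infty$, so $\Phi$ is injective.

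With these constraints in hand, I encode the quiver relations~(\ref{Q:relation}) through the linear map
\[
\Lambda \colon V^{(0)} \oplus V^{(0)} \to \bigoplus_j W^{(j)}, \qquad (a,b) \mapsto \bigl( p_2^{(j)} \pi^{(j)}(a) - p_1^{(j)} \pi^{(j)}(b) \bigr)_j,
\]
which satisfies $\Phi(V^{(1)}) \subset \ker \Lambda$. Since every point of $X = \{x_1^4 + x_2^4 = 0\}$ has $p_1^{(j)}, p_2^{(j)} \ne 0$, identifying $V^{(0)}$ with $\Pi(V^{(0)}) \subset \bigoplus_j W^{(j)}$ via (i) yields $\ker \Lambda \cong V^{(0)} \cap \sigma^{-1}(V^{(0)})$, where $\sigma$ is the diagonal operator on $\bigoplus_j W^{(j)}$ with the pairwise distinct eigenvalues $p_2^{(j)}/p_1^{(j)}$. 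The main obstacle is to promote the bound $\dim \ker \Lambda \le v_0$ to a strict inequality: if equality held, $V^{(0)}$ would be $\sigma$-invariant, hence of the form $\bigoplus_{j \in S} W^{(j)}$, and surjectivity of every $\pi^{(j)}$ from (ii) would force $S = \{j : W^{(j)} \ne 0\}$, whence $v_0 = w$ and thus $v_1 = 0$, contradicting the standing assumption. The strict inequality $\dim \ker \Lambda < v_0$ then combines with the injectivity of $\Phi$ to give $v_1 = \dim \Phi(V^{(1)}) \le \dim \ker \Lambda < v_0$, as required.
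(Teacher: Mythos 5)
Your proof is correct, and for the crucial $d=4$ case it takes a genuinely different route from the paper's. The paper argues by contradiction from $v_1 \ge v_0$: choosing $i$ with $W^{(i)} \neq 0$, the relation (\ref{Q:relation}) forces $p_1^{(i)}X_2 - p_2^{(i)}X_1$ to map $V^{(1)}$ into $\Ker \pi^{(i)}$, whose dimension is $< v_0 \le v_1$ by surjectivity of $\pi^{(i)}$; a nonzero vector $v$ in the kernel of this map then generates a subobject whose class is one of $[\mathbb{C}(1)]$, $[\mathbb{C}(1)]+[\mathbb{C}(0)]$, $[\mathbb{C}(1)]+[\mathbb{C}(0)]+[\Psi(\oO_{p^{(i)}})]$ (the distinctness of the ratios $p_2^{(j)}/p_1^{(j)}$ kills the components $W^{(j)}$ for $j\neq i$), hence has positive slope --- contradicting stability. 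You instead prove $v_1<v_0$ by a direct dimension count: injectivity of $\Phi$ and of $\Pi$ and surjectivity of every $\pi^{(j)}$ (each extracted from stability in a more systematic form than the paper needs), followed by the identification $\ker\Lambda \cong \Pi V^{(0)} \cap \sigma^{-1}\Pi V^{(0)}$. Both arguments rest on the same two inputs --- the quiver relations and the distinctness of the points $p^{(j)}\in\mathbb{P}^1$ --- but the paper's produces one explicit destabilizing subobject, while yours packages the same linear algebra into a global statement about $\sigma$-invariant subspaces, which makes it more transparent why the inequality is strict. One small imprecision: a $\sigma$-invariant subspace of $\bigoplus_j W^{(j)}$ has the form $\bigoplus_j U_j$ with $U_j \subseteq W^{(j)}$ arbitrary, not necessarily $\bigoplus_{j\in S}W^{(j)}$; but your appeal to surjectivity of every $\pi^{(j)}$ forces each $U_j = \pi^{(j)}(V^{(0)}) = W^{(j)}$ anyway, so the conclusion $v_0=w$, $v_1=0$ and the resulting contradiction stand. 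The $d=6$ case coincides with the paper's argument.
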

\begin{proof}
In the case of $d=6$, 
we have $\Imm Z_G^{\dag}(E)=-\sqrt{3}v_0/2$, 
which is non-positive. If $v_0=0$, 
the condition $\mu(E)=0$ implies $v_1 =w \neq 0$. 
Therefore $E$ decomposes into direct sums, which 
contradicts to the $\mu$-stability of $E$. 

In the case of $d=4$, the claim is equivalent to that 
if $E$ is $\mu$-stable with 
$v_0+v_1=w \neq 0$, then $v_1< v_0$. 
Let us represent $E$ 
as a representation of $\qQ$:
\begin{align}\label{rep:Q}
  \xygraph{!~-{@{=}|@{>}} !~:{@{>}}
{\bullet}*+!D{V^{(1)}} -^{X_1}_{X_2}[rr]
        {\bullet}*+!D{V^{(0)}}
        (:^{\pi^{(4)}}[ru]{\bullet}*+!D{W^{(4)}}, 
:_{\pi^{(3)}}[rru]{\bullet}*+!D{W^{(3)}},      
        :_{\pi^{(1)}}[rd]{\bullet}*+!D{W^{(0)}}, 
:^{\pi^{(2)}}[rrd]{\bullet}*+!D{W^{(2)}})
    } 
\end{align}
where $V^{(i)}$ are $v_i$-dimensional and $W^{(i)}$ 
are $w_i$-dimensional. 
Suppose by a contradiction that $v_1 \ge v_0$. 
By the relation (\ref{Q:relation}), we have the linear maps
for $1\le i\le 4$
\begin{align}\label{map:V}
p_1^{(i)} X_2 -p_2^{(i)}X_1 \colon V^{(1)} \to \Ker(\pi^{(i)}). 
\end{align}
Because of the condition $w\neq 0$, there is 
$1\le i\le 4$ such that $w^{(i)} \neq 0$. 
Also the $\mu$-stability of $E$ implies 
that $\pi^{(i)}$ is surjective. 
Therefore the assumption $v_1 \ge v_0$
implies that $\dim V^{(1)} > \dim \Ker(\pi^{(i)})$, 
and there is a non-zero element $v\in V^{(1)}$ 
which is mapped to zero by (\ref{map:V}). 
Let us consider the sub quiver representation of 
(\ref{rep:Q}) generated by $v$. 
By taking account the relation (\ref{Q:relation})
into consideration, the corresponding 
subobject $F \subset E$ in $\aA_W$ has the K-theory class 
either
\begin{align*}
[\mathbb{C}(1)] \ \mbox{ or } \ [\mathbb{C}(1)] + [\mathbb{C}(0)] 
\ \mbox{ or } \
[\mathbb{C}(1)] + [\mathbb{C}(0)] + [\Psi(\oO_{p^{(i)}})]. 
\end{align*}
Hence we have $\mu(F)>0$, which contradicts to the $\mu$-stability 
of $E$ with $\mu(E)=0$. 
\end{proof}

As before, 
the slope function $\mu$ defines a torsion pair (\ref{mu:tilting}) 
on $\aA_W$, and 
the associated tilting $\aA_G$
given by (\ref{AG:tilting}).
We have the following result:
\begin{prop}\label{prop:2-2}
Suppose that $n=2$ and $\varepsilon=-2$. 
Then the triple 
\begin{align}\label{triple:2-2}
\left(Z_G, \aA_G, \theta=\theta_W + \frac{1}{2} \right)
\end{align}
determines a Gepner type stability condition $\sigma_G$
on $\HMF(W)$
with respect to $(\tau, 2/d)$. 
\end{prop}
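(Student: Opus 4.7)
The plan is to execute the three-step recipe from Subsection~\ref{subsec:strategy} in the $(n,\varepsilon)=(2,-2)$ setting, exactly parallel to how Proposition~\ref{prop:3-1} was handled.

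First, I would verify that the triple $(Z_G,\aA_G,\theta_W+1/2)$ determines a stability condition. Equivalently, I will work with the normalized triple $(Z_G^{\dag}=Z_G/C_W,\aA_G,\theta=1/2)$, so that the required half-plane condition becomes: for every nonzero $E\in\aA_G$, one has $\Ree Z_G^{\dag}(E)\le 0$, and if equality holds then $\Imm Z_G^{\dag}(E)<0$. By construction of the tilting, any object in $\tT_\mu[-1]$ satisfies $\Ree Z_G^{\dag}<0$, while $\fF_\mu$ gives $\Ree Z_G^{\dag}\le 0$. The boundary case $\Ree Z_G^{\dag}(E)=0$ reduces, by Harder-Narasimhan filtration in $\mu$-stability, to checking strict negativity of the imaginary part on $\mu$-stable objects of slope zero; this is precisely the content of Lemma~\ref{lem:2-2}. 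The Harder-Narasimhan property for $Z_G^{\dag}$-stability follows from noetherianness and artinianness of $\aA_W$ (since $\aA_W$ is equivalent to representations of a finite quiver with relations) together with discreteness of the image of $Z_G$ (see Remark~\ref{discrete}), via the standard argument of~\cite[Proposition~2.4]{Brs1}.

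Second, I would invoke Lemma~\ref{lem:fur} to deduce the Gepner type property with respect to $(\tau,2/d)$. The numerical hypothesis $\theta_W-\tfrac{1}{d}-\tfrac{2\varepsilon}{d}-\tfrac12\le\theta<\theta_W+1$ reads $\tfrac{3}{d}\le 1\le 1$, which holds for $d=4,6$. It then remains to establish $\sigma_G$-semistability of $\tau\Psi(\oO_x)$ for each $x\in X$, together with $\mathbb{C}(1)$ and $\mathbb{C}(2)=\mathbb{C}(-\varepsilon)$.

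Third, for the stability verifications I would argue case by case in the quiver description of $\aA_W$. For $\tau\Psi(\oO_x)$: by Lemma~\ref{lem:tau:ox} and Remark~\ref{rmk:ext:uni} it is a unique non-split extension of $\mathbb{C}(0)$ by $\Psi(\oO_x)$, so any proper subobject in $\aA_W$ is $\Psi(\oO_x)$; a direct comparison of arguments of $Z_G^{\dag}$ then rules out destabilization, and one may further apply Lemma~\ref{lem:cmin} at the boundary since $\Imm Z_G^{\dag}(\tau\Psi(\oO_x))$ sits at the minimal positive value in the image lattice. For $\mathbb{C}(1)$, which is a simple object of $\aA_W$ corresponding to the leftmost vertex of the quiver, $\sigma_G$-semistability follows by inspecting the possible subobjects in $\aA_G$ using the quiver-with-relation description. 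For $\mathbb{C}(2)$: by Lemma~\ref{lem:filt:A} we have $\mathbb{C}(2)[-1]\in\aA_W$, given explicitly as a representation of the quiver (analogous to Corollary~\ref{cor:quiver2}), and I would enumerate subrepresentations to rule out destabilizers, with the $d=4$ case requiring attention to the relation \eqref{Q:relation}.

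The main obstacle will be the $\sigma_G$-stability analysis of $\mathbb{C}(2)$ in the $d=4$ case, where the relations among arrows of the quiver must be carefully tracked, and the possibility $v_1=v_0$ (the borderline of Lemma~\ref{lem:2-2}) interacts with the filtration data. As in Lemma~\ref{lem:tau2Psi}, the borderline phase computations may force a separate argument using Lemma~\ref{lem:cmin} and Sublemma~\ref{Im=0} to exclude maps from $\mu=0$ objects into the relevant candidate. The $d=6$ case is technically simpler because the quiver has fewer arrows and no relations, but the same strategy applies.
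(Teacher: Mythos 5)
Your plan follows the paper's proof essentially step for step: normalize to $(Z_G/C_W,\aA_G,\theta=\tfrac12)$, verify the half-plane condition via Lemma~\ref{lem:2-2} and the tilting, obtain the Harder--Narasimhan property from discreteness of the image of $Z_G^{\dag}$, reduce the Gepner property to Lemma~\ref{lem:fur}, and check the stability of $\tau\Psi(\oO_x)$, $\mathbb{C}(1)$, $\mathbb{C}(2)$ by cases on $d$ using the quiver description and Lemma~\ref{lem:cmin} --- which is exactly what the paper does in Lemmas~\ref{check:1}--\ref{check:3}. One correction for the execution: since $\theta=\theta_W+\tfrac12$, the quantity playing the role of ``$\Imm Z$'' in Lemma~\ref{lem:cmin} is $-\Ree Z_G^{\dag}$ rather than $\Imm Z_G^{\dag}$ (indeed $\Imm Z_G^{\dag}(\tau\Psi(\oO_x))=-\sin(2\pi/d)<0$, not a minimal positive lattice value), and destabilizing subobjects must be sought in the tilted heart $\aA_G$ rather than in $\aA_W$; the paper resolves both points by a lattice argument in $\mathbb{Z}+\mathbb{Z}\sqrt{-1}$ for $d=4$ and by applying Lemma~\ref{lem:cmin} to $\Ree Z_G^{\dag}$ together with the observation that objects of $\aA_G$ with $\Ree Z_G^{\dag}=0$ lie in $\aA_W$ and are $\mu$-semistable of slope zero.
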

\begin{proof}
The triple (\ref{triple:2-2}) determines a stability 
condition if and only if the triple
\begin{align}\label{triple:2-20}
\left( Z_G^{\dag}=Z_G/C_W, \aA_G, \theta=\frac{1}{2} \right)
\end{align}
determines a stability condition. 
By Lemma~\ref{ZG:2-2}, 
Lemma~\ref{lem:2-2} and the construction 
of $\aA_G$, 
the triple (\ref{triple:2-20})
satisfies the condition (\ref{Htheta}). 
Also since the image of $Z_G^{\dag}$ is discrete, the 
same argument of~\cite[Proposition~7.1]{Brs2}
shows the Harder-Narashiman property of (\ref{triple:2-20}). 
Therefore the triples (\ref{triple:2-2}), (\ref{triple:2-20})
determine stability conditions $\sigma_G$, $\sigma_G^{\dag}$
respectively. 
Since they only differ by a $\mathbb{C}$-action, 
by Lemma~\ref{lem:fur}, 
it is enough to check the $\sigma_G^{\dag}$-stability of 
$\tau\Psi(\oO_x)$, $\mathbb{C}(1)$ and $\mathbb{C}(2)$. 
These are checked in Lemma~\ref{check:1}, Lemma~\ref{check:2}
and Lemma~\ref{check:3} below. 
\end{proof}

\begin{lem}\label{check:1}
For any $x\in X$, the
 object $\tau \Psi(\oO_x)$ is $\sigma_G^{\dag}$-stable. 
\end{lem}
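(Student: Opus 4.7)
The plan is to prove that $\tau\Psi(\oO_x)$ is a simple object of $\aA_G$, from which $\sigma_G^{\dag}$-stability follows automatically.

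First I would verify that $\tau\Psi(\oO_x)$ itself lies in $\aA_G$. By Lemma~\ref{lem:tau:ox} there is an exact sequence in $\aA_W$
\begin{align*}
0 \to \Psi(\oO_x) \to \tau\Psi(\oO_x) \to \mathbb{C}(0) \to 0,
\end{align*}
and Lemma~\ref{ZG:2-2} gives $\mu(\Psi(\oO_x)) = -1$ and $\mu(\tau\Psi(\oO_x)) = -\cos(2\pi/d)$, which equal $0$ for $d=4$ and $-1/2$ for $d=6$; in particular both are nonpositive, while $\mu(\mathbb{C}(0)) = +\infty$. Because $\Psi(\oO_x)$ and $\mathbb{C}(0)$ are simple objects of $\aA_W$ (they correspond to the simple one-dimensional representations at the respective vertices of the quivers described above), the only subobjects of $\tau\Psi(\oO_x)$ in $\aA_W$ are $0$, $\Psi(\oO_x)$, and $\tau\Psi(\oO_x)$ itself. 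Comparing slopes (using $-1 < -\cos(2\pi/d) < \infty$) confirms that $\tau\Psi(\oO_x)$ is $\mu$-stable, hence lies in $\fF_\mu \subset \aA_G$.

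Next I would show that $\tau\Psi(\oO_x)$ is simple in $\aA_G$. Let $0 \to F \to \tau\Psi(\oO_x) \to Q \to 0$ be an exact sequence in $\aA_G$. Applying the long exact sequence of $\aA_W$-cohomology to the associated distinguished triangle and using $\tau\Psi(\oO_x) \in \aA_W$, one finds $\hH^{-1}_{\aA_W}(F) = 0$, so $F \in \aA_W \cap \aA_G = \fF_\mu$; furthermore $\hH^{-1}_{\aA_W}(Q) \in \tT_\mu$ injects into $F \in \fF_\mu$, forcing $\hH^{-1}_{\aA_W}(Q) = 0$ and hence $Q \in \fF_\mu$ as well. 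Thus the sequence is already exact in $\aA_W$, with both $F$ and $Q$ in $\fF_\mu$. By the classification of subobjects just given, either $F=0$ or $F=\tau\Psi(\oO_x)$, since the remaining option $F = \Psi(\oO_x)$ would give quotient $\mathbb{C}(0)$, which lies in $\tT_\mu$ and \emph{not} in $\fF_\mu$.

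The conceptually subtle point, and really the main content of the lemma, is that the natural inclusion $\Psi(\oO_x) \hookrightarrow \tau\Psi(\oO_x)$ in $\aA_W$ does \emph{not} descend to an inclusion in the tilted heart $\aA_G$: after tilting, the cokernel $\mathbb{C}(0) \in \tT_\mu$ contributes a nontrivial shifted kernel $\mathbb{C}(0)[-1]$ in $\aA_G$. It is precisely this phenomenon that upgrades $\mu$-stability in $\aA_W$ to simplicity in $\aA_G$. Once simplicity in $\aA_G$ is established, $\sigma_G^{\dag}$-stability is immediate, since any simple object of the heart of a stability condition is stable.
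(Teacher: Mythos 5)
Your overall strategy --- reducing $\sigma_G^{\dag}$-stability to simplicity of $\tau\Psi(\oO_x)$ in the tilted heart $\aA_G$ --- is legitimate and genuinely different from the paper's argument (which uses the discreteness of the image of $Z_G^{\dag}$ together with Lemma~\ref{lem:cmin}), and your preliminary steps ($\mu$-stability with non-positive slope, hence $\tau\Psi(\oO_x)\in\fF_\mu\subset\aA_G$; simplicity in the heart implies stability) agree with the facts. But the proof of simplicity has a real gap: your long-exact-sequence bookkeeping is reversed. For a short exact sequence $0\to F\to E\to Q\to 0$ in $\aA_G$ with $E=\tau\Psi(\oO_x)\in\fF_\mu$, it is the \emph{quotient} $Q$, not the subobject $F$, that is forced to lie in $\aA_W\cap\aA_G=\fF_\mu$: the pair $(\fF_\mu,\tT_\mu[-1])$ is a torsion pair on $\aA_G$ whose torsion part $\fF_\mu$ is closed under quotients but not under subobjects, and the long exact sequence of $\aA_W$-cohomology kills the $\tT_\mu$-component of $Q$ while exhibiting the $\tT_\mu$-component of $F$ only as a quotient of $Q$. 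So the sequence is not ``already exact in $\aA_W$'', and the case where $F\cong T[-1]$ for some $0\neq T\in\tT_\mu$ is left open. Such an $F$ is precisely an embedding of $\tau\Psi(\oO_x)$ into some $G\in\fF_\mu$ with $G/\tau\Psi(\oO_x)\cong T$, and ruling it out requires extra work: for $d=4$ one has $\Ree Z_G^{\dag}(G)=\Ree Z_G^{\dag}(\tau\Psi(\oO_x))+\Ree Z_G^{\dag}(T)=0+\Ree Z_G^{\dag}(T)>0$, contradicting $\Ree Z_G^{\dag}\le 0$ on $\fF_\mu$; for $d=6$ the same count only forces $T=\mathbb{C}(0)$, and one must further check $\Ext^1_{\aA_W}(\mathbb{C}(0),\tau\Psi(\oO_x))=0$ (via the long exact sequence attached to (\ref{tau:ox}) and the one-dimensionality of $\Hom^1(\mathbb{C}(0),\Psi(\oO_x))$), so that $G$ splits and $\mathbb{C}(0)\subset G$ contradicts $G\in\fF_\mu$.

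The part of your argument that is sound is the exclusion of the case where the $\aA_W$-part of $F$ equals $\Psi(\oO_x)$: then $\mathbb{C}(0)$ would embed into $Q\in\fF_\mu$, which is impossible --- and this is indeed the correct observation that the inclusion $\Psi(\oO_x)\hookrightarrow\tau\Psi(\oO_x)$ does not survive tilting. For comparison, the paper sidesteps the simplicity analysis entirely: for $d=4$ it notes that $Z_G^{\dag}(\tau\Psi(\oO_x))=-\sqrt{-1}$ sits at the extremal phase of $\aA_G$ while the image of $Z_G^{\dag}$ is $\mathbb{Z}+\mathbb{Z}\sqrt{-1}$, and for $d=6$ it applies Lemma~\ref{lem:cmin} and a single Hom-vanishing against $\mu$-semistable objects of slope zero; that route is shorter and avoids exactly the case your proof misses.
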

\begin{proof}
By Lemma~\ref{lem:tau:ox}, we have 
$\tau \Psi(\oO_x) \in \aA_W$. 
By the exact sequence (\ref{tau:ox}),
the object $\tau\Psi(\oO_x)$ is $\mu$-stable with non-positive slope. 
Therefore $\tau\Psi(\oO_x)$ is an object in $\aA_G$.  
In the $d=4$ case, we have $Z_G^{\dag}(\tau\Psi(\oO_x))=-\sqrt{-1}$. 
Since the image of $Z_G^{\dag}$ is $\mathbb{Z} + \mathbb{Z} \sqrt{-1}$, 
this immediately implies that $\tau \Psi(\oO_x)$ is $\sigma_G^{\dag}$-stable. 
In the $d=6$ case, 
note that $\Ree Z_G^{\dag}(\ast)$ on $\aA_G$ is contained in 
$\mathbb{Z}_{\le 0} \times 1/2$. 
Since $\Ree Z_G^{\dag}(\tau\Psi(\oO_x))=-1/2$,
by Lemma~\ref{lem:cmin}, 
it is enough to check that $\Hom(P, \tau\Psi(\oO_x))=0$
for any $P \in \aA_G$ with $\Ree Z_G^{\dag}(P)=0$.
By our construction of $\aA_G$, we have $P \in \aA_W$, 
and it is $\mu$-semistable 
with $\mu(P)=0$. 
Since $\tau\Psi(\oO_x)$ is $\mu$-stable with negative slope, 
we have $\Hom(P, \tau\Psi(\oO_x))=0$.  
\end{proof}

\begin{lem}\label{check:2}
The object $\mathbb{C}(1)$ is $\sigma_G^{\dag}$-stable. 
\end{lem}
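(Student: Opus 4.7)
The plan is to show that $\mathbb{C}(1)[-1]$ is a stable object in the heart $\aA_G$, proceeding differently in the two cases $d=4$ and $d=6$. Since $\mathbb{C}(1)$ has $w=0$ and slope $\mu = +\infty$, it belongs to $\tT_{\mu}$ and hence $\mathbb{C}(1)[-1] \in \aA_G$, with $Z_G^{\dag}(\mathbb{C}(1)[-1]) = -1-\sqrt{-1}$ when $d=4$ (phase $5/4$) and $Z_G^{\dag}(\mathbb{C}(1)[-1]) = -1$ when $d=6$ (phase $1$). Moreover, from the quiver description of $\aA_W$, the object $\mathbb{C}(1)$ corresponds to the simple representation supported at the leftmost vertex $v^{(1)}$ and is therefore simple in $\aA_W$.

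For $d=4$, I apply Lemma~\ref{lem:cmin} after acting on $\sigma_G^{\dag}$ by $1/2 \in \mathbb{C}$ to bring the triple into standard position with $\theta = 0$. The rotated central charge is $-\sqrt{-1}\,Z_G^{\dag}$, and since $\Ree Z_G^{\dag}$ takes integer values while every non-zero object of $\tT_{\mu}$ satisfies $\Ree Z_G^{\dag} > 0$ strictly, one computes $c_{\rm min} = \inf\{-\Ree Z_G^{\dag}(E) > 0 : E \in \aA_G\} = 1 = -\Ree Z_G^{\dag}(\mathbb{C}(1)[-1])$. Lemma~\ref{lem:cmin} then reduces stability to the vanishing $\Hom(P, \mathbb{C}(1)[-1]) = 0$ for every $P \in \aA_G$ with $\Ree Z_G^{\dag}(P) = 0$. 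The condition $\Ree Z_G^{\dag}(P) = 0$ forces $H^1_{\aA_W}(P) = 0$, so $P$ lies in $\aA_W$, and therefore $\Hom(P, \mathbb{C}(1)[-1]) \cong \Ext^{-1}_{\aA_W}(P, \mathbb{C}(1)) = 0$.

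For $d=6$, I argue by direct phase analysis. Given a non-zero proper sub-object $F \subset \mathbb{C}(1)[-1]$ in $\aA_G$, the $\aA_W$-cohomology long exact sequence of the triangle $F \to \mathbb{C}(1)[-1] \to G$, combined with the simplicity of $\mathbb{C}(1)$ in $\aA_W$, identifies $F$ with $T[-1]$ for some short exact sequence $0 \to S \to T \to \mathbb{C}(1) \to 0$ in $\aA_W$ with $S \in \fF_{\mu}$ and $T \in \tT_{\mu}$. The formula $\Imm Z_G^{\dag}(E) = -(\sqrt{3}/2)\,v_0(E)$ gives $\Imm Z_G^{\dag}(F) = (\sqrt{3}/2)\,v_0^S \geq 0$, so $\phi(F) \leq 1$ with equality only when $v_0^S = 0$. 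In that boundary case, the inequalities $\Ree Z_G^{\dag}(S) > -1$ (from $T \in \tT_{\mu}$) and $\Ree Z_G^{\dag}(S) \leq 0$ (from $S \in \fF_{\mu}$), together with integrality when $v_0^S = 0$, force $v_1^S = w_S$; the $\mu$-JH factors of $S$ would then be $\mu$-stable of slope $0$ with $v_0 = 0$ and $v_1 = w > 0$. However Lemma~\ref{lem:compute2} gives $\Ext^1_{\aA_W}(\mathbb{C}(1), \Psi(\oO_x)) = 0$ for $d=6$ (no direct arrow from $v^{(1)}$ to $w^{(x)}$ in the quiver $\qQ$), so any such representation splits as a direct sum of $\mathbb{C}(1)$'s and $\Psi(\oO_x)$'s, which is not $\mu$-stable of slope $0$; hence $S = 0$ and $F = \mathbb{C}(1)[-1]$.

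The main subtlety is the $d=6$ case, where ruling out the phase-equality boundary requires the ext-vanishing $\Ext^1_{\aA_W}(\mathbb{C}(1), \Psi(\oO_x)) = 0$ together with its consequence that no indecomposable $\mu$-stable representation of slope $0$ supported only on the vertices $v^{(1)}$ and the $w^{(x)}$ can exist. The $d=4$ case is cleaner because Lemma~\ref{lem:cmin} applies directly and reduces the problem to an elementary $\Ext^{-1}$-vanishing in the abelian category $\aA_W$.
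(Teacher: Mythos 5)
Your proof is correct and follows essentially the same route as the paper: the $d=4$ case is the identical application of Lemma~\ref{lem:cmin} after normalizing $\theta$, and the $d=6$ case is the paper's phase comparison run from the subobject side ($\Imm Z_G^{\dag}(F)\geq 0$ with equality only if $v_0^S=0$) rather than the quotient side ($\Imm Z_G^{\dag}(E)\leq 0$), with the degenerate case excluded by the same structural input, namely that $\Ext^1(\mathbb{C}(1),\Psi(\oO_x))=0$ for $d=6$ forces any $v_0=0$ object to split into simples. Both formulations are equivalent, so no further comment is needed.
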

\begin{proof}
Since $\mathbb{C}(1)$ is $\mu$-stable with 
$\mu(\mathbb{C}(1))=\infty$, we have 
$\mathbb{C}(1)[-1] \in \aA_G$. 
In the $d=4$ case, note that $\Ree Z_G^{\dag}(\ast)$ is 
contained in $\mathbb{Z}_{\le 0}$. 
Since
$\Ree Z_G^{\dag}(\mathbb{C}(1)[-1])=-1$, 
by Lemma~\ref{lem:cmin}, 
it is enough to check that $\Hom(P, \mathbb{C}(1)[-1])=0$
for any $P \in \aA_G$ with $\Ree Z_G^{\dag}(P)=0$, 
which follows since $P$ is an object in $\aA_W$. 
In the $d=6$ case, 
let $E$ be a non-trivial 
quotient of $\mathbb{C}(1)[-1]$ in $\aA_G$. 
Since $Z_G^{\dag}(\mathbb{C}(1)[-1])=-1$, 
it is enough to check that 
$\Imm Z_G^{\dag}(E)<0$. 
To check this, note that we have $E\in \aA_W$
 since $\mathbb{C}(1)$ is a simple 
object in $\aA_W$. 
Let us write the K-theory class of $E$ as in (\ref{K-th}). 
Since $\Imm Z_G^{\dag}(E)= -\sqrt{3}v_0/2$ is non-positive, 
it is enough 
to exclude the case of $v_0=0$. 
If $v_0=0$, then $E$ is a direct sum of objects
$\mathbb{C}(1)$ and $\Psi(\oO_x)$ for $x\in X$.
 Then there is a non-zero 
morphism from $\mathbb{C}(1)[-1]$ to $\Psi(\oO_x)$, 
which contradicts to Lemma~\ref{lem:compute2}. 
\end{proof}

\begin{lem}\label{check:3}
The object $\mathbb{C}(2)$ is $\sigma_G^{\dag}$-stable. 
\end{lem}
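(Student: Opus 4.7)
Following the template of Lemma~\ref{check:2}, my plan is to show that $\mathbb{C}(2)[-1]$, which lies in $\aA_W$ by Lemma~\ref{lem:filt:A}, is $\mu$-stable, and then to use this to place the appropriate shift of $\mathbb{C}(2)$ in $\aA_G$ and verify $\sigma_G^{\dag}$-stability via Lemma~\ref{lem:cmin}. From Lemma~\ref{lem:filt:A}, the K-theory class
\[
[\mathbb{C}(2)[-1]] = \sum_{x\in X}[\Psi(\oO_x)] + \dim R_2 \cdot [\mathbb{C}(0)] + \dim R_1 \cdot [\mathbb{C}(1)],
\]
combined with Lemma~\ref{ZG:2-2}, gives $\mu(\mathbb{C}(2)[-1]) = 1/4$ when $d=4$ and $\mu(\mathbb{C}(2)[-1]) = -1/4$ when $d=6$. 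Consequently, once $\mu$-stability is established, $\mathbb{C}(2)[-1] \in \tT_\mu$ for $d=4$ (so $\mathbb{C}(2)[-2] \in \aA_G$), while $\mathbb{C}(2)[-1] \in \fF_\mu \subset \aA_G$ for $d=6$.

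The crux is the $\mu$-stability verification. I will realize $\mathbb{C}(2)[-1]$ concretely as a quiver representation: in $d=4$ Corollary~\ref{cor:quiver2} gives $(v^{(1)}, v^{(0)}) = (R_1, R_2)$ with horizontal morphisms $x_1, x_2$ and evaluations $\pi^{(j)}$ at the four points of $X$, subject to the relation $p_2^{(j)}\pi^{(j)}X_1 = p_1^{(j)}\pi^{(j)}X_2$; in $d=6$ the analogous quiver deduced from Lemma~\ref{lem:compute2} is tiny, with $V^{(1)} = R_1 = \mathbb{C}$, $V^{(0)} = R_2 = \mathbb{C}$, and a single arrow $x_2 \colon v^{(1)} \to v^{(0)}$, so all proper subrepresentations are seen by inspection to have $\mu \le -3/4$. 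For $d=4$ I will do a case analysis on $\dim V^{(1)}$: if $V^{(1)} = R_1$ then maximality propagates and the subrepresentation coincides with the whole object; if $V^{(1)}$ is one-dimensional the relation forces $V^{(0)}$ to be at least two-dimensional; and if $V^{(1)} = 0$, any sub with $w=0$ must itself vanish, since four distinct points in $\mathbb{P}^1$ impose linearly independent conditions on the three-dimensional space $R_2$. In every nontrivial case a direct arithmetic check on $\mu = (v_1 + v_0)/w - 1$ yields $\mu < 1/4$, so $\mathbb{C}(2)[-1]$ is $\mu$-stable.

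With $\mathbb{C}(2)[\cdot]$ now placed in $\aA_G$, I rotate the stability condition by $e^{-i\pi/2}$ so that the phase parameter becomes $\theta = 0$ and apply Lemma~\ref{lem:cmin}. The rotated imaginary part on $\aA_G$ equals $-\Ree Z_G^{\dag}$, whose values lie in $\mathbb{Z}_{\ge 0}$ for $d=4$ and in $\tfrac{1}{2}\mathbb{Z}_{\ge 0}$ for $d=6$; hence $c_{\rm{min}} = 1$ and $1/2$ respectively, and $\mathbb{C}(2)[\cdot]$ attains $c_{\rm{min}}$ in each case. It then suffices to prove $\Hom(P, \mathbb{C}(2)[\cdot]) = 0$ for every $P \in \aA_G$ with $\Ree Z_G^{\dag}(P) = 0$. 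Writing the canonical short exact sequence $0 \to F \to P \to T[-1] \to 0$ in $\aA_G$ with $F \in \fF_\mu$ and $T \in \tT_\mu$, the additivity $\Ree Z_G^{\dag}(P) = \Ree Z_G^{\dag}(F) - \Ree Z_G^{\dag}(T)$ together with $\Ree Z_G^{\dag} \le 0$ on $\fF_\mu$ and $\Ree Z_G^{\dag} > 0$ on nonzero objects of $\tT_\mu$ forces $T = 0$, so $P \cong F \in \fF_\mu \subset \aA_W$ with $\mu(P) = 0$. For $d=4$ this immediately gives $\Hom(P, \mathbb{C}(2)[-2]) = \Ext^{-1}_{\HMF(W)}(P, \mathbb{C}(2)[-1]) = 0$, since both arguments lie in the heart $\aA_W$; for $d=6$ a standard slope argument using the $\mu$-stability of $\mathbb{C}(2)[-1]$ at slope $-1/4$ and the $\mu$-semistability of $P$ at slope $0$ yields $\Hom(P, \mathbb{C}(2)[-1]) = 0$. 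The principal obstacle is the quiver-theoretic enumeration in the $d=4$ case; the remaining arguments closely mirror those of Lemmas~\ref{check:1} and~\ref{check:2}.
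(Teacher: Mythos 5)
Your overall strategy is the paper's: realize $\mathbb{C}(2)[-1]$ as the quiver representation of Corollary~\ref{cor:quiver2}, prove it is $\mu$-stable, place the appropriate shift in $\aA_G$, and conclude via Lemma~\ref{lem:cmin} together with $\Hom(P,-)=0$ for $P\in\aA_G$ with $\Ree Z_G^{\dag}(P)=0$. The $d=6$ analysis, the identification of $c_{\rm min}$ after rotating to $\theta=0$, your explicit torsion-pair argument showing that such $P$ lies in $\fF_{\mu}\subset\aA_W$, and the final $\Hom$-vanishings all match the paper and are correct. (In fact your $d=6$ representation, with $V^{(1)}=R_1=\mathbb{C}$ rather than $0$, is the one consistent with Lemma~\ref{lem:filt:A} and with the value $\Ree Z_G^{\dag}(\mathbb{C}(2)[-1])=-1/2$ that the argument needs.)

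The one place where your write-up does not yet close is the $\mu$-stability check for $d=4$. Knowing $v^{(1)}\le 1$ and $v^{(0)}\ge 2$ (resp.\ $v^{(1)}=0$) is not enough for a ``direct arithmetic check'': the inequality $\mu=(v_1+v_0)/w-1<1/4$ also requires a lower bound on $w$, and without one a hypothetical subobject with $(v_1,v_0,w)=(1,2,2)$ would have $\mu=1/2>1/4$. The missing ingredient is exactly the independent-conditions fact that you invoke only in the $w=0$ sub-case: letting $I$ be the set of indices with $W^{(i)}=0$, the inclusion $V^{(0)}\subset\bigcap_{i\in I}\Ker(\pi^{(i)})$ forces $v^{(0)}\le 3-\sharp I$, i.e.\ $w=4-\sharp I\ge v^{(0)}+1$; combined with $v^{(1)}\le 1$ for a proper subobject this gives $v^{(0)}+v^{(1)}\le w$ and hence $\mu\le 0<1/4$ uniformly, which is how the paper argues. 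So the gap is closable with a fact you already have in hand, but as stated the cases $\dim V^{(1)}\in\{0,1\}$ with $w>0$ are not actually finished.
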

\begin{proof}
We first discuss the case of $d=4$. In this case, by 
Corollary~\ref{cor:quiver2}, the object $\mathbb{C}(2)[-1]$
is an object in $\aA_W$ represented by
the following quiver representation
\begin{align}\label{rep:Q2}
  \xygraph{!~-{@{=}|@{>}} !~:{@{>}}
{\bullet}*+!D{R_1} -^{x_1}_{x_2}[rr]
        {\bullet}*+!D{R_2}
        (:^{\pi^{(4)}}[ru]{\bullet}*+!D{\mathbb{C}}, 
:_{\pi^{(3)}}[rru]{\bullet}*+!D{\mathbb{C}},      
        :_{\pi^{(1)}}[rd]{\bullet}*+!D{\mathbb{C}}, 
:^{\pi^{(2)}}[rrd]{\bullet}*+!D{\mathbb{C}})
    } 
\end{align}
Here $R_1$ is two dimensional and $R_2$ is three dimensional. 
We first show that $\mathbb{C}(2)[-1]$ is $\mu$-stable. 
Let $E$ be a subobject of $\mathbb{C}(2)[-1]$ in $\aA_W$
given by a representation (\ref{rep:Q}), with 
$v^{(j)} \cneq \dim V^{(j)}$ and $w\cneq \sum_{j=1}^{4} \dim W^{(j)}$. 
Let $I$ be the subset of $i \in \{1, 2, 3, 4\}$
such that $W^{(i)}=0$. 
Similarly to the proof of Lemma~\ref{lem:30:check}, we have
the inequality
$v^{(0)} \le 3- \sharp I$.
Also we may assume that
$v^{(1)} \le 1$, since otherwise
$F$ coincides with $\mathbb{C}(2)[-1]$. 
Therefore we obtain 
the inequality
\begin{align*}
v^{(0)} + v^{(1)} \le 4-\lvert I \rvert =w
\end{align*} 
which
implies that $\mu(E) \le 0< \mu(\mathbb{C}(2)[-1])=1/4$.
Therefore  
$\mathbb{C}(2)[-1]$ is $\mu$-stable with positive slope, 
hence we have $\mathbb{C}(2)[-2] \in \aA_G$. 
Since $Z_G^{\dag}(\mathbb{C}(2)[-2]) =-1 + \sqrt{-1}$, 
and the image of $Z_G^{\dag}$ is $\mathbb{Z} + \mathbb{Z}\sqrt{-1}$, 
by Lemma~\ref{lem:cmin}, 
the $\sigma_G^{\dag}$-stability of $\mathbb{C}(2)[-2]$ follows if we 
check that $\Hom(P, \mathbb{C}(2)[-2])=0$
for any $P \in \aA_G$ with $\Ree Z_G^{\dag}(P)=0$. 
This follows since $P\in \aA_W$ and $\mathbb{C}(2)[-2] \in \aA_W[-1]$. 

Next we discuss the case of $d=6$. By Lemma~\ref{lem:filt:A}, the 
object $\mathbb{C}(2)[-1] \in \aA_W$ is represented by 
the following quiver representation
\begin{align*}
\xygraph{
{\bullet}*+!D{0} :[rr]
       {\bullet}*+!D{\mathbb{C}}
        (:^{\id}[ru]{\bullet}*+!D{\mathbb{C}},      
        :_{\id}[rd]{\bullet}*+!D{\mathbb{C}})
    } 
\end{align*}
The above object is obviously 
$\mu$-stable. Since $\mu(\mathbb{C}(2)[-1])=-3/4$, 
it follows that $\mathbb{C}(2)[-1] \in \aA_G$. 
Also we have $Z_G^{\dag}(\mathbb{C}(2)[-1])=-(1/2 + 3\sqrt{-3}/2)$, 
and $\Ree Z_G^{\dag}(\ast)$ is contained in $\mathbb{Z}_{\le 0} \times 1/2$
on $\aA_G$. Applying Lemma~\ref{lem:cmin}, 
the $\sigma_G^{\dag}$-stability of $\mathbb{C}(2)[-1]$
follows if we check that 
$\Hom(P, \mathbb{C}(2)[-1])=0$ for 
any $P \in \aA_G$ with $\Ree Z_G^{\dag}(P)=0$. 
Since such $P$ is a $\mu$-semistable 
object in $\aA_W$ with $\mu(P)=0$, 
and $\mathbb{C}(2)[-1]$ 
is $\mu$-stable with negative slope, 
it follows that 
we have $\Hom(P, \mathbb{C}(2)[-1])=0$. 
\end{proof}

Kavli Institute for the Physics and 
Mathematics of the Universe, University of Tokyo,
5-1-5 Kashiwanoha, Kashiwa, 277-8583, Japan.

\textit{E-mail address}: yukinobu.toda@ipmu.jp


\begin{thebibliography}{Tod10b}

\bibitem[BFK12]{BFK}
M.~Ballard, D.~Favero, and L.~Katzarkov.
\newblock Orlov spectra: bounds and gaps.
\newblock {\em Invent.~Math}, 189:359--430, 2012.

\bibitem[BM11]{BaMa}
A.~Bayer and E.~Macri.
\newblock The space of stability conditions on the local projective plane.
\newblock {\em Duke Math.~J.~}, 160:263--322, 2011.

\bibitem[BMT]{BMT}
A.~Bayer, E.~Macri, and Y.~Toda.
\newblock Bridgeland stability conditions on 3-folds {I}:
  {B}ogomolov-{G}ieseker type inequalities.
\newblock {\em J.~Algebraic Geom.~(to appear)}.
\newblock arXiv:1103.5010.

\bibitem[Bri07]{Brs1}
T.~Bridgeland.
\newblock Stability conditions on triangulated categories.
\newblock {\em Ann.~of Math}, 166:317--345, 2007.

\bibitem[Bri08]{Brs2}
T.~Bridgeland.
\newblock Stability conditions on ${K}$3 surfaces.
\newblock {\em Duke Math.~J.~}, 141:241--291, 2008.

\bibitem[Bri11]{BrH}
T.~Bridgeland.
\newblock Hall algebras and curve-counting invariants.
\newblock {\em J.~Amer.~Math.~Soc.~}, 24:969--998, 2011.

\bibitem[C{\u{a}}l05]{Cal2}
A.~C{\u{a}}ld{\u{a}}raru.
\newblock The {M}ukai pairing, {II}:~{T}he {H}ochschild-{K}ostant-{R}osenberg
  isomorphism.
\newblock {\em Advances in Math.~}, 194:34--66\, 2005.

\bibitem[CP10]{InPo}
J.~Collins and A.~Polishchuk.
\newblock Gluing stability conditions.
\newblock {\em Adv.~Theor.~Math.~Phys.~}, 14:563--607, 2010.

\bibitem[CR10]{ChRu}
A.~Chiodo and Y.~Ruan.
\newblock Landau-{G}inzburg/{C}alabi-{Y}au correspondence for quintic
  three-folds via symplectic transformations.
\newblock {\em Invent.~Math.~}, 182:117--165, 2010.

\bibitem[DM]{DM}
F.~Denef and G.~Moore.
\newblock Split states, {E}ntropy {E}nigmas, {H}oles and {H}alos.
\newblock arXiv:hep-th/0702146.

\bibitem[Dyc11]{Dyck}
T.~Dyckerhoff.
\newblock Compact generators in categories of matrix factorizations.
\newblock {\em Duke Math.~J.~}, 159:223--274, 2011.

\bibitem[FJR]{FJRW1}
H.~Fan, T.~J. Jarvis, and Y.~Ruan.
\newblock The {W}itten equation and its virtual fundamental cycle.
\newblock {\em preprint}.
\newblock arXiv:0712.4025.

\bibitem[HRS96]{HRS}
D.~Happel, I.~Reiten, and S.~O. Smal$\o$.
\newblock {\em Tilting in abelian categories and quasitilted algebras}, volume
  120 of {\em Mem.~Amer.~Math.~Soc}.
\newblock 1996.

\bibitem[JS]{JS}
D.~Joyce and Y.~Song.
\newblock A theory of generalized {D}onaldson-{T}homas invariants.
\newblock {\em Memoirs of the A.~M.~S.~(to appear)}.
\newblock arXiv:0810.5645.

\bibitem[Kn{\"o}87]{Knor}
H.~Kn{\"o}rrer.
\newblock Cohen-{M}acauley modules on hypersurface singularities.
\newblock {\em Invent.~Math.~}, 88:153--164, 1987.

\bibitem[KS]{K-S}
M.~Kontsevich and Y.~Soibelman.
\newblock Stability structures, motivic {D}onaldson-{T}homas invariants and
  cluster transformations.
\newblock {\em preprint}.
\newblock arXiv:0811.2435.

\bibitem[KST07]{KST1}
H.~Kajiura, K.~Saito, and A.~Takahashi.
\newblock Matrix factorizations and representations of quivers {II}. {T}ype
  {ADE} case.
\newblock {\em Advances in Math}, 211:327--362, 2007.

\bibitem[KST09]{KST2}
H.~Kajiura, K.~Saito, and A.~Takahashi.
\newblock Triangualted categories of matrix factorizations for regular systems
  of weights $\varepsilon =-1$.
\newblock {\em Advances in Math}, 220:1602--1654, 2009.

\bibitem[LN08]{LaNe}
H.~Lange and P.~E. Newstead.
\newblock Clifford's theorem for coherent systems.
\newblock {\em Arch.~Math.~}, 90:209--216, 2008.

\bibitem[Orl09]{Orsin}
D.~Orlov.
\newblock Derived categories of coherent sheaves and triangulated categories of
  singularities.
\newblock {\em Algebra, arithmetic, and geometry: in honor of Yu.~I.~Manin,
  Progr.~Math.~}, 270:503--531, 2009.

\bibitem[OSV04]{OSV}
H.~Ooguri, A.~Strominger, and C.~Vafa.
\newblock Black hole attractors and the topological string.
\newblock {\em Phys.~Rev.~D}, 70, 2004.
\newblock arXiv:hep-th/0405146.

\bibitem[PV]{PoVa2}
A.~Polishchuk and A.~Vaintrob.
\newblock Matrix factorizations and {C}ohomological {F}ield {T}heories.
\newblock {\em preprint}.
\newblock arXiv:1105.2903.

\bibitem[PV12]{PoVa}
A.~Polishchuk and A.~Vaintrob.
\newblock Chern characters and {H}irzebruch-{R}iemann-{R}och formula for matrix
  factorizations.
\newblock {\em Duke Math.~J.~}, 161:1863--1926, 2012.

\bibitem[Sai87]{Saito}
K.~Saito.
\newblock Regular system of weights and associated singularities.
\newblock {\em Complex analytic singularities, Adv.~Stud.~Pure Math.~},
  8:479--526, 1987.

\bibitem[ST01]{ST}
P.~Seidel and R.~P. Thomas.
\newblock Braid group actions on derived categories of coherent sheaves.
\newblock {\em Duke Math.~J.~}, 108:37--107, 2001.

\bibitem[Tak]{Tak}
A.~Takahashi.
\newblock Matrix factorizations and representations of quivers {I}.
\newblock {\em preprint}.
\newblock arXiv:0506347.

\bibitem[Tho00]{Thom}
R.~P. Thomas.
\newblock A holomorphic {C}asson invariant for {C}alabi-{Y}au 3-folds and
  bundles on ${K3}$-fibrations.
\newblock {\em J.~Differential.~Geom}, 54:367--438, 2000.

\bibitem[Tod]{TGep2}
Y.~Toda.
\newblock Gepner point and strong {B}ogomolov-{G}ieseker inequality for quintic
  3-folds.
\newblock in preparation.

\bibitem[Tod08]{Tst3}
Y.~Toda.
\newblock Moduli stacks and invariants of semistable objects on {K}3 surfaces.
\newblock {\em Advances in Math}, 217:2736--2781, 2008.

\bibitem[Tod09]{Tolim}
Y.~Toda.
\newblock Limit stable objects on {C}alabi-{Y}au 3-folds.
\newblock {\em Duke Math.~J.~}, 149:157--208, 2009.

\bibitem[Tod10a]{Tcurve1}
Y.~Toda.
\newblock Curve counting theories via stable objects~{I}: {DT/PT}
  correspondence.
\newblock {\em J.~Amer.~Math.~Soc.~}, 23:1119--1157, 2010.

\bibitem[Tod10b]{Tolim2}
Y.~Toda.
\newblock Generating functions of stable pair invariants via wall-crossings in
  derived categories.
\newblock {\em Adv.~Stud.~Pure Math.~}, 59:389--434, 2010.
\newblock New developments in algebraic geometry, integrable systems and mirror
  symmetry (RIMS, Kyoto, 2008).

\bibitem[Tod12]{Tsurvey}
Y.~Toda.
\newblock Stability conditions and curve counting invariants on {C}alabi-{Y}au
  3-folds.
\newblock {\em Kyoto Journal of Mathematics}, 52:1--50, 2012.

\bibitem[Ued]{UedaM}
K.~Ueda.
\newblock Hyperplane sections and stable derived categories.
\newblock {\em preprint}.
\newblock arXiv:1207.1167.

\bibitem[Wal]{Wal}
J.~Walcher.
\newblock Stability of {L}andau-{G}inzburg branes.
\newblock arXiv:hep-th/0412274.

\end{thebibliography}
\end{document}